\documentclass[12pt,fleqn,a4paper]{amsart}

\usepackage[utf8]{inputenc}
\usepackage[T1]{fontenc}
\usepackage{a4wide}

\usepackage{amsmath,amssymb,amsthm,latexsym,hyperref,enumitem}
\usepackage{mathrsfs}
\usepackage{url}

\renewcommand{\le}{\leqslant}
\renewcommand{\ge}{\geqslant}

\setlist[enumerate,1]{label=\textup{(\roman*)},leftmargin=*}
\setlist[itemize,1]{leftmargin=*}

\numberwithin{equation}{section}
\newtheorem{theorem}{Theorem}[section]
\newtheorem{mainthm}[theorem]{Main Theorem}
\newtheorem*{theoremA}{Theorem A}
\newtheorem*{theoremB}{Theorem B}
\newtheorem*{theoremC}{Theorem C}
\newtheorem*{theoremD}{Theorem D}
\newtheorem*{theoremE}{Theorem E}
\newtheorem{lemma}[theorem]{Lemma}
\newtheorem{corollary}[theorem]{Corollary}
\newtheorem{proposition}[theorem]{Proposition}
\theoremstyle{definition}
\newtheorem{definition}[theorem]{Definition}
\newtheorem{remark}[theorem]{Remark}
\newtheorem{example}[theorem]{Example}
\newtheorem{question}[theorem]{Question}
\newtheorem{convention}[theorem]{Convention}

\newcommand{\N}{\mathbb{N}}
\newcommand{\Q}{\mathbb{Q}}
\newcommand{\C}{\mathbb{C}}
\newcommand{\Z}{\mathbb{Z}}
\newcommand{\R}{\mathbb{R}}

\newcommand{\SB}{\mathrm{SB}}
\newcommand{\Ideal}{\mathrm{Ideal}}
\newcommand{\dist}{\mathrm{dist}}

\newcommand{\Min}{\mathrm{Min}}
\newcommand{\Con}{\mathrm{Con}}
\newcommand{\NSub}{\mathrm{NSub}}
\newcommand{\Aut}{\mathrm{Aut}}
\newcommand{\Inn}{\mathrm{Inn}}
\newcommand{\Id}{\mathrm{Id}}
\newcommand{\Sub}{\mathrm{Sub}}

\newcommand{\CTbl}{\mathrm{CTbl}}

\title[Polish spaces for countable and separable structures]
{Polish spaces for countable and separable structures\\ through quotient encodings}

\author[T.~Kania]{Tomasz Kania}
\address[T.~Kania]{Mathematical Institute\\Czech Academy of Sciences\\\v Zitn\'a 25 \\115 67 Praha 1\\Czech Republic  and  Institute of Mathematics and Computer Science\\ Jagiellonian University\\ {\L}ojasiewicza 6, 30-348 Krak\'{o}w, Poland
}
\email{kania@math.cas.cz, tomasz.marcin.kania@gmail.com}
\thanks{RVO: 67985840.}
\date{\today}

\subjclass[2020]{Primary 03E15, 46L05; Secondary 03C15, 46H05, 46L80, 54B20}
\keywords{Descriptive set theory, Borel complexity, Polish space, Wijsman topology, $C^*$-algebra, Banach algebra, hyperspace, countable structure, quotient encoding, operator algebra $K$-theory, $\Pi^1_1$-complete}

\begin{document}

\begin{abstract}
We develop a quotient-based framework for locating natural properties of countable
algebraic structures and separable Banach-type structures in the Borel hierarchy.
The common idea is to present an object as a quotient of a fixed generator and to
read definability from the corresponding kernel or congruence.

For separable Banach-type structures, including Banach algebras, $C^*$-algebras and
TROs, admissible kernels form Polish spaces; in the Wijsman topology the quotient-norm
functional $K\mapsto \|x+K\|$ is continuous.  This gives a uniform definability
scheme with explicit Borel upper bounds.  For countable algebraic structures,
congruence spaces are compact zero-dimensional Polish spaces and atomic predicates are
clopen.

For Banach algebras we obtain, among other estimates, closedness of commutativity,
the abstract uniform-algebra norm identity and Dedekind finiteness, and $G_\delta$
bounds for topological stable-rank bounds.  In the unital $C^*$-algebra coding based
on $C^*_{\max}(F_\infty)$ we obtain closedness of stable finiteness and existence of
a tracial state, $G_\delta$ bounds for AF-ness, MF-ness, approximate divisibility and
real-rank bounds; a $\Pi^0_3$ bound for nuclear dimension; Borelness of nuclearity
and simplicity, and analyticity of $D$-absorption for fixed exact $D$.  The
$G_\delta$ bounds for AF-ness, real-rank bounds and topological stable-rank bounds
are shown to be sharp by continuous reductions from a canonical $\Pi^0_2$-complete
set.  We give an internal Borel coding of the
$K_0$-assignment: every coordinate section is $F_\sigma$, and the resulting map into
the standard subgroup coding of countable abelian groups is of Baire class~$2$.
Suspension and Bott periodicity, combined with the known standard coding computations,
yield Borel codings of $K_1$ and all higher $K$-groups.

The framework also gives a compact quotient treatment of countable groups, rings,
lattices, Boolean algebras and abelian groups.  We include continuous completeness
reductions for several low-rank algebraic properties and exhibit natural
$\Pi^1_1$-complete properties: separability of the dual in the commutative
$C^*$-quotient coding and superatomicity in the Boolean-algebra quotient coding.
\end{abstract}

\maketitle

\section{Introduction}

Descriptive set theory gives a language for measuring the complexity of mathematical
classification problems, but classification complexity and property complexity are not
the same question.  The isomorphism relation for separable $C^*$-algebras is already
maximally complicated among orbit equivalence relations; Sabok proved this even for
separable simple AI $C^*$-algebras~\cite{SabokCompleteness}.  Nevertheless, once a
single algebra is presented by a code, one can still ask where concrete properties such
as stable finiteness, AF-ness, nuclear dimension bounds, traciality or $K$-theory lie
inside the Borel hierarchy.  The purpose of this paper is to give a quotient-based
framework in which many such computations become uniform.

The guiding principle is simple.  Instead of embedding all objects into a universal
ambient object, we present them as quotients of a fixed generator.  This is forced in
some Banach-algebraic categories: no separable Banach algebra contains an isomorphic
copy of every separable Banach algebra~\cite{KaniaNoUniversalBanachAlg}.  By contrast,
quotient generators are abundant: for example $C^*_{\max}(F_\infty)$ generates the
separable unital $C^*$-algebras by quotients, and free objects play the same role for
countable algebraic structures.  Thus the natural parameter is not an embedded copy of
the object, but the kernel, ideal or congruence defining the quotient.

This viewpoint also fits the existing descriptive-set-theoretic codings.  Kechris
introduced a standard Borel parameterisation of separable $C^*$-algebras
\cite{KechrisAsianCstar}, and Farah--Toms--T\"ornquist developed several mutually
Borel-equivalent codings together with Borel computations of invariants
\cite{FarahTomsTornquist,FarahTomsTornquistCrelle}.  On the Banach-space side,
C\'uth, Dole\v{z}al, Doucha and Kurka studied Polish spaces of separable Banach
spaces via norm and pseudonorm codings~\cite{CuthDolezalDouchaKurka2022,CuthDolezalDouchaKurka2024}.
The present paper is complementary: it keeps the quotient presentation visible and uses
the topology of the kernel space to read off Borel ranks.

The main results are organised around the following general theorems.  They are stated
informally here; the precise versions, including the hypotheses on the chosen quotient
generators and topologies, appear in the body of the paper.  The concrete references
are collected after the statements so that the statements themselves remain readable.

\begin{theoremA}[Continuous quotient spaces and definability]
Let $P$ be a separable quotient generator in a Banach-type category with countably many
continuous operations.  The admissible kernels in $P$ form a Polish space for every
admissible hyperspace topology.  In the Wijsman topology the quotient-norm functional
\[
        K\longmapsto \|x+K\|_{P/K}
\]
is continuous for each $x\in P$.  Consequently, every property expressible by the
quotient-norm language and stable existential relations is Borel, with Wijsman-rank
bounded by the visible quantifier alternation.
\end{theoremA}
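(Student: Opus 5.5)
The plan has three parts: show that admissible kernels form a $G_\delta$ subset of a Polish hyperspace, prove continuity of the quotient-norm functional directly from the definition of the Wijsman topology, and read off Borel ranks by induction on formulas.

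\textbf{Polishness.} We work in the hyperspace $F(P)$ of closed subsets of $P$. Fix a countable dense set $D=\{d_n\}\subseteq P$. By Beer's theorem, $F(P)$ with the Wijsman topology is Polish, since $P$ is a separable complete metric space. Hence $\mathrm{Ideal}$-type kernel spaces are Polish once we show they are $G_\delta$; in fact we aim to show they are closed. Every other admissible hyperspace topology will be assumed, by the definition of admissibility, to be Polish and to generate the Effros Borel structure. For Wijsman-closedness, use the criterion $x\in K \iff d(x,K)=0$. Each condition ``$K$ is a subspace'' or ``$K$ is closed under the $i$-th continuous operation $\omega_i$ with $P$-parameters'' is a countable intersection over tuples from $D$ of conditions of the form
\[
d(\omega_i(a,b),K)\le C_i\bigl(d(a,K)+d(b,K)\bigr)\quad\text{locally},
\]
or more safely of the form ``$d(a,K)=0$ and $d(b,K)=0$ imply $d(\omega(a,b),K)=0$''. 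These implications only give Borel conditions. So I would instead use the identity $K=\overline{K\cap\text{(approximants)}}$ together with uniform continuity of $\omega$ on bounded sets. This should rewrite closure as $\forall$ rational $\varepsilon$, $\forall$ $a,b\in D$: if $d(a,K)<\delta$ and $d(b,K)<\delta$ then $d(\omega(a,b),K)\le\varepsilon$. Since each $d(\cdot,K)$ is Wijsman-continuous, this is $G_\delta$ (indeed closed after a careful choice of $\delta$). I expect this step to be the main technical obstacle. The difficulty is that $\omega$ is only continuous, not Lipschitz, so the $\delta$ must depend on norms, and one needs the ideal to be a linear subspace in order to translate near-membership into actual membership. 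That is also where the category's axioms (e.g.\ a bilinear bounded product) are used.

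\textbf{Continuity of the quotient norm.} Once $K$ is a closed subspace, $\|x+K\|_{P/K}=\inf_{k\in K}\|x-k\|=d(x,K)$. By definition of the Wijsman topology, $K\mapsto d(x,K)$ is continuous for each fixed $x$, so this part is immediate. Moreover, $|d(x,K)-d(y,K)|\le\|x-y\|$, so the map $(x,K)\mapsto\|x+K\|$ is jointly continuous on $P\times F(P)$.

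\textbf{Definability and ranks.} We define the quotient-norm language with atomic formulas $\|t(\bar x)+K\|<r$ and $\|t(\bar x)+K\|\le r$, where $t$ is a term in the countably many operations and the parameters range over $D$. By joint continuity, atomic formulas define open or closed subsets of $F(P)$. Quantifiers over $P$ are replaced by quantifiers over $D$ by density and continuity, so each quantifier is countable. Each block of quantifier alternation then raises the rank by one level of the $\Sigma^0_n/\Pi^0_n$ hierarchy, which gives the stated bound in terms of visible alternations. Stable existential relations, i.e.\ those where an approximate witness can be perturbed to an exact one, are handled in the same way. Such a relation is equivalent to $\forall\varepsilon\,\exists\bar d\in D$ with an open atomic condition, so it adds no hidden rank beyond its visible quantifiers.
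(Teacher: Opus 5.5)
There is a genuine gap in the Polishness part. Your continuity argument for $K\mapsto\|x+K\|$ is fine, and so is the observation that it is jointly continuous. The problem is that the statement claims Polishness for \emph{every} admissible topology $\tau$, while you only aim at Wijsman-closedness and then remark that admissible topologies are Polish with the Effros Borel structure. That only makes $\Ideal_{\mathcal C}(P)$ a Borel subset of $(\mathscr F(P),\tau)$, and a subspace of a Polish space is Polish only if it is $G_\delta$.

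The missing tool is the Godefroy--Saint-Raymond theorem. It says that $\SB(P)$ is $G_\delta$ in $(\mathscr F(P),\tau)$ and carries $\tau$-continuous selections $f_k$ with $f_k(F)\in F$ and $\{f_k(F)\}_k$ dense in $F$. Combine this with admissibility condition (iii), that the membership relation is closed. Put $\Delta(x)=\omega_i(u_{m_1},\dots,u_{m_j}+x,\dots,u_{m_{n_i}})-\omega_i(u_{m_1},\dots,u_{m_{n_i}})$ with $u_{m_\ell}\in D^P$. Then each set $\{F\in\SB(P):\Delta(f_k(F))\in F\}$ is $\tau$-closed, and their intersection is exactly the kernel space, by density and telescoping over coordinates.

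This is also the property that must be preserved. An admissible kernel is a congruence: perturbing \emph{one} argument of $\omega_i$ by an element of $K$ changes the value by an element of $K$. Your displayed implications put both arguments in $K$, so they describe closed substructures (subalgebras rather than ideals).

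In the Wijsman case no $\delta$-scheme, linearity trick or bilinearity is needed. If $K_n\to K$ and $h\in K$, then $d(h,K_n)\to0$ gives $h_n\in K_n$ with $h_n\to h$. The elements $\omega_i(\dots,x_j+h_n,\dots)-\omega_i(\dots,x_j,\dots)\in K_n$ then converge to the corresponding element for $h$, which therefore has distance $0$ from $K$.

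The rank bookkeeping for stable existential atoms also falls short. Writing $\exists\bar x\,\Sigma_{\bar t,M}(\bar x)$ as $\forall\varepsilon\,\exists\bar d\in D^m\,(\text{open})$ gives a $\Pi^0_2$ set, which hides a $\forall\exists$ block, whereas the theorem counts these atoms as open. The point of stability is that one fixed tolerance suffices. Take $\delta$ to be the stability constant for $\varepsilon=1$. Then an exact solution exists iff some $\bar u\in(D^P)^m$ has $\max_j\Phi(K,u_j)<M$ and $\mathrm{def}^{P/K}_{\bar t}(\bar u+K)<\delta$, which is a countable union of open sets. To get the strict bound $<M$ from an exact solution with $\|x_j\|\le M$, rescale a dense approximant by a rational $\lambda_j$ slightly below $1$.

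Similarly, quantifiers over $P$ cannot in general be replaced by quantifiers over $D$ ``by density and continuity''. An existential over $P$ with closed equality atoms, such as the existence of a non-zero projection, has no such reduction without stability. That is why the language quantifies only over $D^P$ and $\N$ and adds stable-existence atoms separately.
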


\begin{theoremB}[Countable quotient spaces]
For countable algebraic structures, congruences on a free countable object form compact
zero-dimensional Polish spaces.  Atomic quotient predicates are clopen, the canonical
coding of quotients on domain $\mathbb N$ is Borel, and first-order algebraic
complexity is reflected directly by the Borel hierarchy.
\end{theoremB}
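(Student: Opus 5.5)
Fix a countable signature $\mathcal L$ with function symbols of finite arity, a variety $\mathcal V$ (groups, rings, lattices, Boolean algebras, abelian groups), and the free object $F=F_{\mathcal V}(\mathbb N)$, which is countable. The plan is to realise $\Con(F)$ as a subset of the Cantor space $2^{F\times F}$ via characteristic functions and to check that it is closed. Each congruence axiom involves only finitely many coordinates: reflexivity $(a,a)\in\theta$, symmetry, transitivity, and compatibility $(a_i,b_i)\in\theta$ for all $i$ implies $(f(\bar a),f(\bar b))\in\theta$. So each axiom instance defines a clopen set, and $\Con(F)$ is their countable intersection. Hence $\Con(F)$ is closed in $2^{F\times F}$ and is therefore compact, metrisable, zero-dimensional and Polish. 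For groups and rings one could instead code normal subgroups or ideals in $2^{F}$; the same argument applies.

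An atomic quotient predicate has the form $F/\theta\models t_1(\bar a)=t_2(\bar a)$ for terms $t_1,t_2$ and parameters $\bar a$ in $F$. Evaluating the terms in $F$ gives elements $s_1,s_2$, and the predicate becomes $(s_1,s_2)\in\theta$, a single-coordinate condition, hence clopen. Relation symbols, for instance a lattice order defined by $a\wedge b=a$, reduce to atomic equations in the same way.

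For the canonical coding on domain $\mathbb N$, I would define a map $\Con(F)\to$ (structures on $\mathbb N$, finite or infinite) in a Borel way. Fix an enumeration $(w_n)$ of $F$. For each $\theta$, let the transversal consist of those $n$ such that no $m<n$ has $(w_m,w_n)\in\theta$. Being least in its class is a finite Boolean combination of coordinates, so the transversal is clopen in $\theta$. Relabel the transversal elements increasingly by $0,1,2,\dots$; the $k$-th element is determined by finitely many coordinates, but the number of them grows with $k$. The operation $f(i_1,\dots,i_r)=j$ then holds iff $f(w_{n_{i_1}},\dots)$ is $\theta$-equivalent to $w_{n_j}$. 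This is a countable Boolean combination of coordinate conditions, so each coordinate of the coded structure is Borel, in fact $\Delta^0_2$.

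The main obstacle is the last claim, that first-order complexity is reflected directly. I would make it precise by induction on formulas in the quotient language. Quantifiers range over $F/\theta$, so I replace them by quantifiers over representatives in $F$. An $\exists$ then becomes a countable union and a $\forall$ a countable intersection, while quantifier-free formulas are clopen. This yields that a $\Sigma_n$ (respectively $\Pi_n$) sentence defines a $\Sigma^0_n$ (respectively $\Pi^0_n$) set. The point to check is that quantifying over representatives is legitimate, i.e.\ that the formula's truth depends only on classes. This holds because the atomic formulas are $\theta$-invariant.
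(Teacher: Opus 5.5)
Your proposal is correct and follows essentially the same route as the paper. The shared steps are:
\begin{enumerate}
\item closedness of the congruence space in the Cantor cube, via clopen Horn-violation cylinders;
\item clopen atomic predicates, obtained by evaluating terms in $F_\infty$;
\item the canonical coding through the set $\Min_S$ of least class representatives, enumerated increasingly;
\item the induction that turns quantifiers over representatives into countable unions and intersections.
\end{enumerate}
The one divergence concerns finite quotients. The paper restricts $\Theta$ to the $G_\delta$ set $\mathcal S_\infty$ of infinite quotients and codes finite ones by $S$ itself. Folding finite quotients into your coding needs an explicit target such as $\bigsqcup_{k\le\omega}\mathrm{Str}_L(k)$, plus the observation that the partition by number of classes is Borel. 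Your $\Delta^0_2$ coordinate bound is a harmless sharpening of the paper's ``Borel''.
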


\begin{theoremC}[$C^*$-algebraic regularity bounds]
In the unital quotient coding based on $A=C^*_{\max}(F_\infty)$, stable finiteness
and the existence of a tracial state are closed.  AF-ness, MF-ness, approximate
divisibility, real-rank bounds and topological stable-rank bounds are $G_\delta$.
The $G_\delta$ upper bounds for AF-ness, real-rank bounds and topological stable-rank
bounds are optimal: these classes are $\Pi^0_2$-complete.  Quasidiagonality and
nuclear dimension $\le n$ are $\Pi^0_3$; for nuclear dimension and property~(SP) we
also record commutative $\Pi^0_2$ lower bounds, but no $\Pi^0_3$-hardness is claimed.
Nuclearity and simplicity are Borel, and $D$-absorption is analytic for each fixed
separable unital exact $D$.
\end{theoremC}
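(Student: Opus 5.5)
Theorem C is a summary of many estimates, so the plan works through one uniform template rather than a single argument. We code a separable unital $C^*$-algebra by a closed two-sided ideal $J$ of $A=C^*_{\max}(F_\infty)$, with the Wijsman topology on the space of ideals. By Theorem A, the map $J\mapsto\|x+J\|$ is continuous for each $x$ in a fixed countable dense $*$-subalgebra $A_0\subseteq A$, for instance the rational $*$-polynomials in the generators. Every property is therefore rewritten in the quotient-norm language, with quantifiers over $A_0$ or over finite tuples from $A_0$, and the rank is read off from the quantifier pattern. Non-strict inequalities $\|p+J\|\le r$ give closed sets, strict ones give open sets, countable unions give $F_\sigma$, and so on.

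\textbf{Closed properties.} Existence of a tracial state says that for each finite tuple of rational polynomials and each $\varepsilon$, the usual finite-dimensional obstruction is absent. Concretely, $1$ is not within distance $<1$ of a sum of commutators plus positives in the quotient norm (Cuntz--Pedersen), which is a countable intersection of conditions of the form $\|\sum_i [a_i,b_i] - 1 + J\|\ge 1$. Each such condition is closed, so the set is closed. Stable finiteness is handled by a stable version: for all $n$ and all rational $v\in M_n(A_0)$, $\|v^*v-1+J\|\le\delta$ implies $\|vv^*-1+J\|\le f(\delta)$. After perturbation this becomes a universal statement over closed conditions, hence closed.

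\textbf{$G_\delta$ and higher bounds.} AF-ness uses the local characterisation: for every finite set $F\subseteq A_0$ and every $\varepsilon$ there exist a finite-dimensional subalgebra, given by rational approximate matrix units from $A_0$ satisfying the relations within $\delta$ in the quotient norm, and elements of its span within $\varepsilon$ of $F$. Because matrix-unit relations are stable (weakly semiprojective), the approximate units can be perturbed to exact ones. Each inner clause is a strict inequality, so the condition is $\forall\exists$(open), which is $G_\delta$. Real rank $\le n$, tsr $\le n$ (density of invertible, respectively jointly invertible, tuples, with invertibility witnessed by the open condition $\|ab-1+J\|<1$), MF-ness (approximate matricial representations in norm), and approximate divisibility follow the same template: $\forall$ over a countable set, then $\exists$ of an open condition.

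Quasidiagonality and nuclear dimension $\le n$ introduce an extra quantifier. The completely positive approximations must be approximately order zero and approximately contractive, and these are closed conditions nested inside the existential, giving $\forall\exists\forall$, hence $\Pi^0_3$. Nuclearity is Borel as a countable intersection over $n$ of a $\Sigma^0_3$-type condition. Simplicity is Borel via the criterion that every nonzero element generates the unit approximately, which again requires a countable quantifier block. $D$-absorption is $\Sigma^1_1$ because it asserts the existence of an isomorphism, or equivalently of a sequence of approximately central unital embeddings of $D$, coded by a real, subject to Borel conditions. Exactness of $D$ is used to keep those conditions expressible in the quotient norm.

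\textbf{Sharpness, the main obstacle.} Continuous reductions are needed from the canonical $\Pi^0_2$-complete set $\{x\in 2^\N: x \text{ has infinitely many }1\text{s}\}$. The plan is to map $x$ to the kernel $J_x$ of $A\to C(X_x)$ or to a building-block algebra. For AF-ness, $x$ is sent to a direct limit that is AF iff there are infinitely many $1$s: each $1$ triggers a connecting map that kills a nontrivial $K_1$ or a circle factor, for example using $C(\mathbb T)$ blocks that are progressively cut down. For real rank and tsr, $x$ is sent to a commutative $C(X_x)$ whose covering dimension is small exactly when there are infinitely many $1$s. The hard part is making $x\mapsto J_x$ Wijsman-continuous, which means every quotient norm $\|p+J_x\|$ must depend continuously on $x$. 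The approach is to arrange that the first $k$ coordinates of $x$ determine $\|p+J_x\|$ up to an error $2^{-k}$ for polynomials in the first $k$ generators.
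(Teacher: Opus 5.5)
Your template for the $G_\delta$ bounds (AF, MF, approximate divisibility, real rank, tsr) matches the paper. Your commutator criterion for traces is a valid alternative to the paper's projection along the compact simplex $T(A)$: $B$ has no tracial state iff $1\in\overline{\mathrm{span}}\,[B,B]$, iff $\|\sum_i[a_i,b_i]-1+J\|<1$ for some $a_i,b_i\in A_0$, which is an open condition (drop the ``plus positives'').

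Two parts of the proposal fail, however.

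\emph{Simplicity.} Requiring every non-zero dense element to be full is exactly the argument the paper rejects after Proposition~\ref{prop:simple}. Full elements form an open set, so a dense set can miss every non-zero non-full element. Concretely, let $J$ be the kernel of $A\to\C^2$ given by $g_1\mapsto(\lambda,\lambda^2)$ with $\lambda=e^{i}$ (transcendental) and $g_m\mapsto(1,1)$ for $m\ge 2$. The image of a rational $*$-polynomial is $(P(\lambda),P(\lambda^2))$ for some $P\in\Q(i)[t,t^{-1}]$. Hence every non-zero element of $A_0+J$ has both coordinates non-zero, is invertible, and passes your test, yet $\C^2$ is not simple. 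The paper claims only Borelness, pulled back from Farah--Magidor along explicit isomorphism-preserving Borel maps to the standard codings.

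\emph{Sharpness.} You correctly isolate Wijsman continuity of $x\mapsto J_x$ as the crux, but you give no construction achieving it together with the dichotomy, and that construction is the whole content of the lower bounds. A late $1$ that cuts a circle block down by an evaluation changes the norm of $z-1$ from $2$ to $0$, however late it occurs. Only cutting an interval down to a fine finite net changes norms by a small amount. The missing idea is the paper's Hausdorff-continuous family:
\begin{itemize}
\item index disjoint intervals $I_r=[2^{-r-2},2^{-r-1}]$ by the rows of $\alpha\in 2^{\N\times\N}$ (your set reduces to $P_2$ via $\alpha(r,s)=1$ iff $s\ge r$ and $x(s)=1$);
\item keep $I_r$ in $E_\alpha$ if row $r$ has no $1$;
\item otherwise replace $I_r$ by a finite set $F_{r,s}$ with $d_H(F_{r,s},I_r)<2^{-s}$, where $s$ is the first $1$ in row $r$.
\end{itemize}
Then $\alpha\mapsto E_\alpha\times[0,1]^d$ is Hausdorff-continuous, and $\dist(a,K_Y)=\max_Y|\rho_d(a)|$ makes $Y\mapsto K_Y$ continuous. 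Moreover $E_\alpha$ is countable iff $\alpha\in P_2$, and contains an interval otherwise. Covering dimension then settles:
\begin{itemize}
\item AF-ness, with no direct limits or $K_1$ needed;
\item real rank, via $\mathrm{RR}(C(X))=\dim X$;
\item tsr, via Rieffel's formula with $d=2n-1$;
\item the nuclear-dimension and (SP) lower bounds, which your proposal omits.
\end{itemize}

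Several other steps are unproved rather than wrong.

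For QD and nuclear dimension, the existential over c.p.c.\ maps $A/J\to F_0$ is not a countable quantifier. Such maps are not determined by finitely many values, and prescribing rational values fails when the quotient imposes irrational linear relations. So ``closed conditions nested inside the existential'' needs an argument. The paper uses compactness of $\mathrm{UCP}(A,M_m)$ and $\mathrm{CPC}(A,F_0)$: the descent condition $\|\psi(a)\|\le\Phi(K,a)$ is closed, the approximation inequalities are open, and projecting the resulting locally closed set along the compact factor gives an $F_\sigma$ set (Lemma~\ref{lem:proj-locally-closed}). The upward order-zero maps are handled by a stable presentation of the cone over $F_0$.

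Your internal rank claim for nuclearity rests on the same missing step; the paper asserts only Borelness, imported from Effros via the standard codings.

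For $D$-absorption, the isomorphism route is fine once you note that $K\mapsto(A/K)\otimes_{\min}D$ is Borel (the paper's tensor-kernel result, where exactness of $D$ enters). But ``equivalently, approximately central unital embeddings of $D$'' is false beyond the strongly self-absorbing case; take $B=D=C[0,1]$.

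For stable finiteness, an implication whose hypothesis is $\|v^*v-1+J\|\le\delta$ defines the union of an open and a closed set, which need not be closed. The hypothesis must be strict. That is exactly the paper's open witness $\Phi^{(n)}(K,v^*v-1)<\tfrac14$, $\Phi^{(n)}(K,vv^*-1)>\tfrac12$, combined with the polar correction $x(x^*x)^{-1/2}$.
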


\begin{theoremD}[Borel $K$-theory and tensor kernels]
There is an internal Borel assignment
\[
        \kappa_{K_0}:\Ideal(A)\longrightarrow \Sub(\mathbb Z^{(\mathbb N)})
\]
such that $\mathbb Z^{(\mathbb N)}/\kappa_{K_0}(K)\cong K_0(A/K)$; each fixed
coordinate section is $F_\sigma$, and the map is of Baire class~$2$.  Suspension,
Bott periodicity and the standard coding computations yield Borel assignments for
$K_1$ and all higher $K$-groups.  Tensor-product kernel maps are Borel in general and
continuous in the exact cases considered below.
\end{theoremD}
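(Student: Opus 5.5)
The plan is to build $\kappa_{K_0}$ explicitly from projections over matrix algebras of the generator. Fix a countable dense set $\{a_n\}$ in $\bigcup_k M_k(A)$, $A=C^*_{\max}(F_\infty)$; unitisation is not needed since we work in the unital coding. For $K\in\Ideal(A)$ the quotient map $M_k(A)\to M_k(A/K)$ is surjective. Every projection in $M_k(A/K)$ is close to the image of some self-adjoint $a_n$ with $\|a_n^2-a_n+M_k(K)\|$ small. Functional calculus then yields a projection, and its class is constant on the open set where $\|a_n^2-a_n+M_k(K)\|<1/4$. Index the generators of $\Z^{(\N)}$ by pairs $(n,m)$, or more simply by $n$ alone. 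Send generator $e_n$ to $[\chi_{(1/2,\infty)}(a_n+M_k(K))]_0$ when $a_n$ is self-adjoint and $\|a_n^2-a_n+M_k(K)\|<1/4$, and to $0$ otherwise. Equivalently, add the relation $e_n=0$ for these ``bad'' indices. The resulting homomorphism $\Z^{(\N)}\to K_0(A/K)$ is surjective, because every element of $K_0$ is a difference of projection classes and differences are integer combinations. Let $\kappa_{K_0}(K)$ be its kernel; then $\Z^{(\N)}/\kappa_{K_0}(K)\cong K_0(A/K)$.

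Next I compute the complexity of a coordinate section $\{K: v\in\kappa_{K_0}(K)\}$ for fixed $v=\sum c_ne_n$. By Theorem A, each map $K\mapsto\|x+M_k(K)\|$ is continuous, since $M_k(P)$ is again a generator and $M_k(K)$ depends continuously on $K$ in the Wijsman topology. So ``index $n$ is good'' is open and ``bad'' is closed. On the good part, $v\in\ker$ iff the formal combination of the projections is zero in $K_0$. That holds iff there are $r$ and a partial isometry, implemented by some dense element $a_m$ of a larger matrix algebra, with $\|a_m^*a_m-p\oplus 1_r+\cdots\|$ and the analogous quantities $<1/2$ modulo $K$ (Murray--von Neumann equivalence after adding $1_r$). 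The strict inequalities give an open condition, quantified existentially over countably many $(r,m)$, which is open. Splitting by the finitely many patterns of good and bad indices among the support of $v$, the section is a finite union of (closed $\cap$ open) sets, each $F_\sigma$ in a metrizable space. Hence every section is $F_\sigma$. Preimages of subbasic clopen sets of $\Sub(\Z^{(\N)})\subseteq 2^{\Z^{(\N)}}$ are therefore $F_\sigma$ or $G_\delta$, in particular $\Delta^0_3$, and the map is Baire class~$2$ and Borel.

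For $K_1$ and the higher groups, I would use the suspension $SB=C_0(\R)\otimes B$ with $K_1(B)\cong K_0(SB)$. I would realise $S(A/K)$ as the quotient $SA/SK$, using exactness of $C_0(\R)\otimes-$, which holds since $C_0(\R)$ is nuclear. Then $K\mapsto SK$ is Borel (indeed continuous) from $\Ideal(A)$ into $\Ideal(SA)$, and the construction above runs with $SA$, after unitising. Bott periodicity handles $K_n$ for $n\ge 2$, and composing with the standard coding computations of Farah--Toms--T\"ornquist gives Borelness. The step I expect to be the main obstacle is the uniformity of the $K_0$ witnesses. The perturbation constants must be chosen so that a good index yields a well-defined class locally constant in $K$, and equivalence in the quotient must lift to a strict-norm condition on dense representatives. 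I would handle this with the standard estimate that projections at distance $<1$ are unitarily equivalent, applied in $M_k(A)/M_k(K)$.
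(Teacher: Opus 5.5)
\paragraph{Gap: the tensor-kernel clause is not proved.}
Your plan leaves out one of the three assertions of Theorem~D. That assertion says the tensor kernel maps $\mathfrak T_\bullet(I,J)=\ker(q_I\otimes_\bullet q_J)$ are Borel, and are Wijsman-continuous for $\bullet=\max$ and for $\bullet=\min$ with an exact factor.

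The only trace of it is your aside that $K\mapsto SK$ is ``Borel (indeed continuous)''. This concerns a single commutative factor and is not argued. For $C_0(\R)$ it does follow from $\dist(z,C_0(\R,K))=\sup_t\|z(t)+K\|$ and joint continuity of $(t,K)\mapsto\|z(t)+K\|$, but that does not reach a general second factor.

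Borelness needs two steps.
\begin{enumerate}
\item The identification $\ker(q_I\otimes_\bullet q_J)=\overline{I\odot B+A\odot J}$. This is automatic for $\max$, and it is exactly where exactness enters for $\min$.
\item Upper semicontinuity of $(I,J)\mapsto\dist(z,\mathfrak T_\bullet(I,J))$. Pick a near-optimal $w=\sum_k(i_k\otimes b_k+a_k\otimes j_k)$ and move $i_k,j_k$ into $I_n,J_n$ using Lemma~\ref{lem:wijsman-approx}. Then preimages of the subbasic sets $\{\dist(z,\cdot)<r\}$ are open and those of $\{\dist(z,\cdot)>r\}$ are $F_\sigma$.
\end{enumerate}

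Continuity additionally needs the lower bound $\liminf_n\dist(z,\mathfrak T_\bullet(I_n,J_n))\ge\dist(z,\mathfrak T_\bullet(I,J))$, which the approximation argument does not give. The paper obtains it as follows.
\begin{enumerate}
\item View $\{A/I_t\}$ and $\{B/J_t\}$ over $T=\N\cup\{\infty\}$ as continuous fields, via Dixmier's criterion.
\item Apply Kirchberg--Wassermann: fibrewise $\max$ tensor products, and $\min$ tensor products with exact fibres, are again continuous fields.
\item Pass from $A\odot B$ to arbitrary $z$ by density.
\end{enumerate}

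\paragraph{Your $K_0$ argument: correct, different route, two repairs.}
The $K_0$ part is correct and takes a genuinely different route. The paper encodes the monoid $V(A/K)$ through activated zero/equality/addition relations. It then needs Lemma~\ref{lem:Groth-presentation} to identify the generated subgroup with the kernel, and its $F_\sigma$ bound comes from searching over finite derivations.

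You take the kernel of $e_n\mapsto[p_n^K]$ directly and decide $v\in\ker$ by one stabilised equivalence $P\oplus1_r\sim Q\oplus1_r$. This avoids the Grothendieck lemma. Your pattern decomposition in fact shows each section is a finite union of locally closed sets, hence $\Delta^0_2$, so the map is even of Baire class~$1$.

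Two steps need repair.
\begin{enumerate}
\item Continuity of $K\mapsto\|X+M_k(K)\|$ does not follow from Theorem~A for $M_k(A)$. That argument presupposes Wijsman continuity of $K\mapsto M_k(K)$, which is the claim itself; the needed fact is Lemma~\ref{lem:Phi-n-continuous}.
\item $\|a_m^*a_m-(P^K\oplus1_r)\|$ is not the quotient norm of a fixed element. $P^K$ is formed by functional calculus in $A/K$ with the discontinuous $\chi_{(1/2,\infty)}$, and at tolerance $1/4$ the spectrum may approach $1/2$. Use a fixed goodness tolerance $\eta_0<1/4$, as the paper does. The spectra then miss a fixed interval around $1/2$, so $p_n^K=\chi(a_n)+M_k(K)$ for one continuous $\chi$ and all good $K$. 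Your condition then becomes a union of strict inequalities in $\Phi^{(N)}(K,\cdot)$ of fixed elements.
\end{enumerate}

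With that change your threshold $1/2$ works: $\|w^*w-E\|<1/2$ and $\|ww^*-F\|<1/2$ give $E\sim\chi_{(1/2,\infty)}(w^*w)\sim\chi_{(1/2,\infty)}(ww^*)\sim F$.

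Your $K_1$ route via suspension, Farah--Toms--T\"ornquist and Bott periodicity is the paper's. Your alternative of rerunning the internal construction on the non-unital $SA$ ``after unitising'' would need the unitised bookkeeping that the paper deliberately avoids.
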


\begin{theoremE}[Coanalytic calibration]
Some natural quotient-coded properties are not Borel.  In the commutative
$C^*$-quotient coding, having separable dual is $\Pi^1_1$-complete.  In the
Boolean-algebra quotient coding, superatomicity is $\Pi^1_1$-complete.
\end{theoremE}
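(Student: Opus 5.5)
Both claims of Theorem~E share one scheme: show membership in $\Pi^1_1$ by a direct coanalytic definition in the quotient coding, then show hardness by continuously reducing a known $\Pi^1_1$-complete set into the coding. For hardness we use the set $\mathrm{WF}$ of well-founded trees on $\N$; equivalently, the compact countable subsets of a Cantor-type space, which is $\Pi^1_1$-complete by Hurewicz's theorem.

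For superatomicity, we work in the compact space of congruences (ideals) on the free countable Boolean algebra $F$ (Theorem~B). Then $F/I\cong \mathrm{Clop}(X_I)$, where $X_I$ is the closed subset of $2^\N$ given by the Stone space.
\begin{itemize}
\item The map $I\mapsto X_I$ is continuous into $K(2^\N)$. Indeed, membership of a basic clopen set in the ideal is a clopen condition on $I$, and this controls the Vietoris basic sets.
\item Superatomicity of $F/I$ is equivalent to $X_I$ being scattered, which for compact metrizable spaces means countable.
\item The countable members of $K(2^\N)$ form a $\Pi^1_1$-complete set (Hurewicz).
\item This gives the upper bound, as the preimage of a $\Pi^1_1$ set under a continuous map.
\end{itemize}
For the lower bound we construct a continuous inverse $K(2^\N)\to \Ideal(F)$, namely $X\mapsto\{a: [a]\cap X=\emptyset\}$ with $F$ identified with $\mathrm{Clop}(2^\N)$. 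The point to check is that this map is continuous into the congruence topology. It is, because for a clopen set $a$ the condition $[a]\cap X=\emptyset$ is clopen in the Vietoris topology.

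For separable dual we use the commutative coding, where $A/K\cong C(X_K)$ with $X_K$ a closed subset of a fixed compact metrizable space; in the unital commutative quotient of the generator this space can be taken to be a Hilbert cube or a product of discs.
\begin{itemize}
\item $C(X)^*=M(X)$ is separable iff $X$ is countable: a countable compact $X$ gives $M(X)=\ell_1(X)$, and an uncountable $X$ gives an uncountable $2$-separated family of point masses.
\item $K\mapsto X_K$ is Borel, and in fact continuous into the Vietoris topology when Wijsman-coded. This uses Theorem~A: $\|f+K\|=\sup_{X_K}|f|$ is continuous in $K$, and these sup-norms determine $X_K$.
\item The upper bound then follows from Hurewicz again.
\end{itemize}
For the lower bound, we embed $2^\N$ into the spectrum and send a closed $X\subseteq 2^\N$ to the ideal $C_0(\text{spectrum}\setminus X)$. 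Wijsman continuity of this map reduces to continuity of $X\mapsto\sup_X|f|$ for each fixed $f$, which holds on $K(2^\N)$.

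The main obstacle is the continuity or measurability of the correspondence between kernels and closed spectra in the commutative $C^*$ case. We must check that the chosen commutative generator really has a spectrum into which $2^\N$ embeds, and we must fix the admissible topology on kernels. We would check this in the Wijsman topology via the quotient-norm functional of Theorem~A. If the paper's commutative coding uses a different admissible topology, Borelness of the reduction would still suffice for $\Pi^1_1$-hardness under Borel reductions.
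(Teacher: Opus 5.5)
Your proposal is correct and follows essentially the same route as the paper. Both arguments transport Hurewicz's $\Pi^1_1$-complete set of countable closed subsets of a Cantor space through Gelfand/Stone duality, using that $C(X)^*$ is separable iff $X$ is countable and that a countable Boolean algebra is superatomic iff its Stone space is countable. The paper works directly in $\Ideal(C(2^\N))$ and only needs $F\mapsto I_F$ to be a Borel isomorphism; your homeomorphism claim needs a compactness argument, because sup-norms determining $X_K$ gives only injectivity, but $X\mapsto I_X$ is a continuous bijection from the compact hyperspace onto a Hausdorff space. Also, the spectrum of the abelianisation of $C^*_{\max}(F_\infty)$ is $\mathbb T^\N$, not a Hilbert cube or product of discs, which is harmless since any uncountable compact metrisable spectrum contains a Cantor set.
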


The precise versions behind these summaries are as follows.  Theorem~A is proved as
Theorems~\ref{thm:continuous-main} and~\ref{thm:definability-rank}; Theorem~B is
Theorem~\ref{thm:countable-main}.  The upper-bound part of Theorem~C is proved across
Section~\ref{sec:Cstar-properties}; its sharpness part is Theorem~\ref{thm:Cstar-Pi2-complete},
and the $D$-absorption assertion is Theorem~\ref{thm:Dstab}.  The $K$-theoretic part of
Theorem~D is Theorems~\ref{thm:K0} and~\ref{thm:K1}, together with
Corollary~\ref{cor:K0-map-complexity}; the tensor-kernel part is Section~\ref{sec:tensor-products},
especially Theorem~\ref{thm:tensor-continuity}.  Theorem~E is
Theorems~\ref{thm:dual-separable-Pi11} and~\ref{thm:superatomic-Pi11}.

The catalogue results below are deliberately conservative.  When a low finite rank is
claimed, the proof supplies an explicit countable scheme in the Wijsman topology.  When
only Borelness is asserted, no hidden rank optimisation is intended.  In particular, the
paper does not claim a $G_\delta$ or $\Pi^0_2$ bound for simplicity of separable
$C^*$-algebras or Banach algebras; the Borel statement for $C^*$-algebras is obtained by
comparison with the standard codings and known definability results.

\subsection{Organisation}

Section~\ref{sec:prelim} fixes notation, recalls admissible hyperspace topologies,
Wijsman topology and Lusin--Novikov uniformisation.  Section~\ref{sec:continuous}
proves Theorem~A.  Section~\ref{sec:countable} proves Theorem~B.  Section~\ref{sec:examples}
records quotient generators.  Sections~\ref{sec:banach-complexity} and
\ref{sec:Cstar-properties} establish the Banach-algebraic and $C^*$-algebraic
complexity estimates, with the unital $C^*$-algebra scope made explicit in
Remark~\ref{rem:Cstar-unital-scope}.  Section~\ref{sec:K0} proves the
$K$-theory assignment theorem, and Section~\ref{sec:tensor-products} treats
tensor-product kernels.  Section~\ref{sec:D-stability} proves analyticity of
$D$-absorption.  Section~\ref{sec:TRO} records the free TRO construction and the
TRO estimates that follow from stable finite-dimensional relations.  Section~\ref{sec:countable-complexity}
gives the countable-structure catalogue and the low-rank completeness reductions.
Section~\ref{sec:Pi11-example} proves the $\Pi^1_1$-complete calibration results.
\section{Preliminaries and notation}
\label{sec:prelim}

Throughout, $X$ denotes a separable Banach space.  For a non-empty closed set
$F\subseteq X$ and $x\in X$ we write
\[
d(x,F)=\inf\{\|x-y\|:y\in F\}.
\]
If $F$ is a closed linear subspace then $d(x,F)=\|x+F\|_{X/F}$.
For each separable space $X$ under discussion we fix a countable dense set
$D^X=\{u_m:m\in\N\}$.

We shall use standard descriptive set theory; see~\cite{Kechris,BeckerKechris,GaoDST}.
We recall the key notions for the reader's convenience.

\subsection{The Borel and projective hierarchies}\label{subsec:borel-hierarchy}

Let $X$ be a Polish space (separable, completely metrisable).
The \emph{Borel hierarchy} stratifies the Borel $\sigma$-algebra of $X$ into
countable levels.  At the base,
$\Sigma^0_1$ denotes the open sets and $\Pi^0_1$ the closed sets.
Inductively, for $n\ge 1$:
$\Sigma^0_{n+1}$ consists of countable unions of $\Pi^0_n$ sets, and
$\Pi^0_{n+1}$ consists of countable intersections of $\Sigma^0_n$ sets.
Equivalently, $\Pi^0_n$ is the class of complements of $\Sigma^0_n$ sets.
In low levels:
$\Sigma^0_2 = F_\sigma$ (countable unions of closed sets),
$\Pi^0_2 = G_\delta$ (countable intersections of open sets),
$\Sigma^0_3 = G_{\delta\sigma}$, and $\Pi^0_3 = F_{\sigma\delta}$.

Beyond the Borel hierarchy lie the \emph{projective} (or \emph{analytic}) classes.
A set is $\Sigma^1_1$ (\emph{analytic}) if it is the continuous image of a Borel set
in another Polish space, or equivalently the projection of a closed subset of
$X\times\N^\N$.  A set is $\Pi^1_1$ (\emph{coanalytic}) if its complement is analytic.
Suslin's theorem asserts that a set is Borel iff it is both analytic and coanalytic.

\smallskip
\noindent\textbf{Hardness and completeness.}
Let $\Gamma$ be a pointclass ($\Sigma^0_n$, $\Pi^0_n$, $\Sigma^1_1$, $\Pi^1_1$, etc.).
A set $A\subseteq X$ is \emph{$\Gamma$-hard} if for every set $B\in\Gamma(Y)$ (where $Y$
is any Polish space) there exists a continuous map $f:Y\to X$ with $f^{-1}(A)=B$;
informally, $A$ is ``at least as complex as any $\Gamma$ set''.
The set $A$ is \emph{$\Gamma$-complete} if it belongs to $\Gamma$ and is $\Gamma$-hard.
Completeness for $\Gamma$ implies that $A$ does not belong to the dual class
$\check\Gamma$ (e.g.\ a $\Sigma^0_2$-complete set is not $\Pi^0_2$, and a
$\Pi^1_1$-complete set is not Borel).
Proving a property $\Gamma$-complete therefore establishes that its upper bound
within the hierarchy is \emph{optimal} and cannot be improved.

\smallskip
\noindent\textbf{Connection to definability.}
In this paper, upper bounds on Borel complexity arise from
expressing properties via countable quantifier alternations over
continuous predicates.  Each universal quantifier (over a countable domain) preserves
$\Pi^0_n$; each existential quantifier preserves $\Sigma^0_n$; alternation
increments the level by one.  Lower bounds (completeness results) are proved
by explicit continuous reductions from canonical complete sets, such as the set of
eventually-zero sequences in $2^\N$ ($\Sigma^0_2$-complete) or the set of
well-founded trees ($\Pi^1_1$-complete).

\subsection{Hyperspaces and admissible topologies}\label{sec:hyperspace}

Let $X$ be a separable metric space and let $\mathscr F(X)$ denote the hyperspace of
non-empty closed subsets of $X$.

\begin{definition}\label{def:wijsman}
The \emph{Wijsman topology} on $\mathscr F(X)$ is the coarsest topology making each map
$F\mapsto d(x,F)$ continuous for every fixed $x\in X$.
Equivalently, a net $(F_\alpha)$ converges to $F$ in the Wijsman topology iff
$d(x,F_\alpha)\to d(x,F)$ for every $x\in X$; in other words, the distance functions
converge pointwise.

If $X$ is separable, the Wijsman topology is metrisable
(indeed, fixing a countable dense set $\{u_m\}$ one can take
$d_W(F,G)=\sum_m 2^{-m}\min\bigl(1,|d(u_m,F)-d(u_m,G)|\bigr)$
as a compatible metric).
If $X$ is Polish, then $(\mathscr F(X),\mathrm{Wijsman})$ is Polish;
see Beer~\cite{Beer}.
\end{definition}

The Wijsman topology is the natural topology on our parameter spaces throughout
this paper: for Banach-type structures, it makes the quotient-norm functional
$K\mapsto\|x+K\|$ continuous by definition, and for countable structures
the Wijsman topology on closed subsets of a discrete space coincides with the
Vietoris topology.

\begin{lemma}\label{lem:wijsman-approx}
Let $F_n\to F$ in the Wijsman topology.
\begin{enumerate}
\item If $x\in F$, there exist $x_n\in F_n$ with $x_n\to x$.
\item If $F_n\subseteq G_n$ for all $n$ and $G_n\to G$ (Wijsman), then $F\subseteq G$.
\end{enumerate}
\end{lemma}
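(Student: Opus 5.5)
Both parts follow directly from the definition of Wijsman convergence, namely pointwise convergence of the distance functions $d(y,F_n)\to d(y,F)$ for every $y\in X$. No density argument is needed, because Wijsman convergence is tested at every point of $X$ and not only at the points $u_m$.

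For (i), fix $x\in F$. Then $d(x,F)=0$, so Wijsman convergence gives $d(x,F_n)\to 0$. Each $F_n$ is non-empty, so $d(x,F_n)$ is finite and by the definition of the infimum we can choose $x_n\in F_n$ with $\|x-x_n\|<d(x,F_n)+1/n$. Both terms on the right tend to zero, so $x_n\to x$. This is the whole argument.

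For (ii), fix $x\in F$ and apply (i) to obtain $x_n\in F_n\subseteq G_n$ with $x_n\to x$. Since $d(\cdot,G_n)$ is $1$-Lipschitz,
\[
d(x,G_n)\le \|x-x_n\|+d(x_n,G_n)=\|x-x_n\|\longrightarrow 0 .
\]
By Wijsman convergence $G_n\to G$ we have $d(x,G_n)\to d(x,G)$, hence $d(x,G)=0$. Since $G$ is closed, $x\in G$, and therefore $F\subseteq G$.

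There is no real obstacle in either part. The only points to check are that the $F_n$ are non-empty, which holds because we work in $\mathscr F(X)$, and that the closedness of $G$ is used in the last step of (ii). In this formulation, with sequences, (i) needs no use of metrisability. For nets the same argument works with approximants indexed by the net: take $x_\alpha\in F_\alpha$ with $\|x-x_\alpha\|<2\,d(x,F_\alpha)$ whenever $d(x,F_\alpha)>0$, and $x_\alpha=x$ when $d(x,F_\alpha)=0$, which is legitimate because $F_\alpha$ is closed.
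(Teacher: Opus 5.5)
Your proof is correct and essentially the paper's: part (i) is identical. In (ii) the paper bounds $d(x,G_n)\le d(x,F_n)$ directly from $F_n\subseteq G_n$, whereas you route through the approximants from (i); this yields the same estimate.
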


\begin{proof}
(i) Since $d(x,F)=0$ and $d(x,F_n)\to d(x,F)=0$, choose $x_n\in F_n$ with
$\|x-x_n\|\le d(x,F_n)+1/n$.  Then $\|x-x_n\|\to 0$.

(ii) Let $x\in F$.  Wijsman convergence $F_n\to F$ gives $d(x,F_n)\to d(x,F)=0$.
Since $F_n\subseteq G_n$ for all $n$, we have $d(x,G_n)\le d(x,F_n)$ for all $n$,
hence $d(x,G_n)\to 0$.  As $G_n\to G$ in the Wijsman topology,
$d(x,G_n)\to d(x,G)$, so $d(x,G)=0$ and therefore $x\in G$.
\end{proof}

The Wijsman topology is one instance of a broader class of
hyperspace topologies introduced by Godefroy and
Saint-Raymond~\cite{GodefroySaintRaymond2018}.

\begin{definition}
\label{def:admissible}
A Polish topology $\tau$ on $\mathscr F(X)$ is \emph{admissible} if:
\begin{enumerate}
\item for each open $U\subseteq X$,
$E^+(U)=\{F\in\mathscr F(X):F\cap U\ne\varnothing\}$ is $\tau$-open;
\item $\tau$ has a subbase consisting of countable unions of sets of the form
$E^+(U)\setminus E^+(V)$ with $U,V$ open in $X$;
\item the membership relation $\{(x,F):x\in F\}\subseteq X\times\mathscr F(X)$
is $\tau$-closed.
\end{enumerate}
\end{definition}

Condition~(i) says that the topology detects hits:
if a closed set meets an open region, nearby sets (in the $\tau$-sense) also
do.  Condition~(ii) is a regularity constraint ensuring $\tau$
is generated by set-theoretically definable data.
Condition~(iii) is the key geometric requirement: it says precisely that the
singleton embedding $x\mapsto \{x\}$ is a homeomorphism from $X$ onto a
closed subset of $(\mathscr F(X),\tau)$ --- in other words, points are
``robustly detected'' by the topology, and any sequence of closed sets converging
in~$\tau$ to a point mass $\{x\}$ must eventually carry elements near~$x$.

The Wijsman topology is admissible.
The first two conditions are standard (see~\cite{Beer}).  For the third:
if $(x_n,F_n)\to(x,F)$ in $X\times\mathscr F(X)$ and $x_n\in F_n$ for all~$n$,
then $d(x,F_n)\le d(x,x_n)+d(x_n,F_n)=d(x,x_n)\to 0$,
so $d(x,F)=\lim_n d(x,F_n)=0$, hence $x\in F$ and the membership relation is closed.

Let $\SB(X)$ denote the family of closed linear subspaces of~$X$.  For admissible~$\tau$,
$\SB(X)$ is a $G_\delta$ subset of $(\mathscr F(X),\tau)$ and hence Polish
\cite{GodefroySaintRaymond2018}.

\begin{proposition}
\label{prop:SBX-Wijsman-closed}
Let $F_n\in\SB(X)$ and $F_n\to F$ in the Wijsman topology on $\mathscr F(X)$.
Then $F\in\SB(X)$; hence $\SB(X)$ is closed in $(\mathscr F(X),\mathrm{Wijsman})$.
\end{proposition}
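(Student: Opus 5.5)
The plan is to verify directly that the Wijsman limit $F$ contains $0$ and is closed under addition and scalar multiplication. Closedness of $F$ is automatic, since $F\in\mathscr F(X)$. The basic tool is Lemma~\ref{lem:wijsman-approx}(i): every point of $F$ is a limit of points $x_n\in F_n$. The converse direction, that limits of such points lie in $F$, comes from continuity of distance functions. Concretely, if $z_n\in F_n$ and $z_n\to z$, then
\[
d(z,F)=\lim_n d(z,F_n)\le \lim_n \|z-z_n\|=0,
\]
so $z\in F$. This is exactly the closedness of the membership relation verified above for the Wijsman topology.

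The steps are as follows. First, $0\in F_n$ for all $n$, and the constant sequence $0$ converges to $0$, so $0\in F$ by the observation above. Second, let $x,y\in F$ and let $\lambda,\mu$ be scalars. By Lemma~\ref{lem:wijsman-approx}(i), choose $x_n,y_n\in F_n$ with $x_n\to x$ and $y_n\to y$. Then $\lambda x_n+\mu y_n\in F_n$, since $F_n$ is a linear subspace, and $\lambda x_n+\mu y_n\to\lambda x+\mu y$. By the observation, $\lambda x+\mu y\in F$. Hence $F$ is a closed linear subspace, i.e.\ $F\in\SB(X)$.

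For the final clause, the Wijsman topology is metrisable because $X$ is separable (Definition~\ref{def:wijsman}). Closedness of $\SB(X)$ therefore follows from sequential closedness, which is what was just proved.

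There is no real obstacle. The only point needing care is the ``limits of points of $F_n$ land in $F$'' step, and that is handled by the triangle-inequality estimate above.
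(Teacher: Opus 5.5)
Your proof is correct and is essentially the paper's argument. Both use Lemma~\ref{lem:wijsman-approx}(i) to approximate points of $F$ by points of $F_n$, form the linear combination inside $F_n$, and conclude via $d(z,F)=\lim_n d(z,F_n)\le\lim_n\|z-z_n\|=0$. The only differences are minor. The paper first treats rational coefficients and then passes to arbitrary real scalars using closedness of $F$, whereas you handle arbitrary scalars directly, which is equally valid since scalar multiplication is continuous. You also state explicitly the metrisability needed to pass from sequential closedness to closedness.
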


\begin{proof}
By Wijsman convergence, $d(0,F_n)=0$ for all $n$ implies $d(0,F)=\lim_n d(0,F_n)=0$,
so $0\in F$.

Let $x,y\in F$ and $a,b\in\Q$.  By the Wijsman approximation lemma, there exist
$x_n,y_n\in F_n$ with $x_n\to x$ and $y_n\to y$ in $X$.
Since each $F_n$ is a linear subspace, we have $z_n:=ax_n+by_n\in F_n$.
By continuity of addition and scalar multiplication, $z_n\to z:=ax+by$.
Therefore $d(z,F_n)\le \|z-z_n\|\to 0$,
and Wijsman convergence yields
$d(z,F)=\lim_n d(z,F_n)=0$, so $z\in F$.
Hence $F$ is closed under rational linear combinations.
Finally, for arbitrary $t\in\R$ and $x\in F$, choose rationals $t_k\to t$;
then $t_k x\in F$ for all~$k$, and since $F$ is closed, $tx=\lim_k t_k x\in F$.
Thus $F$ is a closed linear subspace.
\end{proof}

\paragraph{Substructure variant.}
The same argument does not use linearity per se.  If $X$ is a separable metric
space equipped with countably many continuous operations
$\omega_i:X^{n_i}\to X$, then the family of all non-empty closed subsets
$F\subseteq X$ that are closed under each $\omega_i$ is Wijsman-closed in
$\mathscr F(X)$.  Indeed, if $F_n\to F$ in the Wijsman topology and each $F_n$
is a closed substructure, then for any $x_1,\dots,x_{n_i}\in F$ one may choose
$x_j^{(n)}\in F_n$ with $x_j^{(n)}\to x_j$ by
Lemma~\ref{lem:wijsman-approx}.  Continuity gives
$\omega_i(x_1^{(n)},\dots,x_{n_i}^{(n)})\to \omega_i(x_1,\dots,x_{n_i})$, and since
$\omega_i(x_1^{(n)},\dots,x_{n_i}^{(n)})\in F_n$ for all $n$, Wijsman convergence
implies $\omega_i(x_1,\dots,x_{n_i})\in F$.  (For nullary operations, the same
argument applies to the distinguished constants.)  In particular, for a separable
$C^*$-algebra $A$, the family of closed $*$-subalgebras of $A$ is Wijsman-closed.

\begin{remark}\label{rem:SBX-Gdelta}
One should \emph{not} expect $\SB(X)$ to be $\tau$-closed for every admissible~$\tau$
in full generality.  The defining condition
``$\forall x,y\in F$ and $\forall a,b\in\Q$, we have $ax+by\in F$'' is universal
and thus naturally produces countable intersections of open conditions, but universal
formulae do not automatically yield closed sets for arbitrary admissible hyperspace
topologies.  For Wijsman, however, the above proposition gives closedness.
\end{remark}

The key tool for working with admissible topologies is the following selection theorem,
which provides continuous dense selections and is the workhorse that allows us to
pass from the abstract hyperspace to concrete dense-set data.

\begin{theorem}\label{thm:GSR}
Let $X$ be a separable Banach space and $\tau$ an admissible topology on $\SB(X)$.
There exist continuous maps $f_k:(\SB(X),\tau)\to X$ ($k\in\N$) such that
\[
f_k(F)\in F\ \ \text{for all }F\in\SB(X)\text{ and }k\in\N,
\quad
\overline{\{f_k(F):k\in\N\}}=F\quad (F\in\SB(X)).
\]
\end{theorem}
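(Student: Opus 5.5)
The plan is to build continuous dense selections for the Wijsman topology first, and then pass to an arbitrary admissible $\tau$ through the Godefroy--Saint-Raymond comparison. For the Wijsman case, fix the dense set $D^X=\{u_m\}$. For each $F\in\SB(X)$ and each $m$ we want a point of $F$ near $u_m$ that depends continuously on $F$. Near-best approximation is not continuous, so we use a Michael-type selection. The lower-semicontinuity input is Lemma~\ref{lem:wijsman-approx}(i): if $F_n\to F$ and $x\in F$, then there are $x_n\in F_n$ with $x_n\to x$. Define the set-valued map
\[
\Phi_{m,\varepsilon}(F)=\{y\in F:\|y-u_m\|<d(u_m,F)+\varepsilon\}.
\]
Its values are non-empty and convex, since $F$ is a subspace and the constraint is a ball intersected with $F$. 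The map $F\mapsto d(u_m,F)$ is Wijsman-continuous. Together with Lemma~\ref{lem:wijsman-approx}(i), this should make $\Phi_{m,\varepsilon}$ lower semicontinuous on the metrisable space $(\SB(X),\mathrm{Wijsman})$.

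Michael's selection theorem needs closed convex values in a Banach space and a paracompact domain. The domain is metrisable, hence paracompact. The values of $\Phi_{m,\varepsilon}$ are not closed, so I would apply the standard variant for open-ball truncations: first take a continuous $\varepsilon$-approximate selection, then refine it by iteration to land in the closed-valued map $F\mapsto F$. The result is a continuous $g_{m,j}:\SB(X)\to X$ with $g_{m,j}(F)\in F$ and $\|g_{m,j}(F)-u_m\|\le d(u_m,F)+2^{-j}$. Enumerate the $g_{m,j}$ as $f_k$. For density, take $x\in F$ and $\delta>0$, pick $u_m$ with $\|u_m-x\|<\delta$, so $d(u_m,F)<\delta$. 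Then $\|g_{m,j}(F)-x\|<3\delta$ for $j$ large.

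For a general admissible $\tau$, I would invoke the structural fact from~\cite{GodefroySaintRaymond2018} that every admissible topology on $\SB(X)$ is finer than the Wijsman topology. The reasoning is that condition~(i) gives lower semicontinuity of $F\mapsto d(x,F)$, while condition~(iii) combined with Polishness gives upper semicontinuity. Given this, the identity map $(\SB(X),\tau)\to(\SB(X),\mathrm{Wijsman})$ is continuous, so the $f_k$ remain $\tau$-continuous.

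The main obstacle is exactly this step: deriving upper semicontinuity of $F\mapsto d(x,F)$ from axiom~(iii). I would first try arguing by contradiction. Suppose $F_n\to F$ in $\tau$ with $d(x,F_n)\ge r>d(x,F)$. Pick $y\in F$ with $\|x-y\|<r$; then $F_n$ misses the ball $B(y,\eta)$ for small $\eta>0$. I would then want to use the subbase condition~(ii) and closedness of the membership relation to conclude $y\notin F$, a contradiction. This step is delicate, and if it fails I would simply cite the corresponding result of Godefroy and Saint-Raymond.
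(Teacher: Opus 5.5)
Your Wijsman-case construction is sound, and it needs no iteration. The closure of the l.s.c.\ map $\Phi_{m,\varepsilon}$ is again l.s.c., with non-empty closed convex values contained in $F\cap\overline{B}(u_m,d(u_m,F)+\varepsilon)$, so Michael's theorem applies directly.

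The gap is the transfer to a general admissible $\tau$, and it is genuine. First, the semicontinuity directions are reversed. Condition~(i) says $\{F:d(x,F)<r\}=E^+(B(x,r))$ is $\tau$-open, which is \emph{upper} semicontinuity of $F\mapsto d(x,F)$. Your contradiction argument (a point $y\in F\cap B(x,r)$ while every $F_n$ misses a ball around $y$) is just condition~(i) again. A comparison with the Wijsman topology needs the other direction: $\{F:d(x,F)>r\}$ must be $\tau$-open. Equivalently, $F_n\to F$ with $y_n\in F_n$ and $\|y_n-x\|\le r$ must force $d(x,F)\le r$. Condition~(iii) says nothing here unless $(y_n)$ has a norm-convergent subsequence, which fails in infinite dimensions. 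Condition~(ii) only bounds $\tau$ from above.

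In fact the comparison is false. In $\ell_2$ put $F_n=\R(e_1+e_{2n})$. The map $t(e_1+e_{2n})\mapsto\sqrt2\,t\,e_{2n}$ is a bi-Lipschitz homeomorphism between closed subsets of the infinite-codimensional subspace $\overline{\mathrm{span}}\{e_1,e_2,e_4,\dots\}$. By Klee's extension theorem for closed sets of infinite deficiency, it extends to a homeomorphism $h$ of $\ell_2$. The Wijsman topology of the complete metric $\rho(a,b)=\|h(a)-h(b)\|$ is Polish and satisfies (i)--(iii). It gives $\rho(w,F_n)=d(h(w),\R e_{2n})\to\|h(w)\|=\rho(w,\{0\})$, so $F_n\to\{0\}$ in this admissible topology. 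Yet $d(e_1,F_n)=1/\sqrt2$ for every $n$. So this step cannot be cited from Godefroy--Saint-Raymond either.

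The missing idea is that no comparison is needed: work in $\tau$ directly, using only~(i) and metrisability.
\begin{itemize}
\item Condition~(i) says exactly that the set-valued map $F\mapsto F$ from $(\SB(X),\tau)$ to $X$ is lower semicontinuous, with non-empty closed convex values. Hence Michael's theorem applies to it.
\item For density, fix $m$ and a rational $r>0$. The set $W_{m,r}=\{F:F\cap B(u_m,r)\neq\varnothing\}$ is $\tau$-open by~(i). The map $F\mapsto\overline{F\cap B(u_m,r)}$ is l.s.c.\ on $W_{m,r}$, so it has a continuous selection $g$ there.
\item Write $W_{m,r}=\bigcup_k C_k$ with each $C_k$ $\tau$-closed. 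The map equal to $\{g(F)\}$ on $C_k$ and to $F$ off $C_k$ is again l.s.c.
\item A Michael selection $f_{m,r,k}$ of that map satisfies $f_{m,r,k}(F)\in F$ everywhere and $\|f_{m,r,k}(F)-u_m\|\le r$ on $C_k$. The countable family $(f_{m,r,k})$ is therefore dense in every $F$.
\end{itemize}
The paper itself gives no argument beyond citing Godefroy--Saint-Raymond. The route above uses only~(i), which is why the theorem holds for every admissible $\tau$. Your $\Phi_{m,\varepsilon}$ cannot be used for general $\tau$ because its lower semicontinuity relied on exactly the missing half of the continuity of $d(u_m,\cdot)$.
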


\begin{proof}
See \cite{GodefroySaintRaymond2018}.
\end{proof}

\subsection{Matrix-level quotient norms}\label{subsec:matrix-Phi}

In the $C^*$-algebra parts of this paper, several arguments require norms
in \emph{matrix amplifications} $M_n(A/K)$ rather than merely in~$A/K$ itself
(see \cite[II.6.6]{BlackadarOA} or \cite[Chapter~1]{BrownOzawa} for background on
matrix norms and operator-space structure).
We introduce a systematic notation.

\begin{definition}\label{def:Phi-matrix}
Let $A$ be a separable $C^*$-algebra and $K\in\Ideal(A)$.
Because $M_n$ is nuclear, the canonical identification $M_n(A)\cong A\otimes M_n$
carries $M_n(K)=K\otimes M_n$ isometrically.  For $X\in M_n(A)$ define
\[
\Phi^{(n)}(K,X):=\dist\bigl(X,\,M_n(K)\bigr)=\|X+M_n(K)\|_{M_n(A/K)}.
\]
\end{definition}

\begin{lemma}\label{lem:Phi-n-continuous}
For each fixed $n\in\N$ and $X\in M_n(A)$, the map
$K\mapsto \Phi^{(n)}(K,X)$ is continuous in the Wijsman topology on $\Ideal(A)$.
\end{lemma}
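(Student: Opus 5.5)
We must show that $K\mapsto d(X,M_n(K))$ is Wijsman-continuous on $\Ideal(A)$, where the matrix norm on $M_n(A)$ is the $C^*$-norm. The plan is to compare this $C^*$-norm with norms built entrywise, so that everything is controlled by the scalar distances $d(a,K)$, whose continuity is exactly the definition of the Wijsman topology.

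\textbf{Upper semicontinuity.} Fix $K\in\Ideal(A)$ and $\varepsilon>0$, and choose $Y=(y_{ij})\in M_n(K)$ with $\|X-Y\|\le\Phi^{(n)}(K,X)+\varepsilon$. Suppose $K_m\to K$ in the Wijsman topology. By Lemma~\ref{lem:wijsman-approx}(i) there are $y_{ij}^{(m)}\in K_m$ with $y_{ij}^{(m)}\to y_{ij}$. Put $Y^{(m)}=(y^{(m)}_{ij})\in M_n(K_m)$. Since $\|Z\|_{M_n(A)}\le\sum_{i,j}\|z_{ij}\|$, we get $Y^{(m)}\to Y$ in $M_n(A)$. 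Hence
\[
\limsup_m\Phi^{(n)}(K_m,X)\le\limsup_m\|X-Y^{(m)}\|=\|X-Y\|\le\Phi^{(n)}(K,X)+\varepsilon .
\]

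\textbf{Lower semicontinuity.} This is the main obstacle, because the infimum over $M_n(K_m)$ could a priori drop in the limit. I would use that $\Phi^{(n)}(K,\cdot)$ is the $C^*$-norm on $M_n(A/K)$, so for $X\in M_n(A)$
\[
\Phi^{(n)}(K,X)=\sup\Bigl\{\bigl\|\textstyle\sum_{i,j}\bar\alpha_i\beta_j\,x_{ij}+K\bigr\|_{A/K}\Bigr\},
\]
where the supremum is over unit vectors $\alpha,\beta\in\C^n$.

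I would try to justify this identity first, and then check it carefully, since the scalar compression it uses need not realise the full matrix norm. A safer route is to take instead the supremum over states $\varphi$ of $A/K$ of $\|[\varphi(x_{ij}^*x_{kl})]\|$-type expressions. This, however, brings in states, which do not obviously vary well with $K$.

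The cleaner approach I would actually commit to goes through a different characterisation. Fix a countable dense set $\{Z_k\}$ in $M_n(A)$. Since $d(X,M_n(K))$ is $1$-Lipschitz in $X$, it suffices to prove continuity for all $X$ simultaneously. By the upper semicontinuity above, $K\mapsto\Phi^{(n)}(K,X)$ is then an infimum of continuous maps.

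For the reverse inequality, I would use the closedness of the graph. Suppose $K_m\to K$ and $\Phi^{(n)}(K_m,X)\le c$ for all $m$, and pick $Y^{(m)}\in M_n(K_m)$ with $\|X-Y^{(m)}\|\le c+1/m$. The goal is $d(X,M_n(K))\le c$. For this I would pass to the Wijsman limit of $M_n(K_m)$ inside $\mathscr F(M_n(A))$ and show that it is contained in $M_n(K)$. Entrywise, with $E_{ii}$ the matrix units, $y_{ij}=E_{ii}YE_{jj}$ read off as an entry, and each entry satisfies $d(y_{ij},K_m)\le\|Y-Y'\|$. Combining this with the identification $d(X,M_n(K))=\|\pi_K^{(n)}(X)\|$ and the fact that $\|\pi_K(a)\|$ varies continuously, I would reduce the matrix norm to a norm in $A/K$ via the cone $X\mapsto X^*X$. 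Positivity then gives
\[
\|X\|^2=\|X^*X\|\le n\max_i\|(X^*X)_{ii}\|\ \text{-type bounds},
\]
and I would iterate this $2^k$-th power trick to obtain the limit inequality up to factors $n^{1/2^k}\to1$.
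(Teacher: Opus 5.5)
Your upper-semicontinuity argument is correct. The lower-semicontinuity half, as written, does not go through.

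\textbf{Where the lower-semicontinuity argument fails.} The scalar-compression identity you start from is false. In $M_n(M_n)$ the matrix $X=[e_{ij}]_{i,j}$ has norm $n$, while every compression $\sum_{i,j}\bar\alpha_i\beta_j e_{ij}$ with unit vectors $\alpha,\beta\in\C^n$ has norm at most $1$.

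More importantly, the step you commit to is not available: passing to the Wijsman limit of $M_n(K_m)$ in $\mathscr F(M_n(A))$ and showing it lies in $M_n(K)$. Such a limit need not exist, even along a subsequence, because the Wijsman hyperspace of an infinite-dimensional Banach space is not sequentially compact. For instance, in $\ell_2$ the lines spanned by $\cos\theta\,e_1+\sin\theta\,e_m$, with $0<\theta<\pi/2$, have no Wijsman-convergent subsequence. Moreover, the existence of a Wijsman limit of $M_n(K_m)$ already entails convergence of $\Phi^{(n)}(K_m,Z)$ for every $Z$, which is essentially what you are trying to prove.

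The other tools you list do not rescue this. There is no reason for the near-minimisers $Y^{(m)}\in M_n(K_m)$ to have a norm-convergent subsequence. Entrywise information only gives $\max_{i,j}\dist(x_{ij},K)\le\Phi^{(n)}(K,X)\le\sum_{i,j}\dist(x_{ij},K)$, which is control up to constants depending on $n$ and never an exact limit inequality.

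\textbf{The fix: your power trick, made precise.} The idea in your final sentence is the missing ingredient, and it proves the lemma outright with no semicontinuity split.

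For $P\in M_n(B)_+$ one has $\max_i\|P_{ii}\|\le\|P\|\le n\max_i\|P_{ii}\|$. The upper bound comes from $\mathrm{diag}(P)=\frac1n\sum_{j=0}^{n-1}U_jPU_j^*\ge\frac1n P$, where $U_j=\mathrm{diag}(\omega^{j},\omega^{2j},\dots,\omega^{nj})$ and $\omega=e^{2\pi i/n}$.

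Apply this in $M_n(A/K)$ to the image of $P_k:=(X^*X)^{2^k}\in M_n(A)$. Its norm is $\Phi^{(n)}(K,X)^{2^{k+1}}$, and its diagonal entries are $(P_k)_{ii}+K$. Set $g_k(K):=\max_i\dist\bigl((P_k)_{ii},K\bigr)^{2^{-k-1}}$, which is Wijsman-continuous by definition. Then $g_k\le\Phi^{(n)}(\cdot,X)\le n^{2^{-k-1}}g_k$. Hence $\sup_K|\Phi^{(n)}(K,X)-g_k(K)|\le(1-n^{-2^{-k-1}})\|X\|\to0$, and a uniform limit of continuous functions is continuous.

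\textbf{Comparison with the paper.} The paper makes $(A/K_t)_{t\in\N\cup\{\infty\}}$ a continuous field via Dixmier's criterion, then appeals to permanence of continuous fields under $M_n$-amplification. The corrected route above is elementary and self-contained. It is exactly the estimate showing that matrix sections inherit norm continuity from scalar sections. The paper obtains that step by re-invoking Dixmier's criterion for the matrix sections, but that criterion takes norm continuity of the generating sections as its hypothesis rather than its conclusion.
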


\begin{proof}
It is enough to check sequential continuity, since the Wijsman topology on
$\Ideal(A)$ is metrisable.  Suppose $K_m\to K$ Wijsman and put
$T=\N\cup\{\infty\}$, with $K_\infty=K$.  For $a\in A$ the function
\[
        t\longmapsto \|a+K_t\|=\dist(a,K_t)
\]
is continuous on~$T$.  By Dixmier's criterion for quotient continuous fields
\cite[Prop.~10.3.2]{Dixmier1977}, the quotients $A/K_t$ form a continuous field over
$T$ with the canonical sections $t\mapsto a+K_t$.

Finite matrix amplification preserves continuous fields: applying the same criterion
to the matrix sections
\[
        t\longmapsto [x_{ij}+K_t]\in M_n(A/K_t),
        \qquad X=[x_{ij}]\in M_n(A),
\]
gives continuity of
\[
        t\longmapsto \|[x_{ij}+K_t]\|_{M_n(A/K_t)}.
\]
Under the canonical quotient identification
$M_n(A)/M_n(K_t)\cong M_n(A/K_t)$ this norm is precisely
$\dist(X,M_n(K_t))$.  Hence
$\Phi^{(n)}(K_m,X)\to\Phi^{(n)}(K,X)$.
\end{proof}

\begin{remark}\label{rem:Phi-n-dense}
Fix a countable dense $*$-subalgebra $D^A\subseteq A$.  Then the set 
\[
    M_n(D^A)=\{(a_{ij})_{i,j\le n}:a_{ij}\in D^A\}
\]
is a countable dense
subset of~$M_n(A)$, and it is closed under multiplication, $*$, and rational
linear combinations.
All arguments below that invoke $\Phi^{(n)}$ may therefore be evaluated on
this countable dense set with no loss.
\end{remark}

\subsection{Projection of locally closed sets along compact spaces}

The following elementary observation is used several times (most explicitly in Proposition~\ref{prop:nucdim}).

\begin{lemma}\label{lem:proj-locally-closed}
Let $X$ be a compact metric space, let $Y$ be a Polish space, and let $C\subseteq X\times Y$ be closed and $U\subseteq X\times Y$ open.  Then $\pi_Y(C\cap U)$ is $F_\sigma$ in~$Y$.
\end{lemma}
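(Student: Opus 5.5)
The plan is to write $U$ as a countable union of pieces on which $C\cap U$ is itself compact in the $X$-direction, and then use that projection along a compact factor is a closed map. First, since $X\times Y$ is metrisable, the open set $U$ is a countable union of closed sets. We can be more specific and use basic open rectangles. Fix a countable base $\{V_i\}$ of $X$ and a countable base $\{W_j\}$ of $Y$, and write $U=\bigcup_{(i,j)\in I}V_i\times W_j$ for a suitable index set $I$. Each open $V_i$ in the metric space $X$ is a countable union of closed sets $V_i=\bigcup_k \overline{V_{i,k}}$. For instance, take $\overline{V_{i,k}}=\{x:d(x,X\setminus V_i)\ge 1/k\}$, with the convention that this is all of $X$ if $V_i=X$. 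Similarly write $W_j=\bigcup_l F_{j,l}$ with each $F_{j,l}$ closed in $Y$. Then
\[
U=\bigcup_{(i,j)\in I}\ \bigcup_{k,l}\ \overline{V_{i,k}}\times F_{j,l},
\]
so $U$ is a countable union of closed rectangles $E_{i,k}\times F_{j,l}$, each contained in $U$.

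Projection commutes with unions, so
\[
\pi_Y(C\cap U)=\bigcup_{(i,j)\in I}\bigcup_{k,l}\pi_Y\bigl(C\cap(E_{i,k}\times F_{j,l})\bigr).
\]
It therefore suffices to show that $\pi_Y$ maps closed subsets of $X\times Y$ to closed subsets of $Y$. This is the classical fact that projection along a compact factor is a closed map (the tube lemma, or Kuratowski's theorem). Concretely, suppose $y_n=\pi_Y(x_n,y_n)\to y$ with $(x_n,y_n)$ in a closed set $Z$. Compactness of $X$ gives a subsequence $x_{n_m}\to x$. Then $(x_{n_m},y_{n_m})\to(x,y)$, so $(x,y)\in Z$ because $Z$ is closed, and hence $y\in\pi_Y(Z)$. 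Sequences suffice here since $Y$ is metrisable. Each set $C\cap(E_{i,k}\times F_{j,l})$ is closed, so each term in the union is closed in $Y$, and the countable union is $F_\sigma$.

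There is no real obstacle. The only points that need care are that the decomposition of $U$ is countable, which uses second countability of both factors, and that compactness is used only for $X$, the factor being projected away. Polishness of $Y$ is needed only for metrisability and second countability.
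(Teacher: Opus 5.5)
Your proof is correct and matches the paper's argument: write $U$ as a countable union of closed sets, intersect each with $C$, and use that projection along the compact factor $X$ is a closed map. The only difference is cosmetic: the paper takes $F_m=\{z: d(z,(X\times Y)\setminus U)\ge 1/m\}$ directly in a metric on the product, so your detour through closed basic rectangles and second countability of each factor is not needed.
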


\begin{proof}
Write
\[
U=\bigcup_{m\ge 1}F_m,
\qquad
F_m:=\bigl\{z\in X\times Y:d\bigl(z,(X\times Y)\setminus U\bigr)\ge 1/m\bigr\}.
\]
Each $F_m$ is closed in $X\times Y$ and satisfies $F_m\subseteq U$.
(We do \emph{not} claim that $F_m$ is compact, since only the $X$-factor is compact.)
Now $C\cap F_m$ is closed in $X\times Y$, and because $X$ is compact and $Y$ is
Hausdorff, the projection $\pi_Y(C\cap F_m)$ is closed in~$Y$.
Therefore
\[
\pi_Y(C\cap U)=\bigcup_{m\ge 1}\pi_Y(C\cap F_m)
\]
is $F_\sigma$.
\end{proof}

\subsection{Lusin--Novikov uniformisation}

We shall invoke Lusin--Novikov uniformisation for countable Borel fibres; we record it
for reference.

\begin{theorem}\label{thm:LN}
Let $X$ and $Y$ be standard Borel spaces and let $B\subseteq X\times Y$ be Borel.
If each section $B_x=\{y:(x,y)\in B\}$ is countable, then there exist Borel maps
$f_n:X\to Y$ ($n\in\N$) such that
\[
B_x=\{f_n(x):n\in\N\}
\quad\text{for all }x\in X
\]
(allowing repetitions).
\end{theorem}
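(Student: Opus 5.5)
The statement is the Lusin--Novikov uniformisation theorem, a classical result, so the plan reduces it to the standard theory instead of rebuilding everything. Since $X$ and $Y$ are standard Borel, I will first fix Polish topologies on them, so that $B$ is a Borel subset of the Polish space $X\times Y$ whose vertical sections are all countable. The target is the decomposition
\[
B=\bigcup_{n}B_n,
\]
where each $B_n$ is Borel and is the graph of a partial function, meaning every section $(B_n)_x$ has at most one point. Given this, the $f_n$ are easy to build. The projection $\pi_X(B_n)$ is Borel, because an injective Borel image of a Borel set is Borel (Lusin--Souslin). The uniformising function $g_n$ on $\pi_X(B_n)$ has Borel graph $B_n$, and a function with Borel graph between standard Borel spaces is Borel. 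Enumerate $B_x$ with repetitions; the set $\pi_X(B)$ is Borel, being a countable union of the $\pi_X(B_n)$. For $x\in\pi_X(B)$, set $f_n(x)=g_n(x)$ when $x\in\pi_X(B_n)$, and otherwise set $f_n(x)=g_{m(x)}(x)$, where $m(x)$ is the least $m$ with $x\in\pi_X(B_m)$. For $x\notin\pi_X(B)$ the section is empty, and there the statement is read with the convention that no values are required, or one fixes a default point.

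To obtain the decomposition I would follow Kechris' proof (Theorem~18.10 in~\cite{Kechris}). First, refine the topology of $X\times Y$ to a finer Polish topology in which $B$ is closed. Then, using a countable basis $\{V_k\}$ of $Y$, I define for each $x$ and each point of $B_x$ an isolating basic set. The cleanest route runs through the small-section uniformisation theorem, which handles Borel sets with $\sigma$-compact or countable sections. The key lemma is that for Borel $B$ with countable sections, and for each basic $V$, the set
\[
\{x : B_x\cap V \text{ is a singleton}\}
\]
is Borel. I would prove this via the Lusin separation theorem: the relevant set and its complement are both analytic, so it is Borel. This is exactly the non-routine step. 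The sets
\[
B_{k}=\{(x,y)\in B: y\in V_k,\ B_x\cap V_k=\{y\}\}
\]
then give Borel partial graphs.

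The main obstacle is showing that these $B_k$ cover all of $B$. This fails if $B_x$ has non-isolated points, for example when $B_x\cong\Q$. So the decomposition must be iterated through Cantor--Bendixson derivatives of the sections $B_x$. These are countable closed sets in the refined topology, hence scattered, which makes the iteration transfinite. This is why the standard proof instead uses the theorem that Borel sets with countable sections are countable unions of Borel graphs, proved by an effective or $\Pi^1_1$-rank argument. Rather than reproduce it, I would cite~\cite[18.10, 18.15]{Kechris} for that decomposition and give in detail only the passage from the decomposition to the enumerating maps $f_n$.
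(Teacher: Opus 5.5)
Your proposal is correct and follows essentially the paper's route: the paper simply cites \cite[Theorem~18.10]{Kechris}, and your passage from a decomposition of $B$ into countably many Borel partial graphs to the maps $f_n$ is sound (Lusin--Souslin gives Borelness of $\pi_X(B_n)$, the Borel-graph criterion gives Borelness of $g_n$, and missing values are filled by $g_{m(x)}$). Two minor remarks. First, the ``both analytic'' justification you sketch for the exploratory key lemma is not right as stated: $\{x:B_x\cap V\text{ is a singleton}\}$ is a priori only the intersection of a $\Sigma^1_1$ set with a $\Pi^1_1$ set, and proving it Borel already needs the Borel-projection part of Lusin--Novikov; this is harmless because you abandon that route and cite the decomposition. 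Second, your caution about empty sections is well placed: the displayed equality can only hold for $x\in\pi_X(B)$, and a default value would give $\{f_n(x)\}\neq\varnothing=B_x$, so the statement should be read on the Borel set $\pi_X(B)$.
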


\begin{proof}
See~\cite[Theorem~18.10]{Kechris}.
\end{proof}

\begin{convention}\label{conv:enumeration}
Every countable algebraic object (free group $F_\infty$, free ring $R_\infty$, free
algebra $F_L$, etc.)\ is identified with $\N$ via a fixed effective enumeration chosen
once and for all.
This makes expressions such as $\min\{[a]_S:a\in F_\infty\}$ well-defined throughout.
\end{convention}

\section{Main theorem for separable continuous structures}\label{sec:continuous}

This section develops the analytic half of our general framework:
we construct Polish parameter spaces for separable Banach-type structures
(such as Banach algebras, $C^*$-algebras, Banach lattices, and ternary rings of
operators) and establish a general definability theorem linking algebraic formulae
to Borel complexity.  The key idea is to view such structures not abstractly, but
concretely as \emph{quotients} of a single universal object endowed with
finitely or countably many continuous multilinear operations.

\subsection{Categories with continuous operations}

\begin{definition}\label{def:category-ops}
A \emph{concrete category $\mathcal{C}$ with continuous operations} consists of:
\begin{itemize}
\item \textbf{Objects}: Separable Banach spaces $X$ equipped with countably many
continuous operations
$\omega_i: X^{n_i} \to X$, $i\in I$ (countable), where $n_i\in\N\cup\{0\}$ is the arity
of~$\omega_i$.
\item \textbf{Morphisms}: Bounded linear maps $T: X \to Y$ preserving all operations:
\[
T(\omega_i^X(x_1, \ldots, x_{n_i}))
= \omega_i^Y(T(x_1), \ldots, T(x_{n_i})).
\]
\item \textbf{Quotients}: If $T: X \to Y$ is surjective with kernel $K = \ker(T)$,
then $Y \cong X/K$ as objects in $\mathcal{C}$, where operations descend to quotients.
\end{itemize}
\end{definition}

\begin{definition}\label{def:admissible-kernel}
Let $X\in\mathcal{C}$.  A closed linear subspace $K\subseteq X$ is an
\emph{admissible kernel} (or \emph{$\mathcal{C}$-ideal}) if for every $i\in I$ and all
$x_1,\ldots,x_{n_i},x_1',\ldots,x_{n_i}'\in X$ with $x_j-x_j'\in K$ for
$1\le j\le n_i$, one has
\[
\omega_i(x_1,\dots,x_{n_i})-\omega_i(x_1',\dots,x_{n_i}')\in K.
\]
Equivalently, $\omega_i$ descends to a well-defined operation on $X/K$;
in the Banach setting, this means that the equivalence relation
$E_K$ on $X$ given by $x\,E_K\,y$ iff $x-y\in K$ is a congruence for all the
operations $\omega_i$.  We write $\Ideal_{\mathcal C}(X)$ for the collection of
admissible kernels.
\end{definition}

\paragraph{Congruence viewpoint.}
The Banach-space formulation is chosen because later arguments exploit the quotient
norm.  Abstractly, the same quotient-coding setup can be stated for separable
Polish structures with continuous operations by replacing closed linear subspaces
$K\subseteq X$ with closed congruences $E\subseteq X^2$.  In the Banach setting,
$K$ and $E_K$ are interchangeable, and Section~\ref{sec:countable} is the
discrete counterpart of this congruence-based viewpoint.

\begin{remark}
\label{rem:banach-form-vs-congruence}
Up to this point, the definitions and closure arguments admit a parallel
formulation for arbitrary separable Polish structures with continuous operations,
using closed congruences $E\subseteq X^2$ rather than kernels $K\subseteq X$.
We do not develop that notation systematically in the sequel only because the
later applications repeatedly use quotient norms, distance-to-kernel functions,
and linear perturbation arguments.  Thus the paper is written in Banach form for
economy of notation, but whenever a later argument is purely formal it may be read
equally in the congruence language.
\end{remark}

\begin{example}\label{ex:standard-cats}
We list canonical examples fitting the framework of
Definition~\ref{def:category-ops}.

\begin{enumerate}
\item \emph{Banach algebras.}  $\omega_1(x,y)=xy$ (multiplication).
Admissible kernels are closed two-sided ideals.

\item \emph{$C^*$-algebras.}  $\omega_1(x,y)=xy$, $\omega_2(x)=x^*$.
Admissible kernels are closed $*$-ideals.

\item \emph{Banach lattices.}
$\omega_1(x,y)=x\vee y$, $\omega_2(x,y)=x\wedge y$, $\omega_3(x)=|x|$.
Admissible kernels are closed lattice ideals.

\item \emph{Ternary rings of operators (TROs).}
$\omega_1(x,y,z)=[x,y,z]=xy^*z$.
Admissible kernels are closed ternary ideals.

\item \emph{Operator systems (not treated here).}
Operator systems do not fit Definition~\ref{def:category-ops} as stated.
Their natural quotient theory is by completely order ideals (kernels of unital
completely positive maps), and an Archimedeanisation step is generally needed to
form the quotient operator system~\cite{KPTT}.  We therefore exclude operator
systems from the present framework.

\item \emph{Banach $A$-modules (encoding caveat).}
Fix a separable Banach algebra $A$ and a countable dense subset
$(a_m)$ of its unit ball.
A left Banach $A$-module may be encoded by the countable family of
unary operations $\omega_m(x)=a_mx$.  We do not use this variant below.

\item \emph{Jordan--Banach algebras (JB or JB$^*$).}
The fundamental operation is the Jordan product
$\omega_1(x,y)=\tfrac12(xy+yx)$; in the JB$^*$ case one adds
$\omega_2(x)=x^*$.
Admissible kernels are closed Jordan ideals.

\item \emph{Banach--Lie algebras.}
$\omega_1(x,y)=[x,y]$ (the Lie bracket).
Admissible kernels are closed Lie ideals.

\item \emph{Operator spaces (encoding caveat).}
Operator spaces are naturally described by a countable family of matrix-norm
predicates, rather than only by operations $X^{n_i}\to X$.  They therefore fall
outside the literal scope of Definition~\ref{def:category-ops}; a multi-sorted
or predicate-enriched version of the framework should cover them.
\end{enumerate}
\end{example}

\begin{remark}\label{rem:BL-separate}
While Banach lattices fit the general quotient-kernel framework at the level of
closed lattice ideals, the descriptive set-theoretic analysis of the natural Polish
parameter spaces of separable Banach lattices involves additional lattice-specific
topological subtleties.  These will be treated separately in forthcoming work of
M.~Niwi{\'n}ski~\cite{NiwinskiBL}.
\end{remark}

\subsection{Closedness of the admissible-kernel space}

\begin{proposition}\label{prop:ideal-closed}
Let $X\in\mathcal C$ be separable and let $\tau$ be an admissible topology on $\SB(X)$.
Then $\Ideal_{\mathcal C}(X)$ is $\tau$-closed in $\SB(X)$, hence Polish.
\end{proposition}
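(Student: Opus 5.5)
The plan is to argue with sequences, since $(\SB(X),\tau)$ is Polish and hence metrisable. The membership relation $\{(x,F):x\in F\}$ is $\tau$-closed by condition~(iii) of Definition~\ref{def:admissible}. Its restriction to $X\times\SB(X)$ is therefore closed as well.

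First I rewrite the admissibility condition in a form adapted to closed sets. Since $K$ is a linear subspace, $K\in\Ideal_{\mathcal C}(X)$ holds if and only if, for every $i\in I$ and all $x_1,\dots,x_{n_i}\in X$, $k_1,\dots,k_{n_i}\in K$,
\[
\omega_i(x_1+k_1,\dots,x_{n_i}+k_{n_i})-\omega_i(x_1,\dots,x_{n_i})\in K .
\]
This is a \emph{universal} condition, and every parameter is either a fixed point of $X$ or a point of $K$. Nullary operations impose no constraint, because a constant minus itself is $0\in K$. So only $n_i\ge 1$ matters.

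Now take a sequence $K_m\in\Ideal_{\mathcal C}(X)$ with $K_m\to K$ in $(\SB(X),\tau)$. Fix $i$, points $x_j\in X$, and $k_j\in K$. The step I expect to be the main obstacle is approximating each $k_j$ by elements $k_j^{(m)}\in K_m$: Lemma~\ref{lem:wijsman-approx}(i) is stated only for the Wijsman topology. For a general admissible $\tau$, I will use Theorem~\ref{thm:GSR}, which provides continuous selections $f_\ell$ with $\{f_\ell(K)\}$ dense in~$K$. Continuity gives $f_\ell(K_m)\to f_\ell(K)$ with $f_\ell(K_m)\in K_m$. Hence it suffices to verify the condition for the $k_j$ in the dense set $\{f_\ell(K):\ell\in\N\}$. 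Passing to general $k_j\in K$ is then done at the end by continuity of $\omega_i$ and closedness of~$K$.

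So let $k_j=f_{\ell_j}(K)$ and put $k_j^{(m)}=f_{\ell_j}(K_m)\in K_m$. Since $K_m$ is admissible,
\[
y_m:=\omega_i(x_1+k_1^{(m)},\dots)-\omega_i(x_1,\dots)\in K_m .
\]
By continuity of $\omega_i$, $y_m\to y:=\omega_i(x_1+k_1,\dots)-\omega_i(x_1,\dots)$. Then $(y_m,K_m)\to(y,K)$ in $X\times\SB(X)$, and each pair lies in the membership relation, which is closed by (iii). Therefore $y\in K$.

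This shows that $\Ideal_{\mathcal C}(X)$ is sequentially closed, hence closed, in the metrisable space $(\SB(X),\tau)$. A closed subset of a Polish space is Polish, which gives the final assertion.
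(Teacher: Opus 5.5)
Your proof is correct and follows essentially the same route as the paper. The Godefroy--Saint-Raymond continuous dense selections supply points $f_\ell(K_m)\in K_m$ converging to a dense set of points of $K$. Closedness of the membership relation (admissibility condition (iii)), continuity of $\omega_i$ and closedness of $K$ then force the perturbation condition on $K$. The differences are cosmetic: the paper writes $\Ideal_{\mathcal C}(X)$ as an explicit countable intersection of the closed sets $\{F:\Delta_{i,j,\bar m}(f_k(F))\in F\}$, perturbing one coordinate at a time and then telescoping, whereas you argue with sequences in the metrisable space $(\SB(X),\tau)$ and perturb all coordinates at once.
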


\begin{proof}
Fix a countable dense set $D^X=\{u_m:m\in\N\}$.  By Theorem~\ref{thm:GSR}, there
exist continuous selections $f_k:(\SB(X),\tau)\to X$ with $f_k(F)\in F$ and
$\overline{\{f_k(F):k\in\N\}}=F$ for all $F\in\SB(X)$.

For $i\in I$, $1\le j\le n_i$, and $\bar m=(m_1,\dots,m_{n_i})\in\N^{n_i}$ define
a continuous map $\Delta_{i,j,\bar m}:X\to X$ by
\[
\Delta_{i,j,\bar m}(x):=
\omega_i(u_{m_1},\dots,u_{m_{j-1}},u_{m_j}+x,u_{m_{j+1}},\dots,u_{m_{n_i}})
-\omega_i(u_{m_1},\dots,u_{m_{n_i}}).
\]
For $k\in\N$ set
\[
\mathcal K_{i,j,\bar m,k}
:=\bigl\{F\in\SB(X):\ \Delta_{i,j,\bar m}\bigl(f_k(F)\bigr)\in F\bigr\}.
\]
The map $F\mapsto \Delta_{i,j,\bar m}(f_k(F))$ is continuous since
$f_k$ and $\Delta_{i,j,\bar m}$ are continuous.
By admissibility, the membership relation $\{(x,F):x\in F\}\subseteq X\times\SB(X)$
is $\tau$-closed, hence each $\mathcal K_{i,j,\bar m,k}$ is $\tau$-closed.

Set $\mathcal K=\bigcap_{i,j,\bar m,k}\mathcal K_{i,j,\bar m,k}$,
a countable intersection of $\tau$-closed sets, hence $\tau$-closed.
We show $\mathcal K=\Ideal_{\mathcal C}(X)$.

Suppose that $F\in\Ideal_{\mathcal C}(X)$.  Then $f_k(F)\in F$ for all~$k$
(by Theorem~\ref{thm:GSR}), so each defining condition of $\mathcal K_{i,j,\bar m,k}$ holds.
Hence $F\in\mathcal K$.

Conversely, let $F\in\mathcal K$.  Fix $i$, $j$, elements
$x_1,\dots,x_{n_i}\in X$, and $h\in F$.
Choose $u_{m_\ell(t)}\to x_\ell$ with $u_{m_\ell(t)}\in D^X$,
and choose $f_{k(t)}(F)\to h$ (this is possible because $f_k(F)\in F$ for all~$k$ and
the selections $\{f_k(F):k\in\N\}$ are dense in~$F$ by Theorem~\ref{thm:GSR}).
Since $F\in\mathcal K$, for each~$t$,
\[
\Delta_{i,j,\bar m(t)}\bigl(f_{k(t)}(F)\bigr)\in F,
\]
where $\bar m(t)=(m_1(t),\dots,m_{n_i}(t))$.
By continuity of~$\omega_i$ and closedness of~$F$, the limit
$\omega_i(x_1,\dots,x_j+h,\dots,x_{n_i})-\omega_i(x_1,\dots,x_{n_i})$
lies in~$F$. This establishes invariance under perturbation of one coordinate by an element of~$F$.
To deduce the full admissible-kernel condition
($x_j-x_j'\in F$ for all~$j$ implies that indeed 
$\omega_i(x_1,\dots,x_{n_i})-\omega_i(x_1',\dots,x_{n_i}')\in F$),
telescope over coordinates: replace $x_1$ by $x_1'$ while fixing all other entries
(applying the single-coordinate invariance), then replace $x_2$ by $x_2'$, and so on.
After $n_i$ applications, $F$ absorbs the full difference.
Hence $F\in\Ideal_{\mathcal C}(X)$.

\end{proof}

\subsection{Quotient norms and definability}

\begin{definition}\label{def:Phi}
For $x\in X$ and $F\in\Ideal_{\mathcal C}(X)$, define
\[
\Phi(F,x)=d(x,F)=\inf_{z\in F}\|x-z\|.
\]
If $F$ is a subspace, $\Phi(F,x)=\|x+F\|_{X/F}$.
\end{definition}

\begin{lemma}\label{lem:Phi-continuous}
Under the Wijsman topology, for each fixed $x\in X$, the map
$F\mapsto \Phi(F,x)$ is continuous on $\Ideal_{\mathcal C}(X)$.
\end{lemma}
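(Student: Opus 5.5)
The claim follows almost directly from Definition~\ref{def:wijsman}, so the plan is to reduce it to that definition and to check that the subspace topology does not interfere. I take $\Ideal_{\mathcal C}(X)$ with the topology inherited from $(\mathscr F(X),\mathrm{Wijsman})$. This is also its topology as a subset of $\SB(X)$, since $\SB(X)\subseteq\mathscr F(X)$ is itself given the subspace topology. By Proposition~\ref{prop:SBX-Wijsman-closed}, $\SB(X)$ is Wijsman-closed in $\mathscr F(X)$. By Proposition~\ref{prop:ideal-closed}, applied with $\tau=$ Wijsman (which is admissible), $\Ideal_{\mathcal C}(X)$ is closed in $\SB(X)$. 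So the topology in question is simply the restriction of the Wijsman topology.

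Fix $x\in X$. By Definition~\ref{def:wijsman}, the Wijsman topology is the coarsest topology on $\mathscr F(X)$ making $F\mapsto d(x,F)$ continuous for every $x$. In particular, each of these maps is continuous on all of $\mathscr F(X)$. The restriction of a continuous map to a subspace is continuous. Hence $F\mapsto d(x,F)$ is continuous on $\Ideal_{\mathcal C}(X)$. By Definition~\ref{def:Phi} this map is exactly $F\mapsto\Phi(F,x)$. Since $F$ is a closed subspace, this value also equals the quotient norm $\|x+F\|_{X/F}$, recorded at the start of Section~\ref{sec:prelim}.

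For robustness I would add a metric check, using that the Wijsman topology is metrisable when $X$ is separable, so sequential continuity suffices. The key inequality is
\[
|d(x,F)-d(u_m,F)|\le\|x-u_m\|
\]
for every $F$ and every $u_m\in D^X$. Suppose $F_n\to F$, so that $d_W(F_n,F)\to0$. Then $d(u_m,F_n)\to d(u_m,F)$ for each $m$. Given $\varepsilon>0$, choose $u_m$ with $\|x-u_m\|<\varepsilon$. A $3\varepsilon$ argument then gives $\limsup_n|d(x,F_n)-d(x,F)|\le2\varepsilon$. This confirms that convergence in $d_W$, which only tests points of the dense set, controls every point $x$.

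There is no genuine obstacle here. The only point requiring care is the bookkeeping of topologies: one must confirm that the topology on $\Ideal_{\mathcal C}(X)$ really is the Wijsman subspace topology and not some other admissible $\tau$. This matters because continuity of $\Phi$ can fail for general admissible $\tau$.
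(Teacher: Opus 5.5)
Your proof is correct and is the same as the paper's, which deduces the lemma from the definition of the Wijsman topology: each $F\mapsto d(x,F)$ is continuous, and continuity passes to the subspace $\Ideal_{\mathcal C}(X)$. The closedness facts and the dense-set metric check are valid but unnecessary, since restricting a continuous map to any subspace keeps it continuous, closed or not.
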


\begin{proof}
Immediate from Definition~\ref{def:wijsman}: Wijsman convergence
$F_n\to F$ means $d(x,F_n)\to d(x,F)$ for all~$x$.
\end{proof}

\begin{lemma}\label{lem:wijsman-subbasic}
Let $X$ be separable and let $\tau_W$ be the Wijsman topology on $\mathscr F(X)$.
For $x\in X$ and $r\in\Q$ the sets
\[
U(x,r):=\{F:\ d(x,F)<r\},\qquad
V(x,r):=\{F:\ d(x,F)\le r\}
\]
are $\tau_W$-open and $\tau_W$-closed respectively.
If $F\subseteq X$ is a closed subspace, then $d(x,F)=\Phi(F,x)$.
\end{lemma}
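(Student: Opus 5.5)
The plan is to reduce everything to the continuity of the evaluation map $e_x:(\mathscr F(X),\tau_W)\to[0,\infty)$, $e_x(F)=d(x,F)$, which holds by Definition~\ref{def:wijsman}. By definition, the Wijsman topology is the coarsest topology making each $e_x$ continuous, so every $e_x$ is continuous for $\tau_W$. This is all the analytic input the argument needs.

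The two topological claims then follow by taking preimages under $e_x$:
\[
U(x,r)=e_x^{-1}\bigl((-\infty,r)\bigr),\qquad V(x,r)=e_x^{-1}\bigl((-\infty,r]\bigr).
\]
The first is the preimage of an open subset of $\R$, hence $\tau_W$-open. The second is the preimage of a closed subset of $\R$, hence $\tau_W$-closed. Rationality of $r$ plays no role here; it is recorded only so that later countable schemes range over a countable index set.

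For the final assertion, let $F$ be a closed subspace, so that $F\in\SB(X)$. Then $d(x,F)=\inf_{z\in F}\|x-z\|$ is exactly the quotient norm $\|x+F\|_{X/F}$, as noted in Section~\ref{sec:prelim}, and this coincides with $\Phi(F,x)$ as defined in Definition~\ref{def:Phi}. Strictly, $\Phi$ was introduced on $\Ideal_{\mathcal C}(X)$. However, the defining formula $d(x,F)$ makes sense for any closed subspace, and on that larger domain it agrees with $d(x,F)$ identically. So the claim holds by reading $\Phi(F,x)$ through that formula.

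There is no real obstacle in this argument. The only point needing care is the observation that "coarsest topology making the maps continuous" does imply that each map is continuous, which is immediate from the definition.
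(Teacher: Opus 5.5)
Your proof is correct and matches the paper's: both realise $U(x,r)$ and $V(x,r)$ as preimages of $(-\infty,r)$ and $(-\infty,r]$ under the Wijsman-continuous map $F\mapsto d(x,F)$. Your added remark on the $\Phi$ identity, reading Definition~\ref{def:Phi} on all closed subspaces rather than only on admissible kernels, spells out a point the paper leaves implicit.
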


\begin{proof}
For each fixed $x$, the map $F\mapsto d(x,F)$ is continuous (Definition~\ref{def:wijsman}).
Thus $U(x,r)$ is the preimage of the open ray $(-\infty,r)$ and $V(x,r)$ is the
preimage of the closed ray $(-\infty,r]$.
\end{proof}

\begin{lemma}\label{lem:dense-op-closed}
Let $X\in\mathcal C$ be separable with countably many
operations $\omega_i$ of finite arity.  There exists a countable dense subset
$D\subseteq X$ closed under all operations $\omega_i$ and under rational
linear combinations.
\end{lemma}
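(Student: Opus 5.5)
The plan is to build $D$ as the closure of a countable dense seed under a countable set of finitary operations, using the standard inductive closure argument. Start from the fixed countable dense set $D^X=\{u_m:m\in\N\}$ and set $D_0:=D^X$. Given a countable set $D_n$, define $D_{n+1}$ to be $D_n$ together with the following elements:
\begin{itemize}
\item all values $\omega_i(d_1,\dots,d_{n_i})$ with $i\in I$ and $d_j\in D_n$, including, for nullary $\omega_i$, the distinguished constants;
\item all rational linear combinations $ax+by$ with $a,b\in\Q$ (or $\Q+i\Q$ in the complex case) and $x,y\in D_n$.
\end{itemize}
Then put $D:=\bigcup_{n}D_n$.

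Next I would verify countability. Each $D_{n+1}$ is a union over the countable index set $I$ of images of the countable sets $D_n^{n_i}$, plus the image of the countable set $\Q^2\times D_n^2$. A countable union of countable sets is countable, so by induction every $D_n$ is countable, and hence so is $D$.

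For closure, take $i\in I$ and $d_1,\dots,d_{n_i}\in D$. Each $d_j$ lies in some $D_{n(j)}$. Since the chain $D_0\subseteq D_1\subseteq\cdots$ is increasing and arity is finite, all of them lie in $D_N$ with $N=\max_j n(j)$. Therefore $\omega_i(d_1,\dots,d_{n_i})\in D_{N+1}\subseteq D$. The same argument, applied to pairs, gives closure under rational linear combinations.

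Density is immediate because $D\supseteq D^X$ and $D^X$ is dense. Neither continuity of the operations nor completeness of $X$ is needed here.

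The only point requiring care is the finiteness of arities, which is used in the closure step: an infinitary operation could need arguments from unboundedly many stages. That is why the hypothesis is stated, and with it there is no real obstacle.
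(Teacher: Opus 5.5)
Your proposal is correct and follows the paper's proof: both build $D_{n+1}$ from $D_n$ by closing under the operations $\omega_i$ and rational linear combinations, and take $D=\bigcup_n D_n$. You also spell out the countability, closure and density checks that the paper leaves implicit.
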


\begin{proof}
Start with any countable dense $D_0$.  Define inductively $D_{n+1}$ to be the
set obtained from $D_n$ by closing under all operations~$\omega_i$ applied to tuples
from~$D_n$ and under rational linear combinations.  Put $D=\bigcup_n D_n$.
\end{proof}

\begin{definition}\label{def:projective}
An object $P\in\mathcal C$ is a \emph{separable quotient generator} if
$P$ is separable and every separable object $Y\in\mathcal C$ is isomorphic to
$P/K$ for some $K\in\Ideal_{\mathcal C}(P)$.
\end{definition}

\begin{remark}\label{rem:terminology}
In the categorical literature, a \emph{projective generator} additionally satisfies
a lifting property with respect to epimorphisms.  We do not require such a lifting
property; our arguments use only the quotient-universality encoded in
Definition~\ref{def:projective}.  We therefore use the more descriptive term
`separable quotient generator' throughout.
\end{remark}

\subsection{Main theorem and definability scheme}

\begin{mainthm}[Polish parameter space for continuous structures]\label{thm:continuous-main}
Let $\mathcal{C}$ be a~concrete category with continuous operations admitting a
separable quotient generator~$P$.  Fix:
\begin{itemize}
\item a countable dense subset $D^P=\{u_m:m\in\N\}\subseteq P$ closed under all
operations~$\omega_i$ and possible rational (or complex-rational, depending on the ground field) linear combinations\footnote{Formally, there may be no rational/complex-rational vector space structure in which case this condition becomes vacuous.};
\item an admissible topology $\tau$ on $\SB(P)$ (e.g.\ Wijsman).
\end{itemize}
Then:
\begin{enumerate}
\item \textbf{(Polishness.)}
$\Ideal_{\mathcal C}(P)$ is a Polish space with the topology~$\tau$.

\item \textbf{(Borel encoding.)}
Define a code $c(K)\in\R^\N$ for $K\in\Ideal_{\mathcal C}(P)$ by
concatenating all values
\[
\Phi\Bigl(K,\sum_{j=1}^\ell q_j u_{m_j}\Bigr)
\quad\text{and}\quad
\Phi\bigl(K,\omega_i(u_{m_1},\dots,u_{m_{n_i}})-u_r\bigr),
\]
enumerated over all rational data.  For any admissible topology~$\tau$,
each coordinate $K\mapsto\Phi(K,x)=d(x,K)$ is upper semicontinuous
(being the infimum of continuous functions via the
Godefroy--Saint-Raymond selections), hence Borel, and
$c:\Ideal_{\mathcal C}(P)\to\R^\N$ is Borel.
Under the \emph{Wijsman} topology, each coordinate is continuous.

\item \textbf{(Faithfulness.)}
If $c(K)=c(L)$, then $K=L$.  In particular, $P/K\cong P/L$ as objects in~$\mathcal C$
via the unique isometric isomorphism sending $u_m+K$ to $u_m+L$.
(\emph{Proof:} $c(K)=c(L)$ implies $\Phi(K,u_m)=\Phi(L,u_m)$ for all~$m$, hence
$d(\cdot,K)=d(\cdot,L)$ on a dense set, hence everywhere, so $K=L$.)

\item \textbf{(Surjectivity.)}
Every separable object $Y\in\mathcal C$ is isomorphic to $P/K$ for some
$K\in\Ideal_{\mathcal C}(P)$.  In particular, $c$ gives a Borel coding of
quotient presentations of separable objects in~$\mathcal C$.
We do \emph{not} claim injectivity on isomorphism classes: distinct kernels may
still yield isomorphic quotients.

\item \textbf{(Definability scheme.)}
Any property $\mathcal P$ of $P/K$ expressible by a formula built from:
\begin{itemize}
\item atomic predicates of the form
$F(\Phi(K,x_1),\ldots,\Phi(K,x_m))\bowtie r$,
where $x_j\in D^P$, $F:\R^m\to\R$ is continuous,
$\bowtie\in\{<,\le,=,\ge,>\}$, and $r\in\Q$;
\item Boolean connectives;
\item countable quantification over variables in $D^P$ or $\N$;
\end{itemize}
defines a Borel subset of $\Ideal_{\mathcal C}(P)$ for \emph{any} admissible
topology~$\tau$.  Under the \emph{Wijsman} topology specifically,
the Borel rank is bounded by the quantifier alternation depth:
$\forall$ alone gives~$\Pi^0_2$,
$\forall\exists$ gives~$\Pi^0_3$,
$\forall\exists\forall$ gives~$\Pi^0_4$, and so on.
\end{enumerate}
\end{mainthm}

\begin{proof}
Part~(1) is Proposition~\ref{prop:ideal-closed}.
For~(2), each coordinate map $K\mapsto d(x,K)$ is upper semicontinuous for any
admissible topology (as the infimum of continuous Godefroy--Saint-Raymond selections),
hence Borel; under the Wijsman topology it is continuous by
Lemma~\ref{lem:Phi-continuous}.
Part~(3): as argued in the statement, $c(K)=c(L)$ forces $K=L$.
Part~(4) holds because $P$ is a quotient generator.
Part~(5): for any admissible $\tau$, each atomic predicate is Borel
(since $K\mapsto\Phi(K,x)$ is u.s.c.\ by~(2)),
Boolean combinations and countable quantification preserve Borelness.
Under the Wijsman topology, each $\Phi$-coordinate is continuous,
so each atomic predicate is open (for~$<,>$)
or closed (for~$\le,\ge,=$), and countable
quantification over a countable set raises the Borel rank by at most one per alternation.
\end{proof}

\begin{definition}\label{def:stable-rel}
Fix $m\in\N$ and a finite family of $\mathcal C$-terms
$t_1(\bar x),\dots,t_r(\bar x)$ in variables $\bar x=(x_1,\dots,x_m)$.
For an object $Y\in\mathcal C$ and $\bar y\in Y^m$ set the \emph{defect}
\[
\mathrm{def}_{\bar t}^Y(\bar y):=\max_{\ell\le r}\|t_\ell^Y(\bar y)\|.
\]
Given $M\in\N$, we say that the relation
\[
\Sigma_{\bar t,M}(\bar x):\ \ \bigl(\|x_j\|\le M\ (1\le j\le m)\bigr)\ \wedge\
\bigl(t_1(\bar x)=\cdots=t_r(\bar x)=0\bigr)
\]
is \emph{stable on quotients of $P$} if for every $\varepsilon>0$ there exists
$\delta>0$ such that for every $K\in\Ideal_{\mathcal C}(P)$ and every
$\bar y\in (P/K)^m$ with $\|y_j\|\le M$,
\[
\mathrm{def}_{\bar t}^{P/K}(\bar y)<\delta\ \Longrightarrow\
\exists\bar z\in(P/K)^m:
\ \Sigma_{\bar t,M}(\bar z)\ \ \text{and}\ \
\max_j\|y_j-z_j\|<\varepsilon.
\]
\end{definition}

\begin{lemma}\label{lem:stable-exists-dense}
Assume $\Sigma_{\bar t,M}$ is stable on quotients of $P$.
Fix $\varepsilon=1$ and let $\delta>0$ be the corresponding stability constant.
Then for every $K\in\Ideal_{\mathcal C}(P)$ the following are equivalent:
\begin{enumerate}
\item there exists $\bar x\in (P/K)^m$ with $\Sigma_{\bar t,M}(\bar x)$;
\item there exists $\bar u\in (D^P)^m$ such that
$\max_j\Phi(K,u_j)< M$ and $\mathrm{def}_{\bar t}^{P/K}(\bar u+K)<\delta$.
\end{enumerate}
In particular, the set of $K$ satisfying $\exists\bar x\,\Sigma_{\bar t,M}(\bar x)$
is \emph{open} in the Wijsman topology on $\Ideal_{\mathcal C}(P)$.
\end{lemma}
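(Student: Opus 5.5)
The plan is to prove the two implications separately and then read off openness from the form of condition~(ii). The key structural fact is that the quotient map $q_K:P\to P/K$ is a morphism in $\mathcal C$. Hence for every tuple $\bar u\in(D^P)^m$ and every term $t_\ell$ we have $t_\ell^{P/K}(\bar u+K)=t_\ell^P(\bar u)+K$. Since $D^P$ is closed under the operations and rational linear combinations, $t_\ell^P(\bar u)\in D^P$, and
\[
\mathrm{def}_{\bar t}^{P/K}(\bar u+K)=\max_{\ell\le r}\Phi\bigl(K,t_\ell^P(\bar u)\bigr).
\]
By Lemma~\ref{lem:Phi-continuous}, each such map is Wijsman-continuous in~$K$.

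\textbf{(ii)$\Rightarrow$(i).} Put $y_j:=u_j+K$, so that $\|y_j\|=\Phi(K,u_j)<M$ and $\mathrm{def}_{\bar t}^{P/K}(\bar y)<\delta$. Definition~\ref{def:stable-rel} with $\varepsilon=1$ then supplies $\bar z\in(P/K)^m$ with $\Sigma_{\bar t,M}(\bar z)$. This direction is immediate.

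\textbf{(i)$\Rightarrow$(ii).} Start from $\bar x$ with $\Sigma_{\bar t,M}(\bar x)$, lift each $x_j$ to $p_j\in P$, and approximate $p_j$ by elements $u_j\in D^P$. Since the terms are built from continuous operations, $t_\ell^P$ is continuous on $P^m$. So for $u_j$ close enough to $p_j$ we get $\Phi(K,t_\ell^P(\bar u))<\delta$, because $\Phi(K,t_\ell^P(\bar p))=\|t_\ell^{P/K}(\bar x)\|=0$ and $\Phi(K,\cdot)$ is $1$-Lipschitz. We also get $\Phi(K,u_j)$ close to $\|x_j\|\le M$.

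This step is the main obstacle: (ii) demands the \emph{strict} inequality $\Phi(K,u_j)<M$. If some $\|x_j\|=M$ exactly, approximation only gives $\Phi(K,u_j)<M+\eta$. In general this cannot be repaired. For example, with $M=1$ and relations forcing norm one, no admissible tuple lies in the open ball. I would therefore first check whether the intended reading has slack, and then prove it in one of two forms. The first form assumes solutions with $\max_j\|x_j\|<M$ whenever solutions exist (e.g.\ by homogeneity of the terms, rescaling $\bar x$ by $1-\eta$ when the relations are homogeneous). The second form replaces ``$<M$'' in (ii) by ``$<M'$'' for a fixed $M'>M$ and applies stability to the relation $\Sigma_{\bar t,M'}$ instead, so that both directions go through with this margin. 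With either adjustment, (i)$\Rightarrow$(ii) follows by the approximation above.

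\textbf{Openness.} Condition~(ii) is a countable union, over $\bar u\in(D^P)^m$, of finite intersections of sets
\[
\{K:\Phi(K,u_j)<M\}\quad\text{and}\quad\{K:\Phi(K,t_\ell^P(\bar u))<\delta\}.
\]
These are Wijsman-open by Lemma~\ref{lem:wijsman-subbasic}, or directly by Lemma~\ref{lem:Phi-continuous}. Hence the set of $K$ satisfying $\exists\bar x\,\Sigma_{\bar t,M}(\bar x)$ is open.
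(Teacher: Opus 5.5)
\textbf{Gap in (i)$\Rightarrow$(ii).} Your (ii)$\Rightarrow$(i) direction and the openness argument are fine, but the proposal does not prove (i)$\Rightarrow$(ii) as stated. You declare the boundary case $\|x_j\|=M$ unrepairable and retreat to modified statements, and that diagnosis is wrong. Condition (ii) does not ask for an exact solution in the open ball. It only asks for a dense tuple with $\Phi(K,u_j)<M$ and $\mathrm{def}_{\bar t}^{P/K}(\bar u+K)<\delta$. Your example of relations forcing norm one is therefore not a counterexample. For an isometry $v$ (relation $x^*x=1$, $M=1$), the element $(1-s)v$ has norm $1-s<1$ and $\|(1-s)^2v^*v-1\|=2s-s^2<\delta$ for small $s$.

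\textbf{The missing step.} In general, the defect is continuous on $(P/K)^m$ and vanishes at $\bar x$. Hence for small $s>0$ the tuple $(1-s)\bar x$ satisfies $\max_j\|(1-s)x_j\|<M$ and has defect $<\delta$. Homogeneity of the terms is irrelevant, because only the approximate relations are required. So the open set $\{\bar y\in(P/K)^m:\max_j\|y_j\|<M,\ \mathrm{def}_{\bar t}^{P/K}(\bar y)<\delta\}$ is non-empty, and it meets the dense set $(D^P+K)^m$.

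\textbf{How the paper does it.} The paper makes this concrete. It approximates $\bar x$ by $\bar v\in(D^P)^m$ within $\eta/4$, where $\eta$ is a continuity radius for the defect at $\bar x$. Whenever $\Phi(K,v_j)\ge M$, it replaces $v_j$ by $\lambda_jv_j$ for a rational $\lambda_j\in\bigl(1-\frac{\eta/2}{\Phi(K,v_j)},\,\frac{M}{\Phi(K,v_j)}\bigr)$. This keeps $u_j\in D^P$, since $D^P$ is closed under rational scalars. It also gives $\Phi(K,u_j)<M$ and moves $u_j+K$ by less than $\eta/2$.

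\textbf{Your fallbacks do not recover the lemma.} The first adds a hypothesis (existence of solutions with slack) that the lemma does not assume. The second replaces $M$ by $M'>M$ in (ii) and applies stability to $\Sigma_{\bar t,M'}$, whose stability is also not assumed. Even granting that, stability then returns only a solution of norm $\le M'$, not a solution of $\Sigma_{\bar t,M}$. So you lose (ii)$\Rightarrow$(i) for the stated relation. You also lose openness of $\{K:\exists\bar x\,\Sigma_{\bar t,M}(\bar x)\}$, which is what Theorem~\ref{thm:definability-rank} uses for stable-existence atoms.
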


\begin{proof}
(1)$\Rightarrow$(2).
Let $\bar x\in (P/K)^m$ be an exact solution with $\|x_j\|\le M$.
By continuity of the finitely many term maps $t_\ell^{P/K}$ at $\bar x$,
choose $\eta>0$ such that
\[
\max_j\|y_j-x_j\|<\eta \ \Longrightarrow\ 
\mathrm{def}_{\bar t}^{P/K}(\bar y)<\delta.
\]
Pick $\bar v\in(D^P)^m$ with $\max_j\|v_j+K-x_j\|<\eta/4$.
Then for each $j$,
\[
\Phi(K,v_j)=\|v_j+K\|\le \|x_j\|+\eta/4\le M+\eta/4.
\]

For each $j$, if $\Phi(K,v_j)<M$ set $u_j=v_j$.
Otherwise $M\le \Phi(K,v_j)\le M+\eta/4$.
Consider the open interval
\[
\Bigl(1-\frac{\eta/2}{\Phi(K,v_j)},\,\frac{M}{\Phi(K,v_j)}\Bigr).
\]
Its length equals
\[
\frac{M-\Phi(K,v_j)+\eta/2}{\Phi(K,v_j)}
\ \ge\ \frac{M-(M+\eta/4)+\eta/2}{\Phi(K,v_j)}
\ =\ \frac{\eta/4}{\Phi(K,v_j)}\ >\ 0,
\]
so the interval is non-empty. Choose a rational $\lambda_j$ in this interval and set
$u_j=\lambda_j v_j\in D^P$ (using closure of $D^P$ under rational scalars).
Then $\Phi(K,u_j)=\lambda_j\Phi(K,v_j)<M$ and
\[
\|u_j+K-(v_j+K)\|=|1-\lambda_j|\Phi(K,v_j)<\eta/2.
\]
Hence
\[
\|u_j+K-x_j\|\le \|u_j+K-(v_j+K)\|+\|v_j+K-x_j\|
<\eta/2+\eta/4<\eta,
\]
and therefore $\mathrm{def}_{\bar t}^{P/K}(\bar u+K)<\delta$.

(2)$\Rightarrow$(1).
Let $\bar u\in(D^P)^m$ satisfy the inequalities in~(2), and set $\bar y=\bar u+K$.
Then $\|y_j\|=\Phi(K,u_j)<M$ and $\mathrm{def}_{\bar t}^{P/K}(\bar y)<\delta$.
By stability there exists $\bar z$ with $\Sigma_{\bar t,M}(\bar z)$.

Finally, for fixed $\bar u$ the conditions
$\max_j\Phi(K,u_j)<M$ and $\mathrm{def}_{\bar t}^{P/K}(\bar u+K)<\delta$
are strict inequalities of continuous functions of $K$, hence open.
Taking the union over countably many $\bar u\in(D^P)^m$ yields openness.
\end{proof}

\begin{theorem}\label{thm:definability-rank}
Work in the setting of Theorem~\ref{thm:continuous-main} with the Wijsman topology.
Let $\mathcal L$ be the language generated by:

\begin{enumerate}[label=\textup{(\alph*)}]
\item \emph{quotient-norm atoms:}
predicates of the form
\[
F\bigl(\Phi(K,t_1),\dots,\Phi(K,t_m)\bigr)\ \bowtie\ r,
\]
where $t_j$ are $\mathcal C$-terms over $D^P$, $F:\R^m\to\R$ is continuous,
$r\in\Q$, and $\bowtie\in\{<,\le,=,\ge,>\}$;

\item \emph{stable-existence atoms:}
for each stable relation $\Sigma_{\bar t,M}$ (Definition~\ref{def:stable-rel}),
a predicate asserting the existence of a solution in $P/K$:
\[
\exists \bar x\ \Sigma_{\bar t,M}(\bar x).
\]
Equivalently (Lemma~\ref{lem:stable-exists-dense}), one may use a fixed rational
tolerance $\delta>0$ and quantify only over tuples from $D^P$.
\end{enumerate}

Build formulae from these atoms using Boolean connectives and countable quantifiers
over $D^P$ and~$\N$.

Then for every such formula $\varphi$, the satisfaction set
\[
[\varphi]=\{K\in\Ideal_{\mathcal C}(P):\ P/K\models\varphi\}
\]
is Borel in $(\Ideal_{\mathcal C}(P),\mathrm{Wijsman})$.
Moreover, if $\varphi$ is in prenex normal form with $s$ alternations, then the
following uniform upper bound holds:
$[\varphi]\in\Sigma^0_{2+s}$ or $[\varphi]\in\Pi^0_{2+s}$ according to the leading
quantifier block (with the convention that stable-existence atoms count as
\emph{open} atoms in the rank bookkeeping).  The estimate is deliberately an upper
bound; closed atomic predicates, such as equalities, often lower the actual rank.
\end{theorem}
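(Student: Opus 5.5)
The plan is to induct on the construction of $\varphi$, after first showing that every atom defines a set that is open, closed, or, at worst, both $F_\sigma$ and $G_\delta$ in the Wijsman topology.

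\textbf{Quotient-norm atoms.} Let $t_j$ be a $\mathcal C$-term over $D^P$. Its value is a single element of $P$, so $K\mapsto\Phi(K,t_j)$ is continuous by Lemma~\ref{lem:Phi-continuous}. Composing with the continuous $F$, the set where $F(\dots)<r$ or $F(\dots)>r$ is open, and the set where $\le$, $\ge$ or $=$ holds is closed. In a metrisable space, open sets are $F_\sigma$ and closed sets are $G_\delta$. Hence every quotient-norm atom lies in $\Delta^0_2=\Sigma^0_2\cap\Pi^0_2$.

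\textbf{Stable-existence atoms.} Lemma~\ref{lem:stable-exists-dense} gives openness directly. That lemma requires the defect $\mathrm{def}_{\bar t}^{P/K}(\bar u+K)$ to be continuous in $K$. This holds because $t_\ell^{P/K}(\bar u+K)=t_\ell^P(\bar u)+K$, the operations descending to the quotient by admissibility of $K$. So the defect is $\max_\ell\Phi(K,t_\ell^P(\bar u))$, which is continuous. Thus these atoms also lie in $\Delta^0_2$, in agreement with the stated convention of counting them as open.

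\textbf{Induction.} Each pointclass $\Sigma^0_n$ and $\Pi^0_n$ is closed under finite unions and intersections. Complements swap $\Sigma^0_n$ and $\Pi^0_n$, so every Boolean combination of atoms stays in $\Delta^0_2$; in particular the quantifier-free matrix of a prenex formula is both $\Sigma^0_2$ and $\Pi^0_2$. Quantification over a countable set (either $D^P$ or $\N$) becomes a countable union or intersection of the instance sets $[\psi(u)]$. Take a prenex formula with $s$ alternations, i.e.\ $s+1$ quantifier blocks, and read the blocks from the inside out:
\begin{itemize}
\item an innermost $\exists$ block takes a countable union of $\Sigma^0_2$ sets and stays in $\Sigma^0_2$;
\item an innermost $\forall$ block stays in $\Pi^0_2$;
\item each further block of the opposite type raises the level by one ($\Sigma^0_n\to\Pi^0_{n+1}$ or $\Pi^0_n\to\Sigma^0_{n+1}$).
\end{itemize}
After $s+1$ blocks the set is in $\Sigma^0_{2+s}$ or $\Pi^0_{2+s}$, according to the leading block. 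Borelness of an arbitrary formula, not necessarily prenex, follows the same way, since countable unions, countable intersections and complements preserve Borel sets.

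\textbf{Main obstacle.} The rank bookkeeping is routine once the base case is secured. The delicate part is making the base case airtight for terms in the stable-existence atoms: one must check that substituting $D^P$-tuples commutes with passing to $P/K$. Admissibility of $K$ makes $P\to P/K$ a morphism, so the defect function reduces to finitely many continuous $\Phi$-coordinates, as used above.

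A further point concerns stable-existence atoms inside an existential quantifier over parameters. If the stable relation involves parameters from $D^P$, then each instance is a separate atom and hence open, so no uniformity in the stability constant $\delta$ is needed. I would note this explicitly to justify counting these atoms as open.
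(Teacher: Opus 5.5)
Your proposal is correct and follows the same route as the paper. Quotient-norm atoms are open or closed by Wijsman continuity of $K\mapsto\Phi(K,t)$, stable-existence atoms are open by Lemma~\ref{lem:stable-exists-dense}, and countable quantifiers become countable unions and intersections, with each alternation raising the rank by at most one; your observation that every atom, and hence the quantifier-free matrix, lies in $\Delta^0_2$ of the metrisable Wijsman space makes explicit the base level that the paper leaves implicit, and it justifies the $2+s$ count.
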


\begin{proof}
By Lemma~\ref{lem:Phi-continuous}, each map $K\mapsto\Phi(K,t)$ is continuous.
Quotient-norm atoms of type~(a) are therefore either open (for~$<,>$) or closed
(for~$\le,\ge,=$) by Lemma~\ref{lem:wijsman-subbasic}.
Stable-existence atoms of type~(b) are open by
Lemma~\ref{lem:stable-exists-dense}.
Boolean combinations preserve Borelness.
Countable $\exists$ corresponds to a countable union, countable $\forall$ to a
countable intersection.  Each alternation raises the Borel rank by at most one.
\end{proof}

\begin{remark}\label{rem:dense-method-scope}
Theorem~\ref{thm:definability-rank} applies to properties that can be expressed using:
\begin{itemize}
\item norms of \emph{named terms} in the quotient (via $\Phi$), and
\item existential assertions whose defining relations are \emph{stable} on bounded balls,
so that exact witnesses can be replaced by approximate witnesses from the fixed
countable dense set $D^P$ (Lemma~\ref{lem:stable-exists-dense}).
\end{itemize}
This includes, for instance, the existence of projections, partial isometries,
matrix units, finite-dimensional $*$-subalgebras, and semiprojective relations, hence
the standard approximation/regularity properties treated later (AF, QD/MF, nuclear
dimension bounds, $D$-absorption, etc.); see Lemma~\ref{lem:stable-fd} and
Remark~\ref{rem:stable-relations}.

By contrast, properties whose definitions require \emph{unbounded} existential
quantification without a uniform stability modulus (\emph{e.g.}\ ring-theoretic phenomena
with unbounded witnesses in reduced products) or that intrinsically quantify over
\emph{uncountable} parameter sets may fall outside this scheme.
\end{remark}

\begin{remark}\label{rem:upper-bounds}
The ranks given in Theorem~\ref{thm:definability-rank} are \emph{uniform upper bounds}
that accommodate both open and closed atomic predicates.  In many concrete instances
the actual Borel class is lower, because particular atoms may be open (strict
inequalities) rather than merely closed (equalities or non-strict inequalities).
Throughout this paper, each stated complexity estimate should be read as a safe upper
bound unless explicitly accompanied by a matching hardness or completeness result.

When we write ``closed (so in particular $\Pi^0_2$)'', we mean the set is $\Pi^0_1$ (closed) and
therefore \emph{a fortiori} $\Pi^0_2$; we sometimes report $\Pi^0_2$ in
such cases because our bookkeeping is geared to quantifier alternation
starting at the $\Pi^0_2$ level, and we do not always optimise the base level.
\end{remark}

\begin{remark}\label{rem:encoding}
The code $c(K)$ captures the \emph{norm structure} of $P/K$ via quotient norms of
all rational combinations, and the \emph{operations} via
$\Phi(K,\omega_i(u_{m_1},\ldots)-u_r)$ (measuring when operations take specific values).
Since $D^P$ is dense and operation-closed, this information determines $P/K$ up to
isometric isomorphism.
\end{remark}

\section{Main theorem for countable algebraic structures}\label{sec:countable}

\subsection{Congruences on free algebras}

Let $L=\{f_i:i\in I\}$ be a countable finitary signature and let $F_\infty$ be the
free $L$-algebra on generators $\{x_n:n\in\N\}$.

\begin{theorem}
\label{thm:free-objects-closed}
Let $\mathcal V$ be a finitary algebraic variety over a countable signature
$L=\{f_i:i\in I\}$.  Let $F_\infty$ be the free $L$-algebra on the countable set of
generators $\{x_n:n\in\N\}$.  Consider the following Polish spaces, all with the
product (Cantor) topology:

\begin{enumerate}
\item \textbf{Congruences (general case).}
$\Con_L(F_\infty)\subseteq 2^{F_\infty\times F_\infty}$, the family of
$L$-congruences on~$F_\infty$.

\item \textbf{Groups.}
$\NSub(F_\infty)\subseteq 2^{F_\infty}$, the family of normal subgroups of the
free group on countably many generators, $F_\infty=\langle x_n:n\in\N\rangle$.

\item \textbf{Rings.}
$\Id(R_\infty)\subseteq 2^{R_\infty}$, the family of two-sided ideals in the
free unital ring generated by countably many free variables, $R_\infty=\Z\langle X_n:n\in\N\rangle$.

\item \textbf{Abelian groups.}
$\Sub(\Z^{(\N)})\subseteq 2^{\Z^{(\N)}}$, the family of subgroups of
$\Z^{(\N)}=\bigoplus_{n\in\N}\Z e_n$.

\item \textbf{Lattices.}
$\Con_{\wedge,\vee}(L_\infty)\subseteq 2^{L_\infty\times L_\infty}$, the family of
lattice congruences on the free lattice~$L_\infty$.
\end{enumerate}

Then each of the above parameter spaces is a \emph{closed} subset of the ambient
Cantor cube, hence compact, zero-dimensional, and Polish.
\end{theorem}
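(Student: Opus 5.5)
The plan is to express, in each of the five cases, membership in the parameter space as a conjunction of countably many conditions. Each condition will mention only finitely many coordinates of the ambient cube $2^{S}$, where $S=F_\infty\times F_\infty$, $F_\infty$, $R_\infty$, $\Z^{(\N)}$ or $L_\infty\times L_\infty$ respectively. A condition depending on finitely many coordinates defines a clopen set. Countable intersections of clopen sets are closed. A closed subset of a compact metrisable zero-dimensional space is again compact, metrisable and zero-dimensional, and therefore Polish. So everything reduces to checking that each defining axiom is a universal (Horn-type) statement over the countable set $S$ whose matrix involves finitely many membership atoms. Note also that each of $F_\infty$, $R_\infty$, $\Z^{(\N)}$ and $L_\infty$ is countable: it is built from finite terms over a countable signature and countably many generators. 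Hence the ambient cube is homeomorphic to $2^\N$ by Convention~\ref{conv:enumeration}.

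For (i), a subset $E\subseteq F_\infty^2$ is a congruence exactly when the following hold.
\begin{itemize}
\item Reflexivity: $(a,a)\in E$ for all $a$.
\item Symmetry: $(a,b)\in E\Rightarrow(b,a)\in E$.
\item Transitivity: $(a,b)\in E\wedge(b,c)\in E\Rightarrow(a,c)\in E$.
\item Compatibility: for each $f_i$ of arity $n$ and all $\bar a,\bar b$, $\bigwedge_{j\le n}(a_j,b_j)\in E\Rightarrow(f_i(\bar a),f_i(\bar b))\in E$.
\end{itemize}
Each instance, for fixed elements, is a Boolean combination of at most $n+1$ coordinates, so it is clopen. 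There are countably many instances because $I$ is countable and the arities are finite. This gives closedness.

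If the variety $\mathcal V$ is intended, so that one wants only the congruences $E$ with $F_\infty/E\in\mathcal V$, add the conditions $(s(\bar a),t(\bar a))\in E$ for each defining identity $s=t$ and each substitution $\bar a$. These are again single-coordinate conditions, and there are countably many if $\mathcal V$ is axiomatised by countably many identities. If $\mathcal V$ has uncountably many identities, the intersection is still closed, since an arbitrary intersection of closed sets is closed.

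Cases (ii)--(iv) follow the same template applied to subsets.
\begin{itemize}
\item Normal subgroups $N$: $1\in N$; $g,h\in N\Rightarrow gh^{-1}\in N$; and $g\in N\Rightarrow wgw^{-1}\in N$ for all $w$.
\item Two-sided ideals $J$ of $R_\infty$: $0\in J$; $a,b\in J\Rightarrow a-b\in J$; and $a\in J\Rightarrow ra,ar\in J$.
\item Subgroups of $\Z^{(\N)}$: the first two group clauses.
\end{itemize}
Each is a universally quantified implication over finitely many coordinates.

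Case (v) is the special case of (i) with the lattice signature $\{\wedge,\vee\}$, using the free lattice $L_\infty$ itself as the carrier. There the lattice identities already hold in $L_\infty$, so no extra axioms are needed. One could alternatively present (ii)--(iv) as instances of (i) through the standard kernel/congruence correspondence, but the direct axioms are cleaner.

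No step is a genuine obstacle. The only points needing care are:
\begin{itemize}
\item Countability of the index sets, so that the coordinate spaces are $2^\N$ and Polishness is automatic. Closedness itself survives arbitrary intersections in any case.
\item The observation that each axiom instance is a positive implication $\bigwedge(\text{finitely many memberships})\Rightarrow(\text{one membership})$. Such a set is clopen because it is a finite Boolean combination of the basic clopen sets $\{E: s\in E\}$.
\end{itemize}
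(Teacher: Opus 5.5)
Your proposal is correct and follows essentially the same route as the paper: each space is written as a countable intersection of clopen sets given by instances of Horn axioms (reflexivity, symmetry, transitivity, compatibility), and compactness of the Cantor cube finishes the argument. The differences are cosmetic. You verify (ii)--(iv) directly from the subgroup/ideal axioms rather than as specialisations of the congruence case, and you add the harmless remark about imposing the identities of $\mathcal V$ when $F_\infty$ is the absolutely free algebra.
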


\begin{proof}
We treat~(1) in full generality; the remaining cases are specialisations.

A congruence $R\subseteq F_\infty\times F_\infty$ must be an equivalence relation
compatible with each operation.  Reflexivity, symmetry, and transitivity are
universal Horn conditions: for instance, transitivity states that for all $a,b,c$,
if $(a,b)\in R$ and $(b,c)\in R$ then $(a,c)\in R$; the violation for a fixed
triple $(a,b,c)$ forms the clopen cylinder
$\{R:(a,b)\in R,\ (b,c)\in R,\ (a,c)\notin R\}$.
Taking complements and intersecting over all triples yields a closed set.

For $L$-compatibility, fix an operation $f_i$ of arity~$n_i$ and tuples
$\bar a,\bar b\in F_\infty^{n_i}$.  The violating set
$\{R:\bigwedge_j (a_j,b_j)\in R\text{ but }(f_i(\bar a),f_i(\bar b))\notin R\}$
depends on finitely many coordinates and is clopen.  Taking complements and
intersecting gives a closed set.

Thus $\Con_L(F_\infty)$ is a countable intersection of closed sets, hence closed
in $2^{F_\infty\times F_\infty}$.  By compactness of the Cantor cube,
$\Con_L(F_\infty)$ is compact Polish.
\end{proof}

\begin{remark}\label{rem:abelian-closed}
In the group case the subclass of \emph{abelian groups} is the closed subspace
$\{N\in\NSub(F_\infty):[F_\infty,F_\infty]\subseteq N\}$
$=\bigcap_{g,h\in F_\infty}\{N:[g,h]\in N\}$,
and in the ring case \emph{commutative rings} form the closed subspace
$\{I\in\Id(R_\infty):\forall a,b,\ ab-ba\in I\}$.
\end{remark}

\subsection{Canonical encoding of quotients on a fixed domain}

\begin{definition}\label{def:canonical-encoding}
For a subobject~$S$ of the appropriate kind (congruence, normal subgroup, ideal, or
subgroup) such that $F_\infty/S$ is \emph{countably infinite},
we encode the quotient
canonically on the fixed domain~$\N$ as follows.  For each $a\in F_\infty$, let
$[a]_S$ be its $S$-coset or $S$-class.  Define
$\Min_S=\{\min([a]_S):a\in F_\infty\}\subseteq\N$,
the set of minimal representatives.
Since $F_\infty/S$ is infinite, $\Min_S$ is infinite;
let $\rho_S:\N\to\Min_S$ enumerate $\Min_S$ increasingly and let $q_S:F_\infty\to\N$
map each element to the index of its class representative.  For each basic operation
$f_i$ of arity $n_i$, define
\[
f_i^S(k_1,\dots,k_{n_i})
=q_S\bigl(f_i^{F_\infty}(\rho_S(k_1),\dots,\rho_S(k_{n_i}))\bigr).
\]
Then $\Theta(S)=(f_i^S)_{i\in I}$ is an $L$-structure on $\N$ isomorphic
to $F_\infty/S$.

\smallskip\noindent
\emph{Finite quotients.}\
If $F_\infty/S$ is finite (with $n$~elements, say), $\Min_S$ has exactly
$n$~elements and $\rho_S$ is not defined as a map $\N\to\Min_S$.
The canonical encoding $\Theta$ is therefore restricted to
infinite quotients; finite quotients are coded directly
by~$S$ (which determines the quotient structure up to isomorphism).
\end{definition}

\begin{lemma}\label{lem:rho-q-Borel}
On the $G_\delta$ subspace
$\mathcal S_\infty:=\{S\in\mathcal S:F_\infty/S\text{ is infinite}\}$,
the maps $S\mapsto\rho_S$ and $S\mapsto q_S$ are Borel, and hence $\Theta$ is Borel.
\end{lemma}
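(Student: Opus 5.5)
We work in $\mathcal S\subseteq 2^{F_\infty\times F_\infty}$ (congruences, or the analogous coset relations for normal subgroups, ideals and subgroups). By Convention~\ref{conv:enumeration} we identify $F_\infty$ with $\N$, and we write $a\sim_S b$ for $(a,b)\in S$. For the kernel-type spaces, $a\sim_S b$ means $a b^{-1}\in S$ or $a-b\in S$. In every case, for fixed $a,b$ the set $\{S: a\sim_S b\}$ is clopen.

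\textbf{Step 1: $\mathcal S_\infty$ is $G_\delta$.} Say $a\in\N$ is \emph{minimal for $S$} if $a\in\Min_S$, i.e.\ $\forall b<a\ \neg(b\sim_S a)$. This is a finite conjunction of clopen conditions, so $\{S: a\in\Min_S\}$ is clopen. The quotient is infinite iff $\Min_S$ is infinite, i.e.
\[
\forall n\ \exists a\ge n\ \ (a\in\Min_S).
\]
This is a countable intersection of countable unions of clopen sets, hence $G_\delta$, and in particular Polish.

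\textbf{Step 2: $S\mapsto\rho_S$ is Borel.} It suffices to show that for fixed $k,a$ the set $\{S\in\mathcal S_\infty:\rho_S(k)=a\}$ is Borel, since then each coordinate of $\rho_S\in\N^\N$ is Borel. Now $\rho_S(k)=a$ iff $a\in\Min_S$ and exactly $k$ of the numbers $0,\dots,a-1$ lie in $\Min_S$. This is a finite Boolean combination of the clopen sets from Step~1, so it is clopen relative to $\mathcal S_\infty$. Thus $\rho$ is in fact continuous on $\mathcal S_\infty$.

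\textbf{Step 3: $S\mapsto q_S$ is Borel.} For fixed $a$ and $k$, $q_S(a)=k$ iff $\exists b\,\bigl(\rho_S(k)=b\wedge a\sim_S b\bigr)$. Because $\min[a]_S\le a$, we may restrict to $b\le a$, so this is a finite union of clopen sets and hence clopen. So $q$ is also continuous into $\N^{F_\infty}$.

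\textbf{Step 4: $\Theta$ is Borel.} For each operation $f_i$ and tuple $\bar k$, we have $f_i^S(\bar k)=m$ iff
\[
\exists \bar b\ \Bigl(\bigwedge_j \rho_S(k_j)=b_j\ \wedge\ q_S\bigl(f_i^{F_\infty}(\bar b)\bigr)=m\Bigr).
\]
This is a countable union over $\bar b\in\N^{n_i}$ of clopen sets, so it is open, and in particular Borel. Since the codomain is a countable product of discrete spaces, this shows that $\Theta$ is Borel, and even continuous. That $\Theta(S)\cong F_\infty/S$ follows from Definition~\ref{def:canonical-encoding}. The claim that $\Theta(S)$ is well defined relies on $S$ being a congruence, so that $q_S\circ f_i^{F_\infty}$ respects classes.

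\textbf{Main obstacle.} The only delicate point is Step~1 together with its use in Step~2. The bounded search $\min[a]_S\le a$ is what makes every predicate finitary. Without the restriction to $\mathcal S_\infty$, $\rho_S(k)$ can be undefined; this is why the statement is made on the $G_\delta$ subspace.
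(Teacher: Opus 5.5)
Your proof is correct and follows the paper's argument. Both reduce $\rho_S(k)$ and $q_S(a)$ to finitely many clopen tests of $S$-equivalence among elements of an initial segment of $\N$, using that $\min[a]_S\le a$. Your bookkeeping is slightly sharper: you observe that these sets are clopen relative to $\mathcal S_\infty$, so $\rho$, $q$ and $\Theta$ are in fact continuous on $\mathcal S_\infty$, whereas the paper only records Borelness.
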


\begin{proof}
Fix $k\in\N$.  The value $\rho_S(k)$ is the least $a\in\N$ such that the family of
$S$-classes met by $\{0,1,\ldots,a\}$ has size $k{+}1$.

For $b,b'\in F_\infty$, the predicate ``$b$ and $b'$ represent the same element of $F_\infty/S$''
is clopen in the Cantor topology on~$\mathcal S$, since it is one of:
\begin{itemize}
\item $(b,b')\in S$ if $S$ is a congruence on $F_\infty$;
\item $bb'^{-1}\in S$ if $S$ is a normal subgroup of $F_\infty$;
\item $b-b'\in S$ if $S$ is an additive subgroup/ideal (abelian groups or rings).
\end{itemize}
Therefore the relation ``among $\{0,\ldots,a\}$ there are at least $k{+}1$ distinct $S$-classes''
is Borel in $S$ for each fixed $(a,k)$, and so $S\mapsto\rho_S(k)$ is Borel.

Similarly, for fixed $a\in F_\infty$, the value $q_S(a)$ is determined by finitely many
tests of whether $a$ is equivalent (mod~$S$) to $\rho_S(j)$ for $j\le a$, so $S\mapsto q_S(a)$ is Borel.
\end{proof}

\begin{mainthm}\label{thm:countable-main}
Let $\mathcal V$ and $F_\infty$ be as above, and let $\mathcal S$ denote any of the
closed coding spaces from Theorem~\ref{thm:free-objects-closed}.
Set
\[
\mathcal S_\infty:=\{S\in\mathcal S:\ F_\infty/S\text{ is infinite}\}.
\]
Then:
\begin{enumerate}
\item $\mathcal S$ is a compact zero-dimensional Polish space, and $\mathcal S_\infty$
is a $G_\delta$ subspace of $\mathcal S$ (hence Polish).

\item The map
\[
\Theta:\mathcal S_\infty\to \mathrm{Str}_L(\N)=\prod_{i\in I}\N^{\N^{n_i}},
\qquad
S\mapsto (f_i^S)_{i\in I},
\]
is Borel.

\item The induced map from $\mathcal S_\infty$ to the set of isomorphism classes
of countably infinite $L$-algebras in~$\mathcal V$ is surjective.  Equivalently,
for every countably infinite $\mathbf A\in\mathcal V$ there exists
$S\in\mathcal S_\infty$ with $\Theta(S)\cong \mathbf A$.

\item For each first-order sentence $\varphi$ in the language~$L$, the set
\[
\{S\in\mathcal S:\ F_\infty/S\models\varphi\}
\]
is Borel in~$\mathcal S$.
Quantifier-free sentences yield clopen sets; one leading existential quantifier
yields open; one leading universal quantifier yields closed; alternations give the
expected Borel ranks.
\end{enumerate}
\end{mainthm}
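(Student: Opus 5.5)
The plan is to dispatch the four items in order, reusing Theorem~\ref{thm:free-objects-closed} and Lemma~\ref{lem:rho-q-Borel} for the first two and doing the real work only in~(3) and~(4).

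For~(1), compactness, zero-dimensionality and Polishness of $\mathcal S$ are exactly Theorem~\ref{thm:free-objects-closed}. To see that $\mathcal S_\infty$ is $G_\delta$, I would write it as
\[
\mathcal S_\infty=\bigcap_{k\in\N}\ \bigcup_{a\in\N}\ \{S:\ \{0,\dots,a\}\text{ meets at least }k\text{ distinct }S\text{-classes}\}.
\]
For fixed $(a,k)$ the inner condition is a finite Boolean combination of the clopen predicates ``$b,b'$ represent the same element'' with $b,b'\le a$ (listed in the proof of Lemma~\ref{lem:rho-q-Borel}), so it is clopen. The union over $a$ is open, and the intersection over $k$ is $G_\delta$. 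A $G_\delta$ subset of a Polish space is Polish.

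Item~(2) is Lemma~\ref{lem:rho-q-Borel}. Each coordinate of $\Theta(S)$ is
\[
f_i^S(\bar k)=q_S\bigl(f_i^{F_\infty}(\rho_S(k_1),\dots,\rho_S(k_{n_i}))\bigr).
\]
This is a composition of Borel maps into the discrete set $\N$: one takes a countable case split over the possible values of the $\rho_S(k_j)$ and then applies the Borel map $S\mapsto q_S(b)$ for the resulting fixed $b$. Hence each coordinate is Borel, and a map into a countable product is Borel when its coordinates are.

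For~(3), take a countably infinite $\mathbf A\in\mathcal V$ and enumerate it as $\{a_n:n\in\N\}$. By freeness there is a homomorphism $\pi:F_\infty\to\mathbf A$ with $x_n\mapsto a_n$, and it is surjective. Let $S$ be its kernel in the appropriate form: the kernel congruence, the normal subgroup $\pi^{-1}(e)$, or the ideal/subgroup $\pi^{-1}(0)$. Then $F_\infty/S\cong\mathbf A$ is infinite, and Definition~\ref{def:canonical-encoding} gives $\Theta(S)\cong F_\infty/S$. One point needs care: $F_\infty$ is the free algebra of the variety $\mathcal V$, not the absolute term algebra, since otherwise $\pi$ would not factor through the relevant closed coding space. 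I read the specialisations~(2)--(5) of Theorem~\ref{thm:free-objects-closed} this way. In the general case~(1) one should restrict to congruences containing the identities of $\mathcal V$, which is still a closed set: it is an intersection of clopen conditions $(s,t)\in R$ over the substitution instances of the identities.

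Item~(4) is the main step, and I expect the bookkeeping around it to be the only subtle point. Elements of $F_\infty/S$ are named by elements $b\in F_\infty\cong\N$, so quantifiers over the quotient become countable quantifiers over $F_\infty$, and this is legitimate because every class has a representative. By induction on formulas I would show that for every formula $\psi(v_1,\dots,v_p)$ and every tuple $\bar b\in F_\infty^p$ the set $\{S: F_\infty/S\models\psi([\bar b]_S)\}$ is Borel.
\begin{itemize}
\item \emph{Atoms.} An atomic formula $s(\bar v)=t(\bar v)$ evaluated at $[\bar b]_S$ says that the fixed elements $s^{F_\infty}(\bar b)$ and $t^{F_\infty}(\bar b)$ represent the same element, so the set is clopen. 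This uses that the quotient map is a homomorphism.
\item \emph{Connectives.} Boolean connectives preserve clopenness and Borelness.
\item \emph{Quantifiers.} $\exists v$ becomes $\bigcup_{b\in F_\infty}$ and $\forall v$ becomes $\bigcap_{b\in F_\infty}$.
\end{itemize}
Rank then follows directly. Quantifier-free sentences give clopen sets. Under a single $\exists$-block the matrix is clopen, so the set is a countable union of clopen sets and hence open. Dually, a single $\forall$-block gives closed sets. Each further alternation moves one level up the hierarchy: $\exists\forall$ gives $\Sigma^0_2$, $\forall\exists$ gives $\Pi^0_2$, and so on.
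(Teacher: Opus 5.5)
Your proposal is correct and follows the paper's proof essentially step for step. You obtain $\mathcal S_\infty$ as a countable intersection of the open sets ``at least $k$ classes''; part~(2) comes from Lemma~\ref{lem:rho-q-Borel}, part~(3) from the universal property of the free object, and part~(4) from reading atoms as clopen membership tests and quantifiers as countable unions and intersections. Your extra care fills in details the paper leaves implicit without changing the route: the case split over the values of $\rho_S(k_j)$, the induction on formulae with parameters, and the remark that in the general congruence case one should use the $\mathcal V$-free algebra (or restrict to the closed set of congruences containing all substitution instances of the identities of $\mathcal V$).
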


\begin{proof}
Part~(1): $\mathcal S$ is compact Polish by Theorem~\ref{thm:free-objects-closed}.
Moreover,
\[
\mathcal S_\infty
=\bigcap_{n\in\N}\{S:\ F_\infty/S\text{ has at least }n\text{ classes}\}
\]
is a countable intersection of open sets, hence $G_\delta$.

Part~(2) is Lemma~\ref{lem:rho-q-Borel}.
Part~(3) holds by the universal property of the free object.
Part~(4): atomic formulae are decided by finitely many coordinates of~$S$,
hence clopen.  Existential quantification adds a countable union; universal
quantification adds a countable intersection; further alternations raise the Borel rank accordingly.
\end{proof}

\begin{remark}
\label{rem:standard-countable-coding}
For a countable finitary signature $L=\{f_i:i\in I\}$, a classical parameter space for
countable $L$-structures is the Polish space
\[
\mathrm{Str}_L(\N):=\prod_{i\in I}\N^{\N^{n_i}},
\]
equipped with the product of the discrete topologies, where a point
$\mathbf A=(f_i^{\mathbf A})_{i\in I}$ represents the $L$-structure on domain $\N$ with
basic operations $(f_i^{\mathbf A})$.
The descriptive set theory of countable structures is typically developed in this
coding (together with the natural logic action of $S_\infty$ and its orbit equivalence
relation of isomorphism); see, for instance,
Kechris~\cite{Kechris}, Becker--Kechris~\cite{BeckerKechris},
Friedman--Stanley~\cite{FriedmanStanley}, Hjorth~\cite{Hjorth}, and Gao~\cite{GaoDST}.

Our quotient coding via congruences on the free algebra $F_\infty$ is compatible with
the usual $\mathrm{Str}_L(\N)$-coding for \emph{infinite} quotients.
(Recall that $\mathrm{Str}_L(\N)$ codes structures on the countably infinite domain~$\N$;
finite quotients are handled directly by the congruence~$S$ and do not enter the
$\mathrm{Str}_L(\N)$ translation.)
Concretely, the map $\Theta$ of Definition~\ref{def:canonical-encoding} produces from
a congruence $S$ a canonical quotient structure on $\N$.
Conversely, given $\mathbf A\in\mathrm{Str}_L(\N)$, the evaluation homomorphism
$F_\infty\to \mathbf A$ sending $x_n\mapsto n$ has kernel a congruence, and this
assignment is Borel.
Thus one may freely translate Borel-complexity statements between the compact
congruence/normal-subgroup spaces used here and the more customary
$\mathrm{Str}_L(\N)$ parameter spaces.
\end{remark}

\begin{remark}\label{rem:novikov-coding}
In algorithmic group theory, Novikov's proof of the existence of finitely presented
groups with unsolvable word problem proceeds by manipulating presentations and relators
encoded as finite words in a free group; see~\cite{Novikov1955}.
From a modern descriptive-set-theoretic perspective, a (finite or countable) group
presentation $\langle X\mid R\rangle$ is determined by the subset $R$ of the countable
set of reduced words in the free group $F(X)$, hence by a point of the Cantor space
$2^{F(X)}$.
Passing to the normal closure $\langle\!\langle R\rangle\!\rangle\triangleleft F(X)$
yields a normal subgroup and therefore a quotient group
$F(X)/\langle\!\langle R\rangle\!\rangle$.
In this sense, Novikov's presentation-based arguments may be viewed (in hindsight) as
working effectively inside what we would now regard as a natural standard Borel
parameter space of countable groups \emph({e.g.}\ a subspace of $\NSub(F_\infty)$ as in
Theorem~\ref{thm:free-objects-closed}).

At the same time, Novikov's paper is not phrased in the language of Polish spaces or
Borel sets: the topology/standard-Borel viewpoint and its systematic deployment for
definability and classification problems belongs to the later descriptive set theory
literature (\emph{cf}.\ \cite{Kechris,BeckerKechris,Hjorth,GaoDST}).
\end{remark}

\begin{remark}\label{rem:disc-vs-cont}
For rings, commutativity is \emph{closed} (hence~$\Pi^0_1$):
\[
        \forall a,b\quad (ab,ba)\in\theta .
\]
For Banach algebras the corresponding condition
$\forall a,b\in D^A,\ \Phi(K,ab-ba)=0$
is~$\Pi^0_2$, since $K\mapsto\Phi(K,ab-ba)$ is continuous and universal
quantification over the countable dense set raises the complexity by one level.
\end{remark}

\section{Examples of quotient generators}\label{sec:examples}

\begin{remark}\label{rem:proj-gens}
We list standard choices of~$P$; all are separable and surject onto every separable
object in the category via a morphism whose kernel is the appropriate admissible ideal.

\begin{enumerate}[label=(\alph*), itemsep=.4em]
\item \emph{Banach algebras.}
Every separable Banach algebra is generated by a sequence of elements of norm at
most~$1$.  By linearising and completing the corresponding free object, one obtains a
projectively universal quotient generator.  Specifically, for the category of
\emph{non-unital} Banach algebras, let $S$ be the free semigroup on countably many
generators and take $P=\ell_1(S)$.  For the category of \emph{unital} Banach algebras,
one must instead take $S$ to be the free monoid on countably many generators (so the
empty word acts as the multiplicative identity), yielding the unital quotient generator
$P=\ell_1(S)$.  The fact that these free objects act as universally free quotient
generators is a direct consequence of general categorical principles surrounding UFOs;
see the author and Balcerzak~\cite{BalcerzakKania2023}.

\item \emph{$C^*$-algebras.}
Let $F_\infty$ be the free group on countably many generators and take
$P=C^*_{\max}(F_\infty)$ \cite[{\S}V.2.1]{Blackadar}.
For non-unital algebras one may take $C^*_{\max}(F_\infty)\otimes\mathcal K$.

\item \emph{Banach lattices.}
Take $P=\mathrm{FBL}(\ell_1)$; see \cite{FreeBLl1}.

\item \emph{TROs.}
A free separable TRO exists (Section~\ref{sec:TRO}) and is a quotient generator.

\item \emph{Operator systems.}
The free operator system on countably many generators exists~\cite{KPTT}.

\item \emph{Operator spaces.}
Pisier's non-commutative $\ell^1$,
$P=(\bigoplus_{n\in\N} M_n)_{\ell_1}$, is a quotient generator
\cite{EffrosRuanBook,PisierOS}.

\item \emph{Banach modules over a fixed separable Banach algebra~$A$.}
One may take $P=\ell_1(\N;A)$ with the obvious module structure.
\end{enumerate}
\end{remark}

\section{Banach algebras: complexity bounds}\label{sec:banach-complexity}

Let $A$ be a fixed separable Banach-algebra quotient generator and work on
$\Ideal(A)$ with the Wijsman topology.
Fix a countable dense multiplicatively closed $D^A$.

\begin{remark}\label{rem:rank-convention}
All admissible topologies on a given ideal space generate the same standard Borel
$\sigma$-algebra~\cite{GodefroySaintRaymond2018}.  In particular, the statement
``property $\mathcal P$ is Borel'' is \emph{topology-independent}: it holds for
one admissible topology iff it holds for all.
However, the \emph{Borel rank} --- the precise level $\Sigma^0_\alpha$ or
$\Pi^0_\alpha$ at which $\mathcal P$ sits --- depends on the choice of topology,
because it involves the interplay between open and closed sets, which can differ
from one admissible topology to another.

Unless explicitly stated otherwise, all references to specific Borel classes
$\Sigma^0_\alpha$, $\Pi^0_\alpha$ throughout
Sections~\ref{sec:banach-complexity}--\ref{sec:Cstar-properties} are with respect to
the Wijsman topology on the relevant ideal space $\Ideal(P)$.  The Wijsman topology
is the natural choice for rank computations because it makes the quotient-norm
functional $\Phi$ continuous (not merely Borel), so that strict inequalities in
$\Phi$ define open sets and non-strict inequalities define closed sets.
\end{remark}

\begin{convention}\label{conv:proper-quotients}
Whenever a property below explicitly involves the unit
(topological stable rank, direct finiteness, stable finiteness, simplicity, tracial
states, and so on), we tacitly restrict to proper ideals $K\subsetneq A$.
Equivalently, we ignore the singleton corresponding to the zero quotient.
Since that singleton is closed, none of the stated upper bounds is affected.
\end{convention}

\subsection{Commutativity and uniformity}

\begin{proposition}\label{prop:comm}
The set of $K$ such that $A/K$ is commutative is closed (so in particular $\Pi^0_2$) in the Wijsman topology.
\end{proposition}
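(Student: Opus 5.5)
The plan is to write the commutativity set as a countable intersection of closed sets, using only continuity of the quotient-norm functional. First I reduce commutativity of $A/K$ to a condition on the countable dense multiplicatively closed set $D^A$. Suppose $A/K$ is commutative. Then for all $a,b\in A$ we have $ab-ba\in K$, so in particular for all $a,b\in D^A$ we get $\Phi(K,ab-ba)=0$.

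For the converse, assume $\Phi(K,ab-ba)=0$ for all $a,b\in D^A$. Since $K$ is closed, this gives $ab-ba\in K$ for all $a,b\in D^A$. Given arbitrary $x,y\in A$, choose $a_n\to x$ and $b_n\to y$ in $D^A$. Continuity of multiplication gives $a_nb_n-b_na_n\to xy-yx$. Since $K$ is closed, $xy-yx\in K$. Surjectivity of the quotient map $A\to A/K$ then yields commutativity of $A/K$.

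So the set in question equals
\[
\bigcap_{a,b\in D^A}\{K\in\Ideal(A):\ \Phi(K,ab-ba)=0\}.
\]
Each set in the intersection is the preimage of the closed set $\{0\}$ under $K\mapsto\Phi(K,ab-ba)$. That map is Wijsman-continuous by Lemma~\ref{lem:Phi-continuous}, so each set is closed, as in Lemma~\ref{lem:wijsman-subbasic} with $V(ab-ba,0)$. A countable (indeed arbitrary) intersection of closed sets is closed, and hence the set is $\Pi^0_2$ a fortiori.

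Alternatively, the set is $\{K:\ \overline{[A,A]}\subseteq K\}$, which is closed by Lemma~\ref{lem:wijsman-approx}(ii) applied with the constant sequence $\overline{[A,A]}$ in place of $F_n$.

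There is no real obstacle here. The only point needing care is that the universal quantifier over $D^A$ does not raise the rank, because all the atoms are closed. This refines the bookkeeping of Remark~\ref{rem:disc-vs-cont}, which would only give $\Pi^0_2$. If the ambient space is taken to be $\Ideal(A)$ with its subspace topology, closedness is relative to $\Ideal(A)$, which is itself Wijsman-closed by Proposition~\ref{prop:ideal-closed}.
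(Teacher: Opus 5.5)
Your proposal is correct and follows the paper's proof. The paper likewise writes commutativity of $A/K$ as $\Phi(K,uv-vu)=0$ for all $u,v\in D^A$ and takes the countable intersection of these Wijsman-closed sets; your density-plus-closedness argument just spells out why testing on $D^A$ suffices. Your alternative via Lemma~\ref{lem:wijsman-approx}(ii) with a constant sequence is also valid, since the Wijsman topology on $\Ideal(A)$ is metrisable and so sequential closedness is enough.
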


\begin{proof}
$A/K$ is commutative if and only if $\Phi(K,uv-vu)=0$ for all $u,v\in D^A$.
Each condition is closed and we take a countable intersection.
\end{proof}

\begin{proposition}\label{prop:unif}
The set of $K$ such that $A/K$ is an abstract uniform algebra is closed in the Wijsman
topology.
\end{proposition}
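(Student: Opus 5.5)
The plan is to use the abstract characterisation of uniform algebras as those commutative unital Banach algebras satisfying the square-norm identity $\|x^2\|=\|x\|^2$ for all $x$. A commutative Banach algebra with this identity has spectral radius equal to the norm. Indeed, the identity gives $\|x^{2^k}\|=\|x\|^{2^k}$, hence $r(x)=\|x\|$. The Gelfand transform is then an isometric isomorphism onto a closed subalgebra of $C(\Delta)$ that separates points and contains constants, which is a uniform algebra. Conversely, every uniform algebra satisfies $\|f^2\|_\infty=\|f\|_\infty^2$. So the set in question is
\[
\{K:\ A/K \text{ commutative}\}\cap\{K:\ \|x^2+K\|=\|x+K\|^2\ \text{for all }x\in A\},
\]
with unitality handled by Convention~\ref{conv:proper-quotients}: we work with a unital generator and proper ideals, so $1+K$ is the unit. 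If the unit of the quotient must have norm one, that is automatic from the square identity applied to $x=1$ once $1+K\neq 0$.

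The first intersected set is closed by Proposition~\ref{prop:comm}. For the second, I replace the quantifier over $A$ by one over the countable dense set $D^A$, which may be taken closed under multiplication and rational combinations by Lemma~\ref{lem:dense-op-closed}. Suppose $\Phi(K,u^2)=\Phi(K,u)^2$ for all $u\in D^A$. Given $x\in A$, pick $u_n\in D^A$ with $u_n\to x$. Then $u_n+K\to x+K$ and $u_n^2+K\to x^2+K$ in $A/K$, by continuity of multiplication and the contractivity of the quotient map. Since the quotient norm is continuous, the identity passes to $x$. The second set is therefore
\[
\bigcap_{u\in D^A}\{K:\ \Phi(K,u^2)-\Phi(K,u)^2=0\}.
\]
Each map $K\mapsto \Phi(K,u^2)-\Phi(K,u)^2$ is continuous in the Wijsman topology by Lemma~\ref{lem:Phi-continuous}, so each zero set is closed. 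A countable intersection of closed sets is closed, which finishes the argument.

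The main obstacle is the characterisation step. We must confirm that $\|x^2\|=\|x\|^2$ together with commutativity is exactly what "abstract uniform algebra" means. The key point is that the Gelfand representation is isometric when $r(x)=\|x\|$, and this follows directly from the spectral radius formula. If the paper's definition of uniform algebra instead requires only an equivalent uniform norm, the argument would change, so I fix the isometric definition matching the "norm identity" phrasing in the abstract. Everything after that is routine continuity bookkeeping.
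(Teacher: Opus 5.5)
Your argument is correct and is essentially the paper's proof. Both reduce the square-norm identity to the countable dense set $D^A$ using continuity of multiplication and of the quotient norm, and then intersect the Wijsman-closed sets $\{K:\Phi(K,u^2)=\Phi(K,u)^2\}$ for $u\in D^A$.

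The only difference is that you also intersect with the closed commutativity set of Proposition~\ref{prop:comm}. The paper takes the square-norm identity alone as the definition and tacitly relies on the classical fact (Hirschfeld--\.Zelazko) that $r(x)=\|x\|$ already forces commutativity. Your extra conjunct is harmless, and it makes your Gelfand-theoretic characterisation self-contained.
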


\begin{proof}
Here ``abstract uniform algebra'' means a Banach algebra whose norm satisfies
\[
        \|z^2\|=\|z\|^2 \qquad(z\in B).
\]
Equivalently, by the standard abstract characterisation of uniform algebras, such a
Banach algebra is isometrically isomorphic to a closed subalgebra of a commutative
$C(K)$ with the supremum norm.  Thus the descriptive-set-theoretic condition to be
checked is exactly the square-norm identity.

Let
\[
        E_K=\{z\in A/K:\|z^2\|=\|z\|^2\}.
\]
Since multiplication and the norm are continuous in $A/K$, the set $E_K$ is closed in
$A/K$.  Hence the identity holds for every element of $A/K$ if and only if it holds on
the dense set $D^A+K$.  For each fixed $u\in D^A$, the condition
\[
        \Phi(K,u^2)=\Phi(K,u)^2
\]
is closed, since both sides are continuous functions of~$K$ in the Wijsman topology.
The desired class is the countable intersection of these closed sets.
\end{proof}

\subsection{Right topological stable rank}

The \emph{topological stable rank} of a unital Banach algebra $B$, denoted $\mathrm{tsr}(B)$,
is the smallest integer $n\ge 1$ such that the set of right-unimodular $n$-tuples
$U_n(B)$ is dense in $B^n$ (see Rieffel~\cite{Rieffel1983}).  For $C^*$-algebras, the
topological stable rank coincides with the Bass stable
rank~\cite{HermanVaserstein1984}.

\begin{proposition}\label{prop:tsr}
Fix $n\in\N$.  The set of $K\in\Ideal(A)$ such that $A/K$ is unital and
$\mathrm{tsr}(A/K)\le n$ is $G_\delta$ in the Wijsman topology.
\end{proposition}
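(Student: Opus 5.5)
The plan is to write the condition ``$\mathrm{tsr}(A/K)\le n$'' as a $\forall\exists$ statement over the countable dense set $D^A$, in which the innermost matrix consists only of \emph{strict} inequalities between Wijsman-continuous functions $K\mapsto\Phi(K,\cdot)$. Such an expression is a countable intersection of countable unions of open sets, hence $G_\delta$.

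\emph{Unitality.} I would first dispose of the unitality clause. The Banach-algebra quotient generator relevant here is the unital one, $\ell_1$ of the free monoid (Remark~\ref{rem:proj-gens}(a)), so $A$ has a unit $1$. For every proper $K$, the quotient $A/K$ is then unital with unit $1+K$. By Convention~\ref{conv:proper-quotients} we discard the closed singleton $\{A\}$, so ``$A/K$ is unital'' imposes no further condition. After rescaling, I may assume $D^A$ contains $1$ and is closed under sums and products, so every finite sum $\sum_i v_iw_i-1$ with $v_i,w_i\in D^A$ is again a term over $D^A$.

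\emph{Open approximate witness.} The key step is to replace exact unimodularity by an open approximate condition. I claim that for $\bar v\in (A/K)^n$ the following are equivalent:
\begin{itemize}
\item $\bar v$ is right-unimodular, i.e.\ $\sum_i v_ib_i=1$ for some $\bar b\in (A/K)^n$;
\item there is $\bar w\in (D^A)^n$ with $\Phi\bigl(K,\sum_i \tilde v_i w_i-1\bigr)<1$, where $\tilde v_i\in A$ are lifts of the $v_i$.
\end{itemize}
For one direction: if $\|x-1\|<1$ in $A/K$, then $x=\sum_i v_iw_i$ is invertible by the Neumann series, and $b_i=w_ix^{-1}$ gives an exact witness. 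For the other direction: approximate an exact witness $\bar b$ by elements $w_i\in D^A+K$, using continuity of multiplication.

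The same Neumann argument shows that $U_n(A/K)$ is open in $(A/K)^n$. Consequently, if some unimodular tuple lies within $\varepsilon$ of $\bar u$, then a tuple $\bar v$ from $D^A+K$ still lies within $\varepsilon$ of $\bar u$ (strictly) and is still unimodular.

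\emph{The $\forall\exists$ formula.} Density of $U_n(A/K)$ in $(A/K)^n$ is equivalent to every $\varepsilon$-ball around tuples from the dense set $(D^A+K)^n$ meeting $U_n$. Thus
\[
\mathrm{tsr}(A/K)\le n\iff \forall \bar u\in (D^A)^n\ \forall \varepsilon\in\Q_{>0}\ \exists \bar v,\bar w\in (D^A)^n:
\ \max_i\Phi(K,u_i-v_i)<\varepsilon\ \wedge\ \Phi\Bigl(K,\sum_i v_iw_i-1\Bigr)<1 .
\]
For fixed data, each conjunct is open by Lemma~\ref{lem:Phi-continuous} and Lemma~\ref{lem:wijsman-subbasic}. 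The countable $\exists$ then gives an open set, and the countable $\forall$ gives a $G_\delta$ set. This is exactly the rank bookkeeping of Theorem~\ref{thm:definability-rank}, with strict atoms.

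\emph{Main obstacle.} The main obstacle is the middle step: making sure the exact existential ``there exists $\bar b$ with $\sum v_ib_i=1$'' really is captured by strict, hence open, inequalities on dense data. Two points need care here. First, the strictness of ``$<\varepsilon$'' and ``$<1$'' is essential, since the non-strict versions would only give closed atoms and a worse bound. Second, I must check that the right/left convention for unimodularity matches Rieffel's, swapping the order $v_iw_i$ versus $w_iv_i$ if needed.
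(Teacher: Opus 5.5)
Your proposal is correct and is essentially the paper's proof: the paper likewise intersects, over $\bar a\in(D^A)^n$ and $m\in\N$, the open sets of $K$ admitting dense witnesses $\bar b,\bar v$ with $\max_j\Phi(K,a_j-b_j)<1/m$ and $\Phi\bigl(K,\sum_j b_jv_j-1\bigr)<1$. It uses the same Neumann-series correction $s^{-1}$ to obtain exact unimodularity in one direction and dense approximation of an exact witness in the other. Your explicit treatment of unitality, via the unital generator and Convention~\ref{conv:proper-quotients}, only makes precise what the paper's proof tacitly assumes when it writes $1\in A$.
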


\begin{proof}
Write $B=A/K$.  The condition $\mathrm{tsr}(B)\le n$ says that the right-unimodular
$n$-tuples are dense in~$B^n$.  It is enough to approximate tuples from $(D^A)^n$.
For fixed $\bar a=(a_1,\ldots,a_n)\in(D^A)^n$ and $m\in\N$, let $U_{\bar a,m}$ be
the set of all $K$ for which there exist $\bar b,\bar v\in(D^A)^n$ such that
\[
        \max_j \Phi(K,a_j-b_j)<\frac1m,
        \qquad
        \Phi\Bigl(K,\sum_{j=1}^n b_jv_j-1\Bigr)<1 .
\]
For fixed witnesses this is open, and therefore $U_{\bar a,m}$ is open.  If the second
inequality holds in $B$, then $s=\sum_j(b_j+K)(v_j+K)$ is invertible and
\[
        \sum_j (b_j+K)\bigl((v_j+K)s^{-1}\bigr)=1,
\]
so $\bar b+K$ is right-unimodular.  Conversely, if a right-unimodular tuple
approximating $\bar a+K$ exists, approximate both that tuple and a right inverse by
members of $D^A$ to obtain the displayed strict inequalities.  Hence
\[
        \{K:\mathrm{tsr}(A/K)\le n\}
        =\bigcap_{\bar a\in(D^A)^n}\bigcap_{m\in\N} U_{\bar a,m},
\]
a countable intersection of open sets.
\end{proof}

\subsection{Uniformly open multiplication}

Uniformly open multiplication in Banach algebras has been studied
in connection with topological stable rank and differential
embeddings; see~\cite{DragaKania,KaniaMaslany}.

\begin{definition}\label{def:moduli}
Let
\[
\mathcal D:=\{\delta:(0,1]\cap\Q\to(0,1]\cap\Q:\delta \text{ is non-decreasing}\}.
\]
We equip the range $(0,1]\cap\Q$ with the discrete topology and endow
$\mathcal D$ with the subspace topology inherited from the product space
\[
\prod_{r\in(0,1]\cap\Q}\bigl((0,1]\cap\Q\bigr)_{\mathrm{disc}}.
\]
\end{definition}

\begin{remark}\label{rem:moduli-polish}
The space $\mathcal D$ is Polish.  Indeed, since $(0,1]\cap\Q$ is countably infinite,
the ambient product
\[
\prod_{r\in(0,1]\cap\Q}\bigl((0,1]\cap\Q\bigr)_{\mathrm{disc}}
\]
is homeomorphic to the Baire space $\N^\N$.
Moreover,
\[
\mathcal D=
\bigcap_{\substack{r,s\in(0,1]\cap\Q\\ r\le s}}
\{\delta:\delta(r)\le \delta(s)\},
\]
and for each fixed pair $(r,s)$ the set on the right is closed in the product topology
(since the coordinate spaces are discrete).  Hence $\mathcal D$ is a closed subspace of
a Polish space, and therefore Polish.
\end{remark}

\begin{definition}\label{def:UAO-UO}
Let $(X,d_X)$ and $(Y,d_Y)$ be metric spaces and let $f:X\to Y$ be continuous.
Fix $\delta\in\mathcal D$.

\begin{enumerate}
\item We say that $f$ is \emph{$\delta$-uniformly almost open} if for every
$r\in(0,1]\cap\Q$ and every $x\in X$,
\[
B_Y\bigl(f(x),r\bigr)\subseteq \overline{f\bigl(B_X(x,\delta(r))\bigr)}.
\]

\item We say that $f$ is \emph{$\delta$-uniformly open} if for every
$r\in(0,1]\cap\Q$ and every $x\in X$,
\[
B_Y\bigl(f(x),r\bigr)\subseteq f\bigl(B_X(x,\delta(r))\bigr).
\]

\item We say that $f$ is \emph{uniformly almost open} (resp.\ \emph{uniformly open})
if it is $\delta$-uniformly almost open (resp.\ $\delta$-uniformly open) for some
$\delta\in\mathcal D$.
\end{enumerate}
\end{definition}

\begin{lemma}\label{lem:schauder}
Let $X$ and $Y$ be metric spaces and let $f:X\to Y$ be continuous.
If $X$ is complete and $f$ is uniformly almost open, then $f$ is uniformly open.
\end{lemma}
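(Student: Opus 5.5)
This is a Schauder-type iteration, as in the proof of the open mapping theorem. Assume $f$ is $\delta$-uniformly almost open. I aim to show that $f$ is $\delta'$-uniformly open for an explicit modulus $\delta'\in\mathcal D$ built from $\delta$, say
\[
\delta'(r):=\sum_{k\ge 0}\delta(2^{-k}r),
\]
suitably rounded up to a rational in $(0,1]$ and made non-decreasing.

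A caveat on this choice: if the series diverges or exceeds $1$, I cap $\delta'$ at $1$. The statement is then only meaningful if $\delta(r)\to 0$ fast enough as $r\to 0$. Alternatively, I would prove the slightly weaker inclusion
\[
B_Y(f(x),r)\subseteq f\bigl(B_X(x,\delta''(r))\bigr),
\]
with $\delta''(r)$ taken to be any rational strictly larger than $\sum_k\delta(r_k)$ along a fast-decreasing sequence $r_k$ with $r_0=r$. Since the definition of $\mathcal D$ only asks for monotone rational-valued moduli, I will choose the $r_k$ freely and take $\delta'(r)$ to be $\min\bigl(1,\lceil\cdot\rceil_{\Q}\bigr)$ of the resulting sum. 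Non-decreasingness follows because each term $\delta(r_k)$ is non-decreasing in $r$ when $r_k=2^{-k}r$.

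\textbf{Main step.} Fix $x\in X$, $r$, and $y\in B_Y(f(x),r)$. Pick $r_0<r$ rational with $d_Y(y,f(x))<r_0$, and set $r_k=2^{-k}r_0$. Inductively build points $x_0=x,x_1,x_2,\dots$ with
\[
d_X(x_k,x_{k+1})<\delta(r_k),\qquad d_Y\bigl(f(x_{k+1}),y\bigr)<r_{k+1}.
\]
The induction works as follows. At stage $k$ we have $y\in B_Y(f(x_k),r_k)$. The almost-open inclusion puts $y$ in the closure of $f\bigl(B_X(x_k,\delta(r_k))\bigr)$, so we may choose $x_{k+1}$ in that ball with $f(x_{k+1})$ within $r_{k+1}$ of $y$.

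Since $\sum_k\delta(r_k)<\infty$ (this is the point that needs the modulus to be summable, and I would use the capping/choice above to ensure it), $(x_k)$ is Cauchy. Completeness of $X$ gives a limit $x_\infty$ with
\[
d_X(x,x_\infty)\le\sum_k\delta(r_k),
\]
and continuity of $f$ together with $f(x_k)\to y$ gives $f(x_\infty)=y$.

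\textbf{Obstacles.} The main obstacle is the bookkeeping on the modulus. The geometric series bound requires summability of $\delta(2^{-k}r)$, which a general non-decreasing $\delta$ need not satisfy; for example, $\delta$ constant equal to $1$ gives a divergent sum. I would handle this by noting that in that case any $x_{k+1}$ may be chosen with much smaller radius. Concretely, apply the almost-open inclusion at stage $k$ with radius $r_k$ but reuse monotonicity to select $r_{k}$ decaying so fast that the $\delta(r_k)$ sum to at most $2\delta(r_0)$ whenever $\delta(s)\to0$ as $s\to0$. Otherwise I cap at $1$, accepting the non-strict inequality $\le$ and then enlarging to a rational strictly bigger value. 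Strictness of the final inclusion into an open ball is arranged by the slack $r_0<r$ and by taking $\delta'(r)$ strictly larger than the sum.
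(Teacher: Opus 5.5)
Your iteration is the classical Schauder argument, which is what the paper relies on (it only cites Schauder's lemma from Draga--Kania). However, your proof breaks exactly at the step you flag, and neither of your repairs works. The Cauchy property of $(x_k)$ needs $\sum_k\delta(r_k)<\infty$, and no choice of the $r_k$ gives this unless $\inf_r\delta(r)=0$. The paper's $\mathcal D$ allows moduli bounded away from $0$, e.g.\ $\delta\equiv1$. Capping $\delta'$ at $1$ only changes the final radius, not the step sizes $d_X(x_k,x_{k+1})$, so there is still no limit point. Better bookkeeping cannot fix this, because with the definitions as written the statement is false. Let $X=\mathbb{N}$ with $d(m,n)=\tfrac12$ for $m\neq n$ (a complete metric), $Y=[0,\tfrac12]$, and $f$ a bijection of $\mathbb{N}$ onto $\mathbb{Q}\cap[0,\tfrac12]$. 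Since $B_X(x,1)=X$ and $\overline{f(X)}=Y$, $f$ is $\delta$-uniformly almost open for $\delta\equiv1$. Yet every ball $B_Y(f(x),r)$ contains irrationals, which are not in $f(X)$, so $f$ is not $\delta'$-uniformly open for any $\delta'\in\mathcal D$.

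The cap at $1$ is a second, independent gap. Every $\delta'\in\mathcal D$ has $\delta'(1)\le1$, and $\delta'\equiv1$ is allowed. So uniform openness in this sense just says $B_Y(f(x),1)\subseteq f(B_X(x,1))$ for all $x$. Once your bound $\sum_k\delta(r_k)$ reaches $1$, there is no strictly larger admissible value to enlarge to. This fails even when $\inf\delta=0$. Take $f(t)=t/2$ from $X=[0,1]$ onto $Y=[0,\tfrac12]$ and $\delta(r)=\min\{1,2r\}$. Then $f$ is $\delta$-uniformly almost open, but $\tfrac12\in B_Y(f(0),1)\setminus f(B_X(0,1))$.

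What your argument does prove, under the standard hypothesis $\inf_r\delta(r)=0$ and allowing moduli with values above $1$, is the following. Take $r_0=r$, and before choosing $x_{k+1}$ pick a rational $r_{k+1}\le2^{-k-1}$ with $\delta(r_{k+1})\le2^{-k-1}\delta(r)$. Then $d_X(x,x_\infty)<\delta(r)+\sum_{k\ge1}2^{-k}\delta(r)=2\delta(r)$, so $B_Y(f(x),r)\subseteq f(B_X(x,2\delta(r)))$ and $2\delta$ is a non-decreasing modulus. This also fixes two smaller points in your write-up. With the fixed choice $r_k=2^{-k}r_0$, the series can diverge even when $\delta\to0$ slowly. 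And once the $r_k$ are chosen adaptively, your monotonicity argument for $\delta'$ no longer applies.
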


\begin{proof}
This is Schauder's lemma; see the author and Draga~\cite[Lemma~2.1]{DragaKania}.
\end{proof}

\begin{lemma}\label{lem:mult-balls-cont}
Let $B$ be a Banach algebra and let $m:B\times B\to B$ be multiplication.
Equip $B\times B$ with the max metric
\[
d_\infty\bigl((a,b),(a',b')\bigr):=\max\{\|a-a'\|,\|b-b'\|\}.
\]
Fix $\varepsilon>0$ and define a set-valued map
\[
F_\varepsilon:B\times B\to\mathscr F(B),\qquad
F_\varepsilon(a,b):=\overline{m\bigl(B_\varepsilon(a)\times B_\varepsilon(b)\bigr)}.
\]
Then $F_\varepsilon$ is Wijsman-continuous: for each fixed $y\in B$ the map
$(a,b)\mapsto \dist\bigl(y,\,F_\varepsilon(a,b)\bigr)$
is continuous on $B\times B$.
\end{lemma}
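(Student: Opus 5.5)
The plan is to reduce the statement to a local Lipschitz estimate for a single explicit infimum. Fix $y\in B$ and $\varepsilon>0$. Since the distance to a set equals the distance to its closure, we have
\[
g(a,b):=\dist\bigl(y,F_\varepsilon(a,b)\bigr)=\inf\bigl\{\|y-xz\|:\ \|x-a\|<\varepsilon,\ \|z-b\|<\varepsilon\bigr\}.
\]
I will work with this formula throughout and never touch the closure again.

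The key step is a translation trick that transports admissible witnesses from one centre to another. Fix $(a,b)$ and let $(a',b')$ satisfy $d_\infty\bigl((a,b),(a',b')\bigr)=\eta\le 1$. Take any witness pair $(x',z')$ for $(a',b')$, so $\|x'-a'\|<\varepsilon$ and $\|z'-b'\|<\varepsilon$. Set
\[
x:=x'+(a-a'),\qquad z:=z'+(b-b').
\]
Then $\|x-a\|=\|x'-a'\|<\varepsilon$ and $\|z-b\|=\|z'-b'\|<\varepsilon$, so $(x,z)$ is a witness pair for $(a,b)$. Moreover
\[
\|xz-x'z'\|\le\|x-x'\|\,\|z\|+\|x'\|\,\|z-z'\|\le \eta\bigl(\|b\|+\varepsilon\bigr)+\eta\bigl(\|a\|+\varepsilon+1\bigr).
\]
Writing $C:=\|a\|+\|b\|+2\varepsilon+1$, this gives $\|y-xz\|\le\|y-x'z'\|+C\eta$. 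Taking the infimum over $(x',z')$ yields
\[
g(a,b)\le g(a',b')+C\eta .
\]

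For the reverse inequality, I will run the same translation in the opposite direction. Given a witness $(x,z)$ for $(a,b)$, the pair $\bigl(x+(a'-a),\,z+(b'-b)\bigr)$ is a witness for $(a',b')$, and the same estimate applies with the same constant $C$, since $\|x\|\le\|a\|+\varepsilon$ and $\|z'\|\le\|b\|+\varepsilon+\eta$. This gives $g(a',b')\le g(a,b)+C\eta$.

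Combining the two inequalities, $|g(a,b)-g(a',b')|\le C\,d_\infty\bigl((a,b),(a',b')\bigr)$ on the $d_\infty$-ball of radius $1$ around $(a,b)$. So $g$ is locally Lipschitz, and in particular continuous on $B\times B$, which is the asserted Wijsman-continuity of $F_\varepsilon$.

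The only point needing care is that $C$ must be controlled uniformly near the base point. This is handled by bounding $\|z'\|\le\|b\|+\varepsilon+\eta$ with $\eta\le1$, which is where the restriction to a neighbourhood enters. No completeness of $B$ is used in this argument.
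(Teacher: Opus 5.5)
Your proof is correct and is essentially the paper's argument: both shift witnesses by $(a'-a,\,b'-b)$ and bound the change in the product using boundedness of the balls, via $(x+u)(z+v)-xz=xv+uz+uv$. The only difference is packaging. The paper shows that the Hausdorff distance between the sets $m\bigl(B_\varepsilon(a_n)\times B_\varepsilon(b_n)\bigr)$ and $m\bigl(B_\varepsilon(a)\times B_\varepsilon(b)\bigr)$ tends to $0$ and then uses that Hausdorff convergence implies Wijsman convergence. You apply the same shift directly to $\dist(y,\cdot)$ and obtain an explicit local Lipschitz constant.
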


\begin{proof}
Let $(a_n,b_n)\to(a,b)$ in $d_\infty$ and set $u_n:=a_n-a$, $v_n:=b_n-b$.
Fix $x\in B_\varepsilon(a)$ and $y\in B_\varepsilon(b)$. Then
$x+u_n\in B_\varepsilon(a_n)$, $y+v_n\in B_\varepsilon(b_n)$, and
\[
(x+u_n)(y+v_n)-xy = x v_n + u_n y + u_n v_n.
\]
Since $x$ and $y$ range over the bounded sets $B_\varepsilon(a)$ and $B_\varepsilon(b)$,
there is a constant $C=C(a,b,\varepsilon)$ such that
\[
\sup_{\substack{x\in B_\varepsilon(a)\\y\in B_\varepsilon(b)}}
\|(x+u_n)(y+v_n)-xy\|
\le C\bigl(\|u_n\|+\|v_n\|\bigr)+\|u_n\|\,\|v_n\|\to 0.
\]
The same estimate holds in the reverse direction (shifting from $(a_n,b_n)$ to $(a,b)$).
Hence the Hausdorff distance between the bounded sets
$m(B_\varepsilon(a_n)\times B_\varepsilon(b_n))$ and
$m(B_\varepsilon(a)\times B_\varepsilon(b))$
tends to $0$, and therefore $F_\varepsilon(a_n,b_n)\to F_\varepsilon(a,b)$
in the Wijsman topology.
\end{proof}

\begin{proposition}\label{prop:UO}
Let $A$ be a separable Banach-algebra quotient generator and work on $\Ideal(A)$
with the Wijsman topology. Fix a countable dense multiplicatively closed $D^A$ and fix
$\delta\in\mathcal D$.

\begin{enumerate}
\item The set of $K\in\Ideal(A)$ such that multiplication in $B=A/K$ is
$\delta$-uniformly almost open is $\Pi^0_3$.

\item The set of $K\in\Ideal(A)$ such that multiplication in $A/K$ is
uniformly open (\emph{i.e.}\ admits some modulus) is analytic.
\end{enumerate}
\end{proposition}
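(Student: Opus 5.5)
For part~(1) the plan is to unfold Definition~\ref{def:UAO-UO} in $B=A/K$ and reduce every quantifier to the countable dense set $D^A$, so that only quotient norms $\Phi(K,\cdot)$ of $D^A$-terms occur. By continuity of multiplication, $\delta$-uniform almost openness is equivalent to the following. For every rational $r\in(0,1]$, every $a,b\in D^A$, every $y\in D^A$ with $\Phi(K,y-ab)<r$, and every rational $\eta>0$, there exist $a',b'\in D^A$ with
\[
\Phi(K,a'-a)<\delta(r),\qquad \Phi(K,b'-b)<\delta(r),\qquad \Phi(K,a'b'-y)<\eta.
\]
The inner existential clause is a countable union of sets that are open in the Wijsman topology, by Lemma~\ref{lem:Phi-continuous} and Lemma~\ref{lem:wijsman-subbasic}; hence it is open. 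The antecedent $\Phi(K,y-ab)<r$ is open, so the implication ``antecedent $\Rightarrow$ open set'' is closed $\cup$ open, which is $\Delta^0_2$ and in particular $\Sigma^0_2$. Intersecting over the countable data $(r,a,b,y,\eta)$ therefore gives a $\Pi^0_3$ set, consistent with the $\forall\exists$ count in Theorem~\ref{thm:definability-rank}.

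The main step is to justify this reduction to dense data. For the direction from the property to the countable scheme, pick $x'\in B_\delta(a)$ and $y'\in B_\delta(b)$ (in $B$) with $x'y'$ close to $y+K$, then perturb $x',y'$ to elements of $D^A+K$. The strict inequalities survive small perturbations, and continuity of multiplication controls the error in the product. For the converse, take arbitrary $x=(a_0,b_0)\in B\times B$ and $z\in B_B(a_0b_0,r)$. Choose a slightly smaller rational $r'<r$ with $z\in B(a_0b_0,r')$, and approximate $a_0,b_0,z$ by $a,b,y\in D^A+K$ so that $\Phi(K,y-ab)<r'$. The scheme then yields points in $m(B_{\delta(r')}(a)\times B_{\delta(r')}(b))$ arbitrarily close to $y$. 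Since $\delta$ is non-decreasing, $\delta(r')\le\delta(r)$, and because $a,b$ are close to $a_0,b_0$ the balls essentially nest. The closure in the definition is exactly what absorbs the remaining slack; Lemma~\ref{lem:mult-balls-cont}, which gives Wijsman continuity of $(a,b)\mapsto\overline{m(B_\varepsilon(a)\times B_\varepsilon(b))}$, makes this precise.

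I expect the ball-nesting bookkeeping to be the main obstacle. The issue is that $B_{\delta(r)}(a)$ need not contain $B_{\delta(r)}(a_0)$ when $a\neq a_0$. I would handle it in one of two ways: by monotonicity, using the slack between $r'$ and $r$, or, if $\delta$ is constant on a neighbourhood, by applying Lemma~\ref{lem:mult-balls-cont} to shift balls at the cost of an arbitrarily small error that the closure absorbs. If this fails for a given $\delta$, the fallback is to express the condition with $\le\delta(r)$ and closed balls, which leaves the rank count unchanged.

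For part~(2), Lemma~\ref{lem:schauder} applies because $B\times B$ is complete, so uniform openness is equivalent to uniform almost openness for some $\delta\in\mathcal D$. The set in question is therefore the projection to $\Ideal(A)$ of
\[
\{(K,\delta)\in\Ideal(A)\times\mathcal D:\ \text{multiplication in }A/K\text{ is }\delta\text{-uniformly almost open}\}.
\]
The scheme of part~(1) is uniform in $\delta$: each clause depends on $\delta$ only through the single coordinate $\delta(r)$, which is continuous on $\mathcal D$ because $\mathcal D$ carries the discrete-product topology. So the same countable Boolean combination defines a Borel (indeed $\Pi^0_3$) subset of the Polish space $\Ideal(A)\times\mathcal D$ (Remark~\ref{rem:moduli-polish}). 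Its projection is analytic.
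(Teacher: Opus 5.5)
Your proposal is correct and follows the paper's proof essentially step for step. For (1) it uses the same clause $\Phi(K,y-ab)\ge r\ \lor\ \exists a',b'\,(\ldots)$, which is closed $\cup$ open and hence $\Sigma^0_2$, intersected over countable data, with the same reduction to dense data via Lemma~\ref{lem:mult-balls-cont}; for (2) it projects the Borel set in $\mathcal D\times\Ideal(A)$ and applies Lemma~\ref{lem:schauder}. The ball-nesting worry needs no case distinction: translation by $a_0-a$ (resp.\ $b_0-b$) maps $B_{\delta(r)}(a)$ onto $B_{\delta(r)}(a_0)$ and moves products by a uniformly small amount, which the closure absorbs. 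So the auxiliary $r'$, the ``constant on a neighbourhood'' caveat and the closed-ball fallback can all be dropped; the fallback would in any case characterise a different property. The slack between $r'$ and $r$ would not help on its own anyway, since $\delta(r')\le\delta(r)$ gives no room to move the centres.
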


\begin{proof}
Write $B=A/K$ and let $m_B:B\times B\to B$ be multiplication, equipped with the max metric
on $B\times B$.

We begin by constructing a countable $\Phi$-scheme that captures
$\delta$-uniform almost openness.
Fix $r\in(0,1]\cap\Q$, $a,b,y\in D^A$, and $n\in\N$.
Consider the condition on $K$:
\begin{multline}\label{eq:UAO-scheme}
\Phi(K,y-ab)\ge r
\quad\lor\\
\exists a',b'\in D^A:\ 
\Phi(K,a'-a)<\delta(r)\ \wedge\ \Phi(K,b'-b)<\delta(r)
\ \wedge\ \Phi(K,a'b'-y)<\tfrac1n .
\end{multline}
For fixed $a',b'$, the conjunction on the right is a finite intersection of strict
inequalities in Wijsman-continuous functions of $K$, hence is open; therefore the right
disjunct is open (countable union over $a',b'\in D^A$).
The left disjunct is closed. Hence \eqref{eq:UAO-scheme} defines a $\Sigma^0_2$ subset of
$\Ideal(A)$.

Let $\mathcal U_\delta$ be the intersection of \eqref{eq:UAO-scheme} over all
$(r,a,b,y,n)\in ((0,1]\cap\Q)\times(D^A)^3\times\N$.
Then $\mathcal U_\delta$ is a countable intersection of $\Sigma^0_2$ sets, hence
$\mathcal U_\delta\in\Pi^0_3$.

We now verify that $\mathcal U_\delta$ coincides with
$\delta$-uniform almost openness of $m_B$.
Fix $r\in(0,1]\cap\Q$ and set $\varepsilon=\delta(r)$.
For $a,b\in B$, define the closed set
\[
F_\varepsilon(a,b):=\overline{m_B\bigl(B_\varepsilon(a)\times B_\varepsilon(b)\bigr)}
\subseteq B.
\]
By Lemma~\ref{lem:mult-balls-cont}, for fixed $\varepsilon$ the map
$(a,b)\mapsto F_\varepsilon(a,b)$
is Wijsman-continuous, and hence the function
$(a,b,z)\mapsto \dist\bigl(z,F_\varepsilon(a,b)\bigr)$
is continuous on $B\times B\times B$.

Assume $K\in\mathcal U_\delta$.
Take $a,b\in D^A$ (viewed in $B$) and $z\in B$ with $\|z-ab\|<r$.
Since $D^A+K$ is dense in $B$ and $F_\varepsilon(a,b)$ is closed, it suffices to show
$z\in F_\varepsilon(a,b)$ for $z$ ranging over $D^A+K$.
Fix $y\in D^A$ with $\Phi(K,y-ab)<r$.
Then for each $n$ the left disjunct in \eqref{eq:UAO-scheme} fails, and we obtain
$a'_n,b'_n\in D^A$ with
\[
\Phi(K,a'_n-a)<\varepsilon,\qquad \Phi(K,b'_n-b)<\varepsilon,\qquad
\Phi(K,a'_nb'_n-y)<\tfrac1n.
\]
In $B$ this means $(a'_n+K)\in B_\varepsilon(a+K)$, $(b'_n+K)\in B_\varepsilon(b+K)$ and
$\|(a'_n+K)(b'_n+K)-(y+K)\|<1/n$, hence $y+K\in F_\varepsilon(a+K,b+K)$.
Density of $D^A+K$ now gives $B_r(ab)\subseteq F_\varepsilon(a,b)$ for all
$a,b\in D^A+K$.

If $\delta$-uniform almost openness failed somewhere in $B\times B$, there would exist
$a,b\in B$ and $z\in B$ with $\|z-ab\|<r$ but $\dist(z,F_\varepsilon(a,b))>0$.
By continuity of $(a,b,z)\mapsto (\|z-ab\|,\dist(z,F_\varepsilon(a,b)))$ and density of
$D^A+K$, such a failure would already occur for some $a,b,z\in D^A+K$, a contradiction.
Therefore multiplication in $B$ is $\delta$-uniformly almost open.

Conversely, if multiplication in $B$ is $\delta$-uniformly almost open, then for any
$r,a,b,y,n$ with $\Phi(K,y-ab)<r$ we have $y+K\in F_{\delta(r)}(a+K,b+K)$, so there exist
$a',b'\in B$ within $\delta(r)$ of $a,b$ with $(a'b')+K$ within $1/n$ of $y+K$.
Approximating $a',b'$ by elements of $D^A$ inside the open balls and using continuity of
multiplication yields the right-hand disjunct in \eqref{eq:UAO-scheme}. Hence
$K\in\mathcal U_\delta$.
This proves~(1).

For genuine uniform openness, let
\[
\mathcal U:=\bigl\{(\delta,K)\in\mathcal D\times\Ideal(A):
K\in\mathcal U_\delta\bigr\}.
\]
Unwinding \eqref{eq:UAO-scheme} shows $\mathcal U$ is Borel in $\mathcal D\times\Ideal(A)$.
Hence its projection $\pi(\mathcal U)\subseteq\Ideal(A)$ is analytic.
If multiplication in $A/K$ is uniformly open, it is in particular uniformly almost open, so
$K\in\pi(\mathcal U)$.
Conversely, if $K\in\pi(\mathcal U)$ then multiplication in $A/K$ is uniformly almost open;
since $(A/K)\times(A/K)$ is complete, Lemma~\ref{lem:schauder} yields that multiplication is
uniformly open.
This proves~(2).
\end{proof}

\subsection{Pure infiniteness and norm control}\label{subsec:PI}

Let $B$ be a unital Banach algebra.  Following Daws--Horv\'ath~\cite{DawsHorvath},
define for $a\in B$
\[
C_{\mathrm{pi}}^B(a):=\inf\{\|b\|\,\|c\|:\ b,c\in B,\ bac=1\}\in(0,\infty].
\]
In particular, $C_{\mathrm{pi}}^B(a)<\infty$ iff $1\in BaB$.

\begin{definition}\label{def:PI-unif}
A unital Banach algebra $B$ is \emph{purely infinite} (in the sense of
\cite{DawsHorvath}) if $B\not\cong\C$ and $C_{\mathrm{pi}}^B(a)<\infty$ for every
non-zero $a\in B$.

We say that $B$ is \emph{uniformly purely infinite} if $B\not\cong\C$ and there exists
$M\in\N$ such that
\[
\sup\{C_{\mathrm{pi}}^B(a):\ \|a\|=1\}\le M.
\]
By \cite[Prop.~2.2]{DawsHorvath}, uniform pure infiniteness is equivalent to pure
infiniteness of every ultrapower $B^{\mathcal U}$ for countably incomplete ultrafilters.
\end{definition}

\begin{proposition}\label{prop:UPI}
Let $A$ be a unital Banach-algebra quotient generator and work on $\Ideal(A)$ with
the Wijsman topology.  The set of $K$ such that $A/K$ is uniformly purely infinite
is $\Sigma^0_3$.
\end{proposition}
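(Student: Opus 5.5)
We will express uniform pure infiniteness as $\exists M\,\forall a\,\exists b,c$ and check that the innermost condition is open, giving $\Sigma^0_3$. Write $B=A/K$. Scaling shows that $C_{\mathrm{pi}}^B(ta)=t^{-1}C_{\mathrm{pi}}^B(a)$, so the bound $C_{\mathrm{pi}}^B(a)\le M$ for $\|a\|=1$ is equivalent to
\[
1\in\overline{\{bac:\ \|b\|\,\|c\|<M'\|a\|^{-1}\}}
\]
for all non-zero $a$, with some $M'$ slightly larger than $M$. For the dense-set reduction we will use a Neumann-series perturbation: if $\|bac-1\|<1/2$, then $s=bac$ is invertible with $\|s^{-1}\|\le 2$, and $b(a c s^{-1})=1$. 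This costs at most a factor of $2$ in the norm product. We therefore propose the following scheme, with $M$ ranging over $\N$:
\[
\mathcal U_M:=\bigcap_{a\in D^A}\Bigl(\{K:\Phi(K,a)=0\}\ \cup\ \bigcup_{b,c\in D^A}\bigl\{K:\ \Phi(K,b)\Phi(K,c)<M\,\Phi(K,a),\ \Phi(K,bac-1)<\tfrac12\bigr\}\Bigr).
\]
We also intersect with the condition $A/K\not\cong\C$, i.e.\ $\exists u\in D^A$ with $\Phi(K,u-\lambda 1)>0$ for all $\lambda$. Equivalently, there exist $u,v\in D^A$ such that $u+K$ and $v+K$ are linearly independent. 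This can be written as the open condition
\[
\inf_{\lambda}\Phi(K,u-\lambda v)>0 \quad\text{with } \Phi(K,v)>0.
\]
Openness needs care, since the infimum over $\lambda$ is only upper semicontinuous. We can instead use the fact that $\dim B\ge 2$ iff there exist $u,v\in D^A$ with $\Phi(K,v)>0$ and $\Phi(K,u-qv)>\eta$ for all rational $q$ up to a bound. At worst this condition is $\Sigma^0_2$, which is harmless for a $\Sigma^0_3$ target.

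Each inner set is open by Lemma~\ref{lem:Phi-continuous}, being a strict inequality between continuous functions of $K$. The set $\{\Phi(K,a)=0\}$ is closed, so each term of the intersection over $a$ is $\Sigma^0_2$. Thus $\mathcal U_M$ is $\Pi^0_3$ by naive count. To get $\Sigma^0_3$ overall we must sharpen this so that $\mathcal U_M$ is $\Pi^0_2$. To do this, we replace the disjunction by the open condition ``$\Phi(K,a)<1/j$ or $\exists b,c$ with $\Phi(K,b)\Phi(K,c)<M$ and $\Phi(K,bac-1)<1/2$'', indexed by $a\in D^A$ and $j\in\N$. We then need only test $a$ with $\Phi(K,a)\ge 1/j$, normalised to norm near one, which keeps everything open. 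Then $\mathcal U_M\in\Pi^0_2$, and $\bigcup_M\mathcal U_M$ intersected with the non-$\C$ condition is $\Sigma^0_3$.

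Correctness has two directions. If $B$ is uniformly purely infinite with bound $M$, take any $a\in D^A$ with $\|a+K\|\ge 1/j$, rescale by a rational to norm close to one, choose exact $b,c$ with $bac=1$ and $\|b\|\|c\|\le M+1$, and approximate them from $D^A$; this lands in $\mathcal U_{2(M+1)j}$ or a similar index, which is fine since we take the union over $M$. The main obstacle is the converse and the uniformity of the index in $j$. A test over $a$ with $\Phi(K,a)\ge 1/j$ and bound $M$ not depending on $j$ must be produced, so the scheme should use normalised witnesses: require $\Phi(K,b)\Phi(K,c)<M\,\Phi(K,a)^{-1}$ rather than a fixed $M$. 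Given membership, for $a+K$ of norm one the approximate witnesses yield exact $b,c\cdot s^{-1}$ with norm product at most $2M$. Density of $D^A+K$ together with upper semicontinuity of $C_{\mathrm{pi}}$ along small perturbations then extends the bound from the dense set to all $a$ of norm one. This semicontinuity is again a Neumann-series argument: if $bac=1$ and $\|a-a'\|$ is small, then $ba'c$ is invertible.
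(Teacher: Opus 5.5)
Your final scheme,
$\bigcup_{M\in\N}\bigcap_{a\in D^A,\,j\in\N}\bigl(\{K:\Phi(K,a)<1/j\}\cup\bigcup_{b,c\in D^A}\{K:\Phi(K,b)\Phi(K,c)\Phi(K,a)<M,\ \Phi(K,bac-1)<\tfrac12\}\bigr)$
intersected with the $\Sigma^0_2$ non-scalar condition and verified by a Neumann-series correction, is correct and is essentially the paper's argument. The paper differs only in requiring witnesses just for those $a$ with $|\Phi(K,a)-1|\le 1/(8N^2)$, using the fixed bounds $\Phi(K,b),\Phi(K,c)<N$ and $\Phi(K,bac-1)<\tfrac18$, so that the passage from $D^A+K$ to arbitrary norm-one elements is absorbed into the tolerance. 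Two corrections to your write-up: your first display has the scaling reversed ($M\,\Phi(K,a)$ must be $M\,\Phi(K,a)^{-1}$, since $C^B_{\mathrm{pi}}(x)\ge\|x\|^{-1}$); and the extension from the dense set rests on the estimate $\|bxc-1\|\le\|b\|\,\|c\|\,\|x-x'\|$ together with the uniform bound on $\|b\|\,\|c\|$, not on upper semicontinuity of $C^B_{\mathrm{pi}}$ as such.
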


\begin{proof}
Write $B=A/K$.  For $N\in\N$ set
\[
r_N:=\frac{1}{8N^2}\in(0,1)
\]
and let $\mathcal U_N\subseteq\Ideal(A)$ be the set of all $K$ such that
\begin{multline}\label{eq:UPI-net}
\forall a\in D^A:\ 
\bigl|\Phi(K,a)-1\bigr|>r_N
\quad\lor\\
\exists b,c\in D^A:\ 
\Phi(K,b)<N,\ \Phi(K,c)<N,\ \Phi(K,bac-1)<\tfrac18.
\end{multline}
For fixed $a\in D^A$, both disjuncts in \eqref{eq:UPI-net} define open subsets of
$\Ideal(A)$, so $\mathcal U_N$ is $G_\delta$, \emph{i.e.}\ $\Pi^0_2$.

Let $\mathrm{NonScal}$ denote the set of $K$ such that $A/K\not\cong\C$.
This set is $F_\sigma$: indeed,
\[
K\in\mathrm{NonScal}
\iff
\exists a\in D^A\ \exists m\in\N\ \forall \lambda\in\Q(i):\ \Phi(K,a-\lambda 1)\ge \tfrac1m.
\]
For fixed $(a,m)$ this is the superlevel condition
\[
\inf_{\lambda\in\Q(i)} \Phi(K,a-\lambda 1)\ge \tfrac1m.
\]
Since each map $K\mapsto \Phi(K,a-\lambda 1)$ is continuous in the Wijsman topology,
the infimum over the countable set $\Q(i)$ is upper semicontinuous.  Hence its
superlevel set is closed.  Taking the countable union over $(a,m)\in D^A\times\N$
shows that $\mathrm{NonScal}$ is $F_\sigma$.

We \emph{claim} that
\[
\mathrm{UPI}=\mathrm{NonScal}\cap\bigcup_{N\in\N}\mathcal U_N.
\]

\smallskip\noindent
($\Rightarrow$)
Assume that $B$ is uniformly purely infinite.
Choose $M\in\N$ with
$\sup\{C_{\mathrm{pi}}^B(x):\ \|x\|=1\}\le M$,
and then choose $N\in\N$ with $N^2>M$.
Fix $a\in D^A$ with $|\|a+K\|-1|\le r_N$ and put
$u:=(a+K)/\|a+K\|$.
Then $\|u\|=1$ and $\|u-(a+K)\|\le r_N$.
Since $C_{\mathrm{pi}}^B(u)\le M$, there exist $b_0,c_0\in B$ with $b_0uc_0=1$ and
$\|b_0\|\,\|c_0\|\le M$.
Rescale by
\[
\lambda:=\Bigl(\frac{\|c_0\|}{\|b_0\|}\Bigr)^{1/2},
\qquad
b:=\lambda b_0,
\qquad
c:=\lambda^{-1}c_0.
\]
Then $buc=1$ and
$\|b\|=\|c\|=\sqrt{\|b_0\|\,\|c_0\|}\le \sqrt M < N$.
Hence
\[
\|b(a+K)c-1\|
=\|b(a+K-u)c\|
\le \|b\|\,\|a+K-u\|\,\|c\|
< N^2 r_N
=\tfrac18.
\]
Approximating $b,c$ by elements of $D^A$ preserves the strict bounds
$\Phi(K,b)<N$, $\Phi(K,c)<N$, and $\Phi(K,bac-1)<1/8$.
Thus $K\in\mathcal U_N$.
Since $B\not\cong\C$ by definition, $K\in\mathrm{NonScal}$ as well.

\smallskip\noindent
($\Leftarrow$)
Assume $K\in\mathrm{NonScal}\cap\mathcal U_N$.
Let $x\in B$ with $\|x\|=1$ and choose $a\in D^A$ with
$\|a+K-x\|<r_N/2$.
Then $|\|a+K\|-1|\le r_N/2$, so the left disjunct in \eqref{eq:UPI-net} fails.
Hence there exist $b,c\in D^A$ with
$\|b+K\|<N$, $\|c+K\|<N$, and $\|b(a+K)c-1\|<1/8$.
Therefore
\[
\|bxc-1\|
\le \|b(x-a-K)c\|+\|b(a+K)c-1\|
< N^2\cdot\frac{r_N}{2}+\frac18
=\frac{3}{16}
<\frac12.
\]
So $bxc$ is invertible and $\|(bxc)^{-1}\|<2$.
Putting $c':=c(bxc)^{-1}$ gives $bxc'=1$ and
$\|b\|\,\|c'\|\le \|b\|\,\|c\|\,\|(bxc)^{-1}\|< 2N^2$.
Thus $C_{\mathrm{pi}}^B(x)\le 2N^2$ for every $\|x\|=1$.
Since $K\in\mathrm{NonScal}$, we also have $B\not\cong\C$.
Therefore $B$ is uniformly purely infinite.

\medskip\noindent
Finally, $\mathrm{NonScal}$ is $\Sigma^0_2$ and $\bigcup_N\mathcal U_N$ is
$\Sigma^0_3$, so
$\mathrm{UPI}=\mathrm{NonScal}\cap\bigcup_{N\in\N}\mathcal U_N$
is $\Sigma^0_3$.
\end{proof}

\begin{remark}\label{rem:PI-nonuniform}
The bare condition ``$C_{\mathrm{pi}}^{A/K}(a)<\infty$ for all non-zero $a$'' involves
existential witnesses with \emph{no uniform norm control}.  As in the directly finite
case, such unbounded-witness phenomena can
obstruct reductions to a single bounded fragment.  We therefore isolate the uniform
variant (Definition~\ref{def:PI-unif}), which is the version naturally compatible with
the dense-set quantification method and with ultrapower permanence~\cite{DawsHorvath}.
At present we do not know whether the bare non-uniform class of purely infinite quotients
is Borel, analytic, or genuinely more complicated in this coding.
\end{remark}

\subsection{Dedekind (in)finiteness and unbounded witnesses}\label{subsec:df-banach}

A unital ring (or Banach algebra) $B$ is \emph{Dedekind finite} (or \emph{directly
finite}) if $ab=1$ implies $ba=1$ for all $a,b\in B$; otherwise $B$ is
\emph{Dedekind infinite}.

In Banach algebras, a central subtlety is that direct finiteness is not controlled by
uniformly bounded fragments: Daws--Horv\'ath show that ring-theoretic (in)finiteness
behaves delicately under reduced products of Banach algebras, and in particular that
the norms of witnesses to Dedekind infiniteness cannot in general be bounded uniformly
along families~\cite{DawsHorvathCJM}.

For complexity purposes it is useful to isolate bounded
\textit{strict} witness fragments.  For $M\in\N$ let $\mathrm{Bad}_M$ be the set of
$K\in\Ideal(A)$ such that, in $B=A/K$, there exist $a,b\in B$ with
\[
        \|a\|<M,\qquad \|b\|<M,\qquad ab=1,\qquad ba\ne 1 .
\]
Then $B$ is Dedekind infinite iff $K\in\bigcup_M\mathrm{Bad}_M$, because any pair of
bounded witnesses has norm strictly smaller than some integer~$M$.

\begin{proposition}\label{prop:df-banach}
The class of Dedekind finite quotients $A/K$ is closed in the Wijsman topology.
Equivalently, the class of Dedekind infinite quotients is open.
\end{proposition}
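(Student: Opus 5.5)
The plan is to show that each bounded fragment $\mathrm{Bad}_M$ is Wijsman-open. Then $\bigcup_M\mathrm{Bad}_M$, which by the remark above is exactly the class of Dedekind infinite quotients, is open, and its complement is closed.

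Fix $M$ and $K\in\mathrm{Bad}_M$, with witnesses $a,b\in B=A/K$ satisfying $\|a\|,\|b\|<M$, $ab=1$ and $ba\ne1$. Put $\eta:=\|ba-1\|>0$. The exact relation $ab=1$ cannot be expected to persist under perturbation of $K$. Instead I will perturb to an approximate relation and then repair it. The repair is the standard Neumann-series step already used in Proposition~\ref{prop:tsr}: if $\|ab-1\|<1$ then $s=ab$ is invertible, and with $b':=bs^{-1}$ we get $ab'=1$. Moreover $\|b'-b\|\le\|b\|\,\|s^{-1}\|\,\|1-s\|$, which is small when $\|1-s\|$ is small.

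The openness argument then runs as follows. First, choose $u,v\in D^A$ with $\Phi(K,u-a')$, $\Phi(K,v-b')$ tiny, where $a',b'$ are representatives; this is possible by density of $D^A+K$. This yields strict inequalities
\[
\Phi(K,u)<M,\quad \Phi(K,v)<M,\quad \Phi(K,uv-1)<\varepsilon,\quad \Phi(K,vu-1)>\eta/2,
\]
where $\varepsilon$ is a small parameter to be fixed depending on $M$ and $\eta$ (with slack left in the norm bounds). Each of these is a strict inequality between Wijsman-continuous functions of $K$ by Lemma~\ref{lem:Phi-continuous}, so the set $W_{u,v}$ of all $L$ satisfying them is an open neighbourhood of $K$. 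I will show that $W_{u,v}\subseteq\mathrm{Bad}_{M'}$ for a suitable $M'$; as we are taking a union over $M$ anyway, it suffices that $M'$ depends only on $M$, say $M'=2M$.

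To verify this, take $L\in W_{u,v}$ and work in $C=A/L$, with $x=u+L$ and $y=v+L$. Since $\|xy-1\|<\varepsilon<1/2$, the element $s=xy$ is invertible with $\|s^{-1}\|<2$. Set $y'=ys^{-1}$. Then $xy'=1$ and $\|y'\|<2M$. Furthermore
\[
\|y'x-yx\|\le\|y\|\,\|s^{-1}-1\|\,\|x\|\le 2M^2\varepsilon,
\]
so $\|y'x-1\|>\eta/2-2M^2\varepsilon>0$ once $\varepsilon<\eta/(4M^2)$. Hence $y'x\ne1$, and $L\in\mathrm{Bad}_{2M}$. This shows that $\bigcup_M\mathrm{Bad}_M$ is a union of open sets around each of its points, and is therefore open. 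Equivalently, it equals the union over all $M$, all rational $\varepsilon$ and $\eta$ with $\varepsilon<\eta/(4M^2)$, and all $u,v\in D^A$ of the open sets $W_{u,v}$.

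The only delicate step is the choice of $\varepsilon$ relative to $\eta$ and $M$, so that the correction $s^{-1}$ does not destroy the non-identity $ba\ne1$. This is exactly where the bounded-witness fragment is needed: the norm bound $M$ controls the perturbation $2M^2\varepsilon$, which is why the unbounded phenomena discussed above do not interfere. Convention~\ref{conv:proper-quotients} removes the zero quotient, in which $1=0$, so the condition $ba\ne1$ is meaningful.
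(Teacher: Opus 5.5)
Your proof is correct and follows the paper's argument. Both approximate an exact bounded witness by $u,v\in D^A$ satisfying finitely many strict $\Phi$-inequalities, which are Wijsman-open, and then repair $xy\approx 1$ in nearby quotients by the Neumann-series correction $y'=y(xy)^{-1}$ while keeping $y'x\neq 1$. The only difference is that the paper builds slack $M-1/k$ into the norm bounds so the corrected witness stays in $\mathrm{Bad}_M$, making each $\mathrm{Bad}_M$ itself open, whereas you only show $\mathrm{Bad}_M$ lies in the interior of $\mathrm{Bad}_{2M}$; as you note, this still gives openness of the union, but your opening sentence overstates what you actually prove.
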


\begin{proof}
Write $B=A/K$.  For $M\in\N$ say that $B$ has an \emph{open $M$-bounded
Dedekind-infinite witness} if there are $a,b\in B$ with
\[
        \|a\|<M,
        \qquad \|b\|<M,
        \qquad ab=1,
        \qquad ba\ne 1 .
\]
A unital Banach algebra is Dedekind infinite iff it has such a witness for some
integer~$M$.

Fix $M\in\N$.  For $k,n\in\N$ with $M-1/k>0$, set
\[
\delta_{M,k,n}:=\min\Bigl\{\frac12,\frac{1}{2kM},\frac{1}{4nM^2+1}\Bigr\}.
\]
Consider the following open condition on~$K$:
\[
\begin{aligned}
&\exists u,v\in D^A:\quad
\Phi(K,u)<M-\frac1k,\qquad
\Phi(K,v)<M-\frac1k,\\
&\hspace{35mm}
\Phi(K,uv-1)<\delta_{M,k,n},\qquad
\Phi(K,vu-1)>\frac1n .
\end{aligned}
\]
All inequalities are strict and involve Wijsman-continuous functions, so the condition
is open.

We claim that $B$ has an open $M$-bounded Dedekind-infinite witness iff the displayed
condition holds for some $k,n$.  If $a,b$ are exact witnesses with norms $<M$, choose
$k$ such that both norms are $<M-1/k$ and choose $n$ with $\|ba-1\|>3/n$; density of
$D^A+K$ then gives $u,v\in D^A$ satisfying the displayed strict inequalities.
Conversely, suppose the displayed condition holds and put $u_0=u+K$, $v_0=v+K$.
Since $\|u_0v_0-1\|<\delta_{M,k,n}<1$, the element $u_0v_0$ is invertible.  Set
\[
        \widetilde a=u_0,
        \qquad
        \widetilde b=v_0(u_0v_0)^{-1}.
\]
Then $\widetilde a\widetilde b=1$ and, by the choice of $\delta_{M,k,n}$,
\[
        \|\widetilde b\|
        \le (M-1/k)(1-\delta_{M,k,n})^{-1}<M .
\]
Moreover
\[
\|\widetilde b\widetilde a-1\|
\ge \|v_0u_0-1\|-
     \|v_0((u_0v_0)^{-1}-1)u_0\|
> \frac1n- M^2\frac{\delta_{M,k,n}}{1-\delta_{M,k,n}}>0.
\]
Thus $\widetilde a,\widetilde b$ are genuine open $M$-bounded witnesses.

The Dedekind-infinite class is the countable union of the open sets just described,
over $M,k,n$ and $u,v\in D^A$.  Hence it is open, and its complement is closed.
\end{proof}

\begin{remark}\label{rem:df-banach-nontrivial}
The Daws--Horv\'ath analysis~\cite{DawsHorvathCJM} shows that norms of witnesses to
Dedekind infiniteness cannot in general be bounded uniformly along arbitrary families.
The preceding proof avoids any uniform global bound: it writes Dedekind infiniteness as
a countable union of open bounded fragments, using open balls and a Neumann-series
correction inside each fragment.
\end{remark}

\section{\texorpdfstring{$C^*$}{C*}-algebras: decomposition, approximation, and regularity}
\label{sec:Cstar-properties}

Throughout, let $A=C^*_{\max}(F_\infty)$ and fix a countable dense $*$-subalgebra
$D^A$ closed under multiplication and~$*$.  Work on $\Ideal(A)$ with the Wijsman
topology and write $B=A/K$.

\begin{remark}\label{rem:Cstar-unital-scope}
The parameter space in Sections~\ref{sec:Cstar-properties}--\ref{sec:D-stability}
is based on the \emph{unital} quotient generator $A=C^*_{\max}(F_\infty)$.
Accordingly, the statements proved there are literally statements about quotients of
this unital generator, hence about unital separable $C^*$-algebras (aside from the
zero quotient).  The standard non-unital variant is obtained by replacing $A$ with
$A\otimes\mathcal K$; predicates involving $1$ should then be rewritten in the
unitisation, or equivalently using a fixed strictly positive contraction.  Since this
modification is routine but notationally cumbersome, we keep the body of the paper in
the unital coding.
\end{remark}

\begin{remark}
\label{rem:other-codings}
Fix a separable infinite-dimensional Hilbert space $H$.
The first standard Borel parametrisation of separable $C^*$-algebras was introduced
by Kechris~\cite{KechrisAsianCstar}.  Farah--Toms--T\"ornquist later introduced and
used several mutually Borel-equivalent standard Borel/Polish parameter spaces for
separable $C^*$-algebras; we briefly recall two of them.

\smallskip
\noindent{\bf (1) The concrete coding $\Gamma(H)$.}
Let $\Gamma(H):=B(H)^{\N}$ equipped with the product of the strong operator topology.
Each $\gamma=(\gamma_n)_{n\in\N}\in\Gamma(H)$ codes the separable $C^*$-algebra
$C^*(\gamma)\subseteq B(H)$ generated by $\{\gamma_n:n\in\N\}$.

\smallskip
\noindent{\bf (2) The seminorm coding $\Xi$.}
Fix an enumeration $(p_n)_{n\in\N}$ of all formal $*$-polynomials over $\Q(i)$
in countably many non-commuting variables $X_0,X_1,\dots$.
The space $\Xi$ consists of all functions
$\xi:\N\to\R_{\ge 0}$ that arise as
$\xi(n)=\|p_n(a_0,a_1,\dots)\|$
for some separable $C^*$-algebra $B$ and some sequence $(a_k)$ in $B$.
Equivalently, $\xi$ encodes a $C^*$-seminorm on the free $*$-algebra;
the completion of the corresponding quotient is the coded algebra~$B_\xi$.

\smallskip
Farah--Toms--T\"ornquist show that $\Gamma(H)$ and $\Xi$ (as well as several related
codings, including abstract countable-structure codings $\widehat\Gamma$ and
$\widehat\Xi$) are equivalent parameterisations: there are Borel maps between the
coding spaces sending a code to a code for an isomorphic algebra; see
\cite[Section~2]{FarahTomsTornquistCrelle} and~\cite{FarahTomsTornquist}.
These are also the codings used in
\cite{ElliottFarahPaulsenRosendalTomsTornquist}.

\smallskip\noindent
\textbf{How we transfer Borelness.}
The existence of a Borel isomorphism between two bare standard Borel spaces is not by
itself enough to transfer a class of algebras: the map must send a code to a code for
an isomorphic algebra.  We therefore use only explicit Borel maps with this
isomorphism-preserving property.  Borel Schr\"oder--Bernstein may be used to identify
the underlying standard Borel spaces once injections in both directions are known, but
none of the rank or Borelness arguments below relies on an uncontrolled
Schr\"oder--Bernstein bijection.
\end{remark}

\begin{lemma}
\label{lem:pullback-ideals}
Let $\sigma:C\twoheadrightarrow D$ be a surjective $*$-homomorphism between separable
$C^*$-algebras.  Then the pullback map
\[
\sigma^*:\Ideal(D)\to\Ideal(C),\qquad J\mapsto\sigma^{-1}(J),
\]
is a Wijsman-continuous injection.
\end{lemma}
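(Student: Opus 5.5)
The plan is to check three things: that $\sigma^*$ lands in $\Ideal(C)$, that it is injective, and that it is Wijsman-continuous. The first two are routine. Since $\sigma$ is a continuous $*$-homomorphism, $\sigma^{-1}(J)$ is a closed two-sided $*$-ideal of $C$ containing $\ker\sigma$. Surjectivity of $\sigma$ gives $\sigma(\sigma^{-1}(J))=J$, so $J$ can be recovered from $\sigma^*(J)$, and $\sigma^*$ is injective.

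For continuity, the key identity is the isometric isomorphism $C/\sigma^{-1}(J)\cong D/J$. The map $C\to D\to D/J$ is a surjective $*$-homomorphism with kernel $\sigma^{-1}(J)$, and the induced injective $*$-homomorphism $C/\sigma^{-1}(J)\to D/J$ between $C^*$-algebras is isometric. Hence, for every $c\in C$,
\[
  d\bigl(c,\sigma^{-1}(J)\bigr)=\|c+\sigma^{-1}(J)\|_{C/\sigma^{-1}(J)}=\|\sigma(c)+J\|_{D/J}=d\bigl(\sigma(c),J\bigr).
\]
Now suppose $J_n\to J$ in the Wijsman topology on $\Ideal(D)$. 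For every fixed $c\in C$ the identity gives
\[
  d\bigl(c,\sigma^*(J_n)\bigr)=d\bigl(\sigma(c),J_n\bigr)\to d\bigl(\sigma(c),J\bigr)=d\bigl(c,\sigma^*(J)\bigr),
\]
which is exactly Wijsman convergence $\sigma^*(J_n)\to\sigma^*(J)$ by Definition~\ref{def:wijsman}. Since both Wijsman topologies are metrisable, sequential continuity suffices. Equivalently, every subbasic function $K\mapsto d(c,K)$ composed with $\sigma^*$ equals the continuous map $J\mapsto d(\sigma(c),J)$.

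The only point needing care is the distance identity, that is, that the quotient norms agree. This rests on the $C^*$-fact that injective $*$-homomorphisms are isometric. An alternative route is the general fact that a quotient map $\sigma$ carries the open unit ball of $C$ onto that of $D$, applied at the level of the quotients. I expect no genuine obstacle here. I would also note that the argument uses nothing beyond $\sigma$ being a surjection whose induced quotient maps are isometric, so it transfers verbatim to other categories where that holds.
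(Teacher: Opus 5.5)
Your proof is correct and follows the paper's argument: injectivity comes from surjectivity of $\sigma$, and Wijsman continuity comes from the distance identity $\dist(c,\sigma^{-1}(J))=\dist(\sigma(c),J)$, which follows from the isometric identification $C/\sigma^{-1}(J)\cong D/J$. You also check two points the paper leaves implicit: that $\sigma^{-1}(J)$ is a closed $*$-ideal, and that the induced injective $*$-homomorphism is isometric.
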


\begin{proof}
Injectivity: if $J\neq L$ then $\sigma^{-1}(J)\neq\sigma^{-1}(L)$ because $\sigma$ is
surjective.  For Wijsman continuity, fix $c\in C$.  Because $\sigma$ is a surjective
$*$-homomorphism, the quotient $C/\sigma^{-1}(J)\cong D/J$ isometrically (via the
canonical isomorphism), and therefore
\[
\dist\bigl(c,\,\sigma^{-1}(J)\bigr)
=\|\sigma(c)+J\|_{D/J}
=\dist\bigl(\sigma(c),\,J\bigr).
\]
Since $J\mapsto\dist(\sigma(c),J)$ is continuous in the Wijsman topology on $\Ideal(D)$,
it follows that $J\mapsto\dist(c,\sigma^{-1}(J))$ is continuous on $\Ideal(C)$.
As $c$ was arbitrary, $\sigma^*$ is Wijsman-continuous.
\end{proof}

\begin{remark}
\label{rem:Xi-ideals}
The seminorm coding $\Xi$ is naturally identified with an ideal space.  Let
\[
U:=C^*\bigl(x_n\ (n\in\N)\ :\ \|x_n\|\le 1\bigr)
\]
be the universal unital $C^*$-algebra generated by countably many contractions.  Then $U$
is separable, and there is a canonical Borel bijection $\Xi\cong\Ideal(U)$: a seminorm
$\xi\in\Xi$ corresponds to the kernel $I_\xi$ of the canonical surjection $U\to B_\xi$,
and conversely $I\in\Ideal(U)$ yields the seminorm $\xi_I(p)=\|p+I\|_{U/I}$, which is
continuous coordinatewise in~$I$.

\smallskip
\noindent{\bf (3) A continuous injection $\Ideal(A)\hookrightarrow\Xi$.}
Let $(g_m)_{m\in\N}$ denote the canonical free unitary generators of
$A=C^*_{\max}(F_\infty)$.  Since each $g_m$ is a contraction, the universal property of

\[
U:=C^*\bigl(x_m\ (m\in\N): \|x_m\|\le 1\bigr)
\]

gives a surjective $*$-homomorphism
$\sigma:U\twoheadrightarrow A$ with $\sigma(x_m)=g_m$.  By
Lemma~\ref{lem:pullback-ideals},

\[
        K\longmapsto \sigma^{-1}(K)
\]

is a Wijsman-continuous injection from $\Ideal(A)$ into $\Ideal(U)\cong\Xi$, and the
$\Xi$-code obtained in this way represents the quotient $A/K$.

\smallskip
\noindent{\bf (4) A continuous injection $\Xi\hookrightarrow\Ideal(A)$.}
Because $A=C^*_{\max}(F_\infty)$ is a quotient generator, $U$ (being separable) is a
quotient of~$A$.  Fix once and for all a surjective $*$-homomorphism
$\pi:A\twoheadrightarrow U$.
By Lemma~\ref{lem:pullback-ideals}, the pullback $I\mapsto\pi^{-1}(I)$ is a
Wijsman-continuous injection $\Ideal(U)\hookrightarrow\Ideal(A)$.
Under the identification $\Xi\cong\Ideal(U)$, this gives a continuous injection
$\Xi\hookrightarrow\Ideal(A)$.  The Borel Schr\"oder--Bernstein theorem therefore gives
an abstract Borel isomorphism between the two underlying standard Borel spaces.  We shall
not use that abstract bijection for transferring algebraic properties; for that purpose
one must use the explicit quotient-preserving maps just described.

\smallskip
\noindent{\bf (5) Borel compatibility with $\Gamma(H)$.}
Composing the continuous injections of~(3) and~(4) with the Borel equivalences
between $\Xi$ and $\Gamma(H)$ from
\cite{FarahTomsTornquistCrelle}, we obtain Borel maps $\Gamma(H)\to\Ideal(A)$ and
$\Ideal(A)\to\Gamma(H)$ which send a code to a code for an isomorphic $C^*$-algebra.
In particular, any Borel-complexity statement for a $C^*$-algebraic property in our
ideal-quotient coding transfers to the equivalent statement in the FTT codings, and
vice versa.
\end{remark}

\begin{lemma}\label{lem:fixed-ideal-to-Xi}
Let $C$ be a fixed separable $C^*$-algebra and let $(c_j)_{j\in\N}$ be a dense
sequence in the unit ball generating~$C$ as a $C^*$-algebra.  There is a
Wijsman-continuous map from $\Ideal(C)$ to the appropriate standard seminorm coding
$\Xi$ which sends $I\in\Ideal(C)$ to a code for $C/I$.
\end{lemma}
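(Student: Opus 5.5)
The plan is to mimic Remark~\ref{rem:Xi-ideals}(3), with the fixed algebra $C$ in place of $A$. Let $U$ be the universal unital $C^*$-algebra generated by countably many contractions $x_j$. I work with its unitisation when $C$ is non-unital, or I use the non-unital universal algebra on contractions; in either case the seminorm coding $\Xi$ is identified with $\Ideal(U)$ as in Remark~\ref{rem:Xi-ideals}. The universal property gives a $*$-homomorphism $\sigma:U\to C$ with $\sigma(x_j)=c_j$. The $c_j$ are contractions, so this is legitimate. The map $\sigma$ is surjective: its range is a $C^*$-subalgebra containing the generating set $\{c_j\}$, and the range of a $*$-homomorphism between $C^*$-algebras is closed.

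I then define the map $\Ideal(C)\to\Ideal(U)\cong\Xi$ by $I\mapsto\sigma^{-1}(I)$. By Lemma~\ref{lem:pullback-ideals} this is a Wijsman-continuous injection. Moreover $U/\sigma^{-1}(I)\cong C/I$ canonically, with $x_j\mapsto c_j+I$, so the image is a code for $C/I$.

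To make the statement concrete in the seminorm coding itself, I would also note the direct formula
\[
\xi_I(n)=\|p_n(c_0,c_1,\dots)+I\|_{C/I}=\dist\bigl(p_n(c_0,c_1,\dots),I\bigr).
\]
For each fixed $n$ this is Wijsman-continuous in $I$ by Lemma~\ref{lem:Phi-continuous}, since $p_n(\bar c)$ is a fixed element of $C$. This gives coordinatewise continuity into $\R_{\ge0}^{\N}$, and hence continuity into $\Xi$ with the subspace product topology. The value $\xi_I$ lies in $\Xi$ because it is realised by the sequence $(c_j+I)$ in $C/I$, which generates $C/I$. The completion of the free $*$-algebra modulo $\xi_I$ is therefore $C/I$.

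The only point needing care, and the one I expect to be the main obstacle, is the unital versus non-unital convention of $\Xi$, i.e.\ whether the formal polynomials include constants. If they do and $C$ is non-unital, I pass to the unitisation $\widetilde C$ and use $\Ideal(C)\ni I\mapsto I\subseteq\widetilde C$. Distances $\dist(c+\lambda 1,I)$ in $\widetilde C/I$ are still continuous in $I$: the norm in $\widetilde{C/I}$ is determined by the left multiplication norms $\sup_{\|y\|\le1}\|(c+\lambda)y+I\|$, which can be computed over the dense sequence $y=c_j$ as suprema of continuous functions of $I$. This gives lower semicontinuity immediately. I would obtain the upper bound by a Dixmier-type continuous-field argument as in Lemma~\ref{lem:Phi-n-continuous}. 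If this proves delicate, I would simply adopt the non-unital $\Xi$ (polynomials without constant term), where the first paragraph applies verbatim.
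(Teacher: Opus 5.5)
Your direct-formula paragraph is exactly the paper's proof: take the unital or non-unital coding according to whether $C$ is unital, and set $\xi_I(n)=\|p_n(c_0,c_1,\dots)+I\|_{C/I}$. Each coordinate is Wijsman-continuous because $p_n(\bar c)$ is a fixed element of $C$, and your pullback through the universal algebra on contractions produces literally the same $\xi_I$, so it is only a repackaging.

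One correction concerns non-unital $C$. The unitisation branch is not merely delicate; it codes the wrong algebra, because $\widetilde C/I$ is the forced unitisation of $C/I$ (for instance $C/I\oplus\C$ when $C/I$ is unital). So the non-unital coding is the correct choice there, not a fallback. That is what ``appropriate'' means in the statement, and it is what the paper does.
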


\begin{proof}
If $C$ is non-unital, use the standard non-unital seminorm coding, obtained by
enumerating rational $*$-polynomials without a constant term; this coding is Borel
equivalent to the concrete coding $\Gamma(H)$ used by Farah--Toms--T\"ornquist.  If
$C$ is unital, use the unital version described above.  In either case, for the fixed
enumeration $(p_n)$ of the relevant rational $*$-polynomials, define
\[
        \xi_I(n):=\|p_n(c_0,c_1,\ldots)+I\|_{C/I}.
\]
For each $n$, the element $p_n(c_0,c_1,\ldots)\in C$ is fixed, and the coordinate
$I\mapsto\xi_I(n)$ is continuous in the Wijsman topology.  The resulting seminorm is
precisely the quotient seminorm of the dense $*$-subalgebra generated by the images of
the $c_j$, so the completion coded by $\xi_I$ is $C/I$.
\end{proof}

\paragraph{Nuclearity.}
Because nuclearity is invariant under $*$-isomorphism and the explicit Borel maps in
Remark~\ref{rem:Xi-ideals} send quotient codes to standard codes for isomorphic
algebras, the class
\[
\{K\in\Ideal(A):A/K\text{ is nuclear}\}
\]
is Borel in the Wijsman topology.  Indeed, one route is via the CPAP
reformulation of nuclearity: Effros proved Borelness of the nuclear class in the
standard parameterisations; see Section~5 of \cite{FarahTomsTornquistCrelle}.
This gives a quick external route to Borelness in the present coding.

It seems plausible that one can sharpen this to a low finite Borel rank by combining the
CPAP with an internal coding lemma for completely positive contractive maps from
finite-dimensional $C^*$-algebras into quotients $A/K$.  Since that lemma is not
developed in the present paper, we record only the Borel upper bound here.

\begin{remark}
\label{rem:ideal-coding-history}
The ideal-space coding of separable $C^*$-algebras as quotients of a fixed universal
separable $C^*$-algebra should not be presented as historically new.  Nate Brown
suggested this quotient/ideal coding around 2010, as communicated to the author by
Ilijas Farah.  Related independent forthcoming work of Austin Shiner, using quotient
encodings to study functorial complexity for $C^*$-algebras, is noted at the end of
the Introduction.
\end{remark}

\paragraph{Continuous logic.}
It is also natural to compare the present quotient/Wijsman formalism with the
continuous-logic treatment of axiomatizable and definable classes of $C^*$-algebras;
see \cite{FarahHartLupiniRobertTikuisisVignatiWinter}.  After transferring between
our ideal coding and the standard FTT codings, this gives an alternative abstract
route to Borelness for many axiomatizable classes.  We nevertheless retain the
direct arguments here because they keep track of concrete Borel ranks inside the
present parameter space.

\subsection{Coding projections, unitaries, and matrix units}

\begin{lemma}\label{lem:basic-coding}
For $x\in D^A$ and $K\in\Ideal(A)$:
\begin{enumerate}
\item $x+K$ is a projection in $B$ iff $\Phi(K,x^2-x)=0$ and $\Phi(K,x^*-x)=0$.
\item $u+K$ is unitary in $B$ iff $\Phi(K,u^*u-1)=0$ and $\Phi(K,uu^*-1)=0$.
\end{enumerate}
For fixed finite families, the predicate ``forms a system of matrix units'' is closed.
\end{lemma}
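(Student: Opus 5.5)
The argument is mostly bookkeeping: translate each algebraic identity in $B=A/K$ into the vanishing of a quotient norm, then use the continuity of $\Phi$.

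For (i), fix $x\in D^A$. Since the quotient map $A\to B$ is a $*$-homomorphism, $(x+K)^2-(x+K)=x^2-x+K$ and $(x+K)^*-(x+K)=x^*-x+K$. An element $y+K$ is zero in $B$ iff $\|y+K\|_{A/K}=0$, which is exactly $\Phi(K,y)=d(y,K)=0$ because $K$ is closed. Hence $x+K$ is a projection iff $\Phi(K,x^2-x)=0$ and $\Phi(K,x^*-x)=0$.

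For (ii), use the unit $1\in A$, whose image is the unit of $B$. Then $(u+K)^*(u+K)-1_B=u^*u-1+K$, and similarly for $uu^*-1$. The same vanishing criterion gives the stated equivalence. Note that $u^*u-1$ lies in $D^A$ if $D^A$ is taken unital, and in any case it is a fixed element of $A$.

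For the matrix-unit claim, fix a finite family $(e_{ij})_{i,j\le n}$ of elements of $A$ (for instance from $D^A$); a finite direct sum of such blocks is handled the same way. Being a system of matrix units in $B$ is a finite conjunction of identities:
\[
e_{ij}^*-e_{ji}=0,\qquad e_{ij}e_{kl}-\delta_{jk}e_{il}=0 ,
\]
and possibly $\sum_i e_{ii}-1=0$ if a unital system is intended. Each identity in $B$ holds iff $\Phi(K,t)=0$ for the corresponding fixed element $t\in A$. By Lemma~\ref{lem:Phi-continuous}, $K\mapsto\Phi(K,t)$ is Wijsman-continuous, so each set $\{K:\Phi(K,t)=0\}$ is closed. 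A finite intersection of closed sets is closed, which proves the last assertion. The same reasoning shows that the predicates in (i) and (ii) are closed as well.

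The only point needing care is the unit. We rely on $A=C^*_{\max}(F_\infty)$ being unital, so the quotient unit is $1+K$ and nothing further is needed. For the zero quotient every predicate holds trivially, which is consistent with Convention~\ref{conv:proper-quotients}. I do not expect a genuine obstacle.
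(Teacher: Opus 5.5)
Your proposal is correct and follows the paper's own (one-line) argument, just written out in more detail: each identity in $B=A/K$ becomes the vanishing of $\Phi(K,t)$ for a fixed $t\in A$, and the Wijsman continuity of $K\mapsto\Phi(K,t)$ makes each such zero set closed. A finite conjunction of these conditions is therefore closed as well.
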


\begin{proof}
All assertions follow directly from identities in $B$ expressed by $\Phi$ and
the continuity of $\Phi$ in~$K$.
\end{proof}

\begin{lemma}\label{lem:positivity-coding}
Let $B=A/K$.
\begin{enumerate}
\item An element $a+K\in B$ is positive iff for every $m\in\N$ there exists
$y\in D^A$ with $\Phi(K,a-y^*y)<1/m$.  In particular, the set of $K$ for which a
fixed $a\in D^A$ has positive image is~$\Pi^0_2$.
\item Every positive element of $B$ is approximated by elements of the form $b^*b+K$
with $b\in D^A$.  Hence in quantifications
``$\forall$ positive $a\in B_+$'', it suffices to range $a$ over $\{b^*b:b\in D^A\}$,
which is countable.
\item For $a\in D^A$ and a projection $p\in B$ (coded as in
Lemma~\ref{lem:basic-coding}), set $x=p(a^*a+K)p\in pBp$.
Then
\[
x\ge \tfrac{1}{n}p
\quad\Longleftrightarrow\quad
x \text{ is invertible in }pBp \text{ and }\|x^{-1}\|\le n.
\]
Moreover, if there exists $y\in pBp$ with $\|y\|\le n$ and
\[
\|xy-p\|<\tfrac12,\qquad \|yx-p\|<\tfrac12,
\]
then $x$ is invertible in $pBp$ and $\|x^{-1}\|\le 2n$; hence
$x\ge \tfrac{1}{2n}p$.
Thus, for the descriptive-set-theoretic arguments below, the relation
$x\ge \tfrac1n p$ may be replaced by this bounded stable witness scheme,
at the expense of renumbering $n$ by a fixed multiplicative constant.
\end{enumerate}
\end{lemma}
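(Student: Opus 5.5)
The three parts of Lemma~\ref{lem:positivity-coding} can be handled in turn using standard $C^*$-algebra facts, with the Wijsman continuity of $\Phi$ (Lemma~\ref{lem:Phi-continuous}) supplying the descriptive bookkeeping.

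For~(i), recall that in $B=A/K$ the positive cone is $\{z^*z:z\in B\}$ and is norm-closed. If $a+K\ge 0$, write $a+K=z^*z$. Choose $y\in D^A$ with $y+K$ close to $z$; continuity of $(w)\mapsto w^*w$ then gives $\|a+K-(y^*y+K)\|<1/m$, which is exactly $\Phi(K,a-y^*y)<1/m$. Conversely, if such $y$ exist for every $m$, then $a+K$ is a norm-limit of the positive elements $y^*y+K$, hence positive by closedness of $B_+$. For the complexity claim, each condition $\Phi(K,a-y^*y)<1/m$ is open in $K$. The union over the countable set $D^A$ is therefore open, and the intersection over $m$ is $G_\delta=\Pi^0_2$.

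Part~(ii) comes from the same computation. Any $c\in B_+$ equals $z^*z$ with $z=c^{1/2}$, and $z$ is approximated by some $b+K$ with $b\in D^A$. So $\{b^*b+K:b\in D^A\}$ is dense in $B_+$. A quantification over $B_+$ of a condition that is closed (or continuous) in the element can therefore be restricted to this countable dense set. Since $D^A$ is a $*$-subalgebra, each $b^*b$ lies again in $D^A$, so these elements are available as arguments of $\Phi$.

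Part~(iii) is spectral theory in the unital $C^*$-algebra $pBp$, whose unit is $p$. Here $x=p(a^*a+K)p$ is positive. For positive $x$, the inequality $x\ge\frac1n p$ holds iff $\sigma_{pBp}(x)\subseteq[\frac1n,\infty)$. That in turn holds iff $x$ is invertible in $pBp$ with $\|x^{-1}\|=1/\min\sigma(x)\le n$, using the continuous functional calculus. For the stable-witness statement, assume $\|y\|\le n$ and $\|xy-p\|<\frac12$, $\|yx-p\|<\frac12$. A Neumann series shows that $xy$ and $yx$ are invertible in $pBp$, so $x$ has both a right and a left inverse, hence is invertible. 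Then
\[
x^{-1}=y(xy)^{-1},
\]
and since $\|(xy)^{-1}\|\le \sum_k 2^{-k}=2$, we get $\|x^{-1}\|\le 2n$. Applying the first equivalence with $2n$ in place of $n$ yields $x\ge\frac1{2n}p$.

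The main care point is~(iii). One must remember that $y$ is required to lie in $pBp$, so that all inverses are taken in the corner algebra rather than in $B$. One must also not lose the constant: the bound $\|(xy)^{-1}\|\le 2$ is what fixes the factor~$2$. The remark that $\ge\frac1np$ "may be replaced" then follows because the two schemes sandwich each other. Exact positivity with bound $n$ yields the witness $y=x^{-1}$, with $\|y\|\le n$ and the errors equal to zero. Conversely, a witness yields the bound $2n$. So the classes agree after renumbering $n\mapsto 2n$.
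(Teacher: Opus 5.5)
Your proposal is correct and follows essentially the same route as the paper: (i) and (ii) use square roots, density of $D^A+K$ and closedness of the positive cone, and (iii) uses the spectral characterisation in the corner $pBp$ together with the Neumann-series bound $\|(xy)^{-1}\|\le 2$. You are slightly more careful than the paper in two places: you use both one-sided estimates to obtain genuine two-sided invertibility of $x$, and you note that restricting ``$\forall a\in B_+$'' to the dense set $\{b^*b+K\}$ requires the quantified condition to be closed in $a$.
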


\begin{proof}
(i) If $a+K\ge 0$ in $B$, let $c$ be a positive square root, and approximate $c$ by
$y_m\in D^A$ to get $\|a+K-y_m^*y_m+K\|<1/m$.  Conversely, if $a+K=\lim(y_m^*y_m+K)$,
then $a+K\ge 0$ since the positive cone is closed.

(ii) follows from (i) and the fact that $D^A$ is closed under~$*$ and multiplication.

(iii) In the unital $C^*$-algebra $pBp$ (with unit $p$), a positive element $x\ge 0$
satisfies $x\ge \tfrac{1}{n}p$ iff $x$ is invertible with $\|x^{-1}\|\le n$
(this is a standard spectral fact: $x\ge\tfrac{1}{n}p$ iff
$\mathrm{sp}(x)\subseteq [1/n,\infty)$, iff $x^{-1}$ exists with
$\|x^{-1}\|=1/\min(\mathrm{sp}(x))\le n$).
If there exists $y\in pBp$ with $\|y\|\le n$ and $\|xy-p\|<1/2$, then the Neumann
series gives $\|x^{-1}\|\le \|y\|\,\|(xy)^{-1}\|\le n\cdot 2=2n$, so $x\ge\frac{1}{2n}p$.
The constant $2$ is harmless for our purposes: replacing $n$ by $2n$ does not change
the quantifier structure.
Approximating $p$ and $y$ by elements of $D^A$ and using stability of
projections/invertibles (Lemma~\ref{lem:stable-fd} and
Remark~\ref{rem:stable-relations}), this is detected by strict $\Phi$-inequalities
on bounded balls.
\end{proof}

\begin{lemma}\label{lem:stable-fd}
For each $d\in\N$ and $\varepsilon>0$ there exists $\delta>0$ such that the following
holds in every $C^*$-algebra~$B$.

If $(x_{ij})_{1\le i,j\le d}\subseteq B$ satisfies
\[
\begin{aligned}
\max_{i,j}\|x_{ij}^*-x_{ji}\| &<\delta,\\
\max_{i,j,k,\ell}\|x_{ij}x_{k\ell}-\delta_{jk}x_{i\ell}\| &<\delta,\\
\Bigl\|\sum_{i=1}^d x_{ii}-1\Bigr\| &<\delta,
\end{aligned}
\]
then there exists a system of matrix units $(e_{ij})_{i,j\le d}\subseteq B$ such that
$\max_{i,j}\|e_{ij}-x_{ij}\|<\varepsilon$.
In particular, the $C^*$-subalgebra generated by $(e_{ij})$ is (unitally) isomorphic
to~$M_d$.
\end{lemma}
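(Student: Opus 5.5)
The plan is the standard argument that $M_d$ (with unital matrix units) has stable, indeed semiprojective, relations. It proceeds in three perturbation steps, each of which produces an exact relation from an approximate one with error controlled by a function of $\delta$ and $d$ alone, independent of $B$. The uniformity is what matters: at every step the bound comes from continuous functional calculus or from Neumann-series estimates whose constants depend only on $d$.

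\emph{Step 1: diagonal projections.} First, replace $x_{11}$ by its self-adjoint part $h=\tfrac12(x_{11}+x_{11}^*)$. Then $\|h^2-h\|\le C\delta$, so the spectrum of $h$ lies within $O(\sqrt{\delta})$ of $\{0,1\}$. Let $\chi$ be continuous, equal to $0$ near $0$ and $1$ near $1$. Then $e_{11}:=\chi(h)$ is a projection with $\|e_{11}-x_{11}\|\le \eta_1(\delta)\to0$.

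\emph{Step 2: off-diagonal partial isometries.} Set $y_i:=x_{i1}e_{11}$ for $i\ge2$, and $y_1:=e_{11}$. Then $y_i^*y_i$ is close to $e_{11}x_{1i}x_{i1}e_{11}\approx e_{11}x_{11}e_{11}\approx e_{11}$. Hence $y_i^*y_i$ is invertible in the corner $e_{11}Be_{11}$, and we can define $v_i:=y_i(y_i^*y_i)^{-1/2}$, using functional calculus in that corner. Each $v_i$ is a partial isometry with $v_i^*v_i=e_{11}$ and $\|v_i-x_{i1}\|$ small. The range projections $p_i=v_iv_i^*$ are then close to $x_{ii}$. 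Since $x_{ii}x_{jj}\approx0$ for $i\ne j$, they are also approximately orthogonal.

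\emph{Step 3: orthogonality and unitality.} This is the main obstacle, because the $p_i$ are only approximately orthogonal. I would fix this with the standard orthogonalisation. Let $q=\sum_i p_i$. We have $q\approx\sum x_{ii}\approx1$, so $q$ is invertible. Replace $v_i$ by $w_i:=q^{-1/2}v_i$. Then
\[
\sum_i w_iw_i^*=q^{-1/2}qq^{-1/2}=1 .
\]
It remains to show $w_i^*w_j=\delta_{ij}e_{11}$. Equivalently, the column $V=(v_1,\dots,v_d)$ satisfies $V^*V\approx e_{11}\otimes 1_d$. To get exact orthogonality, take the polar decomposition of the row operator $W=[v_1\ \cdots\ v_d]\in M_{1,d}(B)$ and pass to $M_d(e_{11}Be_{11})$. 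The matrix $W^*W=(v_i^*v_j)$ is close to the identity of $M_d(e_{11}Be_{11})$. Replacing $W$ by $W(W^*W)^{-1/2}$ yields exact relations $\widetilde w_i^*\widetilde w_j=\delta_{ij}e_{11}$ and keeps $\widetilde w_i$ close to $x_{i1}$. Unitality then follows: $\sum\widetilde w_i\widetilde w_i^*$ is a projection within distance $<1$ of $1$, hence equal to $1$.

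\emph{Step 4: assembling and the conclusion.} Define $e_{ij}:=\widetilde w_i\widetilde w_j^*$. This is an exact system of matrix units summing to $1$. Moreover,
\[
\|e_{ij}-x_{ij}\|\le\|e_{ij}-x_{i1}x_{1j}\|+\|x_{i1}x_{1j}-x_{ij}\|,
\]
and both terms are $O(\eta(\delta))$, where $\eta$ is a function depending only on $d$. Choosing $\delta$ so that $\eta(\delta)<\varepsilon$ gives the result. The final assertion holds because any nonzero system of $d\times d$ matrix units generates a copy of $M_d$ (by simplicity), and it is unital since $\sum e_{ii}=1$.

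Alternatively, one could cite semiprojectivity of $M_d$ (see Loring's book) and the compactness/ultraproduct argument that converts semiprojectivity into a uniform $\delta$. I would keep that as a fallback if the explicit estimates in Step 3 become messy.
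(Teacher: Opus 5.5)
Your argument is correct, and it takes a genuinely different route from the paper, whose proof is a one-line citation of Loring for the semiprojectivity (weak stability) of finite-dimensional $C^*$-algebras. You build the matrix units by hand:
\begin{enumerate}
\item functional calculus turns $x_{11}$ into a projection $e_{11}$;
\item polar decomposition in the unital corner $e_{11}Be_{11}$ gives partial isometries $v_i$ with common source $e_{11}$;
\item a single polar correction $\widetilde W=W(W^*W)^{-1/2}$ of the row $W=[v_1\ \cdots\ v_d]$ forces $\widetilde w_i^*\widetilde w_j=\delta_{ij}e_{11}$; this is computed in $M_d(e_{11}Be_{11})$, where $W^*W=(v_i^*v_j)$ is close to the unit;
\item unitality is then automatic, because $\widetilde W\widetilde W^*$ is a projection within distance $<1$ of $1$.
\end{enumerate}
Your route gives a self-contained proof in which $\delta$ depends explicitly on $(d,\varepsilon)$ and visibly not on $B$. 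The citation is shorter, and so is your fallback (semiprojectivity plus a contradiction argument in $\prod_n B_n/\bigoplus_n B_n$ to make $\delta$ uniform). The citation also matches how the paper later reuses the same stability principle for $M_2$-corners, order-zero maps and stable finite presentations.

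A few small things to tidy:
\begin{itemize}
\item The normalisation $w_i=q^{-1/2}v_i$ is never used and can be dropped, since unitality comes from the projection-near-$1$ argument.
\item The ``column $V$'' should be the row $W$; for a column you would get $\sum_i v_i^*v_i=d\,e_{11}$.
\item Rename the Step~1 projection, since the final $\widetilde w_1\widetilde w_1^*$ need not equal it.
\item State the a priori bound $\|x_{ij}\|\le 1+O(\delta)$ that your estimate $\|h^2-h\|\le C\delta$ tacitly uses. It follows from the relations, e.g.\ $\|x_{jj}\|^2\le\|x_{jj}^2\|+\delta\|x_{jj}\|\le(1+\delta)\|x_{jj}\|+\delta$, and similarly for $x_{ij}$ via $\|x_{ij}^*x_{ij}\|\le\|x_{ji}x_{ij}\|+\delta\|x_{ij}\|$.
\item The spectral estimate in Step~1 is actually $O(\delta)$ rather than $O(\sqrt\delta)$, though either suffices.
\end{itemize}
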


\begin{proof}
This is a standard perturbation/semiprojectivity fact for finite-dimensional
$C^*$-algebras; see~\cite{Loring}.
\end{proof}

\begin{remark}\label{rem:stable-relations}
Several arguments below (for AF, QD, MF, and related approximation properties, as well
as for the $K_0$ coding in Section~\ref{sec:K0}) involve quantification over the
countable dense $*$-subalgebra~$D^A$ using exact equalities such as $\Phi(K,\cdot)=0$
to assert that certain quotient elements are projections, matrix units, or unitaries.
Such exact witnesses need not belong to~$D^A$.

The resolution is standard: one replaces exact relations by \emph{approximate} ones
(expressed by strict inequalities $\Phi(K,\cdot)<\delta$ on elements of $D^A$) and
appeals to perturbation stability (Lemma~\ref{lem:stable-fd} and its generalisations)
to promote approximate witnesses to genuine ones in the quotient.
Since strict inequalities are open predicates, the quantifier-complexity analysis is
unchanged.
\end{remark}

\begin{lemma}
\label{lem:ucp-fd}
Fix $m\in\N$.
\begin{enumerate}
\item \textbf{(UCP maps with prescribed values.)}
Let $\bar x=(x_1,\dots,x_r)\subseteq D^A$ and let $Y_1, \dots, Y_r \in M_m(\Q(i))$.
The condition that there exists a unital completely positive (ucp) map
$\varphi:A/K\to M_m$ satisfying $\varphi(x_j+K)=Y_j$ for all $1\le j\le r$
is a \emph{closed} (and in particular Borel) predicate in $K \in \Ideal(A)$.

\item \textbf{(Approximate embeddings of f.d.\ $C^*$-algebras.)}
A family $(e_{ij})_{i,j\le d}\subseteq D^A$ whose images in $B$
\emph{approximately} satisfy matrix-unit relations (within tolerance~$\delta$) yields,
by Lemma~\ref{lem:stable-fd}, a genuine unital copy of $M_d$ in~$B$.
More precisely, the predicate ``$\exists$ a unital $*$-homo\-morphism
$\pi:F_0\to B$ with $F_0\cong\bigoplus_{j=1}^\ell M_{d_j}$ and
$\max_s\dist(x_s+K,\pi(F_0))<\varepsilon$'' is Borel in~$K$:
quantify over all choices of approximate matrix-unit families for each direct summand
$M_{d_j}$ with entries in $D^A$ (a countable search), require approximate matrix-unit
relations to hold within tolerance~$\delta$ via strict $\Phi$-inequalities, and require
$\varepsilon$-closeness of the target elements.  This contributes one existential
quantifier block.
\end{enumerate}
\end{lemma}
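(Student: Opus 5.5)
For part~(i), I would characterise the existence of the ucp map by a countable family of closed conditions. The plan is to use Arveson extension together with the standard description of ucp maps on a finite-dimensional operator system. Let $S_K\subseteq A/K$ be the operator system spanned by $1$ and the elements $x_j+K,\;x_j^*+K$.

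By Arveson's extension theorem, a ucp map $\varphi:A/K\to M_m$ with $\varphi(x_j+K)=Y_j$ exists if and only if the linear assignment
\[
1\mapsto 1_m,\qquad x_j+K\mapsto Y_j,\qquad x_j^*+K\mapsto Y_j^*
\]
is well defined and completely contractive (equivalently completely positive and unital) on $S_K$. I would encode this by matrix-level norm inequalities. For every $n$ and every tuple of scalar matrices $\Lambda_0,\Lambda_1,\dots,\Lambda_r,\Lambda_1',\dots,\Lambda_r'\in M_n(\Q(i))$, require
\[
\Bigl\|\Lambda_0\otimes 1_m+\sum_j\bigl(\Lambda_j\otimes Y_j+\Lambda_j'\otimes Y_j^*\bigr)\Bigr\|
\le \Phi^{(n)}\Bigl(K,\ \Lambda_0\otimes 1+\sum_j\bigl(\Lambda_j\otimes x_j+\Lambda_j'\otimes x_j^*\bigr)\Bigr).
\]
Taking these inequalities at all matrix levels $n$ gives complete contractivity. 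It also handles well-definedness, because a zero right-hand side forces a zero left-hand side. A unital complete contraction on an operator system is completely positive, which yields the ucp map on $S_K$. Arveson then extends it to all of $A/K$.

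The reverse implication holds because any ucp map is completely contractive. Rational coefficients suffice, since both sides are continuous in the coefficients. The left-hand side of each inequality is a constant. The right-hand side is Wijsman-continuous in $K$ by Lemma~\ref{lem:Phi-n-continuous}. So each inequality defines a closed set, and the countable intersection over all $n$ and all rational tuples is closed.

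The main obstacle in part~(i) is the justification that a unital complete contraction is completely positive, together with well-definedness when $S_K$ has degenerate linear relations. I would cite the standard operator-system fact, as in Paulsen, rather than reprove it.

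For part~(ii), the first sentence follows directly from Lemma~\ref{lem:stable-fd}, applied summand by summand to a block matrix-unit system. For the Borel predicate, I would write it as a union over the countably many data
\[
\bigl(\ell,(d_j),\text{families }(e^{(j)}_{ik})\subseteq D^A,\ \text{rational }\delta'<\delta,\ \text{rational coefficients }\lambda\bigr).
\]
For each choice of data, impose the open conditions stating that the approximate matrix-unit relations hold within $\delta'$, measured by strict $\Phi$-inequalities. Also impose that
\[
\Phi\Bigl(K,\ x_s-\sum\lambda_{s,i,k}^{(j)}e^{(j)}_{ik}\Bigr)<\varepsilon-\eta
\]
for each $s$. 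Here $\eta$ is a margin chosen, via the constant $\varepsilon$ in Lemma~\ref{lem:stable-fd} and the boundedness of the coefficients, to absorb the perturbation to genuine matrix units. Each such condition is open, so the predicate is a countable union of open sets.

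Conversely, genuine matrix units and genuine coefficients can be approximated from $D^A$ and rational coefficients with strict inequalities to spare. This shows the union exactly captures the predicate. Hence the predicate is open, in particular Borel, and it contributes exactly one existential block.
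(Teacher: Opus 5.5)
Your proposal is correct. For part~(i) it takes a genuinely different route from the paper; for part~(ii) it follows the paper's sketch.

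\textbf{Part (i).} The paper does not pass to the operator system $S_K$ and does not use Arveson's theorem. It works with ucp maps $\psi$ defined on the whole generator $A$. It notes that ucp maps on $A/K$ are exactly the ucp maps on $A$ that vanish on $K$, and that vanishing on $K$ is the scalar condition $\|\psi(a)\|\le\Phi(K,a)$ for $a\in D^A$. It then projects the resulting closed subset of $\mathrm{UCP}(A,M_m)\times\Ideal(A)$ along the compact factor.

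\begin{itemize}
\item \emph{What the paper's route buys.} It needs only Banach--Alaoglu and scalar-level continuity of $\Phi$. The same template is reused for Lemma~\ref{lem:cpc-fd}, quasidiagonality, nuclear dimension and tracial states.
\item \emph{What your route buys.} You get an explicit countable intersection of closed conditions $c\le\Phi^{(n)}(K,X)$. This is a direct universal-quantifier description in the spirit of the definability scheme, with no projection step.
\item \emph{What your route costs.} It needs heavier inputs: Arveson's extension theorem, and the fact that a unital complete contraction on an operator system is completely positive. It also needs the matrix-level continuity of Lemma~\ref{lem:Phi-n-continuous}, which the paper obtains via Dixmier's continuous-field criterion.
\end{itemize}

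\textbf{Part (ii).} You add the bookkeeping the paper omits, and you correctly conclude that the predicate is in fact open. Two small repairs are needed.

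\begin{itemize}
\item Lemma~\ref{lem:stable-fd} treats a single block with $\|\sum_i x_{ii}-1\|<\delta$. Each block unit of $\bigoplus_j M_{d_j}$ is only close to a projection, not to $1$, so the lemma cannot literally be applied summand by summand. You need the stability of the relations for $\bigoplus_j M_{d_j}$ itself (semiprojectivity of finite-dimensional algebras). The paper also uses this implicitly.
\item Arrange the margin by putting a rational perturbation size $\varepsilon'$ into the countable data. Take $\delta=\delta(\varepsilon')$ from the stability lemma and set $\eta=\varepsilon'\sum|\lambda|$. With $\delta'$ as the free parameter the dependence runs the wrong way, and $\eta$ cannot be guaranteed to fit inside the available slack in the converse direction.
\end{itemize}
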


\begin{proof}
Part~(i): The existence of a ucp map $\varphi:A/K\to M_m$ with
$\varphi(x_j+K)=Y_j$ is equivalent to the existence of a ucp map
$\psi:A\to M_m$ with $\psi(x_j)=Y_j$ for $1\le j\le r$ and $\psi$ vanishing on~$K$.
Since any ucp map is contractive, vanishing on $K$ is equivalent to
$\|\psi(a)\| \le \|a+K\|_{A/K} = \Phi(K,a)$ for all $a\in A$; by density
and continuity it suffices to check this inequality for $a\in D^A$.

Let $\mathrm{UCP}(A,M_m)$ denote the space of all ucp maps $A\to M_m$, equipped
with the point-norm topology.
Since $M_m$ is finite-dimensional, the point-norm topology on $\mathrm{UCP}(A,M_m)$
coincides with pointwise convergence of the $m^2$ matrix-entry functionals in the
weak$^*$ topology of $A^*$; hence $\mathrm{UCP}(A,M_m)$ is compact metrisable by
Banach--Alaoglu and separability of $A$.
Consider the constraint on pairs
$(\psi, K) \in \mathrm{UCP}(A,M_m) \times \Ideal(A)$:
\[
\forall j\le r\ \bigl(\psi(x_j)=Y_j\bigr) \ \wedge \ 
\forall a\in D^A\ \bigl(\|\psi(a)\| \le \Phi(K,a)\bigr).
\]
For each fixed $a\in D^A$, the evaluation $\psi \mapsto \|\psi(a)\|$ is continuous,
and $K \mapsto \Phi(K,a)$ is continuous (Lemma~\ref{lem:Phi-continuous}), so
$\|\psi(a)\|\le\Phi(K,a)$ defines a closed subset of the product.
Similarly, each condition $\psi(x_j)=Y_j$ is closed.
A countable intersection of closed sets is closed, so the full constraint set
is closed in $\mathrm{UCP}(A,M_m) \times \Ideal(A)$.
Because $\mathrm{UCP}(A,M_m)$ is compact, the projection of this closed set onto
$\Ideal(A)$ is closed.

Part~(ii): approximate
matrix-unit families on a fixed finite set are quantified over
$(x_{ij})\subseteq D^A$, with the matrix-unit relations required to hold up to a
tolerance~$\delta$ in $B$ (written using strict inequalities in~$\Phi$).
Lemma~\ref{lem:stable-fd} then yields genuine matrix units in~$B$
nearby, hence an actual unital copy of the prescribed finite-dimensional algebra.
Closeness to a finite set is measured by~$\Phi$.
\end{proof}

\begin{lemma}\label{lem:cpc-fd}
Let $F_0$ be a finite-dimensional $C^*$-algebra, let
$\bar x=(x_1,\dots,x_r)\subseteq D^A$, and let $\bar y=(y_1,\dots,y_r)\subseteq F_0$.
Then the set of $K\in\Ideal(A)$ for which there exists a completely positive
contractive map
$\psi:A/K\to F_0$
satisfying $\psi(x_j+K)=y_j$ for all $1\le j\le r$ is closed.
\end{lemma}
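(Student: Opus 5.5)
The plan is to mirror the proof of Lemma~\ref{lem:ucp-fd}(i) and replace ucp maps $A\to M_m$ by cpc maps $A\to F_0$. First I lift the problem to $A$. A cpc map $\psi:A/K\to F_0$ is the same thing as a cpc map $\tilde\psi=\psi\circ q_K:A\to F_0$ that vanishes on $K$. Since $\tilde\psi$ is contractive, vanishing on $K$ is equivalent to
\[
\|\tilde\psi(a)\|\le\Phi(K,a)\qquad(a\in A).
\]
Indeed, if the inequality holds then $\tilde\psi(k)=0$ for $k\in K$. Conversely, if $\tilde\psi$ vanishes on $K$, then $\|\tilde\psi(a)\|=\|\tilde\psi(a-k)\|\le\|a-k\|$ for every $k\in K$, and taking the infimum over $k$ gives the inequality. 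Both sides of the inequality are continuous in $a$, so it suffices to impose it for $a\in D^A$.

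Next I need a compact parameter space. Let $\mathrm{CPC}(A,F_0)$ be the set of cpc maps $A\to F_0$ with the point-norm topology. I fix a unital embedding $F_0\subseteq M_N$, say block-diagonally with $N=\sum_j d_j$. Then a map into $F_0$ is cpc exactly when it is cpc as a map into $M_N$ and takes values in the closed subspace $F_0$. Since $F_0$ is finite-dimensional, the point-norm topology agrees with pointwise weak$^*$ convergence of the finitely many matrix-entry functionals, each of which lies in the unit ball of $A^*$.

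The key step is to show that $\mathrm{CPC}(A,F_0)$ is closed, hence compact metrisable by Banach--Alaoglu and separability of $A$. Complete positivity is preserved under pointwise limits: for each $n$ and each $X\in M_n(A)_+$, the condition $\psi^{(n)}(X)\ge0$ is closed in pointwise convergence. Contractivity $\|\psi(a)\|\le\|a\|$ is closed for each fixed $a$. Membership in $F_0$ is closed because $F_0$ is closed. So the whole space is an intersection of closed conditions.

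I then form the constraint set $\mathcal C\subseteq\mathrm{CPC}(A,F_0)\times\Ideal(A)$ defined by
\[
\psi(x_j)=y_j\ \ (j\le r)
\qquad\text{and}\qquad
\|\psi(a)\|\le\Phi(K,a)\ \ (a\in D^A).
\]
Each condition is closed. The map $\psi\mapsto\psi(a)$ is continuous, and $K\mapsto\Phi(K,a)$ is Wijsman-continuous by Lemma~\ref{lem:Phi-continuous}. Hence $\mathcal C$ is a countable intersection of closed sets and is closed. The set in question is exactly $\pi_{\Ideal(A)}(\mathcal C)$. Because the first factor is compact, projection along it is a closed map, as in the proof of Lemma~\ref{lem:proj-locally-closed}. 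So the projection is closed.

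The main thing to check carefully is the compactness of $\mathrm{CPC}(A,F_0)$, in particular that the point-norm and pointwise weak$^*$ topologies coincide. This uses only finite-dimensionality of $F_0$. If that identification caused trouble, I would instead parametrise cpc maps $A\to M_N$ by the positive contractive functionals $A\otimes M_N\to\C$ in the Choi--Effros correspondence, which is a weak$^*$-compact set. I would then intersect with the closed condition that the map lands in $F_0$.
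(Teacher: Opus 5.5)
Your proposal is correct and follows essentially the same route as the paper: you lift to c.p.c.\ maps $A\to F_0$ vanishing on $K$, encode descent by the closed conditions $\|\psi(a)\|\le\Phi(K,a)$ for $a\in D^A$, and project a closed subset of $\mathrm{CPC}(A,F_0)\times\Ideal(A)$ along the compact factor. The only difference is cosmetic: you get compactness of $\mathrm{CPC}(A,F_0)$ from Banach--Alaoglu applied to matrix-entry functionals after embedding $F_0\subseteq M_N$, which is how the paper argues in Lemma~\ref{lem:ucp-fd}, whereas for this lemma the paper codes $\psi$ by its values on the dense sequence inside a product of balls of $F_0$.
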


\begin{proof}
Let $\mathrm{CPC}(A,F_0)$ denote the space of all completely positive contractive
maps $A\to F_0$, equipped with the point-norm topology.
Since $F_0$ is finite-dimensional, $\mathrm{CPC}(A,F_0)$ is compact metrisable:
using the dense sequence $D^A=\{u_m:m\in\N\}$, each
$\psi\in\mathrm{CPC}(A,F_0)$ is determined by the tuple
$(\psi(u_m))_{m\in\N}\in \prod_m B_{\|u_m\|}^{F_0}$, a compact metrisable product,
and the conditions of linearity, $*$-preservation, complete positivity, and
contractivity define a closed subset.

A c.p.c.\ map $\psi:A\to F_0$ factors through $A/K$ iff
\[
\|\psi(a)\| \le \|a+K\|_{A/K}=\Phi(K,a)\qquad(a\in A),
\]
and by density it suffices to check this on $a\in D^A$.
Hence the desired relation on pairs $(\psi,K)\in\mathrm{CPC}(A,F_0)\times\Ideal(A)$ is
\[
\forall j\le r\ \bigl(\psi(x_j)=y_j\bigr)
\quad\wedge\quad
\forall a\in D^A\ \bigl(\|\psi(a)\|\le \Phi(K,a)\bigr).
\]
Each condition $\psi(x_j)=y_j$ is closed, and for fixed $a\in D^A$ the inequality
$\|\psi(a)\|\le \Phi(K,a)$ is closed because $(\psi,K)\mapsto \|\psi(a)\|$ and
$(\psi,K)\mapsto\Phi(K,a)$ are continuous.
Thus the full relation is closed in
$\mathrm{CPC}(A,F_0)\times\Ideal(A)$.
Projecting this closed set along the compact factor $\mathrm{CPC}(A,F_0)$ yields a
closed subset of $\Ideal(A)$.
\end{proof}

\subsection{AF and related approximation classes}

A $C^*$-algebra $B$ is \emph{approximately finite-dimensional} (\emph{AF}) if it is the
closure of an increasing union of finite-dimensional $C^*$-subalgebras, or
equivalently, if for every finite $F\subseteq B$ and every $\varepsilon>0$ there exists
a finite-dimensional $C^*$-subalgebra $F_0\subseteq B$ approximating $F$
within~$\varepsilon$ (see \cite[II.8.2.2]{BlackadarOA}).

\begin{proposition}\label{prop:AF}
The class $\mathrm{AF}=\{K\in\Ideal(A):A/K\text{ is AF}\}$ is $\Pi^0_2$ in the Wijsman
topology.
\end{proposition}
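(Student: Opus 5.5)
We express AF-ness through the local characterisation recalled just before the statement, and reduce it to a countable $\forall\exists$ scheme whose matrix is open. The universal block ranges over finite tuples $\bar a=(a_1,\dots,a_r)$ from $D^A$ and over $m\in\N$. The existential block ranges over finite-dimensional algebras $F_0\cong\bigoplus_{j\le\ell}M_{d_j}$ (countably many isomorphism types) and over approximate matrix-unit families $(x^{(j)}_{ik})\subseteq D^A$. It also ranges over coefficient witnesses $c_s\in D^A$ expressed as rational combinations of the $x^{(j)}_{ik}$; by closure of $D^A$ under rational combinations, these are again elements of $D^A$.

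For fixed data, the matrix of the formula is a finite conjunction of strict inequalities:
\begin{itemize}
\item the matrix-unit relations $\Phi(K,x^{(j)*}_{ik}-x^{(j)}_{ki})<\delta$ and $\Phi(K,x^{(j)}_{ik}x^{(j')}_{k'l}-\delta_{jj'}\delta_{kk'}x^{(j)}_{il})<\delta$;
\item the unit relation $\Phi(K,\sum_{j,i}x^{(j)}_{ii}-1)<\delta$;
\item the approximation relations $\Phi(K,a_s-c_s)<1/m$.
\end{itemize}
Here $\delta=\delta(d,\varepsilon)$ is taken from Lemma~\ref{lem:stable-fd}, applied to the direct-sum matrix-unit system of size $d=\sum d_j$ and with $\varepsilon$ tied to $m$ and to the size of the coefficients. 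By Lemma~\ref{lem:Phi-continuous} each such condition is open. The existential union is therefore open, and the outer intersection is $G_\delta$.

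\textbf{Soundness.} Suppose $K$ lies in the set. Lemma~\ref{lem:stable-fd} then perturbs the approximate units to genuine matrix units $e^{(j)}_{ik}$ in $B=A/K$, generating a copy of $F_0$. The $c_s+K$ are fixed rational combinations of the $x$'s, so they lie within $C\varepsilon$ of $F_0$, where $C$ bounds the sum of the absolute values of the coefficients. We will encode a bound on the coefficients into the choice of $\varepsilon$, making $\varepsilon$ depend on the existential data. The choice is harmless: we pick $\delta$ after the coefficients are chosen, so the quantifier structure is unchanged. Hence every finite subset of $D^A+K$ is within $2/m$ of a finite-dimensional subalgebra. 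Density of $D^A+K$ then gives the local AF criterion, and the local characterisation yields AF-ness (in the separable case, via \cite[II.8.2.2]{BlackadarOA}).

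\textbf{Completeness.} Suppose $B$ is AF. Given $\bar a$ and $m$, choose a finite-dimensional $F_0\subseteq B$ with matrix units $e$ approximating each $a_s+K$ within $1/(3m)$ by combinations $\sum\lambda e$. We may take $F_0$ unital, since in the unital case the increasing union may be chosen unital. Then approximate the coefficients $\lambda$ by rationals and each $e$ by an element of $D^A$ closely enough that all the strict inequalities hold.

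\textbf{Main obstacle.} The hard step is the bookkeeping of tolerances in the soundness direction: the $\delta$ must be chosen uniformly in $K$, depending only on $(d,\text{coefficient bound},m)$. Lemma~\ref{lem:stable-fd} provides exactly this uniformity, since its $\delta$ works in every $C^*$-algebra. So the main task is to fix the order of choices so that $\delta$ is determined by the existentially quantified finite data before the open conditions are written down.
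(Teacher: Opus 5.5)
Your proposal is correct and follows the paper's proof: for each fixed $(\bar a,m)$, the set of $K$ admitting approximate matrix units from $D^A$ (within a stability tolerance) together with rational combinations approximating $\bar a$ is open, and intersecting over the countably many pairs $(\bar a,m)$ gives a $G_\delta$ set. You add one refinement the paper leaves implicit, namely letting the stability tolerance depend on the coefficient bound. One caveat: Lemma~\ref{lem:stable-fd} is stated only for a single block $M_d$, so for $\bigoplus_j M_{d_j}$ you, like the paper, are really relying on the standard stability (semiprojectivity) of finite-dimensional $C^*$-algebras rather than on that lemma as written.
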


\begin{proof}
Recall that $B$ is AF iff for every finite set $F\subseteq B$ and every
$\varepsilon>0$ there exists a finite-dimensional $C^*$-subalgebra approximating $F$
within~$\varepsilon$.  It suffices to test finite tuples $\bar x\in(D^A)^{<\N}$ and
tolerances $1/n$.

Fix such a pair $(\bar x,n)$.  A witness consists of a finite-dimensional algebra type
$F_0\cong\bigoplus_{j=1}^\ell M_{d_j}$, approximate matrix units from $D^A$ for this
algebra, and rational linear combinations of those approximate matrix units
approximating the entries of $\bar x$.  Requiring the matrix-unit relations to hold
within the stability tolerance from Lemma~\ref{lem:stable-fd}, and requiring the
approximations to be within $1/n$, is a finite system of strict inequalities in
Wijsman-continuous functions of~$K$.  For fixed finite data this is open, and the search
over all finite-dimensional types, approximate matrix-unit tuples, and rational
coefficients is countable.  Thus the set of $K$ admitting an AF witness for the fixed
pair $(\bar x,n)$ is open.

Intersecting these open sets over all finite tuples $\bar x$ and all $n\in\N$ gives a
$G_\delta$ set, hence a $\Pi^0_2$ set.
\end{proof}

Recall that a $C^*$-algebra is called \emph{AI} (approximately interval) if it is the
closure of an increasing union of subalgebras each isomorphic to a direct sum of
algebras of the form $C([0,1],M_n)$; it is \emph{AH} (approximately homogeneous)
if the building blocks are homogeneous $C^*$-algebras $p\,C(X,M_n)\,p$ with $X$ a
compact metrizable space; it is \emph{ASH} (approximately sub\-homogeneous) if the
building blocks are subhomogeneous; and it is \emph{A$\mathbb T$} (approximately circle)
if the building blocks are direct sums of $C(\mathbb T,M_n)$ and finite-dimensional
algebras.  For details see \cite[IV.3]{BlackadarOA}.

\begin{proposition}
\label{prop:AI-AH}
Let $\mathcal C=(C_j)_{j\in\N}$ be a countable family of separable $C^*$-algebras,
and assume that each $C_j$ is given by a stable finite presentation.  Let
$\mathrm{LocIm}(\mathcal C)$ be the class of $K\in\Ideal(A)$ such that $B=A/K$ has the
following property: for every finite $F\subseteq B$ and every $\varepsilon>0$ there are
$j\in\N$ and a $*$-homomorphism $\pi:C_j\to B$ such that
$\dist(x,\pi(C_j))<\varepsilon$ for all $x\in F$.

Then $\mathrm{LocIm}(\mathcal C)$ is $\Pi^0_2$ in the Wijsman topology.
\end{proposition}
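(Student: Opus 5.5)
The plan is to follow the proof of Proposition~\ref{prop:AF} verbatim, with matrix-unit presentations replaced by the stable finite presentations of the $C_j$. Write each $C_j$ as the universal $C^*$-algebra on generators $g^{(j)}_1,\dots,g^{(j)}_{s_j}$ subject to finitely many relations. We assume the relations are $*$-polynomial, or norm bounds, as in Definition~\ref{def:stable-rel}. By \emph{stable} we mean that for every $\varepsilon>0$ there is $\delta_j(\varepsilon)>0$ with the following property: in every $C^*$-algebra, any $s_j$-tuple satisfying the relations up to $\delta_j(\varepsilon)$ lies within $\varepsilon$ of a tuple satisfying them exactly. This is the analogue of Lemma~\ref{lem:stable-fd}. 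We may take $\delta_j(\varepsilon)\in\Q$.

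For a fixed finite tuple $\bar x\in(D^A)^{<\N}$ and $n\in\N$, define $U_{\bar x,n}$ as the set of $K$ for which the following witness data exist. There must be some $j\in\N$, a tuple $\bar v\in(D^A)^{s_j}$, and, for each entry $x_s$ of $\bar x$, a rational $*$-polynomial $p_s$ such that two things hold. First, every relation defect of $\bar v+K$ is $<\delta_j(1/(4nM_s))$, where $M_s$ is a Lipschitz bound for $p_s$ on the relevant ball. Second, $\Phi\bigl(K,x_s-p_s(\bar v)\bigr)<1/(2n)$ for every $s$. Each defect has the form $\Phi(K,t(\bar v))$ for a term $t$ with $t(\bar v)\in D^A$, because $D^A$ is a $*$-subalgebra closed under rational combinations. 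So for fixed data the conditions are finitely many strict inequalities in Wijsman-continuous functions (Lemma~\ref{lem:Phi-continuous}) and define an open set. The data range over a countable set, so $U_{\bar x,n}$ is open. The claim is then
\[
\mathrm{LocIm}(\mathcal C)=\bigcap_{\bar x}\bigcap_{n}U_{\bar x,n},
\]
which is $G_\delta$, i.e.\ $\Pi^0_2$.

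For the inclusion $\supseteq$, let $K$ be in the intersection and take a finite $F\subseteq B$ and $\varepsilon>0$. First approximate $F$ within $\varepsilon/3$ by $\bar x+K$ with $\bar x$ from $D^A$, and pick $n$ with $1/n<\varepsilon/3$. Apply stability to $\bar v+K$ to obtain exact generators $\bar e$ with $\|\bar e-(\bar v+K)\|$ small. The universal property then gives $\pi:C_j\to B$ with $\pi(g^{(j)}_i)=e_i$. Using the Lipschitz bound, $p_s(\bar e)\in\pi(C_j)$ is within $1/n$ of $x_s+K$, hence within $\varepsilon$ of the corresponding element of $F$.

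For $\subseteq$, given $\pi$ and an entry $x_s$, first approximate $x_s+K$ within $1/(4n)$ by an element of $\pi(C_j)$. Then approximate that element by a rational $*$-polynomial in the $\pi(g_i^{(j)})$, using density of such polynomials in $C_j$. Finally approximate $\pi(g^{(j)}_i)$ by $v_i+K$ with $v_i\in D^A$. Continuity of the finitely many terms keeps the defects below $\delta_j$ and the distances below $1/(2n)$.

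The main obstacle is bookkeeping the tolerances. The stability modulus must be chosen uniformly over all quotients and compatible with the Lipschitz constants of the approximating polynomials. The fix is to quantify existentially over the polynomial first, which fixes its Lipschitz constant on the norm-bounded ball given by the presentation, and only then choose $\delta$. Since every witness is still a countable search over strict inequalities, this keeps each $U_{\bar x,n}$ open.
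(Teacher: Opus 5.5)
Your proposal is correct and follows essentially the same route as the paper. For fixed $(\bar x,n)$, the set of $K$ admitting a witness is open. Such a witness consists of some $j$, a dense tuple satisfying the defining relations of $C_j$ within a stability tolerance, and rational $*$-polynomials whose evaluations approximate the $x_s$. Intersecting over all $(\bar x,n)$ then gives a $G_\delta$ set.

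Your device of quantifying over the polynomial first, and only then fixing the tolerance from its Lipschitz constant, is cleaner than the paper's ``finite $1/(2n)$-net in a sufficiently large ball''. That net does not exist when $C_j$ is infinite-dimensional, so your version closes a gap the paper leaves.
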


\begin{proof}
Fix a finite tuple $\bar x\in(D^A)^{<\N}$ and $n\in\N$.  For each $j$, fix a stable
finite presentation of $C_j$ on generators $z^{(j)}_1,\dots,z^{(j)}_{r_j}$, and choose a
finite $1/(2n)$-net in a sufficiently large ball of the rational $*$-algebra generated
by these symbols.  Stability supplies a tolerance $\delta_j>0$ such that any tuple in a
quotient satisfying the defining relations within $\delta_j$ is within $1/(2n)$ of the
range of an actual representation of $C_j$.

For fixed $j$ and a tuple $\bar y\in(D^A)^{r_j}$, require the defining relations of
$C_j$ to hold for $\bar y+K$ within $\delta_j$, and require every element of $\bar x$ to
be within $1/n$ of the image, under the formal evaluation at $\bar y$, of one of the
chosen net elements.  These are finitely many strict inequalities involving
Wijsman-continuous quotient-norm functions, hence they define an open subset of
$\Ideal(A)$.

Taking the countable union over $j$ and over $\bar y$ gives, for the fixed pair
$(\bar x,n)$, an open set of codes admitting a local witness.  Intersecting over all
finite tuples $\bar x$ and all $n$ gives a $G_\delta$ set, that is, a $\Pi^0_2$ set.
\end{proof}

\begin{remark}
The proposition is deliberately formulated for homomorphic images.  Stable finite
presentations let one perturb approximate relations to genuine representations, but
they do not by themselves force the representation to be injective.  Therefore the
argument above does not prove that the usual AI, $A\mathbb T$, AH, or ASH classes are
$\Pi^0_2$ or $\Pi^0_3$ in this coding.  To recover those embedding versions one would
need additional Borel conditions controlling the spectrum or injectivity of the
represented block.
\end{remark}

A separable $C^*$-algebra $B$ is \emph{quasidiagonal} (QD) if every faithful
representation of $B$ on a separable Hilbert space admits an increasing approximate
unit of finite-rank projections that asymptotically commute with every operator in
the image, or equivalently if $B$ admits Voiculescu-type approximate embeddings into
matrix algebras (see \cite[V.4.2.1]{BlackadarOA}).
It is \emph{MF} (matricially finite) if it embeds into
$\prod_n M_{k_n}/(\bigoplus_n M_{k_n})_{c_0}$
for some sequence $(k_n)$
(see \cite[V.4.3.1]{BlackadarOA}).

\begin{proposition}\label{prop:QD-MF}
The class of quasidiagonal quotients is $\Pi^0_3$ in the Wijsman topology, and the
class of MF quotients is $\Pi^0_2$.
\end{proposition}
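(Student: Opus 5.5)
We treat the two classes separately, using the characterisations of QD and MF by approximate matrix models on the countable dense $*$-subalgebra $D^A$, together with Lemmas~\ref{lem:ucp-fd} and~\ref{lem:cpc-fd}.

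\textbf{MF.} By Blackadar--Kirchberg, a separable $B$ is MF iff for every finite tuple $\bar x$ in $B$, every finite set of $*$-polynomials $p_1,\dots,p_s$ with rational coefficients, and every $\varepsilon>0$, there exist $m$ and matrices $Y_1,\dots,Y_r\in M_m$ with
\[
\bigl|\,\|p_\ell(\bar Y)\|-\|p_\ell(\bar x)\|\,\bigr|<\varepsilon\qquad(\ell\le s).
\]
The matrices may be taken to have entries in $\Q(i)$, since norms of polynomials depend continuously on the entries.
\begin{itemize}
\item Restrict to tuples $\bar x$ from $D^A$. A $*$-polynomial in elements of $D^A$ is again an element of $D^A$.
\item The model condition then reads $|\,\|p_\ell(\bar Y)\|-\Phi(K,p_\ell(\bar x))\,|<\varepsilon$.
\item For fixed data $(\bar x,\bar p,\varepsilon,m,\bar Y)$ this is a strict inequality in a Wijsman-continuous function of $K$, so it defines an open set.
\item The existential search over the countable set of pairs $(m,\bar Y)$ is therefore open.
\item Intersecting over the countably many $(\bar x,\bar p,\varepsilon)$ gives a $G_\delta$ set.
\end{itemize}
The one point to check is that testing only $\bar x$ in $D^A$ suffices. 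Given an arbitrary finite tuple in $B$, approximate it by $D^A$-elements within $\varepsilon/C$, where $C$ is a Lipschitz bound for the finitely many polynomials on a ball. This is routine.

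\textbf{QD.} We use Voiculescu's characterisation: a unital separable $B$ is QD iff for every finite $F\subseteq B$ and every $\varepsilon>0$ there is a ucp map $\varphi:B\to M_m$ that is $\varepsilon$-multiplicative and $\varepsilon$-isometric on $F$, i.e.
\[
\|\varphi(ab)-\varphi(a)\varphi(b)\|<\varepsilon\quad\text{and}\quad\|\varphi(a)\|>\|a\|-\varepsilon\qquad(a,b\in F).
\]
Restrict $F$ to finite subsets of $D^A$ (closed under products) and fix $\varepsilon=1/n$.

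A witness is a choice of $m$ and rational target matrices $Y_a$ for $a\in F$ (and for the products $ab$) such that three conditions hold:
\begin{itemize}
\item $\|Y_{ab}-Y_aY_b\|<1/n$ and $\|Y_a\|>\Phi(K,a)-1/n$;
\item there exists a ucp map $\psi:A/K\to M_m$ with $\|\psi(a+K)-Y_a\|\le\eta$ for a small rational $\eta$;
\item the tolerances are arranged so that the inequalities above survive the $\eta$-perturbation.
\end{itemize}

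The first bullet is open in $K$. The second bullet is closed in $K$: this is the argument of Lemma~\ref{lem:ucp-fd}(i), with exact values relaxed to closed $\eta$-balls, and projection along the compact space $\mathrm{UCP}(A,M_m)$ preserves closedness. Each witness therefore defines the intersection of an open set and a closed set, which is $F_\sigma$ (Lemma~\ref{lem:proj-locally-closed} gives exactly this form after projection). A countable union over $(m,\bar Y,\eta)$ is $\Sigma^0_2$, and the intersection over $(F,n)$ is $\Pi^0_3$.

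\textbf{Main obstacle.} The hardest step is the equivalence between the existence of these approximate witnesses for dense data and genuine quasidiagonality. Two things must be handled:
\begin{itemize}
\item the ucp map obtained in the second bullet only approximately realises the rational targets $Y_a$;
\item multiplicativity and isometry must be transferred from $F\subseteq D^A$ to arbitrary finite subsets of $B$.
\end{itemize}
The plan is to choose $\eta<1/(4n)$ and to use that ucp maps are contractive, so that errors propagate linearly on bounded sets. This yields a $(3/n)$-multiplicative, $(3/n)$-isometric ucp map on $F$. Voiculescu's theorem, which allows arbitrary tolerances, then absorbs the constant.
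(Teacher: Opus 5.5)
Your proof is correct. The MF part follows the paper's argument. The QD part reaches the same $\Sigma^0_2$ bound for each fixed test pair by a different mechanism.

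\textbf{MF.} You do what the paper does. For a fixed rational matrix witness the conditions are strict inequalities in the continuous functions $K\mapsto\Phi(K,\cdot)$, so the countable search is open, and intersecting over the test data gives a $G_\delta$ set. The only difference is the characterisation used. You use the Blackadar--Kirchberg characterisation by norms of $*$-polynomials, whereas the paper asks for matrices $(Y_t)_{t\in F(\bar x)}$ that are approximately multiplicative, $*$-compatible and isometric. Your version builds in the linear relations among the test elements, while the paper's conditions on $(Y_t)$ constrain only products, adjoints and norms. The complexity is unchanged.

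\textbf{QD.} The paper keeps the ucp map $\varphi$ itself as the witness. Its conditions (descent, approximate multiplicativity, approximate isometry) define a locally closed subset of $\mathrm{UCP}(A,M_m)\times\Ideal(A)$. Lemma~\ref{lem:proj-locally-closed} then makes the projection $F_\sigma$.

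You instead discretise the values of the map by rational targets $Y_a$. The open conditions then involve only $(Y,K)$ and never touch the compact factor. The only thing projected along $\mathrm{UCP}(A,M_m)$ is the closed condition that some descending ucp map lies within $\eta$ of the targets, and that projection is closed by compactness alone, as in Lemma~\ref{lem:ucp-fd}(i). Each witness set is therefore open $\cap$ closed, hence $F_\sigma$ directly, so your appeal to Lemma~\ref{lem:proj-locally-closed} is unnecessary. In effect you carry out by hand the splitting of the open part into countably many closed pieces that the lemma performs. The gain is that only closedness of projections along compact factors is used; the price is the $\eta$-bookkeeping.

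\textbf{Two small corrections.}
\begin{enumerate}
\item The error constant is not $3/n$. With only $\eta<1/(4n)$, the multiplicativity error of $\psi$ is at most $1/n+\eta(1+\|a\|+\|b\|+\eta)$. This is $C_F/n$, where $C_F$ depends on $\max_{a\in F}\|a\|$. It is harmless: for fixed $F$ the intersection runs over all $n$ and $C_F/n\to0$. Alternatively, let $\eta$ depend on $F$.
\item A finite set cannot be ``closed under products''. What you actually use, and should say, is that the products $ab$ ($a,b\in F$) are added as extra indices.
\end{enumerate}
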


\begin{proof}
Write $B=A/K$.

\smallskip
\noindent\textbf{(1) Quasidiagonality.}
We use the standard finite-dimensional approximation formulation of
quasidiagonality for separable unital $C^*$-algebras:
$B$ is quasidiagonal iff for every finite set $F\subseteq B$ and every
$\varepsilon>0$ there exist $m\in\N$ and a ucp map $\varphi:B\to M_m$ such that
\begin{equation}\label{eq:QD-approx}
\|\varphi(xy)-\varphi(x)\varphi(y)\|<\varepsilon\quad (x,y\in F),
\qquad
\bigl|\|\varphi(x)\|-\|x\|\bigr|<\varepsilon\quad (x\in F).
\end{equation}
(See e.g.\ \cite[V.4.2.1]{BlackadarOA} for such Voiculescu-type formulations.)

As usual, it suffices to test finite sets drawn from $D^A+K$ and
$\varepsilon=1/n$.
Fix a finite tuple $\bar x=(x_1,\dots,x_r)\subseteq D^A$ and $n\in\N$.
Let $F(\bar x)$ be the finite $*$-closed set consisting of
\[
1,\ x_i,\ x_i^*,\ x_ix_j,\ x_i^*x_j,\ x_ix_j^*,\ x_i^*x_j^*
\qquad (1\le i,j\le r).
\]

For $m\in\N$, let $\mathrm{UCP}(A,M_m)$ be the compact metrisable space of ucp
maps $A\to M_m$.
Consider the set of pairs $(\varphi,K)\in \mathrm{UCP}(A,M_m)\times\Ideal(A)$ such that

\begin{enumerate}[label=\textup{(\alph*)}]
\item $\varphi$ descends to the quotient $A/K$, \emph{i.e.}
$\|\varphi(a)\|\le \Phi(K,a)$ for all $a\in D^A$;
\item the induced map on $B=A/K$ is $1/n$-approximately multiplicative on
$F(\bar x)$:
$\|\varphi(st)-\varphi(s)\varphi(t)\|<1/n$ for all $s,t,st\in F(\bar x)$;
\item the induced map is $1/n$-approximately isometric on $F(\bar x)$:
$\bigl|\|\varphi(s)\|-\Phi(K,s)\bigr|<1/n$ for all $s\in F(\bar x)$.
\end{enumerate}

Condition~(a) is closed in $(\varphi,K)$, by the same compactness argument as in
Lemma~\ref{lem:ucp-fd}(i).
Conditions~(b) and~(c) are open, since they are strict inequalities involving
continuous functions of $(\varphi,K)$.
Hence the total relation is locally closed in the compact-by-Polish space
$\mathrm{UCP}(A,M_m)\times\Ideal(A)$.
By Lemma~\ref{lem:proj-locally-closed}, its projection onto $\Ideal(A)$ is
$F_\sigma$.

Taking the union over $m\in\N$, we see that for fixed $(\bar x,n)$ the set of
$K$ for which there exists a quasidiagonal witness is $\Sigma^0_2$.
Intersecting over all finite tuples $\bar x$ and all $n\in\N$ yields that the
quasidiagonal class is $\Pi^0_3$.

\smallskip
\noindent\textbf{(2) MF.}
We use the standard microstate/norm-ultraproduct characterisation:
a separable unital $C^*$-algebra $B$ is MF iff for every finite set
$F\subseteq B$ and every $\varepsilon>0$ there exist $m\in\N$ and a map
$\theta:F\to M_m$ such that $\theta$ is approximately $*$-multiplicative on $F$
and approximately isometric on $F$.

As in the QD case, it suffices to test finite sets drawn from $D^A+K$ and
$\varepsilon=1/n$.
Fix $\bar x=(x_1,\dots,x_r)\subseteq D^A$ and let $F(\bar x)$ be the same finite
$*$-closed set as above.
A candidate MF witness is a matrix size $m$ and a family of rational matrices
$(Y_t)_{t\in F(\bar x)}\subseteq M_m(\Q(i))$
satisfying
\begin{align*}
\|Y_{st}-Y_sY_t\|&<\tfrac{1}{n}\qquad (s,t,st\in F(\bar x)),\\
\|Y_{s^*}-Y_s^*\|&<\tfrac{1}{n}\qquad (s,s^*\in F(\bar x)),\\
\bigl|\|Y_s\|-\Phi(K,s)\bigr|&<\tfrac{1}{n}\qquad (s\in F(\bar x)).
\end{align*}
For fixed $(m,(Y_t))$ these are strict inequalities in continuous functions of $K$,
hence define an open set of $K$.
Taking the union over the countable parameter set of such matrix families shows
that, for each fixed $(\bar x,n)$, the set of $K$ admitting an MF witness is open.
Intersecting over all $\bar x$ and $n$ yields that the MF class is a $G_\delta$ set,
that is, $\Pi^0_2$.
\end{proof}

\begin{proposition}\label{prop:approx-div}
A unital $C^*$-algebra $B$ is \emph{approximately divisible} if for every finite
$F\subseteq B$ and $\varepsilon>0$ there exist $p,q\ge 2$ and a unital embedding
$M_p\oplus M_q\hookrightarrow B$ whose image $\varepsilon$-commutes with every
element of~$F$ (see \cite[V.2.3.1]{BlackadarOA}).  The class of $K\in\Ideal(A)$ such
that $A/K$ is approximately divisible is $G_\delta$ in the Wijsman topology.
\end{proposition}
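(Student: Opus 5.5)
The plan is to follow the template of Proposition~\ref{prop:AF}: reduce to countably many finite test data from $D^A$, show that for each test datum the set of $K$ admitting a witness is Wijsman-open, and then intersect. The witness for a fixed datum is a unital copy of $M_p\oplus M_q$ in $B=A/K$, and by Lemma~\ref{lem:stable-fd} it can be replaced by approximate matrix units taken from $D^A$.

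First I would fix a finite tuple $\bar x=(x_1,\dots,x_r)\subseteq D^A$ and $n\in\N$. Define $\mathcal O_{\bar x,n}$ as the set of $K$ for which there exist $p,q\ge 2$ and families $(y_{ij})_{i,j\le p}$, $(z_{kl})_{k,l\le q}$ in $D^A$ satisfying four groups of strict inequalities:
\begin{itemize}
\item the matrix-unit relations for each family, within the tolerance $\delta$ of Lemma~\ref{lem:stable-fd} for $d=p$ and $d=q$ and with $\varepsilon=\eta$ (say $\eta=1/(8n\max(1,M))$, where $M$ bounds the norms of the $x_s$), with the unit condition $\sum_i y_{ii}=1$ replaced by separate conditions;
\item orthogonality relations $\Phi(K,y_{ij}z_{kl})<\delta$ and $\Phi(K,z_{kl}y_{ij})<\delta$;
\item the joint unit condition $\Phi(K,\sum_i y_{ii}+\sum_k z_{kk}-1)<\delta$;
\item commutation $\Phi(K,y_{ij}x_s-x_sy_{ij})<1/(2n)$ and $\Phi(K,z_{kl}x_s-x_sz_{kl})<1/(2n)$.
\end{itemize}
For fixed finite data these are finitely many strict inequalities in Wijsman-continuous functions (Lemma~\ref{lem:Phi-continuous}), so they define an open set. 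The search over $p$, $q$ and tuples is countable, so $\mathcal O_{\bar x,n}$ is open. Commuting with the matrix units suffices, since the image of $M_p\oplus M_q$ is their span with uniformly bounded coefficients; the tolerances would be adjusted by a factor depending on $p$ and $q$, or else one tests only against the unit ball, which is spanned with coefficient bound $\le p^2+q^2$. I would therefore put the factor $(p^2+q^2)$ into the commutation tolerance, choosing it for each $(p,q)$ separately.

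The main step is to show that $\{K:A/K\text{ approx.\ divisible}\}=\bigcap_{\bar x,n}\mathcal O_{\bar x,n}$. The main obstacle is the stability input in the direction ``$\supseteq$''. Lemma~\ref{lem:stable-fd} is stated for a single unital $M_d$, but here the relevant algebra is $M_p\oplus M_q$. I would handle this in one of two ways. The first is to apply it to $F_0=M_p\oplus M_q$ via the standard semiprojectivity of finite-dimensional $C^*$-algebras from \cite{Loring}, i.e.\ the generalisation alluded to in Remark~\ref{rem:stable-relations}, with a tolerance $\delta(p,q,\eta)$. The second is to first perturb $e=\sum y_{ii}$ to a projection $P$ with complement $1-P$, and then apply Lemma~\ref{lem:stable-fd} inside the corners $PBP$ and $(1-P)B(1-P)$.

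Either way, genuine matrix units $e_{ij},f_{kl}$ lie within $\eta$ of the approximate ones and span a unital copy of $M_p\oplus M_q$. The commutators then change by at most $2\eta M\cdot(p^2+q^2)$, which remains below $\varepsilon$ once $\delta$ and $\eta$ are chosen depending on $(p,q,n)$. Finally one passes from $D^A+K$ to arbitrary finite $F\subseteq B$ and arbitrary $\varepsilon$ by density, at the cost of a factor of $2$. For the direction ``$\subseteq$'', take an exact embedding for $F=\bar x+K$ and $\varepsilon$ small, and approximate its matrix units by elements of $D^A$ closely enough that all the strict inequalities hold. The intersection is countable, so the class is $G_\delta$.
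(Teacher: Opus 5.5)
Your overall route matches the paper's sketch: open witness sets built from approximate matrix units in $D^A$ via strict $\Phi$-inequalities, promotion to a genuine unital copy of $M_p\oplus M_q$ by stability of finite-dimensional relations, and an intersection over $(\bar x,n)$. You are also right that, if ``the image $\varepsilon$-commutes'' means commutation with the unit ball of the image, then commutation with the matrix units controls it only up to a factor that grows with $p$ and $q$. The paper's sketch uses the uniform tolerance $\Phi(K,[e_{ij},x_s])<1/m$ and does not discuss this point.

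However, the way you absorb that factor breaks the inclusion ``$\subseteq$''. With a commutation tolerance $\tau_{p,q,n}$ of order $1/(n(p^2+q^2))$, showing $K\in\mathcal O_{\bar x,n}$ for an approximately divisible $A/K$ requires a unital copy of some $M_p\oplus M_q$ whose matrix units commute with $\bar x+K$ up to $\tau_{p,q,n}$. Approximate divisibility, applied with a given $\varepsilon$, returns sizes $(p,q)$ that depend on $\varepsilon$. Nothing guarantees $\varepsilon<\tau_{p,q,n}$, and shrinking $\varepsilon$ may simply enlarge $p$ and $q$. So ``take an exact embedding for $F=\bar x+K$ and $\varepsilon$ small'' is circular, and as written you have not shown that the approximately divisible codes lie in $\mathcal O_{\bar x,n}$.

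There are two ways to repair this.
\begin{itemize}
\item \emph{Uniform target on a net.} Keep the commutation target independent of $(p,q)$, and let only quantities chosen after $(p,q)$ is known depend on it. For each $(p,q,n)$, fix a finite $\theta$-net $N_{p,q,n}$ of the unit ball of $M_p\oplus M_q$ consisting of complex-rational elements of that ball. Require $\Phi(K,[w(\bar y,\bar z),x_s])<1/(2n)$ for all $w\in N_{p,q,n}$, where $w(\bar y,\bar z)\in D^A$ is the formal evaluation on your approximate matrix units. For ``$\subseteq$'', apply approximate divisibility with $\varepsilon=1/(4n)$, and only then choose the $D^A$-approximants, within a tolerance depending on the now known $p,q$. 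For ``$\supseteq$'', the net, $\eta$ and $\delta(p,q,\eta)$ give unit-ball commutation below $1/n$.
\item \emph{Bounded sizes.} Restrict the search to $p,q\le 4$. If $M_{p'}\oplus M_{q'}$ (with $p',q'\ge2$) is not already that small, write $p'=2a+3b$ and $q'=2a'+3b'$ with $a+a'\ge1$ and $b+b'\ge1$. This gives a unital copy of $M_2\oplus M_3$ inside $M_{p'}\oplus M_{q'}$, and unit-ball $\varepsilon$-commutation passes to unital subalgebras. Then $p^2+q^2\le 32$ is a constant, and your scheme works as stated.
\end{itemize}

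Finally, because you separate the two unit conditions, one of the two families may be $\approx 0$, and stability then yields only a unital homomorphism, not an embedding. Add the open conditions $\Phi(K,y_{11})>1/2$ and $\Phi(K,z_{11})>1/2$ to force injectivity on both summands.
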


\begin{proof}
For fixed $\bar x\in(D^A)^{<\N}$ and $m\in\N$, the existence of an approximately
central unital copy of some $M_p\oplus M_q$ is witnessed by approximate matrix units
from $D^A$ for the two summands, with $p,q\ge2$, satisfying the finite-dimensional
relations and the centrality inequalities $\Phi(K,[e_{ij},x_s])<1/m$ with strict
margins.  Stability of finite-dimensional relations promotes such approximate data to
a genuine unital embedding after decreasing the tolerance.  For fixed witnesses the
conditions are open, and the search over $p,q$ and dense witnesses is countable; hence
the witness set for fixed $(\bar x,m)$ is open.  Intersecting over $\bar x$ and $m$
gives a $G_\delta$ set.
\end{proof}

\subsection{Stable finiteness}

A unital $C^*$-algebra $B$ is \emph{finite} if every isometry in $B$ is unitary, and
\emph{stably finite} if $M_n(B)$ is finite for every $n\in\N$
(see \cite[V.2.1.1]{BlackadarOA}).

\begin{proposition}\label{prop:stably-finite}
The class of stably finite quotients is closed (and in particular $\Pi^0_2$)
in the Wijsman topology.
\end{proposition}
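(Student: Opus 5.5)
The plan is to imitate Proposition~\ref{prop:df-banach}: show that the complement, the class of $K$ for which some $M_n(A/K)$ contains a non-unitary isometry, is open. Since $M_n(A/K)\cong M_n(A)/M_n(K)$ and Lemma~\ref{lem:Phi-n-continuous} makes $K\mapsto\Phi^{(n)}(K,X)$ Wijsman-continuous for each fixed $X\in M_n(A)$, we work with the countable dense set $M_n(D^A)$ from Remark~\ref{rem:Phi-n-dense}. Note that for $C^*$-algebras finiteness of $M_n(B)$ coincides with Dedekind finiteness of $M_n(B)$, but the isometry formulation is more convenient: if $s^*s=1$ and $ss^*\neq1$, then $ss^*$ is a projection different from $1$, so $\|ss^*-1\|=1$. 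This gives a uniform gap, so no norm bookkeeping over $M$ is needed.

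Concretely, I would define, for $n\in\N$ and $X\in M_n(D^A)$, the set
\[
\mathcal O_{n,X}:=\bigl\{K:\ \Phi^{(n)}(K,X^*X-1_n)<\tfrac14,\ \ \Phi^{(n)}(K,XX^*-1_n)>\tfrac12\bigr\},
\]
which is open as the intersection of strict inequalities in continuous functions. The claim is that $\bigcup_{n,X}\mathcal O_{n,X}$ is exactly the set of $K$ with $A/K$ not stably finite (among proper ideals, per Convention~\ref{conv:proper-quotients}). For one direction, suppose $S\in M_n(B)$ is an isometry with $\|SS^*-1\|=1$. Choose $X\in M_n(D^A)$ whose image is close to $S$; then $X^*X-1$ has small image norm and $XX^*-1$ has image norm close to $1$, so $K\in\mathcal O_{n,X}$. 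For the converse, let $T=X+M_n(K)$ in $M_n(B)$. Since $\|T^*T-1\|<1/4$, the element $T^*T$ is invertible and positive. Set $S=T(T^*T)^{-1/2}$, which is an isometry. Using $\|(T^*T)^{-1/2}-1\|\le$ a small explicit constant via functional calculus on $\mathrm{sp}(T^*T)\subseteq(3/4,5/4)$, one estimates $\|SS^*-TT^*\|<1/2$. Hence $\|SS^*-1\|>0$, so $S$ is a non-unitary isometry and $M_n(B)$ is infinite.

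The main obstacle is the arithmetic in the converse step: showing that polar-decomposition correction moves $TT^*$ by less than the gap. I would write $SS^*=T(T^*T)^{-1}T^*$ and bound
\[
\|SS^*-TT^*\|\le\|T\|^2\,\|(T^*T)^{-1}-1\|\le\tfrac54\cdot\tfrac13,
\]
using $\|(T^*T)^{-1}-1\|\le \tfrac{1/4}{3/4}$. This gives $5/12<1/2$. If the constants turn out too tight, I would shrink $1/4$ to $1/8$. Finally, the complement of a union of open sets is closed, which gives the statement, and closed sets are in particular $\Pi^0_2$.
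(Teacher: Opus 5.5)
Your proposal is correct and follows the paper's proof essentially verbatim. The paper likewise shows that the non-stably-finite codes form the open union, over $n$ and $v\in M_n(D^A)$, of the sets $\{K:\Phi^{(n)}(K,v^*v-1)<\tfrac14,\ \Phi^{(n)}(K,vv^*-1)>\tfrac12\}$. It then uses the same polar correction $w=x(x^*x)^{-1/2}$ and the same estimate $\|ww^*-xx^*\|\le\tfrac54\cdot\tfrac13=\tfrac5{12}<\tfrac12$.
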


\begin{proof}
Write $B=A/K$.
By definition, $B$ is stably finite iff $M_n(B)$ is finite for every $n\in\N$, \emph{i.e.}\
iff there is no $n$ for which $M_n(B)$ contains a non-unitary isometry.

We use the matrix-level quotient norms $\Phi^{(n)}$ of
Definition~\ref{def:Phi-matrix} and the countable dense set $M_n(D^A)$
(Remark~\ref{rem:Phi-n-dense}).
Fix the concrete constants
\[
\alpha:=\tfrac14,\qquad \beta:=\tfrac12.
\]

\smallskip\noindent
\emph{Claim.} $B$ is \emph{not} stably finite iff there exist $n\in\N$ and
$v\in M_n(D^A)$ such that
\begin{equation}\label{eq:sf-witness}
\Phi^{(n)}(K,v^*v-1)<\alpha
\qquad\text{and}\qquad
\Phi^{(n)}(K,vv^*-1)>\beta.
\end{equation}

\smallskip\noindent
($\Rightarrow$) If $B$ is not stably finite, then for some $n$ there exists an
isometry $u\in M_n(B)$ which is not unitary. Then $p:=uu^*$ is a proper projection,
so $\|p-1\|=1$, \emph{i.e.}\ $\|uu^*-1\|=1$.
Choose $v\in M_n(D^A)$ with $\|v+M_n(K)-u\|<\varepsilon$, where $\varepsilon>0$ is so
small that
\begin{align*}
\|(v+M_n(K))^*(v+M_n(K)) - u^*u\|&<\alpha,\\
\|(v+M_n(K))(v+M_n(K))^* - uu^*\|&<1-\beta.
\end{align*}
Since $u^*u=1$ and $\|uu^*-1\|=1$, this yields \eqref{eq:sf-witness}.

\smallskip\noindent
($\Leftarrow$) Suppose \eqref{eq:sf-witness} holds for some $n$ and $v\in M_n(D^A)$.
Let $x:=v+M_n(K)\in M_n(B)$ and set $s:=x^*x$.
Then $\|s-1\|<\alpha<1$, so $s$ is invertible with
$\|s^{-1}-1\|\le \alpha/(1-\alpha)$.
Define the polar correction $w:=x\,s^{-1/2}\in M_n(B)$.
Then $w^*w=s^{-1/2}s\,s^{-1/2}=1$, so $w$ is an isometry.

We show $w$ is not unitary. Since $ww^*=xs^{-1}x^*$, we have
$ww^*-xx^*=x(s^{-1}-1)x^*$, so
\[
\|ww^*-xx^*\|\le \|x\|^2\,\|s^{-1}-1\|.
\]
Moreover $\|x\|^2=\|s\|\le 1+\alpha$, so with $\alpha=\tfrac14$,
\[
\|ww^*-xx^*\|\le (1+\tfrac14)\cdot\frac{1/4}{1-1/4}
= \tfrac54\cdot\tfrac13=\tfrac{5}{12}<\tfrac12.
\]
Since $\|xx^*-1\|=\Phi^{(n)}(K,vv^*-1)>\beta=\tfrac12$, we conclude
\[
\|ww^*-1\|
\ge \|xx^*-1\|-\|ww^*-xx^*\|
> \tfrac12-\tfrac{5}{12}=\tfrac{1}{12}>0.
\]
Since $ww^*$ is a projection (as $w$ is an isometry), $\|ww^*-1\|>0$ forces
$ww^*\neq 1$, so $w$ is a non-unitary isometry and $B$ is not stably finite.

\smallskip\noindent
\emph{Borel complexity.}
For fixed $n$ and $v$, each inequality in \eqref{eq:sf-witness} is a strict
inequality in a Wijsman-continuous function of $K$ (Lemma~\ref{lem:Phi-n-continuous}),
hence defines an open set in $\Ideal(A)$. Taking the union over the countable set of
pairs $(n,v)$ shows the non-stably-finite codes form an open set. Therefore the class
of stably finite quotients is closed (and in particular $\Pi^0_2$).
\end{proof}

\begin{remark}\label{rem:stably-finite-sharp}
Proposition~\ref{prop:stably-finite} is stronger than the generic rank bound supplied by
Theorem~\ref{thm:definability-rank}: although stable finiteness is \emph{a priori} a universal
property, the complement is witnessed by the \emph{strict} inequalities
\eqref{eq:sf-witness}, so the non-stably-finite class is open and the stably finite class is
actually closed.  In particular, this is a genuine $\Pi^0_1$ phenomenon, not merely a
$\Pi^0_2$ one.
\end{remark}

\subsection{Real rank}

The \emph{real rank} of a unital $C^*$-algebra $B$, denoted $\mathrm{RR}(B)$, is the
smallest non-negative integer $n$ such that for every $(n\!+\!1)$-tuple of self-adjoint
elements $(x_0,\dots,x_n)$ in $B$ and every $\varepsilon>0$, there exists a
self-adjoint tuple $(y_0,\dots,y_n)$ with $\max_j\|x_j-y_j\|<\varepsilon$ such that
$\sum_{j=0}^n y_j^2$ is invertible (see \cite[V.3.2.3]{BlackadarOA}).

\begin{proposition}\label{prop:real-rank}
For fixed $n\in\N$, the class
$\{K\in\Ideal(A):\mathrm{RR}(A/K)\le n\}$ is $G_\delta$ in the Wijsman topology.
\end{proposition}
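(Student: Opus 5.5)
The plan follows the template of Proposition~\ref{prop:tsr}, with self-adjoint tuples in place of arbitrary tuples. We write $\mathrm{RR}(B)\le n$ as a countable intersection, over test data, of open sets of the form ``there exists an approximating tuple with invertible sum of squares''. Let $B=A/K$.

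\textbf{Step 1: reduction to dense test tuples.} Every self-adjoint element of $B$ is a limit of elements $(a+a^*)/2+K$ with $a\in D^A$. Hence it suffices to test tuples $\bar x=(x_0,\dots,x_n)$ with $x_j=(a_j+a_j^*)/2$ and $a_j\in D^A$, together with tolerances $\varepsilon=1/m$. Indeed, suppose $\|x'_j-x_j\|<1/(2m)$ and a good self-adjoint $\bar y$ lies within $1/(2m)$ of $\bar x+K$. Then $\bar y$ lies within $1/m$ of $\bar x'$. So testing on the dense family with halved tolerance recovers the full condition.

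\textbf{Step 2: the open witness sets.} For each such $\bar x$ and each $m$, let $U_{\bar x,m}$ be the set of $K$ for which there exist $b_0,\dots,b_n,v\in D^A$ satisfying the following strict inequalities. Put $y_j=(b_j+b_j^*)/2$ and require
\[
\max_j\Phi(K,x_j-y_j)<\tfrac1m,\qquad \Phi\Bigl(K,\,v\sum_{j}y_j^2-1\Bigr)<1 .
\]
The element $y_j+K$ is genuinely self-adjoint. The second inequality makes $s=\sum_j(y_j+K)^2$ left invertible in $B$. Since $s$ is self-adjoint, left invertibility implies invertibility ($s^*=s$ is then also right invertible). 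For fixed witnesses these are strict inequalities in Wijsman-continuous functions of $K$, by Lemma~\ref{lem:Phi-continuous}, so the sets are open. The union over the countably many witnesses is therefore open.

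\textbf{Step 3: the converse direction.} Suppose $\mathrm{RR}(B)\le n$. Given $\bar x$ and $m$, choose a self-adjoint tuple $\bar y'$ in $B$ within $1/(2m)$ of $\bar x+K$ with $s'=\sum_j y_j'^2$ invertible. Approximate each $y'_j$ by $(b_j+b_j^*)/2+K$ with $b_j\in D^A$, and approximate $s'^{-1}$ by some $v+K$ with $v\in D^A$. Invertibility is an open condition and multiplication is continuous. So for sufficiently fine approximations both strict inequalities hold. This is the only step requiring care: one must pick the approximation of $\bar y'$ first, since $v$ then only needs to approximate the inverse of the perturbed sum of squares. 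This gives
\[
\{K:\mathrm{RR}(A/K)\le n\}=\bigcap_{\bar x}\bigcap_{m}U_{\bar x,m},
\]
which is $G_\delta$. I expect no real obstacle beyond the bookkeeping in Step~1 and the order of the approximations in Step~3.
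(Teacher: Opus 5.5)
Your proposal is correct and follows essentially the same route as the paper. Both intersect, over dense self-adjoint test tuples and tolerances $1/m$, the open sets of codes admitting dense self-adjoint approximants whose sum of squares has an approximate inverse in $D^A$. The only difference is cosmetic: the paper imposes both $\Phi(K,zs-1)<1$ and $\Phi(K,sz-1)<1$, while you impose only the left-sided inequality and recover two-sided invertibility from the self-adjointness of $s$, which is a valid simplification.
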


\begin{proof}
The self-adjoint part of $D^A$ is countable and dense in the self-adjoint part of each
quotient.  For a fixed self-adjoint tuple
$\bar x=(x_0,\ldots,x_n)\in(D^A\cap A_{\mathrm{sa}})^{n+1}$ and $m\in\N$, let
$U_{\bar x,m}$ be the set of $K$ for which there are self-adjoint
$\bar y=(y_0,\ldots,y_n)\in(D^A\cap A_{\mathrm{sa}})^{n+1}$ and $z\in D^A$ such that
\[
\max_j\Phi(K,x_j-y_j)<\frac1m,
\qquad
\Phi\Bigl(K,z\sum_{j=0}^n y_j^2-1\Bigr)<1,
\qquad
\Phi\Bigl(K,\Bigl(\sum_{j=0}^n y_j^2\Bigr)z-1\Bigr)<1 .
\]
For fixed witnesses this is open, and hence $U_{\bar x,m}$ is open.  The two inverse
inequalities imply that $\sum_j(y_j+K)^2$ is invertible after a Neumann-series
correction; conversely, an invertible positive element has an inverse which may be
approximated by an element of $D^A$.  Therefore
\[
        \{K:\mathrm{RR}(A/K)\le n\}
        =\bigcap_{\bar x}\bigcap_m U_{\bar x,m},
\]
which is $G_\delta$.
\end{proof}

\subsection{Simplicity}

A $C^*$-algebra $B$ is \emph{simple} if it has no non-trivial closed two-sided ideals
(see \cite[II.1.1]{BlackadarOA}).

\begin{proposition}\label{prop:simple}
The class
\[
\mathrm{Simple}=\{K\in\Ideal(A):A/K\text{ is simple}\}
\]
is Borel in the Wijsman topology.
\end{proposition}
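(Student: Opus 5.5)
The plan is to give an internal characterisation of simplicity that uses only countably many quotient-norm conditions plus one projection along a Polish parameter space, and then fall back on the external route if needed. For a unital $C^*$-algebra $B$, simplicity is equivalent to the following: for every non-zero $x\in B$ the closed ideal generated by $x$ contains $1$. Since $1$ is then a norm-limit of finite sums $\sum_i a_i x b_i$, and an element within distance $<1$ of $1$ is invertible, we get an equivalent statement with finite witnesses:
\[
\forall x\neq 0\ \exists N,\ a_1,\dots,a_N,b_1,\dots,b_N\in B:\ \Bigl\|\sum_{i}a_ixb_i-1\Bigr\|<\tfrac12 .
\]
We restrict $x$ to the dense set. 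For each $u\in D^A$ and $k\in\N$ with $\Phi(K,u)>1/k$, we want to ask for witnesses $a_i,b_i\in D^A$ with $\Phi\bigl(K,\sum_i a_iub_i-1\bigr)<\tfrac12$.

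The catch is that the witnesses for $u$ do not transfer to a nearby $x$ without norm control. Perturbing $x$ by $\eta$ changes $\sum a_ixb_i$ by at most $\eta\sum\|a_i\|\|b_i\|$, and that bound is not uniform. So the natural formula has the shape
\[
\forall u\ \forall k\ \bigl[\Phi(K,u)\le 1/k\ \vee\ \exists\text{ witnesses: open}\bigr].
\]
This is $\Pi^0_2$-like in form, but its correctness requires that every non-zero $x\in B$ be approximable by some $u+K$ that also generates $B$ as an ideal. That fails: a non-simple algebra can have all dense-set elements generating full ideals. I would therefore use a different test.

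$B$ is non-simple iff there is a proper non-zero ideal $J$, iff there is a closed ideal $L\supsetneq K$ of $A$ with $L\neq A$. This exhibits the non-simple class as a projection of the Borel set
\[
\{(K,L)\in\Ideal(A)^2: K\subseteq L,\ K\neq L,\ 1\notin L\},
\]
whose conditions are all expressible by $\Phi$: $K\subseteq L$ is the closed condition $\Phi(L,u)\le\Phi(K,u)$ for all $u\in D^A$; $K\neq L$ is open; and $\Phi(L,1)>1/2$ is open. So non-simplicity is analytic, and simplicity is coanalytic.

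For the reverse inclusion I would transfer via the explicit Borel maps of Remark~\ref{rem:Xi-ideals}(3),(5) to the Farah--Toms--T\"ornquist coding, where simplicity is known to be Borel. The pullback under a Borel reduction preserving isomorphism type is then Borel. Equivalently, Borelness of simplicity in the standard codings yields, by pulling back, an analytic description of simplicity itself, and Suslin's theorem then closes the argument.

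The main obstacle is the internal analytic upper bound for simplicity, that is, the $\Sigma^1_1$ side. Its difficulty is precisely the unbounded-witness issue above, so I would not try to prove it internally but rely on the cited known definability result (FTT, Section~5, where simplicity is shown Borel) together with the isomorphism-preserving Borel maps. The internal coanalytic computation serves only as a consistency check.
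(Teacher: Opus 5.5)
Your final argument, which pulls back a known Borel class of simple algebras in the standard codings along the explicit isomorphism-preserving Borel maps of Remarks~\ref{rem:other-codings} and~\ref{rem:Xi-ideals}, is exactly the paper's proof; the only difference is that the paper takes the Borelness input from Farah--Magidor \cite[Corollary~8.2]{FarahMagidor2018} rather than from FTT. The internal coanalytic computation and the appeal to Suslin's theorem are correct but not needed, because the preimage of a Borel set under a Borel map is already Borel.
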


\begin{proof}
Farah--Magidor proved that the class of simple separable $C^*$-algebras is Borel;
more generally, their Corollary~8.2 applies to classes uniformly definable by omitting
types~\cite[Corollary~8.2]{FarahMagidor2018}.  By
Remarks~\ref{rem:other-codings} and~\ref{rem:Xi-ideals}, there is an explicit Borel
map from $\Ideal(A)$ to a standard code for an isomorphic quotient algebra.  Pulling
the Farah--Magidor Borel set back along this isomorphism-preserving map gives the
claim.
\end{proof}

\begin{remark}
No finite Wijsman-rank bound for simplicity is asserted here.  A tempting dense-set
argument would try to test whether each non-zero dense element generates the whole
algebra as an ideal, but this does not pass from an arbitrary non-full element to a
nearby dense one: the set of full elements is open, and its complement is closed, but a
sequence converging to a point of a closed set need not eventually remain in that set.
This is why Proposition~\ref{prop:simple} records only Borelness, obtained through the
standard codings.
\end{remark}

\subsection{Nuclear dimension}

The \emph{nuclear dimension} of a $C^*$-algebra $B$, denoted
$\dim_{\mathrm{nuc}}(B)$, is the smallest non-negative integer $n$ such that for
every finite $F\subseteq B$ and every $\varepsilon>0$ there exist a
finite-dimensional $C^*$-algebra $F_0$, a c.p.c.\ map $\psi:B\to F_0$, and c.p.c.\
maps $\varphi_0,\dots,\varphi_n:F_0\to B$ of order zero such that
$\|\sum_{i=0}^n\varphi_i\psi(x)-x\|<\varepsilon$ for all $x\in F$
(see \cite{WinterZacharias}).

\begin{lemma}
\label{lem:order-zero-coding}
Let
\[
F_0=\bigoplus_{j=1}^{\ell} M_{d_j}
\]
be a finite-dimensional $C^*$-algebra, and for each $j$ fix matrix units
$\{e_{ab}^{(j)}:1\le a,b\le d_j\}$ for the summand $M_{d_j}$.

Then there exist finitely many noncommutative $*$-polynomials
\[
r_1(\bar z),\dots,r_M(\bar z)
\]
in variables
\[
\bar z=\bigl(z_{ab}^{(j)}:1\le j\le \ell,\ 1\le a,b\le d_j\bigr),
\]
and, for every $\varepsilon>0$, a tolerance $\delta_{F_0}(\varepsilon)>0$ such that for
every $C^*$-algebra $B$ the following hold.

\begin{enumerate}
\item If $\varphi:F_0\to B$ is completely positive, contractive, and order zero, then the
tuple
\[
z_{ab}^{(j)}:=\varphi(e_{ab}^{(j)})
\]
satisfies
\[
r_s(\bar z)=0\qquad (1\le s\le M).
\]

\item If a tuple
\[
\bar y=\bigl(y_{ab}^{(j)}\bigr)\subseteq B
\]
satisfies
\[
\|r_s(\bar y)\|<\delta_{F_0}(\varepsilon)\qquad (1\le s\le M),
\]
then there exists a completely positive contractive order-zero map
\[
\varphi:F_0\to B
\]
such that
\[
\|\varphi(e_{ab}^{(j)})-y_{ab}^{(j)}\|<\varepsilon
\]
for all $j,a,b$.
\end{enumerate}
\end{lemma}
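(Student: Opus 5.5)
Our plan is to use the standard correspondence between c.p.c.\ order-zero maps on $F_0$ and $*$-homomorphisms from the cone $C_0((0,1])\otimes F_0$. Concretely, by the structure theorem of Winter--Zacharias, c.p.c.\ order-zero maps $\varphi:F_0\to B$ correspond bijectively to $*$-homomorphisms $\rho:C_0((0,1])\otimes F_0\to B$ via $\varphi(x)=\rho(\iota\otimes x)$, where $\iota(t)=t$. The cone $CF_0=C_0((0,1])\otimes F_0$ is projective, hence semiprojective, and it admits a finite presentation in the generators $z^{(j)}_{ab}=\iota\otimes e^{(j)}_{ab}$ (Loring's presentation of cones over finite-dimensional algebras). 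Our $r_1,\dots,r_M$ will be these relations:
\begin{align*}
z^{(j)}_{ab}{}^{*}-z^{(j)}_{ba},\quad z^{(j)}_{ab}z^{(j')}_{cd}\ (j\ne j'),\quad z^{(j)}_{ab}z^{(j)}_{cd}\ (b\ne c),\\
z^{(j)}_{ab}z^{(j)}_{bc}-z^{(j)}_{ad}z^{(j)}_{dc}\ (\text{all }d),\quad z^{(j)}_{ab}z^{(j)}_{cc}-z^{(j)}_{ab}z^{(j)}_{bb}.
\end{align*}
We may also add finitely many polynomial relations encoding positivity and contractivity of $h=\sum_{j,a}z^{(j)}_{aa}$, together with $h\ge 0$, $\|h\|\le1$. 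Contractivity is not a polynomial equation, so we will instead rely on the fact that Loring's presentation of the cone uses only $*$-polynomial relations plus the norm bound $\|z\|\le1$. We will therefore include the norm bound as part of the bounded ball, as in Definition~\ref{def:stable-rel} with $M=1$.

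Part~(i) will then be immediate. If $\varphi$ is c.p.c.\ order zero, the corresponding $\rho$ is a $*$-homomorphism, so the images $\rho(\iota\otimes e^{(j)}_{ab})$ satisfy every relation that holds in $CF_0$. Each $r_s$ vanishes on the generators of $CF_0$ by direct computation with $\iota^2\otimes e_{ab}e_{cd}$.

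For part~(ii) we will invoke semiprojectivity, in fact projectivity, of $CF_0$ together with the general principle that finitely presented semiprojective algebras have stable relations (Loring, \cite{Loring}, Theorem 14.1.4). This principle says that for each $\varepsilon$ there is $\delta$ such that any tuple in the unit ball satisfying the relations up to $\delta$ is within $\varepsilon$ of an exact representation. That exact representation is a $*$-homomorphism $\rho:CF_0\to B$, and $\varphi:=\rho(\iota\otimes\cdot)$ is c.p.c.\ order zero with the required closeness. The uniformity of $\delta$ in $B$ will come from the usual ultraproduct argument. If it failed, tuples $\bar y_k\in B_k$ with defects tending to $0$ would define an exact representation in $\prod B_k/\bigoplus B_k$. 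Projectivity lifts this representation to $\prod B_k$, so for large $k$ the components are $\varepsilon$-close to exact ones, a contradiction.

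The main obstacle we expect is the presentation step: we must verify that the listed polynomial relations, together with the norm bound, genuinely present $CF_0$. In particular, approximate relations should force the approximants into the unit ball after functional calculus. If the literal presentation is problematic, we will fall back on the standard description of $CM_d$ as universal for $d$ contractions $x_2,\dots,x_d$ (or $h$ and $x_i$) with $*$-polynomial relations, and take the direct sum over $j$ with orthogonality relations. Projectivity of direct sums of cones is known, so the argument goes through unchanged.
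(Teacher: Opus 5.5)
Your route is the paper's own: the Winter--Zacharias correspondence with $C_0((0,1])\otimes F_0$, plus stability of a finite presentation of this projective cone. But the decisive step fails, and no choice of relations can fix it while the hypothesis of (ii) consists only of $*$-polynomial defects. The reason is that positivity and contractivity of $\varphi$ are not $*$-polynomial identities in the variables $z^{(j)}_{ab}=\varphi(e^{(j)}_{ab})$.

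To see this, write $\bar e=(e^{(j)}_{ab})$. Apply (i) to the universal order-zero map $x\mapsto\iota\otimes x$ into $C_0((0,1])\otimes F_0$ and evaluate at $t\in(0,1]$. For real $\tau$, each monomial of degree $k$ in the $z$'s and $z^*$'s sends $\tau\bar e$ to $\tau^k$ times a fixed element of $F_0$, so $r_s(\tau\bar e)=\sum_k\tau^k c_{s,k}$. Vanishing for $\tau\in(0,1]$ forces all $c_{s,k}=0$, hence $r_s(\tau\bar e)=0$ for \emph{every} real $\tau$.

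Now take $B=F_0$ and $\bar y=-\bar e$. All defects are exactly $0$ and every entry has norm $1$, so your added bound $\|y\|\le1$ does not exclude it. Yet $\|\varphi(e^{(1)}_{11})+e^{(1)}_{11}\|\ge1$ for every positive $\varphi$, since $\varphi(e^{(1)}_{11})\ge0$. Without a norm bound, $\bar y=(1+2\varepsilon)\bar e$ defeats every $\varepsilon$. So (ii) is false as stated.

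In your argument, the claim that your listed relations together with $\|z\|\le1$ present $CF_0$ is therefore wrong. The tuple $-\bar e$ is an exact solution that comes from no $*$-homomorphism $CF_0\to B$, so the exact solution supplied by Loring's stability theorem need not give an order-zero map. Your fallback to Loring's off-diagonal presentation does not help either: it has no generators for summands with $d_j=1$, and it gives no polynomial control of the diagonal entries $\varphi(e_{aa})$. Separately, $z_{ab}z_{cc}-z_{ab}z_{bb}$ does not vanish on the cone when $c\ne b$, so (i) already fails for your list as written; presumably you meant $z_{aa}z_{ab}-z_{ab}z_{bb}$.

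The paper's proof has exactly the same hole. It appeals to \cite[Proposition~1.4]{WinterZacharias}, whose hypothesis is that the approximate map is already c.p.c.\ and only approximately orthogonality-preserving, and silently turns this into purely polynomial relations.

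What is missing in both arguments is a non-polynomial part of the hypothesis in (ii). For instance, require $\|y^{(j)}_{ab}\|<1+\delta$ together with $\|y^{(j)}_{aa}-w_{ja}^*w_{ja}\|<\delta$ for some auxiliary $w_{ja}\in B$. With this amendment your ultraproduct argument goes through:
\begin{itemize}
\item In $\prod_kB_k/\bigoplus_kB_k$ the $w$'s stay bounded, and the limiting tuple has positive contractive diagonal entries and satisfies the polynomial relations exactly.
\item Then $z_{ab}^*z_{ab}=z_{bb}^2$ gives $|z_{ab}|=z_{bb}$, and one checks that $e_{ab}\mapsto z_{ab}$ is c.p.c.\ order zero.
\item Hence the tuple comes from a $*$-homomorphism of $C_0((0,1])\otimes F_0$, which lifts to $\prod_kB_k$ by projectivity.
\end{itemize}
In Proposition~\ref{prop:nucdim} the extra conditions become strict $\Phi$-inequalities with witnesses ranging over $D^A$. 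So $\Lambda_{F_0,\bar d}$ remains open, and the $\Pi^0_3$ bound survives.
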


\begin{proof}
This is the standard weak stability of the finite-dimensional order-zero relations.
By \cite[Theorem~2.3]{WinterZacharias}, c.p.c.\ order-zero maps
$F_0\to B$ are in bijection with $*$-homomorphisms
\[
C_0((0,1])\otimes F_0 \longrightarrow B.
\]
For finite-dimensional $F_0$, the cone $C_0((0,1])\otimes F_0$ admits a stable finite
presentation; equivalently, the relations defining the canonical generators
$t\otimes e_{ab}^{(j)}$ are weakly stable, see
\cite[Proposition~1.4]{WinterZacharias}.  Choosing such a finite stable presentation
gives the required polynomials $r_s$ and tolerances $\delta_{F_0}(\varepsilon)$.
\end{proof}

\begin{proposition}\label{prop:nucdim}
For fixed $n\in\N$, the class
$\{K\in\Ideal(A):\dim_{\mathrm{nuc}}(A/K)\le n\}$ is $\Pi^0_3$
in the Wijsman topology.
\end{proposition}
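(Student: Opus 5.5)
The plan is to follow the pattern of the quasidiagonality argument in Proposition~\ref{prop:QD-MF}: reduce to countably many tests $(\bar x,k)$, show that for each test the set of $K$ admitting a witness is $\Sigma^0_2$, and intersect. As usual, it suffices to take finite tuples $\bar x=(x_1,\dots,x_r)\subseteq D^A$ and tolerances $\varepsilon=1/k$. The reason is that nuclear dimension $\le n$ need only be tested on a dense set of finite sets: an approximation within $1/(2k)$ on $D^A+K$ transfers to nearby elements, since all maps involved are contractive. This step costs a factor of~$3$ in the tolerance, which is absorbed by relabelling.

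For a fixed test $(\bar x,k)$, I split the witness into two parts. The downward c.p.c.\ map $\psi:B\to F_0$ is handled by the compactness argument of Lemma~\ref{lem:cpc-fd}. The upward order-zero maps $\varphi_0,\dots,\varphi_n:F_0\to B$ are handled by the stable coding of Lemma~\ref{lem:order-zero-coding}. Concretely, fix a finite-dimensional $F_0=\bigoplus_j M_{d_j}$ and tuples $\bar y^{(0)},\dots,\bar y^{(n)}\subseteq D^A$, indexed by the matrix units of $F_0$; these data range over a countable set. Then consider the set of pairs $(\psi,K)\in\mathrm{CPC}(A,F_0)\times\Ideal(A)$ satisfying three conditions:
\begin{enumerate}[label=\textup{(\alph*)}]
\item $\|\psi(a)\|\le\Phi(K,a)$ for all $a\in D^A$;
\item $\Phi(K,r_s(\bar y^{(i)}))<\delta_{F_0}(\eta)$ for all $s$ and $i$;
\item $\Phi\bigl(K,\sum_i\sum_{j,a,b}\psi(x_t)^{(j)}_{ab}\,y^{(i),(j)}_{ab}-x_t\bigr)<1/k$ for all $t\le r$.
\end{enumerate}
Here $\psi(x_t)^{(j)}_{ab}$ denotes the matrix coordinates of $\psi(x_t)$, and the element in~(c) is a linear combination of elements of $A$ with coefficients depending continuously on $\psi$. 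Condition~(a) is closed. Condition~(b) is open in $K$. For~(c), the map $(\psi,K)\mapsto\Phi(K,\cdot)$ is jointly continuous, since $\Phi(K,\cdot)$ is $1$-Lipschitz in its argument and continuous in $K$; so~(c) is open. The set is therefore locally closed, and Lemma~\ref{lem:proj-locally-closed} shows that its projection to $\Ideal(A)$ is $F_\sigma$. A countable union over $F_0$ and the $\bar y^{(i)}$ is again $\Sigma^0_2$, and intersecting over $(\bar x,k)$ gives $\Pi^0_3$.

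It remains to check that this countable scheme characterises $\dim_{\mathrm{nuc}}\le n$; I expect this to be the main obstacle. For soundness, suppose a scheme witness exists. Lemma~\ref{lem:order-zero-coding} upgrades each $\bar y^{(i)}+K$ to a genuine c.p.c.\ order-zero $\varphi_i$ whose values on matrix units are within $\eta$ of the $y$'s. The error in~(c) then grows by at most $(n+1)\eta\,C_{F_0}$, where $C_{F_0}$ bounds $\sum_{j,a,b}|\psi(x)^{(j)}_{ab}|$ and is at most $\dim F_0\cdot\max_t\|x_t\|$. Since $\eta$ may be chosen after $F_0$ (the quantification over $\eta$ is countable and placed inside the union), the error is below $2/k$. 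For completeness, given exact witnesses $\psi,\varphi_i$, approximate $\varphi_i(e^{(j)}_{ab})$ by elements of $D^A+K$ closely enough that the defining relations (which hold exactly by Lemma~\ref{lem:order-zero-coding}(i)) hold within $\delta_{F_0}(\eta)$ and~(c) still holds strictly. Here $\psi$ is composed with the quotient map to lie in $\mathrm{CPC}(A,F_0)$ and satisfy~(a). The delicate bookkeeping is to choose $\eta$ depending on $F_0$ and $\bar x$ before approximating, and to check that the matrix-coordinate expansion $\varphi_i(\psi(x))=\sum\psi(x)^{(j)}_{ab}\varphi_i(e^{(j)}_{ab})$ is linear in $\psi$. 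That linearity is what makes condition~(c) continuous in $\psi$ rather than merely Borel.
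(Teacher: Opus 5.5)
Your proposal is correct and is essentially the paper's own argument. For each finite-dimensional type $F_0$ and each dense tuple of candidate order-zero images, the paper likewise takes a locally closed subset of $\mathrm{CPC}(A,F_0)\times\Ideal(A)$: a closed descent condition, open stable order-zero relations, and an open approximation condition. It projects this set to an $F_\sigma$ set via Lemma~\ref{lem:proj-locally-closed} and sandwiches the resulting $\Sigma^0_2$ union between exact-witness sets ($F_{\bar x,4m}\subseteq E_{\bar x,m}\subseteq F_{\bar x,m}$), with the stability tolerance a fixed function $\varepsilon_{F_0,\bar x,m}$.

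Two small points to tighten. First, $\eta$ must be such a fixed function of $(F_0,\bar x,k)$ rather than an existentially quantified parameter, since an unrestricted union over $\eta$ would break soundness. Second, $\sum_i\varphi_i\psi$ is only c.p.\ of norm at most $n+1$, not contractive, so the density reduction loses a factor depending on $n$ rather than $3$; this is harmless because the approximation distance is at your disposal.
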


\begin{proof}
Fix a finite tuple
\[
\bar x=(x_1,\dots,x_r)\subseteq D^A
\]
and $m\in\N$.  Let $F_{\bar x,m}$ be the set of all $K$ for which there exist a
finite-dimensional $C^*$-algebra $F_0$, a c.p.c. map
$\psi:A/K\to F_0$, and c.p.c. order-zero maps
$\varphi_0,\dots,\varphi_n:F_0\to A/K$ such that
\[
\Bigl\|\sum_{i=0}^{n}\varphi_i\psi(x_t+K)-(x_t+K)\Bigr\|<\frac1m
\qquad (1\le t\le r).
\]
We shall construct a set $E_{\bar x,m}\in\Sigma^0_2$ satisfying
\[
        F_{\bar x,4m}\subseteq E_{\bar x,m}\subseteq F_{\bar x,m}.        \tag{*}
\]
Then
\[
\{K:\dim_{\mathrm{nuc}}(A/K)\le n\}
=
\bigcap_{\bar x\in(D^A)^{<\N}}\bigcap_{m\in\N} E_{\bar x,m},
\]
because the left-hand side is equivalently
$\bigcap_{\bar x,m}F_{\bar x,m}=\bigcap_{\bar x,m}F_{\bar x,4m}$.  Since each
$E_{\bar x,m}$ is $\Sigma^0_2$, this will give a $\Pi^0_3$ set.

Fix a finite-dimensional algebra type
\[
F_0=\bigoplus_{j=1}^{\ell}M_{d_j},
\qquad
N(F_0):=\sum_{j=1}^{\ell}d_j^2,
\]
and matrix units $e_{ab}^{(j)}$ for each block.  Put
\[
M_{\bar x}:=1+\max_{1\le t\le r}\|x_t\|,
\qquad
\varepsilon_{F_0,\bar x,m}:=
\frac{1}{4m(n+1)N(F_0)M_{\bar x}},
\]
and let
\[
\delta_{F_0,\bar x,m}:=\delta_{F_0}(\varepsilon_{F_0,\bar x,m})
\]
be the stability tolerance from Lemma~\ref{lem:order-zero-coding}.

For each $i=0,\dots,n$, a candidate order-zero map $F_0\to A/K$ is coded by a tuple
\[
\bar d^{(i)}=(d_{ab}^{(i,j)})_{j,a,b}\in(D^A)^{N(F_0)},
\]
and we write
\[
\bar d=(\bar d^{(0)},\dots,\bar d^{(n)})\in(D^A)^{(n+1)N(F_0)}.
\]
Let $r_1(\bar z),\dots,r_M(\bar z)$ be the finite family of order-zero relations from
Lemma~\ref{lem:order-zero-coding}.  For fixed $F_0$ and $\bar d$, define
$\Lambda_{F_0,\bar d}$ to be the set of all $K$ such that, for every $i\le n$ and
$1\le s\le M$,
\[
        \Phi(K,r_s(\bar d^{(i)}))<\delta_{F_0,\bar x,m}.
\]
This is open, since $K\mapsto\Phi(K,a)$ is Wijsman-continuous for fixed $a\in A$.

Let $\mathrm{CPC}(A,F_0)$ be the compact metrisable space of c.p.c. maps
$A\to F_0$.  For fixed $F_0$ and $\bar d$, let
\[
C_{F_0,\bar d}\subseteq \mathrm{CPC}(A,F_0)\times\Ideal(A)
\]
be the set of pairs $(\psi,K)$ such that
\[
        \|\psi(a)\|\le \Phi(K,a)\qquad(a\in D^A),                    \tag{1}
\]
so that $\psi$ descends to $A/K$, and for every $1\le t\le r$,
\[
\Phi\Bigl(
K,
\sum_{i=0}^{n}\sum_{j=1}^{\ell}\sum_{a,b\le d_j}
\psi(x_t)^{(j)}_{ab}d_{ab}^{(i,j)}-x_t
\Bigr)
<\frac{1}{2m}.                                                        \tag{2}
\]
Condition~(1) is closed, and condition~(2) is open in
$\mathrm{CPC}(A,F_0)\times\Ideal(A)$.  Hence $C_{F_0,\bar d}$ is locally closed.
By Lemma~\ref{lem:proj-locally-closed}, its projection
$P_{F_0,\bar d}$ to $\Ideal(A)$ is $F_\sigma$.  Set
\[
        E_{\bar x,m}:=
        \bigcup_{F_0}\bigcup_{\bar d\in(D^A)^{(n+1)N(F_0)}}
        (\Lambda_{F_0,\bar d}\cap P_{F_0,\bar d}),
\]
where the first union runs over all finite-dimensional algebra types.  This is a
countable union of $F_\sigma$ sets, hence belongs to $\Sigma^0_2$.

It remains to prove (*).  Suppose first that $K\in F_{\bar x,4m}$, witnessed by
$F_0$, a c.p.c. map $\psi_0:A/K\to F_0$, and c.p.c. order-zero maps
$\varphi_i:F_0\to A/K$.  Put $\psi=\psi_0\circ q_K$.  For each $i,j,a,b$ choose
$d_{ab}^{(i,j)}\in D^A$ sufficiently close to a lift of
$\varphi_i(e_{ab}^{(j)})$.  Since the exact order-zero relations vanish modulo $K$,
and since only finitely many coefficients $\psi(x_t)^{(j)}_{ab}$ occur, the choices may
be made so that $K\in\Lambda_{F_0,\bar d}$ and (2) holds with error $<1/(2m)$.
Thus $K\in E_{\bar x,m}$.

Conversely, suppose $K\in E_{\bar x,m}$.  Then for some $F_0$ and $\bar d$ we have
$K\in\Lambda_{F_0,\bar d}\cap P_{F_0,\bar d}$.  The first condition and
Lemma~\ref{lem:order-zero-coding} give c.p.c. order-zero maps
$\varphi_i:F_0\to A/K$ satisfying
\[
\|\varphi_i(e_{ab}^{(j)})-(d_{ab}^{(i,j)}+K)\|
<\varepsilon_{F_0,\bar x,m}.
\]
The second condition gives a descending c.p.c. map $\psi:A/K\to F_0$ satisfying (2).
For $\bar x_t=x_t+K$, contractivity gives
$|\psi(\bar x_t)^{(j)}_{ab}|\le M_{\bar x}$, and therefore
\[
\begin{aligned}
\Bigl\|\sum_{i=0}^{n}\varphi_i\psi(\bar x_t)-\bar x_t\Bigr\|
&\le
\Bigl\|
\sum_{i,j,a,b}\psi(\bar x_t)^{(j)}_{ab}(d_{ab}^{(i,j)}+K)-\bar x_t
\Bigr\| \\
&\quad+
\sum_{i,j,a,b}|\psi(\bar x_t)^{(j)}_{ab}|
\|\varphi_i(e_{ab}^{(j)})-(d_{ab}^{(i,j)}+K)\| \\
&<\frac1{2m}+(n+1)N(F_0)M_{\bar x}\varepsilon_{F_0,\bar x,m}
<\frac1m.
\end{aligned}
\]
Hence $K\in F_{\bar x,m}$, proving the second inclusion in (*).
\end{proof}

\subsection{Property (SP)}

\begin{proposition}\label{prop:SP}
A $C^*$-algebra $B$ has \emph{property (SP)} if every non-zero hereditary subalgebra
of $B$ contains a non-zero projection (see \cite[V.2.2.14]{BlackadarOA}).
The class of $K\in\Ideal(A)$ such that $A/K$ has property~(SP)
is $\Pi^0_3$ in the Wijsman topology.
\end{proposition}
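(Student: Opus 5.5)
Our plan is to reduce (SP) to a countable $\forall\exists$ scheme over positive elements, in which the universal quantifier ranges over $a\in D^A$ (through $a^*a$, Lemma~\ref{lem:positivity-coding}(ii)) and the existential quantifier produces approximate projections that are promoted to genuine ones by stability (Lemma~\ref{lem:stable-fd} with $d=1$, Remark~\ref{rem:stable-relations}). We use the standard reformulation: $B$ has (SP) iff for every non-zero positive $x\in B$ the hereditary subalgebra $\overline{xBx}$ contains a non-zero projection. Fix $b\in D^A$ and $k\in\N$, and let $S_{b,k}$ be the set of $K$ for which either $\Phi(K,b^*b)\le 1/k$, or there is $y\in D^A$ satisfying
\[
\Phi(K,y^*-y)<\delta,\quad \Phi(K,y^2-y)<\delta,\quad \Phi(K,y)>\tfrac12,\quad \Phi\bigl(K,y-f_k(b^*b)\,y\,f_k(b^*b)\bigr)<\delta .
\]
Here $f_k(t)$ is a fixed polynomial with rational coefficients approximating the cut-off function that vanishes on $[0,1/(4k)]$ and equals $1$ on $[1/(2k),\infty)$, uniformly on the spectrum bound $[0,\Phi(K,b^*b)]$; a bound $\|b\|^2$ is available in $A$ since $\|b+K\|\le\|b\|$. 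The first alternative is closed, and the second is a countable union of open sets, hence open. Each $S_{b,k}$ is therefore $\Sigma^0_2$, and $\bigcap_{b,k}S_{b,k}$ is $\Pi^0_3$.

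For correctness we argue in two directions. \emph{(SP) $\Rightarrow$ membership.} If $\|b^*b+K\|>1/k$, then $g(b^*b+K)$ is non-zero, where $g$ is a continuous function supported in $[1/(2k),\infty)$, and $\overline{g(x)Bg(x)}$ is non-zero hereditary, so it contains a projection $p\neq0$. This $p$ is almost fixed by $f_k(x)$, because $f_k\approx1$ on the support of $g$; approximating $p$ by some $y\in D^A$ gives the strict inequalities. \emph{Membership $\Rightarrow$ (SP).} Given a non-zero hereditary $H$, pick a positive $x\in H$ with $\|x\|$ large relative to $1/k$, approximate $x$ by $b^*b+K$, and obtain $y$. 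Stability yields a genuine projection $p$ near $y$, with $\|p\|=1$, satisfying $p\approx h p h$ with $h=f_k(b^*b+K)$.

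The main obstacle is the last step: we need an exact projection inside $H$, not merely near it. We plan to handle it with a functional-calculus perturbation. Write $e=h'$, where $h'$ is a cut-off supported where $x\ge c>0$, so that $\overline{eBe}\subseteq H$. From $\|p-epe\|$ small we get that $epe$ is a positive element of $\overline{eBe}\subseteq H$ lying within a small distance of the projection $p$. Its spectrum is then contained near $\{0,1\}$ and meets the neighbourhood of $1$, so $\chi_{[1/2,\infty)}(epe)$ is a non-zero projection. By functional calculus this projection lies in $C^*(epe)\subseteq H$. To make this work we must choose the cut-offs so that $h$ is dominated by a function of $x$ supported where $x$ is bounded below. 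We also have to check that approximating $x$ by $b^*b+K$ changes $f_k$ only slightly, which follows from uniform continuity of polynomial functional calculus on bounded sets. Once these tolerance choices ($\delta$ versus $k$) are fixed, the $\Pi^0_3$ bound follows.
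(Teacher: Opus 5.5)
Your proposal is correct, and at the decisive step it takes a different route from the paper. The paper tests, for each $b\in D^A$ and $n$, whether some approximate projection of norm $>\tfrac12$ lies within $1/n$ of $(b^*b)D^A(b^*b)$ modulo $K$. Intersected over $n$, this says exactly that every non-zero $d=b^*b+K$ has a non-zero projection in $\overline{dBd}$. The paper then passes from these dense elements to arbitrary positive elements by noting that the set of ``good'' positive elements is open. You instead test, at a fixed tolerance, approximate projections that are almost fixed under compression by a cut-off of $b^*b$. You then transfer from an arbitrary positive $x\in H$ to a nearby $b^*b+K$ by a quantitative functional-calculus argument.

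Your route buys a transfer step that is actually valid. For a universal statement, openness of the good set runs the wrong way. It shows that good elements have good dense neighbours, whereas one needs every bad element to have a bad dense neighbour, and that can fail. For instance, embed $X=[0,1]\sqcup C$ in $\mathbb T^2$, with $C$ a Cantor set of positive measure. Map $C^*_{\max}(F_\infty)$ onto $C(X)$ by sending $g_1,g_2$ to the coordinate functions and the other generators to $1$, and take $D^A=\Q(i)[F_\infty]$. Every non-zero image of $b^*b$ is then $|P|^2$ for a non-zero trigonometric polynomial $P$, so it cannot vanish on all of $C$. Hence its hereditary subalgebra contains $1_V$ for a non-empty clopen $V\subseteq C$, and yet $C(X)$ fails (SP). Your almost-invariance condition, that $y$ is close to its compression by $f_k(b^*b)$, is exactly what rules out such examples.

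Two remarks on your argument. First, you can use the exact continuous cut-off $\chi_k(b^*b)\in A$ in place of a rational polynomial. This is legitimate because $K\mapsto\Phi(K,a)$ is continuous for every $a\in A$ and $\chi_k(b^*b)+K=\chi_k(b^*b+K)$. Then $\chi_k(x)p\chi_k(x)$ already lies in $H$, so both the auxiliary $e$ and the dependence of $f_k$ on $\|b\|$ disappear. Second, each $S_{b,k}$ is the union of a closed set and an open set, hence $G_\delta$ in the metrisable space $\Ideal(A)$. So your scheme actually gives a $\Pi^0_2$ bound. Combined with the $\Pi^0_2$-hardness in Theorem~\ref{thm:Cstar-Pi2-complete}, this makes (SP) $\Pi^0_2$-complete.
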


\begin{proof}
Write $B=A/K$.
$B$ has property~(SP) iff for every non-zero positive $a\in B_+$ there exists a
\emph{non-zero} projection $p\in\overline{aBa}$.

By Lemma~\ref{lem:positivity-coding}(ii), it suffices to range over $a=b^*b+K$
with $b\in D^A$, and to express non-zeroness by $\Phi(K,b^*b)>0$.
Indeed, the set of non-zero positive $a\in B_+$ such that $\overline{aBa}$
contains a non-zero projection is open in $B_+\setminus\{0\}$: if
$q\neq 0$ is a projection in $\overline{aBa}$, choose
$x\in B$ with
$\|q-axa\|<1/4$.
(This is possible because $aBa$ is dense in the hereditary $C^*$-subalgebra $\overline{aBa}$.)
Then the same inequality persists for all $a'$ sufficiently close to $a$,
so $\overline{a'Ba'}$ also contains a non-zero projection.

Fix a tolerance $\eta>0$ small enough that if $y\in B$ satisfies
$\|y^2-y\|<\eta$ and $\|y^*-y\|<\eta$, then functional calculus produces a projection
$p(y)$ with $\|p(y)-y\|<1/4$ (any standard $\eta$ from \cite{Loring} suffices).
In particular, if additionally $\|y\|>1/2$, then $p(y)\neq 0$.

For $b\in D^A$ and $n\in\N$ consider the condition
\begin{multline*}
\Phi(K,b^*b)=0
\ \lor\ 
\exists p,x\in D^A:\\
\Phi(K,p^2-p)<\eta\ \wedge\ \Phi(K,p^*-p)<\eta\ \wedge\ \Phi(K,p)>\tfrac12\\
\wedge\ \Phi\bigl(K,p-b^*b\,x\,b^*b\bigr)<\tfrac1n.
\end{multline*}
If $\Phi(K,b^*b)>0$, the second disjunct asserts the existence of a non-zero
approximate projection $p+K$ lying within $1/n$ of the hereditary subalgebra
\[
    H:=\overline{(b^*b+K)B(b^*b+K)}.
\]
Choose $h\in H$ with $\|h-(p+K)\|<1/n$; for large $n$, the element $h$ is also
an approximate (self-adjoint) idempotent.
Since $H$ is a closed $C^*$-subalgebra, functional calculus \emph{inside $H$}
produces a genuine projection $q\in H$ close to $h$, and $\|q\|\ge\|p+K\|-1/4>0$
ensures $q\neq 0$.
Thus $H$ contains a non-zero projection.

For fixed parameters, the first disjunct is closed and the second is a countable union
of open sets (strict $\Phi$-inequalities), hence the whole condition is $\Sigma^0_2$.
Intersecting over $b\in D^A$ and $n\in\N$ gives a $\Pi^0_3$ set.
\end{proof}

\subsection{Tracial states}

\begin{proposition}\label{prop:trace}
The class of $K\in\Ideal(A)$ such that $A/K$ admits a tracial state is closed
(in particular $\Pi^0_1$) in the Wijsman topology.
\end{proposition}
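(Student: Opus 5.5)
The plan is to imitate the compactness-projection argument of Lemma~\ref{lem:ucp-fd}(i) and Lemma~\ref{lem:cpc-fd}, with the matrix algebra replaced by the scalars and the ucp maps replaced by states. Let $S(A)$ be the state space of $A$ with the weak$^*$ topology. Since $A$ is separable and unital, $S(A)$ is compact metrisable (Banach--Alaoglu). A tracial state on $B=A/K$ is the same as a state $\tau$ on $A$ that vanishes on $K$ and is tracial. We will characterise the desired class as the projection onto $\Ideal(A)$ of a closed subset of $S(A)\times\Ideal(A)$.

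Concretely, let $\mathcal T\subseteq S(A)\times\Ideal(A)$ consist of all pairs $(\tau,K)$ with
\[
\tau(uv-vu)=0\quad(u,v\in D^A),\qquad |\tau(a)|\le \Phi(K,a)\quad(a\in D^A).
\]
The first family of conditions is closed because $\tau\mapsto\tau(c)$ is weak$^*$-continuous for fixed $c\in A$. The second family is closed because $(\tau,K)\mapsto|\tau(a)|$ and $(\tau,K)\mapsto\Phi(K,a)$ are continuous, the latter by Lemma~\ref{lem:Phi-continuous}. So $\mathcal T$ is a countable intersection of closed sets. Projection along the compact factor $S(A)$ sends closed sets to closed sets, so $\pi_{\Ideal(A)}(\mathcal T)$ is closed.

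It remains to check that this projection is exactly the set of $K$ for which $A/K$ has a tracial state.
\begin{itemize}
\item[] \emph{From a tracial state to a pair in $\mathcal T$.} If $\sigma$ is a tracial state on $A/K$, then $\tau=\sigma\circ q_K$ is a state on $A$. It is tracial, so the commutator conditions hold. Being contractive on $A/K$, it satisfies $|\tau(a)|\le\|a+K\|=\Phi(K,a)$.
\item[] \emph{From a pair in $\mathcal T$ to a tracial state.} Suppose $(\tau,K)\in\mathcal T$. By density of $D^A$ and continuity of both sides, $|\tau(a)|\le\Phi(K,a)$ holds for all $a\in A$. Hence $\tau$ vanishes on $K$ and descends to a bounded functional $\sigma$ on $A/K$. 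This $\sigma$ is positive and satisfies $\sigma(1)=1$, since positive elements of $A/K$ lift to positive elements of $A$. So $\sigma$ is a state. The trace identity $\tau(uv)=\tau(vu)$ holds on $D^A\times D^A$ and extends by density and joint continuity of multiplication to all of $A$, hence passes to $\sigma$.
\end{itemize}
By Convention~\ref{conv:proper-quotients} we restrict to proper $K$. This is harmless here: for $K=A$ the inequality forces $\tau=0$, which is not a state, so $K=A$ lies outside the projection automatically.

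The only step that needs care is the density extension in the second bullet, namely passing from $D^A$ to all of $A$ in both the descent inequality and the trace identity. This is routine because every condition involved is continuous in the relevant variable. No stability or approximate-witness machinery is needed here, in contrast with the $\Pi^0_2$ arguments earlier in the section. The fact doing the real work is compactness of $S(A)$, exactly as in Lemma~\ref{lem:cpc-fd}.
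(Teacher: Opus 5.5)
Your proposal is correct and is essentially the paper's argument: the paper projects the closed set $\{(\tau,K)\in T(A)\times\Ideal(A): |\tau(a)|\le\Phi(K,a)\ (a\in D^A)\}$ along the compact tracial state space $T(A)$. Your only difference is that you cut $T(A)$ out of $S(A)$ by the commutator conditions explicitly rather than starting from it, and you also spell out the descent and density details.
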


\begin{proof}
Let $T(A)$ be the compact (metrisable) simplex of tracial states on $A$
with its weak$^*$ topology.
A tracial state on $B=A/K$ is the same as a tracial state $\tau\in T(A)$
vanishing on $K$ (compose with the quotient map $A\to A/K$, and conversely factor
through $A/K$).

Fix the dense $*$-subalgebra $D^A\subseteq A$.
For $\tau\in T(A)$ and $K\in\Ideal(A)$, the condition $K\subseteq\ker(\tau)$ is
equivalent to
\[
\forall a\in D^A:\ |\tau(a)|\le \|a+K\|_{A/K}=\Phi(K,a),
\]
since $\tau$ is contractive and $\Phi(K,k)=0$ for $k\in K$.

For each fixed $a\in D^A$, the map $(\tau,K)\mapsto |\tau(a)|-\Phi(K,a)$ is continuous
on $T(A)\times\Ideal(A)$ (weak$^*$ continuity in $\tau$ and Wijsman continuity in $K$),
hence the inequality defines a closed subset.
Therefore
\[
C:=\{(\tau,K)\in T(A)\times\Ideal(A):\ K\subseteq\ker(\tau)\}
\]
is closed.
Since $T(A)$ is compact, the projection of $C$ onto $\Ideal(A)$ is closed, and this
projection is exactly the set of ideals $K$ for which $A/K$ carries a tracial state.
\end{proof}

\subsection{Crossed products and Rokhlin property}

\begin{remark}\label{rem:crossed-Rokhlin}
A full treatment of crossed-product codes and action codes would require an
additional parameter-space construction for compact metrisable spaces, group
actions, and quotient actions in the present framework.
Such a treatment should lead to analytic/Borel upper bounds for the following classes:
for a fixed countable discrete group~$G$, the class of $K$ such that $A/K$ is
isomorphic to a crossed product $C(X)\rtimes_\alpha G$ should be analytic;
and for coded actions $\alpha:G\curvearrowright A/K$, the Rokhlin property should
be $\Pi^0_3$ in the pair $(K,\alpha)$.
These facts are not used elsewhere in the paper, so we omit the details.
\end{remark}

\begin{remark}\label{rem:AI-auto}
Let $B=A/K$.

\smallskip\noindent
\textbf{Coding automorphisms.}
In our quotient coding, it is convenient to regard an automorphism $\alpha\in\Aut(B)$
as being specified by its values on the named dense $*$-subalgebra $D^A+K\subseteq B$.
Concretely, a \emph{code} for $\alpha$ may be taken to be a sequence
$\bar y=(y_m)_{m\in\N}$ in $D^A$ intended to represent $\alpha(u_m+K)=y_m+K$,
together with the requirement that the assignment $u_m+K\mapsto y_m+K$ extends to a
$*$-automorphism of $B$.
This extension condition can be written as a countable family of constraints
expressing $*$-homomorphism identities on $D^A$ (e.g.\ preservation of products,
$*$, and rational linear combinations) and the existence of a coded inverse; these
are Borel conditions in the pair $(K,\bar y)$ by the general definability scheme
(Theorem~\ref{thm:definability-rank}).

\smallskip\noindent
\textbf{Approximate innerness.}
An automorphism $\alpha\in\Aut(B)$ is \emph{approximately inner} if it belongs to the
point-norm closure of the inner automorphism group $\Inn(B)$, equivalently:
for every finite $F\subseteq B$ and every $\varepsilon>0$ there exists a unitary
$u\in B$ such that
\[
\max_{x\in F}\|\alpha(x)-uxu^*\|<\varepsilon.
\]
Since $D^A+K$ is dense, it suffices to test finite $F\subseteq D^A+K$ and
$\varepsilon=1/n$.
Moreover, the unitary quantifier can be reduced to the fixed countable dense set
$D^A$ by stability of the unitary relation:
if $v\in B$ is sufficiently close to a unitary (\emph{i.e.}\ $\|v^*v-1\|$ and $\|vv^*-1\|$
are small), then functional calculus produces a genuine unitary $u(v)$ close to~$v$.
Thus, for a fixed tolerance $\eta>0$ (small enough for unitary stability),
approximate innerness of $\alpha$ is equivalent to the following countable scheme:
\begin{multline*}
\forall m\in\N\ \forall n\in\N\ \exists v\in D^A:\\ 
\Phi(K,v^*v-1)<\eta\ \wedge\ \Phi(K,vv^*-1)<\eta\\
\wedge\ 
\max_{j\le m}\Phi\!\bigl(K,\alpha(u_j+K)-v(u_j+K)v^*\bigr)<\tfrac{1}{n}.
\end{multline*}
Here $\alpha(u_j+K)$ is read from the chosen automorphism code (e.g.\ via the
representatives $y_j+K$).
The displayed predicate is a finite conjunction of strict $\Phi$-inequalities, hence
open; the quantifier pattern is $\forall\forall\exists(\text{open})$.
Therefore, the set of coded pairs $(K,\alpha)$ with $\alpha$ approximately inner is
a $\Pi^0_3$ subset of the ambient coding space.

\smallskip\noindent
\textbf{Context.}
For each fixed $B$ (separable), $\Aut(B)$ equipped with the point-norm topology is a
Polish group and $\Aut_{\mathrm{ai}}(B)$ is a closed normal subgroup (being the
closure of $\Inn(B)$).  Globally, in the quotient coding, the previous paragraph
identifies approximate innerness as a uniform Borel condition on \emph{pairs}
$(K,\alpha)$.
\end{remark}

\begin{remark}\label{rem:summary}
In the Wijsman quotient coding, the rank estimates proved in this section are as
follows.  Nuclearity and simplicity are Borel.  Stable finiteness and the existence of a
tracial state are closed.  AF, MF, approximate divisibility, real-rank bounds and
topological stable-rank bounds are $G_\delta$; equivalently, they are $\Pi^0_2$.
The bounds for AF, real-rank bounds and topological stable-rank bounds are optimal by
Theorem~\ref{thm:Cstar-Pi2-complete}.  Quasidiagonality, property~(SP), and nuclear
dimension~$\le n$ are $\Pi^0_3$; for property~(SP) and nuclear dimension the same
theorem gives $\Pi^0_2$ lower bounds, but not $\Pi^0_3$-hardness.
Local approximation by homomorphic images of a countable family of stably finitely
presented blocks is $G_\delta$.  For a fixed separable unital exact algebra $D$,
$D$-absorption is analytic.
\end{remark}

\subsection{Sharpness of some \texorpdfstring{$G_\delta$}{G-delta} bounds}

The previous estimates give upper bounds.  We now record lower bounds for three of
the $G_\delta$ classes.  The reductions are commutative and therefore live inside a
very small part of the quotient space.  This still proves hardness in the full quotient
coding, because the reduction constructed below is a continuous map into the
ambient Wijsman ideal space.

Let
\[
        P_2=\{\alpha\in 2^{\N\times\N}:\forall r\;\exists s\; \alpha(r,s)=1\}.
\]
This is a standard $\Pi^0_2$-complete subset of the Cantor space $2^{\N\times\N}$.

\begin{lemma}\label{lem:compact-Pi2-test}
There is a continuous map
\[
        \alpha\longmapsto E_\alpha
\]
from $2^{\N\times\N}$ into the hyperspace of non-empty compact subsets of $[0,1]$,
with the Hausdorff topology, such that:
\begin{enumerate}
\item if $\alpha\in P_2$, then $E_\alpha$ is countable, hence zero-dimensional;
\item if $\alpha\notin P_2$, then $E_\alpha$ contains a non-degenerate interval.
\end{enumerate}
\end{lemma}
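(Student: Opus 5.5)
We will build $E_\alpha$ as a closed union of countably many pieces placed in disjoint subintervals of $[0,1]$, one piece per ``row'' $r$. Fix pairwise disjoint closed intervals $I_r=[a_r,b_r]\subseteq[0,1]$ with $b_r\to 0$ and $b_{r+1}<a_r$, for instance $I_r=[2^{-2r-2},2^{-2r-1}]$. For each $r$ we define a compact set $E_\alpha^{(r)}\subseteq I_r$ that depends only on the row $\alpha(r,\cdot)$. Then we put
\[
E_\alpha:=\{0\}\cup\bigcup_{r}E_\alpha^{(r)}.
\]
This set is compact because the pieces accumulate only at $0$. Continuity in the Hausdorff metric reduces to continuity of each row map together with the uniform bound: $E_\alpha\cap[0,2^{-2R}]$ lies within Hausdorff distance $2^{-2R}$ of $\{0\}$, uniformly in $\alpha$. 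So finitely many rows determine $E_\alpha$ up to small Hausdorff error.

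For a single row $\beta=\alpha(r,\cdot)\in 2^{\N}$ we need a continuous map $\beta\mapsto E(\beta)$ into the compact subsets of $I_r$. It should satisfy two conditions: $E(\beta)$ is finite or countable when $\beta$ has some $1$, and $E(\beta)=I_r$ (or contains a fixed non-degenerate subinterval) when $\beta\equiv 0$. A natural choice is as follows. Let $s_0(\beta)=\min\{s:\beta(s)=1\}$, with $s_0=\infty$ if none exists. Set $E(\beta)$ to be the finite grid $\{a_r+k(b_r-a_r)2^{-s_0}:0\le k\le 2^{s_0}\}$ when $s_0<\infty$, and $E(\beta)=I_r$ when $s_0=\infty$. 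Continuity holds as follows. If $\beta_n\to\beta$ with $s_0(\beta)<\infty$, then $s_0(\beta_n)=s_0(\beta)$ eventually, so the sets are eventually equal. If $\beta\equiv 0$, then $s_0(\beta_n)\to\infty$, and the grid of mesh $2^{-s_0(\beta_n)}(b_r-a_r)$ converges in the Hausdorff metric to $I_r$. This is exactly where the grid, rather than a single point, is needed.

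With this construction both properties follow. If $\alpha\in P_2$, every row has a $1$, so every $E_\alpha^{(r)}$ is finite and $E_\alpha$ is countable. A countable compact metric space is zero-dimensional, since its countably many distances omit almost all radii, giving clopen balls. If $\alpha\notin P_2$, some row $r$ is identically zero, so $I_r\subseteq E_\alpha$.

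The main obstacle is the global continuity check, which combines the row-wise continuity with the tail estimate. Fix $\varepsilon$ and choose $R$ with $2^{-2R}<\varepsilon$. Rows $r\ge R$ contribute only points within $\varepsilon$ of $0\in E_\alpha$, for every $\alpha$. Rows $r<R$ are finitely many, and each varies continuously with $\alpha$, since the coordinate projection $\alpha\mapsto\alpha(r,\cdot)$ is continuous. Hausdorff distance of finite unions is bounded by the maximum of the distances of the corresponding pieces. It follows that $d_H(E_\alpha,E_{\alpha'})<2\varepsilon$ once $\alpha'$ is close enough to $\alpha$.
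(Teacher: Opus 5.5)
Your proposal is correct and follows essentially the same construction as the paper: intervals $I_r$ accumulating only at $0$, a finite subset of $I_r$ chosen according to the first index $s$ with $\alpha(r,s)=1$ (your dyadic grid is a concrete instance of the paper's sets $F_{r,s}$), the full interval $I_r$ for an all-zero row, and continuity via a uniform tail estimate combined with eventual stabilisation of finitely many rows. One small point in your favour is that your intervals $[2^{-2r-2},2^{-2r-1}]$ are genuinely pairwise disjoint, whereas the paper's $[2^{-r-2},2^{-r-1}]$ share endpoints; disjointness is not actually needed in either argument.
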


\begin{proof}
Choose pairwise disjoint closed intervals
\[
        I_r=[2^{-r-2},2^{-r-1}]\qquad(r\in\N),
\]
which accumulate only at~$0$.  For each pair $(r,s)$ choose a finite set
$F_{r,s}\subseteq I_r$, containing the endpoints of $I_r$, such that
\[
        d_H(F_{r,s},I_r)<2^{-s},
\]
where $d_H$ denotes Hausdorff distance.  For $\alpha\in2^{\N\times\N}$ put
\[
        m_r(\alpha)=\min\{s:\alpha(r,s)=1\},
\]
with $m_r(\alpha)=\infty$ if the set is empty, and define
\[
        E_\alpha=
        \{0\}
        \cup\bigcup_{m_r(\alpha)=\infty} I_r
        \cup\bigcup_{m_r(\alpha)<\infty} F_{r,m_r(\alpha)} .
\]
This is compact because the intervals $I_r$ accumulate only at~$0$.
If every row of $\alpha$ contains a~$1$, then $E_\alpha$ is a countable compact set.
If some row contains no~$1$, then $E_\alpha$ contains the corresponding interval~$I_r$.

It remains only to check continuity.  Let $\alpha_j\to\alpha$.  Given $\varepsilon>0$,
choose $R$ so large that
\[
        \{0\}\cup\bigcup_{r>R} I_r \subseteq [0,\varepsilon].
\]
For each $r\le R$ with $m_r(\alpha)<\infty$, the first $m_r(\alpha)+1$ entries in row
$r$ are eventually constant, so the corresponding piece of $E_{\alpha_j}$ is eventually
exactly the same as the piece of $E_\alpha$.  For each $r\le R$ with
$m_r(\alpha)=\infty$, choose $S$ so large that $2^{-S}<\varepsilon$; eventually the
first $S$ entries in row~$r$ of $\alpha_j$ are all zero, and therefore the corresponding
piece of $E_{\alpha_j}$ is either $I_r$ or some $F_{r,s}$ with $s\ge S$, in both cases
within Hausdorff distance $<\varepsilon$ from $I_r$.  Combining the finitely many
large intervals with the small tail gives $d_H(E_{\alpha_j},E_\alpha)\to0$.
\end{proof}

For $d\ge0$ set
\[
        Y_{\alpha,d}:=E_\alpha\times [0,1]^d\subseteq [0,1]^{d+1},
\]
where $[0,1]^0$ is a point.  Fix once and for all a surjection
\[
        \rho_d:C^*_{\max}(F_\infty)\longrightarrow C([0,1]^{d+1}).
\]
For a compact set $Y\subseteq [0,1]^{d+1}$ write
\[
        I_Y=\{f\in C([0,1]^{d+1}):f|_Y=0\},
        \qquad
        K_Y=\rho_d^{-1}(I_Y).
\]
The map $Y\mapsto K_Y$ is Wijsman-continuous: for $a\in C^*_{\max}(F_\infty)$,
\[
        \dist(a,K_Y)=\|\rho_d(a)+I_Y\|=\|\rho_d(a)|_Y\|_\infty
        =\max_{y\in Y}|\rho_d(a)(y)|,
\]
and the last expression is continuous in $Y$ for the Hausdorff topology.
Since $Y\mapsto I_Y$ is injective, compactness of the hyperspace and metrisability of the
Wijsman ideal space show that $Y\mapsto K_Y$ is a topological embedding onto its image.
Thus a continuous hardness reduction landing in this commutative subfamily is already a
hardness reduction for the full quotient coding.

\begin{theorem}\label{thm:Cstar-Pi2-complete}
In the Wijsman quotient coding of separable unital $C^*$-algebras by ideals of
$C^*_{\max}(F_\infty)$, the following classes are $\Pi^0_2$-complete:
\begin{enumerate}
\item AF algebras;
\item algebras of real rank at most $d$, for each fixed $d\ge0$;
\item algebras of topological stable rank at most $n$, for each fixed $n\ge1$.
\end{enumerate}
Moreover, for each fixed $d\ge0$, the class of algebras with nuclear dimension at most
$d$ is $\Pi^0_2$-hard, and property~\textup{(SP)} is $\Pi^0_2$-hard.
\end{theorem}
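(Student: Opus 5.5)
The upper bounds are already in hand: Proposition~\ref{prop:AF}, Proposition~\ref{prop:real-rank} and Proposition~\ref{prop:tsr} (the latter applied to $C^*$-quotients, which are Banach-algebra quotients with continuous $\Phi$) place the three classes in $\Pi^0_2$. So only hardness needs proof. I will reduce the $\Pi^0_2$-complete set $P_2$ continuously to each class via the map $\alpha\mapsto K_{Y_{\alpha,d}}$, where $d$ is chosen depending on the class. This map is continuous because it is a composite of three continuous maps: $\alpha\mapsto E_\alpha$ (Lemma~\ref{lem:compact-Pi2-test}); $E\mapsto E\times[0,1]^d$, which is Hausdorff-continuous; and $Y\mapsto K_Y$, whose Wijsman continuity was checked just before the theorem. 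The quotient $A/K_{Y_{\alpha,d}}$ is $C(Y_{\alpha,d})$. So everything reduces to the dimension theory of the compact metrisable spaces $Y_{\alpha,d}$, via the classical commutative identities
\[
\mathrm{RR}(C(Y))=\dim Y,\qquad \mathrm{tsr}(C(Y))=\lfloor \dim Y/2\rfloor+1,\qquad \dim_{\mathrm{nuc}}C(Y)=\dim Y.
\]

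If $\alpha\in P_2$, then $E_\alpha$ is countable and compact, hence zero-dimensional, and $\dim Y_{\alpha,d}=d$ by the product theorem (a zero-dimensional compactum times a cube). If $\alpha\notin P_2$, then $Y_{\alpha,d}$ contains $I_r\times[0,1]^d$, so $\dim Y_{\alpha,d}=d+1$. The cases then go as follows.

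\textbf{AF.} Take $d=0$. The algebra $C(E_\alpha)$ is AF exactly when $E_\alpha$ is totally disconnected. A commutative AF algebra has zero-dimensional spectrum, and an interval spectrum is not AF.

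\textbf{Real rank $\le d$.} Use $Y_{\alpha,d}$ directly. The dimension jumps from $d$ to $d+1$.

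\textbf{Topological stable rank $\le n$.} Choose the cube exponent so that the floor formula separates the two cases, namely exponent $2n-1$ (so $\dim\in\{2n-1,2n\}$). Then $\mathrm{tsr}$ is $n$ when $\alpha\in P_2$ and $n+1$ otherwise. For $n=1$ I can instead use exponent $0$ or $1$ as appropriate; the point is only that $\lfloor (2n-1)/2\rfloor+1=n$ and $\lfloor 2n/2\rfloor+1=n+1$.

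\textbf{Nuclear dimension $\le d$.} Use the same $Y_{\alpha,d}$ as for real rank, which gives hardness.

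\textbf{(SP).} Take $d=0$. $C(E_\alpha)$ has (SP) iff $E_\alpha$ is totally disconnected. A non-zero function supported in the interior of $I_r$ generates a hereditary subalgebra with no non-zero projection, since a projection there would be the indicator of a clopen subset of an interval. In the countable case the clopen sets form a basis.

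Each reduction is continuous into $\Ideal(A)$ and satisfies $f^{-1}(\text{class})=P_2$. Combined with the upper bounds, this gives completeness in (i)--(iii) and hardness for the last two.

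The main obstacle is the dimension bookkeeping. First, I need $\dim(E_\alpha\times[0,1]^d)=d$ for a countable compact $E_\alpha$; I would cite the standard fact that $\dim(X\times Y)\le\dim X+\dim Y$ for compacta, with equality here by monotonicity. Second, the references for the covering-dimension formulas must be stated correctly: real rank and nuclear dimension of $C(Y)$ (Brown--Pedersen, Winter--Zacharias) and Rieffel's $\mathrm{tsr}$ formula. The $\mathrm{tsr}$ parity choice is where an off-by-one error is most likely, so I would verify it carefully against Rieffel's $\lfloor\dim/2\rfloor+1$.
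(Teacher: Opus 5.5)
Your proposal is correct and is essentially the paper's proof: the same continuous reduction $\alpha\mapsto K_{E_\alpha\times[0,1]^d}$ of $P_2$, with $d=0$ for AF and (SP), general $d$ for real rank and nuclear dimension, and $d=2n-1$ for topological stable rank, combined with the Brown--Pedersen, Rieffel and Winter--Zacharias formulas and the $\Pi^0_2$ upper bounds proved earlier. The only slip is your aside for $n=1$: exponent $0$ would not separate the cases, since both give $\mathrm{tsr}=\lfloor\dim/2\rfloor+1=1$ because $\dim E_\alpha\le 1$, but the aside is unnecessary because exponent $2n-1=1$ already works.
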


\begin{proof}
Membership in $\Pi^0_2$ for AF, real-rank bounds and topological stable-rank bounds was
proved above.  We prove hardness by reducing $P_2$.

For AF-ness use $Y_{\alpha,0}=E_\alpha$.  If $\alpha\in P_2$, then $E_\alpha$ is
countable and compact, hence zero-dimensional, and $C(E_\alpha)$ is AF.  If
$\alpha\notin P_2$, then $E_\alpha$ contains a non-degenerate interval, so
$C(E_\alpha)$ is not AF.  Thus
\[
        \alpha\in P_2
        \quad\Longleftrightarrow\quad
        C^*_{\max}(F_\infty)/K_{E_\alpha}\cong C(E_\alpha)\text{ is AF}.
\]
Since $\alpha\mapsto K_{E_\alpha}$ is continuous, AF-ness is $\Pi^0_2$-hard.

For real rank, use $Y_{\alpha,d}=E_\alpha\times[0,1]^d$.  If $\alpha\in P_2$, then
$E_\alpha$ is zero-dimensional, so standard dimension theory gives
$\dim(Y_{\alpha,d})=d$.  If $\alpha\notin P_2$, then $Y_{\alpha,d}$ contains a copy of
$[0,1]^{d+1}$, so $\dim(Y_{\alpha,d})\ge d+1$.  Since real rank of a commutative
unital $C^*$-algebra is covering dimension~\cite{BrownPedersen1991}, $\mathrm{rr}(C(Y_{\alpha,d}))\le d$ holds
exactly for $\alpha\in P_2$.  This proves $\Pi^0_2$-hardness of the real-rank bound.

For topological stable rank~$\le n$, put $d=2n-1$.  The same dimension computation
gives dimension $2n-1$ when $\alpha\in P_2$ and dimension $2n$ otherwise; the
choice $2n-1$ is exactly the threshold between stable ranks $n$ and $n+1$.
Rieffel's formula
$\mathrm{tsr}(C(X))=\lfloor \dim(X)/2\rfloor+1$ for compact spaces of finite covering
dimension~\cite{Rieffel1983} therefore yields
\[
        \mathrm{tsr}(C(Y_{\alpha,2n-1}))\le n
        \quad\Longleftrightarrow\quad
        \alpha\in P_2.
\]
This proves $\Pi^0_2$-hardness of the stable-rank bound.

The final assertions are proved by the same commutative family.  Winter--Zacharias'
formula $\dim_{\mathrm{nuc}}(C(X))=\dim(X)$ for compact metrisable $X$
\cite{WinterZacharias} gives $\Pi^0_2$-hardness of nuclear dimension~$\le d$.
For property~(SP), $C(E_\alpha)$ has property~(SP) when $E_\alpha$ is zero-dimensional:
every non-empty open subset contains a non-empty clopen subset, whose characteristic
function is a non-zero projection.  If $E_\alpha$ contains an interval, a hereditary
subalgebra supported in a smaller open interval contains no non-zero projection.  Hence
property~(SP) also has $P_2$ as a continuous preimage.
\end{proof}

\section{Borel assignment of \texorpdfstring{$K$}{K}-theory}\label{sec:K0}

We give a fully internal Borel coding of the $K_0$-assignment for the unital
quotients considered in Sections~\ref{sec:Cstar-properties}--\ref{sec:D-stability}.
For $K_1$ and higher $K$-groups we use suspension and Bott periodicity together
with the already-known Borel computability of $K$-theory in the standard
$\Gamma/\Xi$ codings.  A completely internal treatment of the non-unital
suspension case would require the relative/unitised version of the $K_0$ coding.
These results can also be extended to ordered $K$-theory, but we do not pursue that
direction here.

Recall that for a separable $C^*$-algebra $B$, the group $K_0(B)$ may be defined as
the Grothendieck group of Murray--von Neumann equivalence classes of projections in
$(B\otimes\mathcal K)^+$ under direct sum.  When $B$ is unital, this agrees with the
Grothendieck group of the commutative monoid $V(B)$ of projections in
$\bigsqcup_d M_d(B)$ under direct sum.
The group $K_1(B)$ is defined as $\pi_0(\mathrm{GL}_\infty(\widetilde B))$, where
$\widetilde B$ is the unitisation of $B$, or equivalently as $K_0(SB)$ where
$SB=C_0((0,1))\otimes B$ is the suspension; the latter identification follows from
Bott periodicity.
For the precise definitions and basic properties of $K$-theory for operator algebras we
refer to R{\o}rdam--Larsen--Laustsen~\cite{RordamLarsenLaustsen},
Blackadar~\cite{Blackadar}, and Wegge-Olsen~\cite{WeggeOlsen}; we use these
definitions without further comment.

\begin{remark}
\label{rem:K-FTT-compare}
Farah--Toms--T\"ornquist proved that ordered $K_0$ and $K_1$ are Borel computable in
the standard $\Gamma/\Xi$ parameterisations; see Section~3 of
\cite{FarahTomsTornquist}.
By Remarks~\ref{rem:other-codings} and~\ref{rem:Xi-ideals}, those codings are Borel
equivalent to the present quotient-ideal coding.
Thus the existence of a Borel $K$-theory assignment in our framework is known
\emph{a priori}.  The point of the present section is to construct such assignments
internally in the quotient/Wijsman setting and to record some explicit complexity
information there.
\end{remark}

\subsection{The coding space of countable abelian groups}

Let $\Z^{(\N)}=\bigoplus_{n\in\N}\Z e_n$.  The space
\[
\Sub(\Z^{(\N)})=\{H\subseteq\Z^{(\N)}:H\text{ is a subgroup}\}\subseteq 2^{\Z^{(\N)}}
\]
is a closed subset of a Cantor cube and hence compact Polish.
Each $H$ codes the quotient group $\Z^{(\N)}/H$.

\subsection{An internal Borel presentation of \texorpdfstring{$K_0$}{K0}}

Fix a unital separable $C^*$-algebra quotient generator $A$ and a countable dense
$*$-subalgebra $D^A\subseteq A$.
For each $d\in\N$, let $M_d(D^A)_+$ denote the positive cone of $M_d(D^A)$.
Choose once and for all an enumeration
\[
(a_n)_{n\in\N}
\]
of the countable set
\[
\mathcal P^A:=\bigsqcup_{d\in\N} M_d(D^A)_+
\]
with the following properties:
\begin{enumerate}
\item $a_0=0\in M_1(D^A)$;
\item if $a_m\in M_{d(m)}(D^A)_+$ and $a_n\in M_{d(n)}(D^A)_+$, then there is a
computable index $m\oplus n$ such that
\[
a_{m\oplus n}=a_m\oplus a_n\in M_{d(m)+d(n)}(D^A)_+.
\]
\end{enumerate}
Since every positive element is of the form $b^*b$ and $D^A$ is dense,
$\mathcal P^A$ is dense in the positive cone of $A\otimes\mathcal K$.

\begin{lemma}
\label{lem:K0-correct-proj}
For every $\varepsilon\in(0,1/4)$ there exist a constant
$\eta(\varepsilon)>0$ and a continuous function
$\chi:[0,\infty)\to[0,1]$ with
\[
\chi|_{[0,1/4]}=0,
\qquad
\chi|_{[3/4,\infty)}=1,
\]
such that for every $C^*$-algebra $B$ and every positive element $x\in B_+$ satisfying
\[
\|x^2-x\|<\eta(\varepsilon),
\]
the element $\widehat x:=\chi(x)$ is a projection and
\[
\|\widehat x-x\|<\varepsilon.
\]
\end{lemma}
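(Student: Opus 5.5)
The plan is to use a spectral gap argument. Since $x$ is positive, hence self-adjoint, the spectral mapping theorem gives
\[
\|x^2-x\| = \sup\{|t^2-t| : t\in\mathrm{sp}(x)\}.
\]
Thus the hypothesis forces $|t(t-1)|<\eta$ for every $t\in\mathrm{sp}(x)\subseteq[0,\infty)$, independently of the ambient algebra $B$. This uniformity is the point of the lemma. The first step is to confine the spectrum: we choose $\eta=\eta(\varepsilon)$ so small that $\{t\ge0:|t^2-t|<\eta\}\subseteq[0,\varepsilon')\cup(1-\varepsilon',1+\varepsilon')$, where $\varepsilon'=\min(\varepsilon,1/4)/2$. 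Since $t\mapsto t^2-t$ is continuous and vanishes only at $0$ and $1$ on $[0,\infty)$, and tends to infinity, its absolute value is bounded below by a positive constant on $[\varepsilon',1-\varepsilon']\cup[1+\varepsilon',\infty)$. Any $\eta$ below that constant works; explicitly, $\eta=\varepsilon'/2$ is enough for small $\varepsilon'$.

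Next we fix $\chi$ once and for all, independently of $\varepsilon$. We take $\chi$ to be $0$ on $[0,1/4]$, $1$ on $[3/4,\infty)$, and linear in between. Because $\varepsilon'<1/4$, the function $\chi$ takes only the values $0$ and $1$ on $\mathrm{sp}(x)$, so $\chi^2=\chi=\overline\chi$ on the spectrum. Continuous functional calculus then makes $\widehat x=\chi(x)$ a self-adjoint idempotent, i.e.\ a projection. If $B$ is non-unital this still works: $\chi(0)=0$, so $\chi(x)\in B$, and we may compute in $\widetilde B$.

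For the estimate, we write $\|\chi(x)-x\|=\sup_{t\in\mathrm{sp}(x)}|\chi(t)-t|$. On $[0,\varepsilon')$ this equals $t<\varepsilon'$, and on $(1-\varepsilon',1+\varepsilon')$ it equals $|1-t|<\varepsilon'$. The spectrum is compact and contained in the open set $[0,\varepsilon')\cup(1-\varepsilon',1+\varepsilon')$, so the supremum is a maximum attained at a point of the spectrum and is therefore strictly below $\varepsilon'\le\varepsilon$. This gives the strict inequality $\|\widehat x-x\|<\varepsilon$.

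There is no real obstacle here. The only care needed is to make $\eta$ depend on $\varepsilon$ alone, which the spectral computation guarantees, and to make sure the strict inequality survives by using compactness of the spectrum inside the open set.
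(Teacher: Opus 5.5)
Your proof is correct and takes the same route as the paper: the spectral mapping theorem for $t\mapsto t^2-t$ confines $\mathrm{sp}(x)$ near $\{0,1\}$, and continuous functional calculus with a fixed cutoff $\chi$ produces the projection and the norm estimate. Your version just makes explicit what the paper leaves implicit: a concrete $\eta$, the non-unital case via $\chi(0)=0$, and the strict inequality, which is automatic since you work with $\varepsilon'=\varepsilon/2<\varepsilon$.
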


\begin{proof}
Choose $\eta(\varepsilon)>0$ small enough that
$|t^2-t|<\eta(\varepsilon)$ for $t\in[0,\infty)$ forces
$t\in[0,\varepsilon)\cup(1-\varepsilon,1+\varepsilon)$.
Functional calculus then gives the asserted spectral projection and the norm estimate.
\end{proof}

\begin{lemma}
\label{lem:K0-stable-MvN}
There exists $\delta_{\mathrm{MvN}}>0$ such that the following holds.

Let $B$ be a $C^*$-algebra, let $p,q,v\in B$, and assume
\[
\|p^2-p\|,\ \|p^*-p\|,\ \|q^2-q\|,\ \|q^*-q\|<\delta_{\mathrm{MvN}},
\]
and
\[
\|v^*v-p\|<\delta_{\mathrm{MvN}},
\qquad
\|vv^*-q\|<\delta_{\mathrm{MvN}}.
\]
Then the corrected projections $\widehat p$ and $\widehat q$
(from Lemma~\ref{lem:K0-correct-proj}) are Murray--von Neumann equivalent.

Conversely, if $e,f\in B$ are Murray--von Neumann equivalent projections, then for every
$\varepsilon>0$ and every dense $*$-subalgebra $B_0\subseteq B$ there exist
$p,q,v\in B_0$ such that
\[
\|p-e\|,\ \|q-f\|,\ \|v-w\|<\varepsilon
\]
for some partial isometry $w$ with $w^*w=e$ and $ww^*=f$, and the above
approximate relations hold.
\end{lemma}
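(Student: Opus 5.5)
Both directions come from standard perturbation theory; the only real work is choosing $\delta_{\mathrm{MvN}}$ uniformly. We fix $\varepsilon_0=1/8$ and first require $\delta_{\mathrm{MvN}}\le\eta(\varepsilon_0)$. Since $p$ is only approximately self-adjoint, we apply Lemma~\ref{lem:K0-correct-proj} to the positive part of $(p+p^*)/2$, adjusting $\chi$ so that it vanishes on $(-\infty,1/4]$; the same is done for $q$. This yields projections $\widehat p,\widehat q$ with $\|\widehat p-p\|,\|\widehat q-q\|<\varepsilon_0+O(\delta_{\mathrm{MvN}})$, which is comfortably below $1/4$.

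\textbf{Forward direction.} Put $v_1:=\widehat q\,v\,\widehat p$. Then $v_1^*v_1=\widehat p v^*\widehat q v\widehat p$, and $\|v^*\widehat q v-v^*v\|$ is small because $\|v\|^2\le\|p\|+\delta_{\mathrm{MvN}}$ is bounded. Hence $\|v_1^*v_1-\widehat p\|<1/2$ once $\varepsilon_0$ and $\delta_{\mathrm{MvN}}$ are small, and symmetrically $\|v_1v_1^*-\widehat q\|<1/2$. Inside the corner $\widehat pB\widehat p$ the element $v_1^*v_1$ is therefore invertible, and we take the polar part $w:=v_1(v_1^*v_1)^{-1/2}$, with the inverse square root computed in $\widehat pB\widehat p$. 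This gives $w^*w=\widehat p$, and $ww^*$ is a projection below $\widehat q$ satisfying $\|ww^*-\widehat q\|<1$ (estimate it via $\|ww^*-v_1v_1^*\|$, as in the polar-correction step of Proposition~\ref{prop:stably-finite}). A subprojection of $\widehat q$ at distance $<1$ from $\widehat q$ must equal it, since $\widehat q-ww^*$ is a projection of norm $<1$. Thus $\widehat p\sim\widehat q$. The main bookkeeping obstacle is getting all the constants explicit and uniform in $B$. We handle this by making every estimate depend only on $\varepsilon_0$, $\delta_{\mathrm{MvN}}$ and the a priori bound $\|v\|\le 2$, and then choosing $\delta_{\mathrm{MvN}}$ last.

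\textbf{Converse.} Let $w$ be a partial isometry with $w^*w=e$ and $ww^*=f$. Choose $p,q,v\in B_0$ within $\varepsilon'$ of $e,f,w$, where $\varepsilon'\le\varepsilon$ is small. The expressions $p^2-p$, $p^*-p$, $v^*v-p$, and so on, are continuous polynomial expressions that vanish at $(e,f,w)$, and each is $O(\varepsilon')$ with a constant depending only on the norm bound $2$. So for $\varepsilon'$ small relative to $\delta_{\mathrm{MvN}}$ all the approximate relations hold strictly. This only uses density of $B_0$ and the continuity of these polynomial expressions.
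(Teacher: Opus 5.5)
Your proposal is correct, but it takes a different route from the paper. The paper disposes of the forward direction in one line. It cites weak stability (semiprojectivity) of the relations $e_{ii}=e_{ii}^*=e_{ii}^2$, $e_{12}^*=e_{21}$, $e_{12}e_{21}=e_{11}$, $e_{21}e_{12}=e_{22}$, applied with $e_{11}\approx q$, $e_{22}\approx p$, $e_{12}\approx v$, to get an exact partial isometry nearby. Your converse is the same density argument as the paper's.

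You instead build the partial isometry by hand. You compress to $v_1=\widehat q v\widehat p$, take its polar part in the unital corner $\widehat pB\widehat p$, and finish with the fact that a subprojection of $\widehat q$ at distance $<1$ from $\widehat q$ equals $\widehat q$. What your route buys:
\begin{itemize}
\item It is self-contained, with constants visibly uniform in $B$.
\item It says how $\widehat p$ is formed when $p$ is only approximately self-adjoint; Lemma~\ref{lem:K0-correct-proj} is stated for positive elements.
\item It makes explicit why the exact source and range projections produced by stability are equivalent to $\widehat p,\widehat q$, a step the citation leaves implicit.
\item It sidesteps the paper's description of those relations as a finite presentation of $M_2$, which is inaccurate. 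Nothing forces $e_{11}e_{22}=0$ (take $e_{11}=e_{22}=e_{12}=1$ in $\mathbb{C}$). The relations only say that $e_{12}$ is a partial isometry, so what is really needed is weak stability of partial isometries, which is exactly what your polar-decomposition argument proves.
\end{itemize}
The paper's route buys brevity.

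Two of your justifications need tightening. First, boundedness of $v$ alone does not make $\|v^*\widehat q v-v^*v\|$ small. Write $v^*\widehat q v-v^*v=v^*(\widehat q-vv^*)v+\bigl((v^*v)^2-v^*v\bigr)$, and use $vv^*\approx\widehat q$ together with $(v^*v)^2\approx p^2\approx p\approx v^*v$.

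Second, the a priori bound on $\|p\|$ (hence $\|v\|\le 2$) must be derived from the hypotheses. For $h=(p+p^*)/2$ one has $\|h\|^2-\|h\|\le\|h^2-h\|\le C\delta_{\mathrm{MvN}}(1+\|h\|)$, which forces $\|h\|\le 1+O(\delta_{\mathrm{MvN}})$. The same spectral estimate gives $\|\widehat p-p\|=O(\delta_{\mathrm{MvN}})$, and $\delta_{\mathrm{MvN}}$ must be a fixed fraction of $\eta(\varepsilon_0)$ rather than merely $\le\eta(\varepsilon_0)$, which your ``choose $\delta_{\mathrm{MvN}}$ last'' covers. With these points, the margins you need, $\|v_1^*v_1-\widehat p\|<1/2$ and $\|ww^*-\widehat q\|<1$, are met with room to spare.
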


\begin{proof}
This is the stability of the finite relations defining a corner copy of $M_2$.
Indeed, the exact relations
\[
e_{11}=e_{11}^*=e_{11}^2,\quad
e_{22}=e_{22}^*=e_{22}^2,\quad
e_{12}^*=e_{21},\quad
e_{12}e_{21}=e_{11},\quad
e_{21}e_{12}=e_{22}
\]
give a finite presentation of $M_2$, hence are weakly stable by semiprojectivity of
finite-dimensional $C^*$-algebras; cf.\ Lemma~\ref{lem:stable-fd} and~\cite{Loring}.
The converse follows by approximating an exact partial isometry witness in a dense
$*$-subalgebra.
\end{proof}
Fix $\varepsilon_0\in(0,1/4)$ so small that
$\varepsilon_0<\delta_{\mathrm{MvN}}/4$, and set
\[
\eta_0:=\min\{\eta(\varepsilon_0),\delta_{\mathrm{MvN}}/4\}.
\]
In the sequel, corrected projections are formed using this value of $\eta_0$ and the
corresponding function $\chi$ from Lemma~\ref{lem:K0-correct-proj}.

For $K\in\Ideal(A)$ and $n\in\N$, define the \emph{good approximate projection}
predicate
\[
\mathcal G(K,n)\iff \Phi^{(d(n))}(K,a_n^2-a_n)<\eta_0.
\]
If $\mathcal G(K,n)$ holds, let
\[
p_n^K:=\chi(a_n+M_{d(n)}(K))\in M_{d(n)}(A/K)
\]
be the corrected projection from Lemma~\ref{lem:K0-correct-proj}; if
$\mathcal G(K,n)$ fails, set $p_n^K:=0$.

We also define the \emph{zero-class} predicate
\[
\mathcal Z(K,n)\iff
\neg\mathcal G(K,n)\ \lor\ \Phi^{(d(n))}(K,a_n)<1/2.
\]
If $\mathcal G(K,n)$ holds, then $\mathcal Z(K,n)$ is equivalent to $p_n^K=0$.

\begin{lemma}
\label{lem:K0-dense-proj}
For every $K\in\Ideal(A)$ and every projection
$p\in M_d(A/K)$ there exists $n\in\N$ with $d(n)=d$ such that
$p_n^K$ is Murray--von Neumann equivalent to $p$.
Hence the set of classes $\{[p_n^K]:n\in\N\}$ is all of
$V(A/K)$.
\end{lemma}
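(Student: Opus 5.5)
Given a projection $p\in M_d(A/K)$, the plan is to approximate it from the dense countable set $\mathcal P^A$. Then we show that the corrected projection of the approximant is Murray--von Neumann equivalent to $p$, and in fact unitarily equivalent via a standard close-projections argument. First lift $p$ to $M_d(A)$: since $p=p^*p$, choose any $c\in M_d(A)$ with $c+M_d(K)=p$. Choose $b\in M_d(D^A)$ with $\Phi^{(d)}(K,b-c)$ small, using Remark~\ref{rem:Phi-n-dense}. Put $a:=b^*b\in M_d(D^A)_+$, so that $a=a_n$ for some $n$ with $d(n)=d$. In $M_d(A/K)$ the image $x:=a_n+M_d(K)=(b+M_d(K))^*(b+M_d(K))$ is then close to $p^*p=p$. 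Concretely, if $\|b+M_d(K)-p\|<\gamma\le 1$, then $\|x-p\|\le \gamma(2+\gamma)\le 3\gamma$.

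Next check the predicate $\mathcal G(K,n)$. Since $p^2=p$, we have $\|x^2-x\|\le \|x-p\|(\|x\|+\|p\|+1)\le 3\gamma\cdot 4$. Choosing $\gamma$ so that $12\gamma<\eta_0$ gives $\Phi^{(d)}(K,a_n^2-a_n)<\eta_0$. Thus $\mathcal G(K,n)$ holds, and $p_n^K=\chi(x)$ is a projection with $\|p_n^K-x\|<\varepsilon_0$ by Lemma~\ref{lem:K0-correct-proj}. Hence $\|p_n^K-p\|<\varepsilon_0+3\gamma<1$, provided also $3\gamma<3/4$ (recall $\varepsilon_0<1/4$).

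Then conclude with the standard fact that two projections $e,f$ in a unital $C^*$-algebra with $\|e-f\|<1$ are unitarily equivalent, hence Murray--von Neumann equivalent. Explicitly, $z=fe+(1-f)(1-e)$ is invertible and its polar part $u$ satisfies $ueu^*=f$, so $w=ue$ is a partial isometry with $w^*w=e$ and $ww^*=f$. This is done in the unital algebra $M_d(A/K)$, so $p_n^K\sim p$. Alternatively, one could invoke the converse half of Lemma~\ref{lem:K0-stable-MvN}, but the direct unitary argument avoids the tolerance bookkeeping.

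For the final assertion, $V(A/K)$ is by definition the set of classes of projections in $\bigsqcup_d M_d(A/K)$. Every such class is $[p_n^K]$ for some $n$ by the above. Conversely, each $p_n^K$ is either a genuine corrected projection or $0$, so it defines an element of $V(A/K)$. The only delicate point is arranging all constants consistently: $\gamma$ must depend only on $\eta_0$ and $\varepsilon_0$. That dependence is uniform in $K$ and $d$, so there is no obstacle beyond this bookkeeping.
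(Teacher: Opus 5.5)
Your proof is correct and follows essentially the same route as the paper: approximate $p$ by some $a_n$, verify $\mathcal G(K,n)$, use Lemma~\ref{lem:K0-correct-proj} to get $\|p_n^K-p\|<1$, and conclude because projections at distance less than $1$ are unitarily equivalent. Your choice $a_n=b^*b$, with $b$ approximating an arbitrary lift of $p=p^*p$, is a small refinement: it makes explicit the approximation modulo $M_d(K)$, which the paper simply attributes to density of $\mathcal P^A$ in the positive cone.
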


\begin{proof}
Fix a projection $p\in M_d(A/K)$.
Choose $\varepsilon>0$ so small that $2\varepsilon+\varepsilon^2<\eta_0$ and
$\varepsilon<1/2$.
Since $\mathcal P^A$ is dense in the positive cone, pick $n$ with $d(n)=d$ and
\[
\|a_n+M_d(K)-p\|<\varepsilon.
\]
Then
\[
\|(a_n+M_d(K))^2-(a_n+M_d(K))\|
\le 2\varepsilon+\varepsilon^2<\eta_0,
\]
so $\mathcal G(K,n)$ holds.
By Lemma~\ref{lem:K0-correct-proj}, the corrected projection $p_n^K$ satisfies
\[
\|p_n^K-(a_n+M_d(K))\|<1/4.
\]
Therefore
\[
\|p_n^K-p\|<1/4+\varepsilon<1.
\]
Projections at distance strictly less than $1$ are unitarily equivalent, hence
Murray--von Neumann equivalent.
\end{proof}

For $m,n\in\N$, let
\[
r(m,n):=d(m)+d(n),
\]
and define
\[
b_{m,n}^{(1)}:=a_m\oplus 0_{d(n)},
\qquad
b_{m,n}^{(2)}:=0_{d(m)}\oplus a_n
\]
as elements of $M_{r(m,n)}(A)_+$.

For $m,n,k\in\N$, let
\[
s(m,n,k):=d(m)+d(n)+d(k),
\]
and define
\[
c_{m,n,k}^{(1)}:=a_m\oplus a_n\oplus 0_{d(k)},
\qquad
c_{m,n,k}^{(2)}:=0_{d(m)+d(n)}\oplus a_k
\]
as elements of $M_{s(m,n,k)}(A)_+$.

Now define relations
\[
\mathcal E\subseteq \Ideal(A)\times\N^2,
\qquad
\mathcal A\subseteq \Ideal(A)\times\N^3
\]
by the following clauses.

\smallskip
\noindent\emph{Equivalence relation.}
We declare $\mathcal E(K,m,n)$ to hold iff either
\[
\mathcal Z(K,m)\wedge \mathcal Z(K,n),
\]
or else there exists
$v\in M_{r(m,n)}(D^A)$ such that
\[
\Phi^{(r(m,n))}\bigl(K,(b_{m,n}^{(1)})^2-b_{m,n}^{(1)}\bigr)<\eta_0,
\]
\[
\Phi^{(r(m,n))}\bigl(K,(b_{m,n}^{(2)})^2-b_{m,n}^{(2)}\bigr)<\eta_0,
\]
and
\[
\Phi^{(r(m,n))}\bigl(K,v^*v-b_{m,n}^{(1)}\bigr)<\delta_{\mathrm{MvN}},
\qquad
\Phi^{(r(m,n))}\bigl(K,vv^*-b_{m,n}^{(2)}\bigr)<\delta_{\mathrm{MvN}}.
\]

\smallskip
\noindent\emph{Addition relation.}
We declare $\mathcal A(K,m,n,k)$ to hold iff one of the following alternatives holds:
\begin{enumerate}[label=\textup{(A\arabic*)}]
\item $\mathcal Z(K,m)$ and $\mathcal E(K,n,k)$;
\item $\mathcal Z(K,n)$ and $\mathcal E(K,m,k)$;
\item there exists $v\in M_{s(m,n,k)}(D^A)$ such that
\[
\Phi^{(s(m,n,k))}\bigl(K,(c_{m,n,k}^{(1)})^2-c_{m,n,k}^{(1)}\bigr)<\eta_0,
\]
\[
\Phi^{(s(m,n,k))}\bigl(K,(c_{m,n,k}^{(2)})^2-c_{m,n,k}^{(2)}\bigr)<\eta_0,
\]
and
\[
\Phi^{(s(m,n,k))}\bigl(K,v^*v-c_{m,n,k}^{(1)}\bigr)<\delta_{\mathrm{MvN}},
\qquad
\Phi^{(s(m,n,k))}\bigl(K,vv^*-c_{m,n,k}^{(2)}\bigr)<\delta_{\mathrm{MvN}}.
\]
\end{enumerate}

\begin{lemma}\label{lem:K0-relations}
The relations $\mathcal Z$, $\mathcal E$, and $\mathcal A$ are Borel in $K$.
Moreover, for every $K\in\Ideal(A)$:
\begin{enumerate}
\item $\mathcal Z(K,n)$ iff $p_n^K=0$;
\item $\mathcal E(K,m,n)$ iff $[p_m^K]=[p_n^K]$ in $V(A/K)$, equivalently iff
\[
p_m^K\oplus 0_{d(n)} \sim 0_{d(m)}\oplus p_n^K
\]
inside $M_{r(m,n)}(A/K)$;
\item $\mathcal A(K,m,n,k)$ iff
\[
[p_m^K]+[p_n^K]=[p_k^K]
\]
in the semigroup $V(A/K)$.
\end{enumerate}
\end{lemma}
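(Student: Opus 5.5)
The plan is to separate the Borel part, which is bookkeeping, from the correctness part, which uses Lemmas~\ref{lem:K0-correct-proj} and~\ref{lem:K0-stable-MvN} and needs care with tolerances and degenerate cases.

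\textbf{Borel part.} By Lemma~\ref{lem:Phi-n-continuous}, every atom occurring in $\mathcal G,\mathcal Z,\mathcal E,\mathcal A$ is a strict inequality in a Wijsman-continuous function $K\mapsto\Phi^{(r)}(K,X)$ with $X$ fixed, so each atom is open.
\begin{itemize}
\item $\mathcal G(K,n)$ is open and $\neg\mathcal G(K,n)$ is closed, so $\mathcal Z$ is a finite Boolean combination of open sets.
\item The second clause of $\mathcal E$ is a countable union over $v\in M_{r(m,n)}(D^A)$ of finite intersections of open sets, hence open; thus $\mathcal E$ is Borel (in fact $\Delta^0_2$).
\item $\mathcal A$ is a finite Boolean combination of such sets together with the open set in (A3), hence Borel.
\end{itemize}

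\textbf{Item (i).} Suppose first that $\mathcal G(K,n)$ holds. Then $\|p_n^K-(a_n+M_{d(n)}(K))\|<\varepsilon_0<1/4$.
\begin{itemize}
\item If $\Phi^{(d(n))}(K,a_n)<1/2$, then $\|p_n^K\|<3/4$. A projection of norm less than $1$ is $0$, so $p_n^K=0$.
\item Conversely, if $p_n^K=0$, then $\Phi^{(d(n))}(K,a_n)<1/4<1/2$.
\end{itemize}
If $\mathcal G(K,n)$ fails, then $p_n^K=0$ by definition and $\mathcal Z(K,n)$ holds by definition.

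\textbf{Two preliminary observations for (ii) and (iii).}
\begin{itemize}
\item Since $\chi(0)=0$, functional calculus commutes with direct sums: $\chi(x\oplus y)=\chi(x)\oplus\chi(y)$. Moreover $\|(x\oplus y)^2-(x\oplus y)\|=\max(\|x^2-x\|,\|y^2-y\|)$. Hence the goodness condition on $b^{(1)}_{m,n}$ is exactly $\mathcal G(K,m)$, and in that case its corrected projection is $p_m^K\oplus 0_{d(n)}$. The analogous statements hold for $b^{(2)}_{m,n}$, $c^{(1)}_{m,n,k}$ and $c^{(2)}_{m,n,k}$.
\item A projection whose class in $V(A/K)$ is $0$ is itself $0$. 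Also, a direct sum of projections that are not both zero is nonzero.
\end{itemize}

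\textbf{Item (ii), forward direction.}
\begin{itemize}
\item If both $\mathcal Z$'s hold, then by (i) both projections are zero and their classes agree.
\item Otherwise the second clause of $\mathcal E$ holds. The elements $b^{(i)}_{m,n}$ are positive, hence self-adjoint, and $\eta_0\le\delta_{\mathrm{MvN}}/4$. So the hypotheses of Lemma~\ref{lem:K0-stable-MvN} are met, with the same $\chi$. It gives $p_m^K\oplus0\sim0\oplus p_n^K$, and hence $[p_m^K]=[p_n^K]$.
\end{itemize}

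\textbf{Item (ii), converse direction.}
\begin{itemize}
\item If $[p_m^K]=[p_n^K]=0$, both projections vanish and the first clause holds by (i).
\item Otherwise both projections are nonzero. Then $\mathcal G(K,m)$ and $\mathcal G(K,n)$ hold, since failure of $\mathcal G$ forces the projection to be $0$. So the $\eta_0$-conditions on $b^{(1)}_{m,n}$ and $b^{(2)}_{m,n}$ hold.
\item Take an exact partial isometry $w$ with $w^*w=p_m^K\oplus0$ and $ww^*=0\oplus p_n^K$. Lift it by density to $v\in M_{r(m,n)}(D^A)$ with $v+M_r(K)$ very close to $w$.
\item Then $\|v^*v-b^{(1)}_{m,n}\|\le\|v^*v-w^*w\|+\varepsilon_0$. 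The choice $\varepsilon_0<\delta_{\mathrm{MvN}}/4$ makes this less than $\delta_{\mathrm{MvN}}$, and the same estimate applies to $vv^*-b^{(2)}_{m,n}$.
\end{itemize}
This tolerance bookkeeping, namely that the approximate projections $b^{(i)}_{m,n}$ lie within $\varepsilon_0$ of the corrected ones, is the main point to check carefully.

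\textbf{Item (iii), forward direction.}
\begin{itemize}
\item (A1) and (A2) give the identity via (i) and (ii).
\item For (A3), the goodness of $c^{(1)}_{m,n,k}$ forces $\mathcal G(K,m)$ and $\mathcal G(K,n)$, with corrected projection $p_m^K\oplus p_n^K\oplus0$. The goodness of $c^{(2)}_{m,n,k}$ forces $\mathcal G(K,k)$. Lemma~\ref{lem:K0-stable-MvN} then yields $p_m^K\oplus p_n^K\oplus0\sim0\oplus0\oplus p_k^K$, which is the required additive identity.
\end{itemize}

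\textbf{Item (iii), converse direction.}
\begin{itemize}
\item If $p_m^K=0$, use (A1) together with (ii); if $p_n^K=0$, use (A2).
\item Otherwise all of $p_m^K$, $p_n^K$ and $p_k^K$ are nonzero, the last because $[p_k^K]=[p_m^K\oplus p_n^K]\ne0$. So all three $\mathcal G$'s hold, and the density argument from (ii) produces a witness $v$ for (A3).
\end{itemize}

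The main obstacle is handling the degenerate cases in which some $\mathcal G$ fails, which is what the $\mathcal Z$-clauses are designed for.
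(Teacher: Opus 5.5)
Your proposal is correct and follows the same route as the paper. Borelness is read off from the strict $\Phi^{(r)}$-inequalities, item~(i) comes from the closeness of $p_n^K$ to $a_n+M_{d(n)}(K)$, and items~(ii)--(iii) use Lemma~\ref{lem:K0-stable-MvN} in the forward direction and approximation of an exact partial isometry from $M_r(D^A)$ in the converse. Your write-up is more careful than the paper's sketch. You verify that the goodness conditions on the block-diagonal elements $b^{(i)}_{m,n}$, $c^{(i)}_{m,n,k}$ reduce to $\mathcal G$ and that $\chi$ respects these direct sums, and you carry out the $\varepsilon_0<\delta_{\mathrm{MvN}}/4$ tolerance estimate explicitly. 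You also separate the degenerate cases in which some $\mathcal G$ fails, which the paper's one-line converse passes over.
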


\begin{proof}
Borelness is immediate from the definitions: all displayed conditions are strict
inequalities in Wijsman-continuous functions, with existential quantification only over
countable dense sets.

For (1), if $\neg\mathcal G(K,n)$ then by definition $p_n^K=0$.
If $\mathcal G(K,n)$ holds, then $p_n^K=0$ iff the spectrum of $a_n+M_{d(n)}(K)$ is
contained in $[0,1/4]$, which is equivalent to
$\|a_n+M_{d(n)}(K)\|<1/2$.

For (2), the forward implication follows from Lemma~\ref{lem:K0-stable-MvN} applied to
the approximate source/range data in the common amplification $M_{r(m,n)}(A/K)$; the
corrected projections of $b_{m,n}^{(1)}$ and $b_{m,n}^{(2)}$ are exactly
$p_m^K\oplus 0_{d(n)}$ and $0_{d(m)}\oplus p_n^K$.
Conversely, if these two amplified projections are Murray--von Neumann equivalent, approximate
an exact partial-isometry witness by an element of the dense $*$-subalgebra.

For (3), argue exactly as in (2), now comparing
\[
p_m^K\oplus p_n^K\oplus 0_{d(k)}
\quad\text{and}\quad
0_{d(m)+d(n)}\oplus p_k^K.
\]
\end{proof}

\begin{lemma}
\label{lem:Groth-presentation}
Let $M$ be a commutative monoid and let $\nu:\N\to M$ be surjective.
Assume given sets
\[
Z\subseteq \N,\qquad E\subseteq \N^2,\qquad A\subseteq \N^3
\]
such that
\[
\begin{aligned}
n\in Z &\iff \nu(n)=0,\\
(m,n)\in E &\iff \nu(m)=\nu(n),\\
(m,n,k)\in A &\iff \nu(m)+\nu(n)=\nu(k).
\end{aligned}
\]
Let $H\le \Z^{(\N)}$ be the subgroup generated by
\[
\{\mathbf e_n:n\in Z\}\cup
\{\mathbf e_m-\mathbf e_n:(m,n)\in E\}\cup
\{\mathbf e_m+\mathbf e_n-\mathbf e_k:(m,n,k)\in A\}.
\]
Then
\[
\Z^{(\N)}/H\cong G(M),
\]
where $G(M)$ denotes the Grothendieck group of $M$.
\end{lemma}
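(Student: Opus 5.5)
Let $\gamma:M\to G(M)$ be the canonical map into the Grothendieck group. I would construct a homomorphism $\Psi:\Z^{(\N)}\to G(M)$, show it is surjective, and show $\ker\Psi=H$.

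\textbf{Step 1: define $\Psi$ and check surjectivity.} Set $\Psi(\mathbf e_n):=\gamma(\nu(n))$ and extend by the universal property of the free abelian group $\Z^{(\N)}$. Every element of $G(M)$ has the form $\gamma(x)-\gamma(y)$ with $x,y\in M$. Since $\nu$ is surjective, write $x=\nu(m)$ and $y=\nu(n)$. Then $\gamma(x)-\gamma(y)=\Psi(\mathbf e_m-\mathbf e_n)$, so $\Psi$ is onto.

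\textbf{Step 2: $H\subseteq\ker\Psi$.} Check this on generators. If $n\in Z$, then $\nu(n)=0$, so $\Psi(\mathbf e_n)=\gamma(0)=0$. If $(m,n)\in E$, then $\Psi(\mathbf e_m-\mathbf e_n)=\gamma(\nu(m))-\gamma(\nu(n))=0$. If $(m,n,k)\in A$, then $\Psi(\mathbf e_m+\mathbf e_n-\mathbf e_k)=\gamma(\nu(m)+\nu(n))-\gamma(\nu(k))=0$, using additivity of $\gamma$. Hence $\Psi$ descends to a surjection $\bar\Psi:\Z^{(\N)}/H\to G(M)$.

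\textbf{Step 3: build an inverse via the universal property of $G(M)$.} Define $\mu:M\to\Z^{(\N)}/H$ by $\mu(x):=\mathbf e_n+H$ for any $n$ with $\nu(n)=x$.
\begin{itemize}
\item $\mu$ is well defined: if $\nu(m)=\nu(n)$, then $(m,n)\in E$, so $\mathbf e_m-\mathbf e_n\in H$.
\item $\mu$ preserves the identity: pick $n_0$ with $\nu(n_0)=0$; then $n_0\in Z$, so $\mu(0)=\mathbf e_{n_0}+H=0$.
\item $\mu$ is additive: given $x=\nu(m)$ and $y=\nu(n)$, choose $k$ with $\nu(k)=x+y$. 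Then $(m,n,k)\in A$, so $\mathbf e_k\equiv\mathbf e_m+\mathbf e_n \pmod H$, i.e.\ $\mu(x+y)=\mu(x)+\mu(y)$.
\end{itemize}
So $\mu$ is a monoid homomorphism into an abelian group. By the universal property of the Grothendieck group it factors as $\mu=\bar\mu\circ\gamma$ for a unique group homomorphism $\bar\mu:G(M)\to\Z^{(\N)}/H$.

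\textbf{Step 4: the two maps are mutually inverse.} We have $\bar\mu\bar\Psi(\mathbf e_n+H)=\mu(\nu(n))=\mathbf e_n+H$. The classes $\mathbf e_n+H$ generate $\Z^{(\N)}/H$, so $\bar\mu\bar\Psi=\mathrm{id}$. This makes $\bar\Psi$ injective, hence an isomorphism by Step 1.

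One point needs attention: no cancellativity of $M$ is required. $G(M)$ is characterised solely by its universal property, and the monoid equalities in $M$ are exactly the ones encoded by $Z$, $E$ and $A$, so the argument goes through unchanged. The only genuinely delicate step is the well-definedness and additivity of $\mu$ in Step 3, and there the hypotheses on $E$ and $A$ are used in their "$\Leftarrow$" directions.
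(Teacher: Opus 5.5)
Your proof is correct, but it gets injectivity by a different route from the paper. The paper never builds an inverse map. It takes $x=\sum_i\mathbf e_{m_i}-\sum_j\mathbf e_{n_j}\in\ker\pi$ and uses the concrete description of equality in $G(M)$: there is $c\in M$ with $\sum_i\nu(m_i)+c=\sum_j\nu(n_j)+c$. It picks $\ell$ with $\nu(\ell)=c$. By induction on the number of summands, using the $A$-generators, it shows that $\mathbf e_{m_1}+\dots+\mathbf e_{m_r}+\mathbf e_\ell$ and $\mathbf e_{n_1}+\dots+\mathbf e_{n_s}+\mathbf e_\ell$ are congruent modulo $H$ to single basis vectors $\mathbf e_u$ and $\mathbf e_v$ with $\nu(u)=\nu(v)$. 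One $E$-generator then gives $x\in H$. Your argument uses the same two inputs, $E$ for well-definedness and $A$ for additivity (both in the $\Leftarrow$ direction), by packaging them into the statement that $\mu$ is a monoid homomorphism. The universal property of $G(M)$ then does the cancellation by $c$ and the induction for you. Your version gains brevity and independence from any particular model of $G(M)$, and you also supply the surjectivity argument that the paper only asserts. The paper's version gains an explicit certificate: each kernel element is written as a concrete signed sum of active $A$- and $E$-generators. In both versions the $Z$-generators are redundant. The paper's injectivity step never uses them, and in yours $\mu(0)=0$ already follows from additivity, since $\mu(0)=\mu(0+0)=2\mu(0)$ in a group.
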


\begin{proof}
Let $\iota:M\to G(M)$ be the canonical map and define
\[
\pi:\Z^{(\N)}\to G(M),\qquad \pi(\mathbf e_n)=\iota(\nu(n)).
\]
By the defining properties of $Z,E,A$, every generator of $H$ lies in $\ker(\pi)$,
so $\pi$ factors through a surjective homomorphism
\[
\overline\pi:\Z^{(\N)}/H\to G(M).
\]

It remains to prove injectivity.
Take $x\in\ker(\pi)$ and write
\[
x=\sum_{i=1}^r \mathbf e_{m_i}-\sum_{j=1}^s \mathbf e_{n_j}.
\]
Since $\pi(x)=0$ in the Grothendieck group, there exists $c\in M$ such that
\[
\sum_{i=1}^r \nu(m_i)+c=\sum_{j=1}^s \nu(n_j)+c
\]
in $M$.  Choose $\ell\in\N$ with $\nu(\ell)=c$.

We claim that every finite sum of $\nu$-values can be reduced, modulo $H$, to a
single generator.  More precisely, for any indices $q_1,\dots,q_t$ there exists
$u\in\N$ such that
\[
\sum_{i=1}^t \mathbf e_{q_i}-\mathbf e_u\in H
\qquad\text{and}\qquad
\nu(u)=\sum_{i=1}^t \nu(q_i).
\]
This is proved by induction on $t$.
For $t=1$ there is nothing to show.
If $t\ge 2$, choose $u'$ with
\[
\nu(u')=\nu(q_1)+\nu(q_2),
\]
possible by surjectivity of $\nu$.
Then $(q_1,q_2,u')\in A$, so
\[
\mathbf e_{q_1}+\mathbf e_{q_2}-\mathbf e_{u'}\in H,
\]
and the induction hypothesis applies to $u',q_3,\dots,q_t$.

Applying this reduction to the two sums
\[
m_1,\dots,m_r,\ell
\qquad\text{and}\qquad
n_1,\dots,n_s,\ell
\]
yields indices $u,v\in\N$ with
\[
\sum_{i=1}^r \mathbf e_{m_i}+\mathbf e_\ell-\mathbf e_u\in H,
\qquad
\sum_{j=1}^s \mathbf e_{n_j}+\mathbf e_\ell-\mathbf e_v\in H,
\]
and
\[
\nu(u)=\sum_{i=1}^r \nu(m_i)+c=\sum_{j=1}^s \nu(n_j)+c=\nu(v).
\]
Hence $(u,v)\in E$, so $\mathbf e_u-\mathbf e_v\in H$.
Subtracting the two displayed relations and cancelling $\mathbf e_\ell$, we obtain
$x\in H$.
Thus $\ker(\overline\pi)=0$, and $\overline\pi$ is an isomorphism.
\end{proof}

\begin{theorem}\label{thm:K0}
Equip $\Ideal(A)$ with the Wijsman topology.  There is a Borel map
\[
\kappa_{K_0}:\Ideal(A)\to\Sub(\Z^{(\N)})
\]
such that
\[
\Z^{(\N)}/\kappa_{K_0}(K)\cong K_0(A/K)
\qquad (K\in\Ideal(A)).
\]
Moreover, for each fixed $z\in\Z^{(\N)}$, the section
\[
\{K\in\Ideal(A): z\in \kappa_{K_0}(K)\}
\]
is $F_\sigma$ in the Wijsman topology.
\end{theorem}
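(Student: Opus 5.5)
Define $\kappa_{K_0}(K)$ to be the subgroup $H_K\le\Z^{(\N)}$ generated by
\[
\{\mathbf e_n:\mathcal Z(K,n)\}\cup\{\mathbf e_m-\mathbf e_n:\mathcal E(K,m,n)\}\cup\{\mathbf e_m+\mathbf e_n-\mathbf e_k:\mathcal A(K,m,n,k)\},
\]
and apply Lemma~\ref{lem:Groth-presentation} with $M=V(A/K)$ and $\nu(n)=[p_n^K]$. By Lemma~\ref{lem:K0-dense-proj}, $\nu$ is surjective onto $V(A/K)$. By Lemma~\ref{lem:K0-relations}, the sets $Z,E,A$ cut out by $\mathcal Z,\mathcal E,\mathcal A$ at $K$ are exactly the zero, equality and addition relations required there. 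Hence $\Z^{(\N)}/H_K\cong G(V(A/K))=K_0(A/K)$, since $A/K$ is unital (or zero, in which case all $p_n^K=0$ and $H_K=\Z^{(\N)}$, giving $0$).

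For the complexity, I first record the topological type of the relations. $\mathcal G(K,n)$ is open by Lemma~\ref{lem:Phi-n-continuous}. Hence $\mathcal Z(K,n)$, being $\neg\mathcal G\vee(\text{open})$, is a union of a closed and an open set, so it is $F_\sigma$ (even $\Delta^0_2$). The second clause of $\mathcal E$ is a countable union over $v\in M_{r(m,n)}(D^A)$ of finite intersections of strict $\Phi^{(r)}$-inequalities, so it is open. Thus $\mathcal E$ is (closed-or-open) intersected with $F_\sigma$, then unioned with open, hence $F_\sigma$. The same reasoning makes $\mathcal A$ a finite union of $F_\sigma$ sets.

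Now fix $z\in\Z^{(\N)}$. Then $z\in H_K$ iff there are a finite list of generator labels (indices $n$, pairs $(m,n)$, triples $(m,n,k)$) and integer coefficients whose combination of the corresponding formal generators equals $z$, such that each listed label satisfies its relation at $K$. The set of such finite formal data is countable, and whether the combination equals $z$ does not depend on $K$. For each datum, the condition on $K$ is a finite conjunction of $F_\sigma$ sets, hence $F_\sigma$. The section $\{K:z\in\kappa_{K_0}(K)\}$ is therefore a countable union of $F_\sigma$ sets, hence $F_\sigma$. Since $\Sub(\Z^{(\N)})$ carries the product topology from $2^{\Z^{(\N)}}$, its Borel structure is generated by the sets $\{H:z\in H\}$, so Borel sections give Borelness of $\kappa_{K_0}$.

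The main obstacle is the precise correctness of Lemma~\ref{lem:K0-relations}, in particular checking that the Borel predicates agree exactly, and not just approximately, with equality and addition in $V(A/K)$. Two points need care. First, the cases where $\mathcal G$ fails must be routed through the zero-class clause consistently. Second, when the approximate data $b^{(i)}$ satisfy the $\eta_0$ bound, the corrected projections must be exactly $p_m^K\oplus0$ and $0\oplus p_n^K$. The latter holds because $\chi$ acts blockwise on direct sums and $\chi(0)=0$. I would take those verifications from the lemma as given. The remaining task is then the bookkeeping above, namely that intersecting the closed clause $\neg\mathcal G$ with open clauses only stays inside $F_\sigma$, and that quantifying over finite witness data for membership in $H_K$ adds only a countable union.
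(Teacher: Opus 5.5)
Your proposal is correct and follows the paper's proof essentially step for step. You use the same generating family for $H_K$, and the same appeal to Lemmas~\ref{lem:K0-dense-proj}, \ref{lem:K0-relations} and~\ref{lem:Groth-presentation} to get $\Z^{(\N)}/H_K\cong K_0(A/K)$. Your bookkeeping also matches: $\mathcal G$ is open, $\mathcal Z,\mathcal E,\mathcal A$ are $F_\sigma$, and membership of a fixed $z$ is a countable union over finite formal derivations, which gives the $F_\sigma$ sections and hence Borelness. Your extra remarks (that $\mathcal Z$ is even $\Delta^0_2$, and that the zero quotient gives $H_K=\Z^{(\N)}$) are accurate refinements, not a different route.
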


\begin{proof}
Let $(\mathbf e_n)_{n\in\N}$ denote the standard basis of $\Z^{(\N)}$.
For each $K\in\Ideal(A)$, let $H_K\le \Z^{(\N)}$ be the subgroup generated by the
following countable family:

\begin{enumerate}
\item $\mathbf e_n$ whenever $\mathcal Z(K,n)$ holds;
\item $\mathbf e_m-\mathbf e_n$ whenever $\mathcal E(K,m,n)$ holds;
\item $\mathbf e_m+\mathbf e_n-\mathbf e_k$ whenever $\mathcal A(K,m,n,k)$ holds.
\end{enumerate}

Fix $K$ and write $B=A/K$.
Let
\[
\nu_K:\N\to V(B),\qquad \nu_K(n):=[p_n^K].
\]
By Lemma~\ref{lem:K0-dense-proj}, the map $\nu_K$ is surjective.
By Lemma~\ref{lem:K0-relations}, the relations $\mathcal Z$, $\mathcal E$, and
$\mathcal A$ detect exactly the zero element, equality, and addition in the monoid
$V(B)$.
Therefore Lemma~\ref{lem:Groth-presentation} yields
\[
\Z^{(\N)}/H_K\cong G(V(B))=K_0(B)=K_0(A/K).
\]
Define $\kappa_{K_0}(K):=H_K$.

We next show that the map $K\mapsto H_K$ is Borel as a map into
$\Sub(\Z^{(\N)})$.
Fix $z\in\Z^{(\N)}$.  Then
\[
z\in H_K
\]
iff there exist finitely many generators $g_1,\dots,g_r$ of the above three types and
signs $\varepsilon_j\in\{\pm1\}$ such that
\[
z=\sum_{j=1}^{r}\varepsilon_j g_j
\]
and each of the corresponding activation relations $\mathcal Z$, $\mathcal E$, or
$\mathcal A$ holds at~$K$.
Since the potential generators form a fixed countable set and the activation relations are
Borel by Lemma~\ref{lem:K0-relations}, the set
\[
\{K:z\in H_K\}
\]
is a countable union of Borel sets, hence Borel.
Therefore $K\mapsto H_K$ is Borel as a map into $\Sub(\Z^{(\N)})$.

Finally, the coordinate-complexity claim is read off directly from the construction.
The predicate $\mathcal G(K,n)$ is open, so $\neg\mathcal G(K,n)$ is closed, and
$\mathcal Z(K,n)$ is $F_\sigma$.  The witness alternatives in the definitions of
$\mathcal E$ and $\mathcal A$ are open, while the alternatives involving
$\mathcal Z$ are $F_\sigma$; consequently $\mathcal E(K,m,n)$ and
$\mathcal A(K,m,n,k)$ are $F_\sigma$ predicates in~$K$.
For fixed $z\in\Z^{(\N)}$, membership $z\in H_K$ is equivalent to the existence of a
finite formal derivation of $z$ as a signed sum of generators of types~\textup{(1)}--\textup{(3)},
together with the corresponding activation relations.
Hence
\[
\{K\in\Ideal(A): z\in H_K\}
\]
is a countable union of finite intersections of $F_\sigma$ sets, therefore it is
$F_\sigma$.
\end{proof}

\begin{remark}
\label{rem:K0-general}
Inspection of the proof of Theorem~\ref{thm:K0} shows that it applies verbatim to
any separable \emph{unital} $C^*$-algebra $C$.
For a non-unital algebra $B$, however, one has
\[
K_0(B)=\ker\bigl(K_0(\widetilde B)\to K_0(\C)\cong \Z\bigr),
\]
so an internal treatment requires the relative/unitised picture rather than the
plain projection monoid $V(B)$.
We do not develop that extra bookkeeping here, because the present paper needs
only Borelness of $K_1$ and the higher groups.
\end{remark}

\subsection{The \texorpdfstring{$K_1$}{K1}-assignment}\label{subsec:K1}

Let $S=C_0((0,1))$ and set
\[
A_S:=A\otimes_{\max} S.
\]
Since $S$ is nuclear, this agrees canonically with $A\otimes_{\min}S$.
For $K\in\Ideal(A)$ define
\[
J_K:=\overline{K\odot S}\triangleleft A_S.
\]
Then
\[
A_S/J_K\cong (A/K)\otimes S = S(A/K).
\]

This is the first place where non-unitality matters: the suspension
$A_S/J_K\cong S(A/K)$ is non-unital, so the preceding internal $K_0$-presentation
cannot be applied verbatim.
We therefore combine the Borel suspension map $K\mapsto J_K$ with the already-known
Borel computation of $K$-theory in the standard $\Gamma/\Xi$ codings.

\begin{theorem}\label{thm:K1}
Equip $\Ideal(A)$ with the Wijsman topology.  There is a Borel map
\[
\kappa_{K_1}:\Ideal(A)\to\Sub(\Z^{(\N)})
\]
such that
\[
\Z^{(\N)}/\kappa_{K_1}(K)\cong K_1(A/K)
\qquad (K\in\Ideal(A)).
\]
\end{theorem}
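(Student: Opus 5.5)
Our plan is to follow the route announced just before the statement: pass to the suspension, transfer to a standard coding, and apply the known Borel computation of $K_0$ there, using Bott periodicity in the form $K_1(B)\cong K_0(SB)$.

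\textbf{Step 1: the suspension map is Borel (indeed continuous).} We show that $K\mapsto J_K$ is Borel as a map $\Ideal(A)\to\Ideal(A_S)$ with the Wijsman topologies. Elementary tensors $a\otimes f$ with $a\in D^A$ and $f$ in a countable dense subset of $S$ span a dense subset of $A_S$. Since $S$ is nuclear, the quotient norm satisfies
\[
\dist\Bigl(\sum_i a_i\otimes f_i,\,J_K\Bigr)=\Bigl\|\sum_i (a_i+K)\otimes f_i\Bigr\|_{(A/K)\otimes S}.
\]
Identify $(A/K)\otimes S$ with $C_0((0,1),A/K)$. The right-hand side then equals
\[
\sup_{t\in(0,1)\cap\Q}\Bigl\|\sum_i f_i(t)a_i+K\Bigr\|=\sup_t\Phi\Bigl(K,\sum_i f_i(t)a_i\Bigr).
\]
This is a supremum of continuous functions of $K$, hence lower semicontinuous and therefore Borel. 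Continuity in $t$ lets us restrict to rational $t$. In fact equicontinuity in $t$, which holds because the $f_i$ are fixed and $\Phi$ is $1$-Lipschitz in its second argument, should give genuine continuity, but Borelness is enough. Since $d(x,J_K)$ is $1$-Lipschitz in $x$, Borelness of the coordinates on a dense set gives Borelness of $K\mapsto d(x,J_K)$ for every $x$. Thus $K\mapsto J_K$ is Borel into the Wijsman (equivalently, any admissible) Borel structure.

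\textbf{Step 2: transfer to a standard coding.} Apply Lemma~\ref{lem:fixed-ideal-to-Xi} with $C=A_S$, which is separable and non-unital. This gives a Wijsman-continuous map $\Ideal(A_S)\to\Xi$ (non-unital version) sending $J$ to a code for $A_S/J$. Composing with Step~1, and then with the Borel equivalence between $\Xi$ and $\Gamma(H)$ from \cite{FarahTomsTornquistCrelle}, yields a Borel map $K\mapsto\gamma_K$ with $C^*(\gamma_K)\cong S(A/K)$.

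\textbf{Step 3: apply the known $K$-theory computation.} Farah--Toms--T\"ornquist (Section~3 of \cite{FarahTomsTornquist}; cf.\ Remark~\ref{rem:K-FTT-compare}) give a Borel map from $\Gamma(H)$ to codes of countable abelian groups computing $K_0$. If their output is in $\mathrm{Str}$-form, or as a subgroup of a different free group, we convert it to $\Sub(\Z^{(\N)})$. For this we use the Borel translation of Remark~\ref{rem:standard-countable-coding}, taking the kernel of the evaluation map $\Z^{(\N)}\to G$, $e_n\mapsto n$. Composing, we obtain a Borel map $\kappa_{K_1}$ with
\[
\Z^{(\N)}/\kappa_{K_1}(K)\cong K_0(S(A/K))\cong K_1(A/K),
\]
where the last isomorphism is the standard suspension isomorphism recalled in Section~\ref{sec:K0}.

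\textbf{Expected obstacle.} The delicate point is Step~1, specifically justifying the quotient-norm identity for $J_K$. We need $\overline{K\odot S}$ to be exactly the kernel of $A\otimes S\to (A/K)\otimes S$. This is exactness of the sequence $0\to K\otimes S\to A\otimes S\to (A/K)\otimes S\to 0$, which holds because $S$ is nuclear, hence exact. Once this is in place, the computation of the norm in $C_0((0,1),A/K)$ is routine. The remaining steps are bookkeeping about matching the codings.
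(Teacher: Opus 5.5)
Your proposal is correct and follows the paper's own route: the suspension map $K\mapsto J_K$, the Wijsman-continuous passage to $\Xi$ via Lemma~\ref{lem:fixed-ideal-to-Xi}, the Farah--Toms--T\"ornquist Borel $K_0$-assignment, and the isomorphism $K_0(S(A/K))\cong K_1(A/K)$. The differences are minor: you verify Borelness (indeed continuity) of $K\mapsto J_K$ directly through $(A/K)\otimes S\cong C_0((0,1),A/K)$ instead of citing Proposition~\ref{prop:tensor-borel}, and you make explicit the conversion of the output into $\Sub(\Z^{(\N)})$, which the paper leaves implicit.
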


\begin{proof}
Let $S=C_0((0,1))$ and set
\[
A_S:=A\otimes_{\min} S,
\]
which agrees canonically with $A\otimes_{\max} S$ because $S$ is nuclear.
For $K\in\Ideal(A)$ define
\[
J_K:=\overline{K\odot S}\triangleleft A_S.
\]
Then
\[
A_S/J_K\cong (A/K)\otimes S = S(A/K).
\]
By Proposition~\ref{prop:tensor-borel} (with $\bullet=\min$), the map
\[
K\longmapsto J_K
\]
from $\Ideal(A)$ to $\Ideal(A_S)$ is Borel.

Choose a dense sequence of contractions in $A_S$ which generates $A_S$, and apply
Lemma~\ref{lem:fixed-ideal-to-Xi} to pass Wijsman-continuously from
$\Ideal(A_S)$ to the seminorm coding $\Xi$.
In that coding, Farah--Toms--T\"ornquist proved that the $K_0$- and $K_1$-assignments
are Borel; see Remark~\ref{rem:K-FTT-compare}.
Applying their Borel $K_0$-assignment to the Borel code of the suspended quotient
$A_S/J_K$ yields a Borel code for
\[
K_0(A_S/J_K)=K_0\bigl(S(A/K)\bigr).
\]
By Bott periodicity,
\[
K_0\bigl(S(A/K)\bigr)\cong K_1(A/K),
\]
and composing these Borel maps gives the required $\kappa_{K_1}$.
\end{proof}

\begin{remark}
\label{rem:Kn-KOn}
For each $m\in\N$, let $S^{m}(B)$ denote the $m$-fold suspension of a $C^*$-algebra~$B$.
Iterating the Borel suspension assignment and composing with the standard-coding
Borel $K_0$-assignment yields a Borel map
\[
\kappa_{K_m}:\Ideal(A)\to\Sub(\Z^{(\N)})
\]
such that
\[
\Z^{(\N)}/\kappa_{K_m}(K)\cong K_0\bigl(S^{m}(A/K)\bigr).
\]
By complex Bott periodicity,
\[
K_m(A/K)\cong K_0\bigl(S^{m}(A/K)\bigr)\cong K_{m\bmod 2}(A/K).
\]
Thus all higher $K$-groups are Borel in the present quotient coding.
If one develops the relative/unitised internal $K_0$-construction alluded to in
Remark~\ref{rem:K0-general}, the same conclusion becomes entirely internal.
We do not optimise the precise Wijsman rank of $\kappa_{K_1}$.
\end{remark}

\begin{lemma}
\label{lem:Sub-map-complexity}
Let $X$ be a Polish space and let
\[
\Theta:X\to\Sub(\Z^{(\N)})
\]
be a map.
Assume that for every $z\in\Z^{(\N)}$ the section
\[
A_z:=\{x\in X:\ z\in \Theta(x)\}
\]
is $F_\sigma$.
Then:
\begin{enumerate}
\item for every finite subsets $E,F\subseteq \Z^{(\N)}$, the basic clopen cylinder
\[
U_{E,F}:=\{H\in\Sub(\Z^{(\N)}):\ E\subseteq H,\ F\cap H=\varnothing\}
\]
has preimage $\Theta^{-1}(U_{E,F})\in\Delta^0_3(X)$;
\item for every open $U\subseteq \Sub(\Z^{(\N)})$, the preimage $\Theta^{-1}(U)$ is
$\Sigma^0_3$.
\end{enumerate}
In particular, $\Theta$ is of Baire class~$2$.
\end{lemma}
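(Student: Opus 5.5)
The proof will be a direct unwinding of the Cantor topology on $\Sub(\Z^{(\N)})$: compute preimages of basic clopen cylinders first, then pass to arbitrary open sets by countable unions.

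For~(i), I would write the cylinder as the finite intersection
\[
U_{E,F}=\bigcap_{z\in E}\{H:z\in H\}\cap\bigcap_{w\in F}\{H:w\notin H\}.
\]
Taking preimages gives
\[
\Theta^{-1}(U_{E,F})=\bigcap_{z\in E}A_z\cap\bigcap_{w\in F}(X\setminus A_w).
\]
By hypothesis each $A_z$ is $F_\sigma=\Sigma^0_2$, so each $X\setminus A_w$ is $G_\delta=\Pi^0_2$. Both $\Sigma^0_2$ and $\Pi^0_2$ are contained in $\Delta^0_3=\Sigma^0_3\cap\Pi^0_3$. Since $\Delta^0_3$ is closed under finite intersections, the preimage lies in $\Delta^0_3(X)$.

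The one point needing care is the inclusion $\Sigma^0_2\cup\Pi^0_2\subseteq\Delta^0_3$ in an arbitrary Polish space. This holds because every closed set in a metrisable space is $G_\delta$, so $\Sigma^0_2\subseteq\Pi^0_3$ trivially and $\Sigma^0_2\subseteq\Sigma^0_3$; dually $\Pi^0_2\subseteq\Sigma^0_3\cap\Pi^0_3$. I would cite this standard fact from \cite{Kechris} rather than reprove it.

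For~(ii), I would use that $\Sub(\Z^{(\N)})$ is a subspace of the Cantor cube $2^{\Z^{(\N)}}$ over a countable index set. Its topology therefore has a countable base consisting of the cylinders $U_{E,F}$ with $E,F$ ranging over finite subsets of $\Z^{(\N)}$. Any open $U$ is then a countable union $\bigcup_k U_{E_k,F_k}$, so $\Theta^{-1}(U)=\bigcup_k\Theta^{-1}(U_{E_k,F_k})$. This is a countable union of $\Delta^0_3\subseteq\Sigma^0_3$ sets, hence $\Sigma^0_3$.

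The final assertion follows from the standard characterisation that a map between metrisable (here Polish) spaces is of Baire class~$\xi$ exactly when preimages of open sets are $\Sigma^0_{\xi+1}$ \cite[\S24]{Kechris}; with $\xi=2$ this is~(ii). Since the target is zero-dimensional Polish, this characterisation applies directly. An alternative route is to exhibit $\Theta$ as a pointwise limit of Baire class~$1$ maps, but the preimage formulation is the convention already in use, so no further work is needed.

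I expect no genuine obstacle. The only delicate points are the bookkeeping of the ambiguous class $\Delta^0_3$ and making sure to quote the Baire-class/Borel-class correspondence correctly for non-real-valued maps into a zero-dimensional space.
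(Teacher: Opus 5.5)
Your proposal is correct and follows essentially the same route as the paper: you write $\Theta^{-1}(U_{E,F})$ as $\bigcap_{z\in E}A_z\setminus\bigcup_{w\in F}A_w$, place it in $\Delta^0_3$, and then pass to arbitrary open sets via countable unions of basic cylinders. The only cosmetic difference is that the paper checks by hand that a difference of two $F_\sigma$ sets is $\Delta^0_3$, whereas you quote $\Sigma^0_2\cup\Pi^0_2\subseteq\Delta^0_3$ together with closure of $\Delta^0_3$ under finite intersections; both are equally valid.
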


\begin{proof}
For fixed finite $E,F$, the set
\[
\Theta^{-1}(U_{E,F})
=
\Bigl(\bigcap_{z\in E}A_z\Bigr)\cap
\Bigl(\bigcap_{w\in F}(X\setminus A_w)\Bigr)
\]
is the intersection of an $F_\sigma$ set with a $G_\delta$ set.
Equivalently,
\[
\Theta^{-1}(U_{E,F})
=
\Bigl(\bigcap_{z\in E}A_z\Bigr)\setminus
\Bigl(\bigcup_{w\in F}A_w\Bigr),
\]
hence it is the difference of two $F_\sigma$ sets.
In a Polish space, the difference of two $F_\sigma$ sets lies in $\Delta^0_3$:
if $A=\bigcup_n C_n$ and $B$ is $F_\sigma$, with each $C_n$ closed, then
\[
A\setminus B=\bigcup_n (C_n\setminus B),
\]
and each $C_n\setminus B$ is $G_\delta$; thus $A\setminus B\in\Sigma^0_3$.
Its complement
\[
(X\setminus A)\cup B
\]
is also $\Sigma^0_3$, so $A\setminus B\in\Pi^0_3$ as well.

Now every open subset of the zero-dimensional Polish space
$\Sub(\Z^{(\N)})$ is a countable union of such basic clopen cylinders.
Therefore the preimage of every open set is $\Sigma^0_3$.
\end{proof}

\begin{corollary}
\label{cor:K0-map-complexity}
The map $\kappa_{K_0}$ of Theorem~\ref{thm:K0} is of Baire class~$2$.
More explicitly, for each $z\in \Z^{(\N)}$,
\[
\kappa_{K_0}^{-1}(\{H:z\in H\})
\]
is $F_\sigma$, preimages of basic clopen cylinders in $\Sub(\Z^{(\N)})$ are
$\Delta^0_3$, and preimages of open sets are $\Sigma^0_3$.
\end{corollary}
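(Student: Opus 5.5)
The corollary follows by feeding the section-complexity statement of Theorem~\ref{thm:K0} into Lemma~\ref{lem:Sub-map-complexity}, applied with $X=\Ideal(A)$ (Polish in the Wijsman topology by Theorem~\ref{thm:continuous-main}(1)) and $\Theta=\kappa_{K_0}$. First I check that the lemma's hypothesis is literally the theorem's conclusion. For each fixed $z\in\Z^{(\N)}$, the section $A_z=\{K:z\in\kappa_{K_0}(K)\}$ equals $\kappa_{K_0}^{-1}(\{H:z\in H\})$, and Theorem~\ref{thm:K0} says it is $F_\sigma$. This already gives the first displayed assertion of the corollary.

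Recall why Theorem~\ref{thm:K0} gives this. The predicate $\mathcal G$ is open, so $\mathcal Z$ is $F_\sigma$. Hence $\mathcal E$ and $\mathcal A$ are $F_\sigma$, being finite Boolean combinations of $F_\sigma$ sets with open witness alternatives, unioned over countable witnesses. Membership of $z$ in $H_K$ is then a countable union, over finite formal derivations, of finite intersections of such sets. I would not redo this.

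Next, Lemma~\ref{lem:Sub-map-complexity}(i) gives that preimages of the basic cylinders $U_{E,F}$ are differences of $F_\sigma$ sets, hence $\Delta^0_3$. Part~(ii) gives that preimages of arbitrary open sets are $\Sigma^0_3$. This uses that $\Sub(\Z^{(\N)})$ carries the subspace topology from the Cantor cube $2^{\Z^{(\N)}}$, so the cylinders $U_{E,F}$ with $E,F$ finite form a countable base. Finally, a map between Polish spaces for which preimages of open sets are $\Sigma^0_3$ is, by definition (equivalently via Lebesgue--Hausdorff--Banach), $\Sigma^0_3$-measurable, that is, of Baire class~$2$. The lemma records this as its final sentence.

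No step here is a real obstacle; the corollary is bookkeeping. The only point to watch is that the $F_\sigma$ bound in Theorem~\ref{thm:K0} is genuinely uniform in $z$, meaning a fixed countable derivation scheme independent of $K$. That uniformity holds because the set of potential generators is a fixed countable set.
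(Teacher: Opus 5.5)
Your proposal is correct and is exactly the paper's argument: the $F_\sigma$ section bound at the end of Theorem~\ref{thm:K0} is the hypothesis of Lemma~\ref{lem:Sub-map-complexity} with $X=\Ideal(A)$, and parts (i)--(ii) of that lemma give the remaining assertions. The one thing to drop is your parenthetical appeal to Lebesgue--Hausdorff--Banach. Here ``Baire class~$2$'' can only mean $\Sigma^0_3$-measurability: any iterated pointwise limit of continuous maps into the totally disconnected space $\Sub(\Z^{(\N)})$ is constant on connected sets, while $\kappa_{K_0}$ is not constant along the Wijsman-continuous path $t\mapsto K_{\{0,t\}}$, whose quotients are $\C$ at $t=0$ and $\C^2$ for $t>0$.
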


\begin{proof}
Combine the last assertion of Theorem~\ref{thm:K0} with
Lemma~\ref{lem:Sub-map-complexity}.
\end{proof}

\section{Tensor products and kernel maps}\label{sec:tensor-products}

Let $A$ and $B$ be separable $C^*$-algebras.  For
$\bullet\in\{\max,\min\}$ write $A\otimes_\bullet B$ for the corresponding
tensor product, and equip each ideal space with the Wijsman topology.
For $I\in\Ideal(A)$ and $J\in\Ideal(B)$ define
\[
\mathcal J_\bullet(I,J)=\ker(q_I\otimes_\bullet q_J)\subseteq A\otimes_\bullet B.
\]
When $\bullet=\max$ one has
$\mathcal J_{\max}(I,J)=\overline{I\odot B+A\odot J}$.
For $\bullet=\min$, this description holds whenever one factor is exact;
see~\cite[Prop.~2.3.3]{BrownOzawa}.  The exactness assumption is used precisely here:
without it, the kernel of the quotient map for the minimal tensor product need not be
this elementary closure, and the approximation argument below would have no such
finite-tensor decomposition to start from.

\begin{proposition}\label{prop:tensor-usc}
Assume either $\bullet=\max$, or $\bullet=\min$ and one factor is exact.
Then for each $z\in A\otimes_\bullet B$ the function
\[
(I,J)\longmapsto\dist\bigl(z,\mathcal J_\bullet(I,J)\bigr)
\]
is upper semicontinuous on $\Ideal(A)\times\Ideal(B)$.
\end{proposition}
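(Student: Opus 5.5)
We will show that for each rational $r>0$ the strict sublevel set $\{(I,J):\dist(z,\mathcal J_\bullet(I,J))<r\}$ is open in $\Ideal(A)\times\Ideal(B)$; this is exactly upper semicontinuity. The plan rests on the elementary description
\[
\mathcal J_\bullet(I,J)=\overline{I\odot B+A\odot J},
\]
valid for $\bullet=\max$ and, under the exactness hypothesis, for $\bullet=\min$ by \cite[Prop.~2.3.3]{BrownOzawa}. Thus $\dist(z,\mathcal J_\bullet(I,J))<r$ iff there is a finite sum
\[
w=\sum_{k}i_k\otimes b_k+\sum_{l}a_l\otimes j_l,
\qquad i_k\in I,\ j_l\in J,\ a_l\in A,\ b_k\in B,
\]
with $\|z-w\|_\bullet<r$. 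This is the only place exactness enters.

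Fix $(I_0,J_0)$ and such a witness $w$ with $\|z-w\|_\bullet<r-\eta$ for some $\eta>0$. Only finitely many elements $i_k\in I_0$ and $j_l\in J_0$ occur. By Lemma~\ref{lem:wijsman-approx}(i), if $I_n\to I_0$ and $J_n\to J_0$ in the Wijsman topology, we can pick $i_k^{(n)}\in I_n$ with $i_k^{(n)}\to i_k$ and $j_l^{(n)}\in J_n$ with $j_l^{(n)}\to j_l$. The elementary tensors satisfy the cross-norm estimate $\|x\otimes y\|_\bullet\le\|x\|\|y\|$ in both the max and min norms. So the perturbed sum
\[
w_n=\sum_k i^{(n)}_k\otimes b_k+\sum_l a_l\otimes j^{(n)}_l
\]
satisfies $\|w_n-w\|_\bullet\le\sum_k\|i_k^{(n)}-i_k\|\|b_k\|+\sum_l\|a_l\|\|j_l^{(n)}-j_l\|\to0$. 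Moreover $w_n\in I_n\odot B+A\odot J_n\subseteq\mathcal J_\bullet(I_n,J_n)$. Hence eventually $\dist(z,\mathcal J_\bullet(I_n,J_n))<r$. Since the Wijsman topology on each ideal space is metrisable, sequential openness suffices, and this gives openness of the strict sublevel set.

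To make the openness explicit rather than sequential, one can instead use the quantitative form of the approximation. The condition $d(i_k,I)<\varepsilon$ is Wijsman-open in $I$ by Lemma~\ref{lem:wijsman-subbasic}, and on it one can choose $i'\in I$ with $\|i'-i_k\|<\varepsilon$. So the finite intersection of the open sets $\{d(i_k,I)<\varepsilon\}\times\{d(j_l,J)<\varepsilon\}$, for $\varepsilon$ small relative to $\eta$ and the norms $\|a_l\|,\|b_k\|$, is a neighbourhood of $(I_0,J_0)$ on which the distance is $<r$.

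The main obstacle is conceptual rather than computational. The argument only moves elements into the new ideals, never out, so it yields upper and not lower semicontinuity. The lower bound would require controlling how small the approximating kernel elements are for limits, and it is exactly what fails without further structure. The one point to verify carefully is that the finite algebraic-tensor witnesses are dense in $\mathcal J_\bullet(I,J)$, which is the cited kernel description; everything else is the cross-norm estimate together with Lemma~\ref{lem:wijsman-approx}.
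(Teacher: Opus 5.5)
Your proposal is correct and follows essentially the same argument as the paper. Both approximate $z$ by a finite witness in $I\odot B+A\odot J$, using the elementary kernel description, which is the only place exactness enters. Both then move the finitely many ideal entries into the nearby ideals via Lemma~\ref{lem:wijsman-approx}(i) and use the cross-norm estimate to show $\|w_n-w\|_\bullet\to 0$.

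Your phrasing via openness of strict sublevel sets, including the explicit neighbourhood $\{d(i_k,I)<\varepsilon\}\times\{d(j_l,J)<\varepsilon\}$, is a direct, non-sequential version of the paper's $\limsup$ argument. Your observation that exactness is needed only at the base point is accurate, since $I_n\odot B+A\odot J_n\subseteq\mathcal J_\bullet(I_n,J_n)$ holds unconditionally.
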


\begin{proof}
Write $d(I,J)=\dist(z,\overline{I\odot B+A\odot J})$.  Fix $\varepsilon>0$ and choose
$w\in I\odot B+A\odot J$ such that $\|z-w\|_\bullet<d(I,J)+\varepsilon$.
Write
$w=\sum_{k=1}^m(i_k\otimes b_k+a_k\otimes j_k)$
with $i_k\in I$, $j_k\in J$.
If $(I_n,J_n)\to(I,J)$ Wijsman, then $d(i_k,I_n)\to 0$ and $d(j_k,J_n)\to 0$.
Choose $i_k^{(n)}\in I_n$ and $j_k^{(n)}\in J_n$ with $\|i_k-i_k^{(n)}\|\to 0$
and $\|j_k-j_k^{(n)}\|\to 0$.  Set
\[
w_n=\sum_{k=1}^m(i_k^{(n)}\otimes b_k+a_k\otimes j_k^{(n)})\in I_n\odot B+A\odot J_n.
\]
By continuity of the tensor product bilinear map, $\|w_n-w\|_\bullet\to 0$.
Hence for large~$n$,
\[
\dist(z,\mathcal J_\bullet(I_n,J_n))\le\|z-w_n\|_\bullet
\le\|z-w\|_\bullet+\|w-w_n\|_\bullet<d(I,J)+2\varepsilon.
\]
Taking $\limsup$ and letting $\varepsilon\downarrow 0$ gives upper semicontinuity.
\end{proof}

While upper semicontinuity falls short of continuity, it already
pins down the Borel complexity of the tensor-ideal assignment.

\begin{proposition}
\label{prop:tensor-borel}
Under the hypotheses of Proposition~\ref{prop:tensor-usc}, the map
\[
\mathfrak T_\bullet:\Ideal(A)\times\Ideal(B)\to\Ideal(A\otimes_\bullet B),
\quad
(I,J)\mapsto\mathcal J_\bullet(I,J),
\]
is Borel for any admissible topology on the three ideal spaces
(since all admissible topologies share the same Borel $\sigma$-algebra;
see Remark~\ref{rem:rank-convention}).
For the \emph{Wijsman} topology one can say more: $\mathfrak T_\bullet$
is of Borel class~$2$ (\emph{i.e.}\ $\Sigma^0_2$-measurable).
More precisely, for each $z\in A\otimes_\bullet B$ and
$r\in\Q_{>0}$, the preimage
$\{(I,J):\dist(z,\mathcal J_\bullet(I,J))<r\}$
is $\Sigma^0_2$ (equivalently, $F_\sigma$) in the Wijsman topology.
This is the regularity that the complexity arguments in
Section~\ref{sec:D-stability} require.
\end{proposition}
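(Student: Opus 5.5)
The plan is to deduce everything from the upper semicontinuity in Proposition~\ref{prop:tensor-usc}, together with the standard fact that Wijsman-Borel sets in $\Ideal(A\otimes_\bullet B)$ are generated by the sets $\{L:\dist(z,L)<r\}$, with $z$ ranging over a countable dense subset and $r\in\Q_{>0}$.

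\textbf{Step 1: the sublevel sets are $F_\sigma$.} Fix $z$ and $r$. The function
\[
g_z(I,J):=\dist\bigl(z,\mathcal J_\bullet(I,J)\bigr)
\]
is upper semicontinuous on the metrisable space $\Ideal(A)\times\Ideal(B)$, with the product of Wijsman topologies. Hence each non-strict sublevel set $\{g_z\le s\}$ is closed for every real $s$. This gives
\[
\{(I,J):g_z(I,J)<r\}=\bigcup_{s\in\Q,\ s<r}\{(I,J):g_z(I,J)\le s\},
\]
a countable union of closed sets, hence $F_\sigma$. This is exactly the stated $\Sigma^0_2$ assertion.

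\textbf{Step 2: preimages of all open sets are $\Sigma^0_2$.} Fix a countable dense set $\{z_k\}\subseteq A\otimes_\bullet B$. On the target ideal space the Wijsman topology is metrisable and separable. It is the initial topology of the countably many maps $L\mapsto\dist(z_k,L)$, because $|\dist(z,L)-\dist(z',L)|\le\|z-z'\|$. So it has a countable subbase of the sets $\{L:\dist(z_k,L)<r\}$ and $\{L:\dist(z_k,L)>r\}$.

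The preimages of sets of the first kind are $F_\sigma$ by Step~1. For the second kind, upper semicontinuity alone gives only $G_\delta$, since $\{g>r\}$ is the complement of a closed-type set. This is the main obstacle. I see two ways to handle it.

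\emph{(a) Use the subbase of the first kind only.} Every open set in the separable metric target is a countable union of finite intersections of subbasic sets, so it suffices to control both kinds. I would first check whether Wijsman open sets are already generated by countable unions of finite intersections of sets of the form $\{\dist(z_k,\cdot)<r\}$. This holds for the lower Wijsman part but not in general for the full topology.

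\emph{(b) Prove lower semicontinuity directly.} Then $g_z$ is continuous and Step~2 is immediate. Write $\dist(z,\mathcal J(I,J))=\|(q_I\otimes q_J)(z)\|$, a tensor norm in $(A/I)\otimes_\bullet(B/J)$. For the maximal norm, $\|x\|=\sup\|\pi(x)\|$ over commuting representations, so lower semicontinuity reduces to a continuous-field statement in the spirit of Lemma~\ref{lem:Phi-n-continuous}. For $\min$ with an exact factor, one would use the continuity of the minimal tensor product of continuous fields with an exact fibre algebra (Kirchberg--Wassermann).

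If (b) fails, I fall back to a weaker statement. Each preimage of a subbasic set is then $\Sigma^0_2$ or $\Pi^0_2$, so preimages of basic sets are $\Delta^0_3$. This already yields Borel measurability for the Wijsman topology.

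\textbf{Step 3: all admissible topologies.} Borelness for an arbitrary admissible topology follows from Remark~\ref{rem:rank-convention}, since all admissible topologies generate the same Borel $\sigma$-algebra. The precise $\Sigma^0_2$ claim, that each set $\{g_z<r\}$ is $F_\sigma$, is exactly Step~1. This is what Section~\ref{sec:D-stability} uses.
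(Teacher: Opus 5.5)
There is a genuine error: you have reversed the direction of semicontinuity, and this creates the only obstacle you report. For an upper semicontinuous $g$, the strict sublevel sets $\{g<r\}$ are open and the superlevel sets $\{g\ge s\}$ are closed. It is \emph{lower} semicontinuity that makes $\{g\le s\}$ closed. For example, $g=\mathbf 1_{\{0\}}$ on $\R$ is upper semicontinuous, but $\{g\le 0\}=\R\setminus\{0\}$ is not closed. So the inference in Step~1 is invalid, although its conclusion is true for a trivial reason: $\{g_z<r\}$ is open by the definition of upper semicontinuity, hence $F_\sigma$.

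The obstacle in Step~2 does not exist. Since each $\{g_z\ge q\}$ is closed,
\[
\{g_z>r\}=\bigcup_{q\in\Q,\ q>r}\{g_z\ge q\}
\]
is $F_\sigma$. It is $\{g_z\le r\}=\bigcap_n\{g_z<r+1/n\}$ that upper semicontinuity makes $G_\delta$, not $\{g_z>r\}$. So the preimages of both kinds of Wijsman subbasic sets are $\Sigma^0_2$. Your own remark that the target has a countable subbase (Lindel\"ofness would also do) then finishes $\Sigma^0_2$-measurability, because $\Sigma^0_2$ is closed under finite intersections and countable unions. This one-line argument is the paper's proof.

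As written, your proposal does not establish the Borel class~$2$ claim. It defers that claim to route~(b). Your fallback gives only $\Delta^0_3$ preimages of basic sets, hence $\Sigma^0_3$ preimages of open sets, which is strictly weaker than what is stated. Route~(b) would work under these hypotheses and would even give continuity. It is essentially the paper's later Theorem~\ref{thm:tensor-continuity}, which uses Dixmier's criterion to make the quotient families continuous fields, then the Kirchberg--Wassermann continuity results for fibrewise $\max$ and exact $\min$ tensor products. But in your proposal it is only sketched, it relies on heavy machinery, and it is unnecessary for this proposition once the semicontinuity is handled correctly.
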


\begin{proof}
The Wijsman topology on $\Ideal(A\otimes_\bullet B)$ is the initial topology
for the family $K\mapsto d(z,K)$, so its subbasis consists of both the sublevel sets
$\{K:\dist(z,K)<r\}$ and the superlevel sets $\{K:\dist(z,K)>r\}$.
By Proposition~\ref{prop:tensor-usc}, for each fixed~$z$ the map
$(I,J)\mapsto\dist\bigl(z,\mathcal J_\bullet(I,J)\bigr)$
is upper semicontinuous, hence the strict sublevel sets
$\{(I,J):\dist(z,\mathcal J_\bullet(I,J))<r\}$ are open (and therefore
$F_\sigma=\Sigma^0_2$ since $\Ideal(A)\times\Ideal(B)$ is metrisable).
For the superlevel sets, upper semicontinuity gives
$\{(I,J):\dist(z,\mathcal J_\bullet(I,J))\ge q\}$ closed for each~$q$, and
$\{\dist>r\}=\bigcup_{q\in\Q,\,q>r}\{\dist\ge q\}$
is therefore $F_\sigma$ ($\Sigma^0_2$).
Thus preimages of both families of Wijsman subbasic opens are $\Sigma^0_2$.
\end{proof}

\begin{theorem}
\label{thm:tensor-continuity}
Let $A$ and $B$ be separable $C^*$-algebras.
\begin{enumerate}
\item For the maximal tensor product, the map $\mathfrak T_{\max}$ is continuous
for the Wijsman topologies on all three ideal spaces.
\item If one factor, say~$A$, is exact, then $\mathfrak T_{\min}$ is also
Wijsman-continuous.
\end{enumerate}
\end{theorem}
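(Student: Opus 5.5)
The plan is to upgrade the upper semicontinuity of Proposition~\ref{prop:tensor-usc} to full continuity by proving lower semicontinuity of
\[
(I,J)\longmapsto \dist\bigl(z,\mathcal J_\bullet(I,J)\bigr)=\bigl\|(q_I\otimes_\bullet q_J)(z)\bigr\|_{(A/I)\otimes_\bullet(B/J)} .
\]
By definition of the Wijsman topology, continuity of $\mathfrak T_\bullet$ is exactly continuity of these maps for each fixed $z\in A\otimes_\bullet B$. All ideal spaces are metrisable, so it suffices to treat sequences.

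\textbf{Reduction to continuous fields.} Let $(I_m,J_m)\to(I,J)$ and put $T=\N\cup\{\infty\}$, with $I_\infty=I$ and $J_\infty=J$. As in the proof of Lemma~\ref{lem:Phi-n-continuous}, Dixmier's criterion~\cite[Prop.~10.3.2]{Dixmier1977} shows that $(A/I_t)_{t\in T}$ and $(B/J_t)_{t\in T}$ are continuous fields of $C^*$-algebras over the compact metrisable space $T$. Their canonical sections are $t\mapsto a+I_t$ and $t\mapsto b+J_t$. Equivalently, $\mathcal A:=\{(a_t)\in\prod_t A/I_t: t\mapsto a_t \text{ continuous}\}$ is a continuous $C(T)$-algebra with fibres $A/I_t$, and similarly $\mathcal B$ has fibres $B/J_t$.

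Now consider the fibrewise tensor product field $\bigl((A/I_t)\otimes_\bullet(B/J_t)\bigr)_{t\in T}$. For an elementary tensor $z=\sum_k a_k\otimes b_k\in A\odot B$, the element $(q_{I_t}\otimes q_{J_t})(z)$ is the fibre value of the section $\sum_k(a_k+I_t)\otimes(b_k+J_t)$. So it suffices to show that $t\mapsto\|(q_{I_t}\otimes_\bullet q_{J_t})(z)\|$ is continuous on $T$ for $z\in A\odot B$. The general case then follows by density, because every map $z\mapsto\dist(z,\mathcal J_\bullet(I,J))$ is $1$-Lipschitz. Upper semicontinuity of these norm functions is already known, so only lower semicontinuity at $t=\infty$ remains.

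\textbf{Maximal case.} I would invoke Kirchberg--Wassermann: for continuous fields $\mathcal A,\mathcal B$ over a locally compact Hausdorff space, the field of fibrewise maximal tensor products is continuous; lower semicontinuity is automatic for $\otimes_{\max}$. Concretely, $\mathcal A\otimes_{\max,C(T)}\mathcal B$ has fibres $(A/I_t)\otimes_{\max}(B/J_t)$, since the maximal tensor product is exact in each variable. Its norm function is lower semicontinuous because the maximal norm is a supremum over commuting pairs of representations, and these can be taken to factor through evaluations at points of $T$. Continuity of $t\mapsto\|\sum_k(a_k+I_t)\otimes(b_k+J_t)\|_{\max}$ then follows, which proves~(1). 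If one prefers to avoid the citation, the same lower-semicontinuity argument can be run by hand on states of $A\otimes_{\max}B$.

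\textbf{Minimal case, and the main obstacle.} Here the difficulty is genuine: fibrewise minimal tensor products of continuous fields need not be continuous. The step I expect to be hardest is verifying the hypotheses of the Kirchberg--Wassermann exactness criterion. Every quotient $A/I_t$ of an exact $A$ is exact (Kirchberg), and $\mathcal A$ is then an exact continuous field whose fibres are all exact. By Kirchberg--Wassermann, $\mathcal A\otimes_{\min,C(T)}\mathcal B$ is then continuous with fibres $(A/I_t)\otimes_{\min}(B/J_t)$. Identifying the fibre with $(A\otimes_{\min}B)/\mathcal J_{\min}(I_t,J_t)$ again uses exactness, via~\cite[Prop.~2.3.3]{BrownOzawa}. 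I would first try to check carefully that the relevant form of their theorem covers two varying fields, not just a field tensored with a constant algebra. Failing that, I would fall back on a more hands-on route: embed $A\subseteq B(H)$ nuclearly, approximate the minimal norm on the finite tensor $z$ by finite-dimensional cp compressions (available by exactness), and reduce to the matrix-level continuity of Lemma~\ref{lem:Phi-n-continuous} applied to $B/J_t$. Combined with Proposition~\ref{prop:tensor-usc}, either route yields~(2).
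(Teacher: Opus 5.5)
Your proposal is correct and follows the paper's proof essentially step for step. Both argue sequentially over $T=\N\cup\{\infty\}$, obtain the quotient continuous fields from Dixmier's criterion, invoke Kirchberg--Wassermann for the fibrewise maximal tensor field and (using exactness of the quotients $A/I_t$) for the minimal one, pass from $A\odot B$ to all of $A\otimes_\bullet B$ by density, and identify $\dist(z,\mathcal J_\bullet(I_t,J_t))$ with the fibre norm. The only difference is that you use Proposition~\ref{prop:tensor-usc} to reduce to lower semicontinuity, whereas the paper takes full continuity directly from the cited continuous-field theorems; this reduction is harmless.
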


\begin{proof}
Because the Wijsman topology is metrisable, it suffices to prove sequential continuity.

Let
\[
(I_m,J_m)\longrightarrow (I,J)
\]
in $\Ideal(A)\times \Ideal(B)$.
Set
\[
T:=\N\cup\{\infty\},
\]
where $\N$ is discrete and $\infty$ is the point at infinity.
Define
\[
I_t=
\begin{cases}
I_m,& t=m\in\N,\\
I,& t=\infty,
\end{cases}
\qquad
J_t=
\begin{cases}
J_m,& t=m\in\N,\\
J,& t=\infty,
\end{cases}
\]
and write
\[
A_t:=A/I_t,\qquad B_t:=B/J_t.
\]

For $a\in A$ define a section
\[
\widehat a(t):=a+I_t\in A_t,
\]
and similarly for $b\in B$.

\smallskip
\noindent\emph{The quotient families are continuous fields.}
For every fixed $a\in A$, the function
\[
t\longmapsto \|\widehat a(t)\|
=
\|a+I_t\|
=
\dist(a,I_t)
\]
is continuous on $T$: continuity at points of $\N$ is automatic because they are isolated,
and continuity at $\infty$ is exactly the Wijsman convergence $I_m\to I$.
By Dixmier's criterion for continuous fields of quotient algebras
\cite[Prop.~10.3.2]{Dixmier1977}, the family $\{A_t:t\in T\}$, together with the
sections generated by the $\widehat a$'s, forms a continuous field of $C^*$-algebras
over~$T$.
The same argument applies to the family $\{B_t:t\in T\}$.

\smallskip
\noindent\emph{Maximal tensor products.}
By the standard continuity theorem for fibrewise maximal tensor products of continuous
fields; see, for example, Kirchberg--Wassermann \cite{KirchbergWassermann1995}, the
fibrewise maximal tensor product is again a continuous field over $T$, whose fibre at $t$ is
\[
A_t\otimes_{\max} B_t.
\]
Therefore, for every algebraic tensor
\[
w=\sum_{r=1}^{N} a_r\odot b_r\in A\odot B,
\]
the section
\[
t\longmapsto
w_t:=
\sum_{r=1}^{N}(a_r+I_t)\otimes (b_r+J_t)
\in A_t\otimes_{\max} B_t
\]
has continuous norm:
\[
t\longmapsto \|w_t\|_{A_t\otimes_{\max} B_t}
\quad\text{is continuous.}
\]

Now let $z\in A\otimes_{\max} B$ be arbitrary.
Given $\varepsilon>0$, choose $w\in A\odot B$ with $\|z-w\|<\varepsilon$.
Then for every $t\in T$,
\[
\bigl|\|z_t\|-\|w_t\|\bigr|
\le \|z_t-w_t\|
\le \|z-w\|
<\varepsilon,
\]
so continuity of $t\mapsto \|w_t\|$ implies continuity of $t\mapsto \|z_t\|$.

Finally, for every $t\in T$ one has the canonical quotient identification
\[
(A\otimes_{\max} B)/\mathcal J_{\max}(I_t,J_t)
\cong
A_t\otimes_{\max} B_t.
\]
Hence
\[
\dist\bigl(z,\mathcal J_{\max}(I_t,J_t)\bigr)
=
\|z_t\|_{A_t\otimes_{\max} B_t}.
\]
Therefore
\[
\dist\bigl(z,\mathcal J_{\max}(I_m,J_m)\bigr)
\longrightarrow
\dist\bigl(z,\mathcal J_{\max}(I,J)\bigr)
\qquad (m\to\infty)
\]
for every $z\in A\otimes_{\max} B$, which is exactly Wijsman continuity of
$\mathfrak T_{\max}$.

\smallskip
\noindent\emph{Minimal tensor products under exactness.}
Assume that $A$ is exact.
Then each quotient $A_t=A/I_t$ is exact.
By Kirchberg--Wassermann's exactness theorem for fibrewise minimal tensor products
(see \cite[Thm.~4.5]{KirchbergWassermann1995}), the fibrewise minimal tensor product
of the above two continuous fields is a continuous field over $T$, whose fibre at $t$ is
\[
A_t\otimes_{\min} B_t.
\]
Thus, exactly as above, for every
\[
w=\sum_{r=1}^{N} a_r\odot b_r\in A\odot B
\]
the function
\[
t\longmapsto \|w_t\|_{A_t\otimes_{\min} B_t}
\]
is continuous, and by density of $A\odot B$ in $A\otimes_{\min} B$ the same holds for every
$z\in A\otimes_{\min} B$.

Because $A$ is exact, one has for every $t\in T$ the canonical quotient identification
\[
(A\otimes_{\min} B)/\mathcal J_{\min}(I_t,J_t)
\cong
A_t\otimes_{\min} B_t,
\]
so
\[
\dist\bigl(z,\mathcal J_{\min}(I_t,J_t)\bigr)
=
\|z_t\|_{A_t\otimes_{\min} B_t}.
\]
Hence
\[
\dist\bigl(z,\mathcal J_{\min}(I_m,J_m)\bigr)
\longrightarrow
\dist\bigl(z,\mathcal J_{\min}(I,J)\bigr)
\qquad (m\to\infty)
\]
for every $z\in A\otimes_{\min} B$, \emph{i.e.}\ $\mathfrak T_{\min}$ is Wijsman-continuous.
\end{proof}

\begin{remark}\label{rem:tensor-cont}
The Borelness result (Proposition~\ref{prop:tensor-borel}) is what our complexity
arguments require; Theorem~\ref{thm:tensor-continuity} gives a strengthening under
additional hypotheses.  We do \emph{not} assert Wijsman continuity of
$(I,J)\mapsto\mathcal J_\bullet(I,J)$ in full generality for the minimal tensor
product without exactness.  Lower semicontinuity of
$(I,J)\mapsto\dist(z,\mathcal J_{\min}(I,J))$ can fail, and the additional
structure needed to force continuity is subtle.

In particular, the $K_1$-assignment of Theorem~\ref{thm:K1} uses only the
\emph{Borel} tensor-ideal assignment (Proposition~\ref{prop:tensor-borel}), not
Wijsman continuity.
\end{remark}

\subsection{Projective and injective tensor products of Banach algebras}

Let $A$ and $B$ be separable Banach algebras.  For $\alpha\in\{\pi,\varepsilon\}$ write
$A\widehat\otimes_\alpha B$ for the projective ($\pi$) or injective ($\varepsilon$)
completed tensor product of the underlying Banach spaces.
The projective tensor product $A\widehat\otimes_\pi B$ is always a Banach algebra.
The injective tensor product $A\widehat\otimes_\varepsilon B$ is a Banach algebra
provided that the multiplication on $A\odot B$ extends continuously (which is the case,
for instance, when $A$ and $B$ are uniform algebras).

If $I\triangleleft A$ and $J\triangleleft B$ are closed
(two-sided) ideals, set
\[
\mathcal J_\alpha(I,J):=\overline{I\otimes B + A\otimes J}^{\ \|\cdot\|_\alpha}
\subseteq A\widehat\otimes_\alpha B.
\]
Then $\mathcal J_\pi(I,J)$ is always a closed ideal of $A\widehat\otimes_\pi B$, and
$\mathcal J_\varepsilon(I,J)$ is a closed ideal of $A\widehat\otimes_\varepsilon B$
whenever the latter is a Banach algebra.
The following topological regularity results hold strictly at the level of Banach
spaces and their closed subspaces, and hence apply to both norms without requiring
additional algebraic assumptions.

\begin{proposition}\label{prop:banach-tensor-usc}
Fix $\alpha\in\{\pi,\varepsilon\}$ and $z\in A\widehat\otimes_\alpha B$.
With the Wijsman topologies on $\Ideal(A)$ and $\Ideal(B)$, the function
\[
(I,J)\longmapsto \dist\bigl(z,\mathcal J_\alpha(I,J)\bigr)
\]
is upper semicontinuous, hence Borel.
\end{proposition}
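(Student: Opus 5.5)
The plan is to copy the argument of Proposition~\ref{prop:tensor-usc} almost verbatim, since it used only Banach-space structure: bilinearity and joint continuity of the elementary tensor map together with the Wijsman approximation property. Fix $z\in A\widehat\otimes_\alpha B$ and $(I,J)$, and write $d(I,J)=\dist(z,\mathcal J_\alpha(I,J))$. Given $\varepsilon>0$, use that $I\otimes B+A\otimes J$ is dense in $\mathcal J_\alpha(I,J)$ by definition. This gives a finite algebraic tensor
\[
w=\sum_{k=1}^m\bigl(i_k\otimes b_k+a_k\otimes j_k\bigr),\qquad i_k\in I,\ j_k\in J,
\]
with $\|z-w\|_\alpha<d(I,J)+\varepsilon$.

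Next, take a sequence $(I_n,J_n)\to(I,J)$ in the Wijsman topology; sequences suffice because the product of the Wijsman topologies is metrisable. By Lemma~\ref{lem:wijsman-approx}(i) there are $i_k^{(n)}\in I_n$ and $j_k^{(n)}\in J_n$ with $i_k^{(n)}\to i_k$ and $j_k^{(n)}\to j_k$. Form
\[
w_n=\sum_k\bigl(i_k^{(n)}\otimes b_k+a_k\otimes j_k^{(n)}\bigr)\in I_n\otimes B+A\otimes J_n\subseteq\mathcal J_\alpha(I_n,J_n).
\]
Both $\pi$ and $\varepsilon$ are cross norms, so $\|x\otimes y\|_\alpha=\|x\|\,\|y\|$. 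This yields
\[
\|w_n-w\|_\alpha\le\sum_k\bigl(\|i_k^{(n)}-i_k\|\,\|b_k\|+\|a_k\|\,\|j_k^{(n)}-j_k\|\bigr)\to0.
\]
Hence $\dist(z,\mathcal J_\alpha(I_n,J_n))\le\|z-w\|_\alpha+\|w-w_n\|_\alpha<d(I,J)+2\varepsilon$ for all large $n$. Taking $\limsup_n$ and letting $\varepsilon\downarrow0$ gives $\limsup_n d(I_n,J_n)\le d(I,J)$, which is sequential upper semicontinuity.

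For Borelness, upper semicontinuity makes each set $\{(I,J):d(I,J)<r\}$ open. Every real-valued u.s.c.\ function on a metrisable space is therefore Borel, in fact of Baire class one.

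I expect no serious obstacle. The only point to check is that the argument never needs a kernel description of $\mathcal J_\alpha$ as a quotient kernel, because the statement defines $\mathcal J_\alpha$ directly as a closure. It also needs no algebra structure on the tensor product, so the argument works for $\varepsilon$ even when $A\widehat\otimes_\varepsilon B$ is not a Banach algebra. The cross-norm estimate is the only norm-specific input, and it holds for both $\pi$ and $\varepsilon$.
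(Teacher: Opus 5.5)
Your proposal is correct and follows the paper's proof essentially step for step. You approximate $z$ by a finite tensor $w\in I\otimes B+A\otimes J$, move its ideal entries into $I_n$ and $J_n$ by Wijsman approximation, and bound $\|w_n-w\|_\alpha$; you do this last step via the cross-norm identity, whereas the paper cites joint continuity of $(x,y)\mapsto x\otimes y$, which comes to the same thing.
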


\begin{proof}
Fix $\varepsilon>0$ and choose $w\in I\otimes B + A\otimes J$ with
$\|z-w\|_\alpha<\dist(z,\mathcal J_\alpha(I,J))+\varepsilon$.
Write $w=\sum_{k=1}^m(i_k\otimes b_k+a_k\otimes j_k)$.

If $(I_n,J_n)\to(I,J)$ Wijsman, pick $i_k^{(n)}\in I_n$ and $j_k^{(n)}\in J_n$ with
$i_k^{(n)}\to i_k$ and $j_k^{(n)}\to j_k$ in norm.
Set
\[
w_n=\sum_{k=1}^m(i_k^{(n)}\otimes b_k+a_k\otimes j_k^{(n)})
\in I_n\otimes B+A\otimes J_n.
\]
Since $(x,y)\mapsto x\otimes y$ is jointly continuous into both
$\widehat\otimes_\pi$ and $\widehat\otimes_\varepsilon$, we have
$\|w_n-w\|_\alpha\to 0$.
Hence, for large $n$,
\[
\dist(z,\mathcal J_\alpha(I_n,J_n))\le \|z-w_n\|_\alpha
<\dist(z,\mathcal J_\alpha(I,J))+2\varepsilon.
\]
Taking $\limsup$ and letting $\varepsilon\downarrow 0$ gives upper semicontinuity.
\end{proof}

\begin{proposition}\label{prop:banach-tensor-borel}
For $\alpha\in\{\pi,\varepsilon\}$ the map
\[
\mathfrak T_\alpha:\Ideal(A)\times\Ideal(B)\to \Ideal(A\widehat\otimes_\alpha B),
\qquad (I,J)\mapsto \mathcal J_\alpha(I,J),
\]
is Borel for any admissible topology on the three ideal spaces.
For the Wijsman topology it is of Baire class~$1$:
for each $z\in A\widehat\otimes_\alpha B$ and $r\in\Q_{>0}$, the preimage
$\{(I,J):\dist(z,\mathcal J_\alpha(I,J))<r\}$ is open.
\end{proposition}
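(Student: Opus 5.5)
The plan is to mirror the proof of Proposition~\ref{prop:tensor-borel}, now using Proposition~\ref{prop:banach-tensor-usc} in place of Proposition~\ref{prop:tensor-usc}. The key input is that, for each fixed $z\in A\widehat\otimes_\alpha B$, the function $f_z(I,J):=\dist(z,\mathcal J_\alpha(I,J))$ is upper semicontinuous on $\Ideal(A)\times\Ideal(B)$ with the Wijsman product topology. First I will check that $\mathcal J_\alpha(I,J)$ really lies in $\Ideal(A\widehat\otimes_\alpha B)$, so that $\mathfrak T_\alpha$ is well defined. This is the remark preceding Proposition~\ref{prop:banach-tensor-usc}; in the injective case we assume $A\widehat\otimes_\varepsilon B$ is a Banach algebra.

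\textbf{The open-preimage claim.} Fix $z$ and $r\in\Q_{>0}$. Upper semicontinuity of $f_z$ means exactly that the strict sublevel set $\{(I,J):f_z(I,J)<r\}$ is open. This is immediate and gives the displayed assertion.

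\textbf{Measurability of $\mathfrak T_\alpha$ for the Wijsman topology.} The Wijsman topology on the target is the initial topology for the maps $K\mapsto\dist(z,K)$. It is second countable, since one may restrict $z$ to a countable dense set and $r$ to rationals. So it suffices to control preimages of the subbasic sets $\{K:\dist(z,K)<r\}$ and $\{K:\dist(z,K)>r\}$, because finite intersections and countable unions preserve the relevant classes.
\begin{itemize}
\item The sublevel preimages are open, as shown above.
\item The superlevel preimages satisfy $\{f_z>r\}=\bigcup_{q\in\Q,\,q>r}\{f_z\ge q\}$. Each set $\{f_z\ge q\}$ is closed by upper semicontinuity, so this union is $F_\sigma$.
\end{itemize}
Hence every open set in the target pulls back to a $\Sigma^0_2$ set, which is the Baire class~$1$ assertion (equivalently, $\Sigma^0_2$-measurability). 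I read ``Baire class~$1$'' in this sense, consistent with the phrasing of Proposition~\ref{prop:tensor-borel}.

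\textbf{Borelness for any admissible topology.} By Remark~\ref{rem:rank-convention}, all admissible topologies on each ideal space generate the same standard Borel structure. The Wijsman-measurability just established therefore implies Borel measurability for any choice of admissible topologies on the three spaces.

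\textbf{Expected obstacle.} The only substantive issue is the reduction from arbitrary open sets to subbasic ones. I will handle it by noting that the Wijsman topology on $\Ideal(A\widehat\otimes_\alpha B)$ is separable metrisable, with the explicit metric $d_W$ of Definition~\ref{def:wijsman}. So every open set is a countable union of finite intersections of the subbasic sets above, which again lies in $\Sigma^0_2$. No lower semicontinuity is needed, and no such claim will be made.
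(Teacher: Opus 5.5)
Your proposal is correct and follows the paper's proof: upper semicontinuity from Proposition~\ref{prop:banach-tensor-usc} makes the sublevel preimages open, and the superlevel preimages are $F_\sigma$ via $\{f_z>r\}=\bigcup_{q\in\Q,\,q>r}\{f_z\ge q\}$. Your reduction from arbitrary open sets to a countable subbasis, and your appeal to Remark~\ref{rem:rank-convention} for the admissible-topology clause, fill in steps the paper leaves implicit.
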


\begin{proof}
The Wijsman subbasis consists of both $\{K:\dist(z,K)<r\}$ and
$\{K:\dist(z,K)>r\}$ (Lemma~\ref{lem:wijsman-subbasic}).
By Proposition~\ref{prop:banach-tensor-usc}, the function
$(I,J)\mapsto \dist\bigl(z,\mathcal J_\alpha(I,J)\bigr)$ is upper semicontinuous, so
$\{(I,J):\dist(z,\mathcal J_\alpha(I,J))<r\}$ is open.
For the superlevel direction, $\{\dist>r\}=\bigcup_{q\in\Q,\,q>r}\{\dist\ge q\}$
is $F_\sigma$ (since $\{\dist\ge q\}$ is closed for u.s.c.\ functions).
Thus preimages of both families of Wijsman subbasic opens are $F_\sigma$,
giving Borelness (indeed Baire class~$1$) of $\mathfrak T_\alpha$.
\end{proof}

\section{\texorpdfstring{$D$}{D}-absorption and approximate intertwining}\label{sec:D-stability}

Fix a separable unital $C^*$-algebra~$D$.  We say that a separable unital
$C^*$-algebra $B$ is \emph{$D$-absorbing} if
$B\cong B\otimes_{\min}D$.  When $D$ is strongly self-absorbing, this is the
usual notion of \emph{$D$-stability}; see \cite[II.8.5.1]{BlackadarOA}
and \cite{TomsWinter}.

\subsection{Elliott intertwining}\label{subsec:elliott}

A central theme in the classification programme for $C^*$-algebras is the passage from
approximate morphisms to genuine isomorphisms.  The fundamental device for this is the
\emph{Elliott intertwining argument}, which first appeared in
Elliott's classification of AF-algebras~\cite{Elliott1976} and has since become one of
the most widely used tools in the field.  The argument says, roughly, that if two
separable $C^*$-algebras $B$ and $C$ can be connected by a back-and-forth sequence of
maps that \emph{approximately} compose to the identity on ever-larger finite subsets,
then $B\cong C$ \emph{genuinely}.

This theorem matters for us for two reasons.  First, the
\emph{reduction from an isomorphism to finite-level witnesses} is the mechanism
that makes $D$-absorption ($B\cong B\otimes_{\min}D$) expressible as an analytic condition
(Theorem~\ref{thm:Dstab}).  Without the intertwining theorem, one
would need to quantify over an entire isomorphism---an object living in a
non-separable function space---and the resulting condition would a priori be
$\Sigma^1_1$ rather than Borel.
Second, it provides the $\Sigma^1_1$ upper bound for isomorphism of
separable $C^*$-algebras (Remark~\ref{rem:isom-analytic}): the approximate
intertwining witnesses form a countable Borel family, so the existence of a
complete intertwining is an analytic condition.

The proof of the intertwining theorem is entirely constructive (a standard
diagonal argument), and is by now well-established; see
R\o rdam--Larsen--Laustsen~\cite[Theorem~2.3.3]{RordamLarsenLaustsen}
and Elliott~\cite{Elliott1976}.
We do not reproduce it here, but state it for reference and proceed
directly to its descriptive set-theoretic consequences.

\begin{theorem}
\label{thm:elliott-intertwining}
Let $B$ and $C$ be separable unital $C^*$-algebras.  Suppose there exist sequences of
unital $*$-homomorphisms $\phi_n:B\to C$ and $\psi_n:C\to B$ such that for every
$b\in B$ and $c\in C$,
\[
\|\psi_n\phi_n(b)-b\|\to 0,
\qquad
\|\phi_{n+1}\psi_n(c)-c\|\to 0.
\]
Then there exists a unital $*$-isomorphism $\Phi:B\to C$.  \qed
\end{theorem}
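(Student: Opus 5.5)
The plan is the standard approximate intertwining argument, run along finite subsets and summable tolerances so that the limits exist in norm. Fix dense sequences $(b_k)$ in $B$ and $(c_k)$ in $C$, and write $F_j=\{b_1,\dots,b_j\}$ and $G_j=\{c_1,\dots,c_j\}$. The given maps only make the two composition errors small along the original sequence. I will therefore pass to subsequences: choose indices $n_1<n_2<\cdots$ and set $\alpha_j=\phi_{n_j}$, $\beta_j=\psi_{n_j}$. Some care is needed because the hypothesis pairs $\phi_{n+1}$ with $\psi_n$. The cleanest fix is to first replace $\phi_{n+1}$ by a subsequence realisation. Since the hypothesis holds for every fixed $c$ as $n\to\infty$, we have $\|\phi_{n+1}\psi_n(c)-c\|$ and $\|\psi_n\phi_n(b)-b\|$ both tending to zero. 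Choosing consecutive indices $n_j$ and $n_j+1$ alternately, I will build a diagram
\[
B\xrightarrow{\alpha_1}C\xrightarrow{\beta_1}B\xrightarrow{\alpha_2}C\to\cdots
\]
with $\alpha_j=\phi_{m_j}$ and $\beta_j=\psi_{m_j}$, where $m_{j+1}=m_j+1$ is not required. The conditions are
\[
\|\beta_j\alpha_j(x)-x\|<2^{-j} \quad (x\in F'_j),
\qquad
\|\alpha_{j+1}\beta_j(y)-y\|<2^{-j} \quad (y\in G'_j).
\]
Here $F'_j$ and $G'_j$ are enlarged finite sets, defined inductively to contain $F_j$, $G_j$, and the images of earlier finite sets under the previously chosen maps. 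Each step only asks for one large index, which the hypotheses supply. The point to check is that $\phi_{m_{j+1}}$ with $m_{j+1}=m_j+1$ is compatible with $\psi_{m_{j+1}}\phi_{m_{j+1}}$ being close to the identity. It is, because that condition is imposed at the next step by taking $m_j$ large.

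Next I define $\Phi(b)=\lim_j\alpha_j(b)$. For $x$ in the accumulated finite sets we have
\[
\|\alpha_{j+1}(x)-\alpha_j(x)\|\le\|\alpha_{j+1}(x)-\alpha_{j+1}\beta_j\alpha_j(x)\|+\|\alpha_{j+1}\beta_j\alpha_j(x)-\alpha_j(x)\|<2\cdot2^{-j}.
\]
The first term is controlled by contractivity of $\alpha_{j+1}$ and the first inequality, and the second by the second inequality applied to $y=\alpha_j(x)\in G'_j$. So the sequence is Cauchy on a dense set. Since all maps are contractive, $\Phi$ extends to a unital $*$-homomorphism $B\to C$. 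Symmetrically, $\Psi(c)=\lim_j\beta_j(c)$ defines a unital $*$-homomorphism $C\to B$. Passing to the limit in the two inequalities gives $\Psi\Phi=\mathrm{id}_B$ and $\Phi\Psi=\mathrm{id}_C$ on dense sets, hence everywhere. Therefore $\Phi$ is a unital $*$-isomorphism.

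The main obstacle is the index bookkeeping in the first step. The finite sets $G'_j$ must contain $\alpha_j(F'_j)$, and $F'_{j+1}$ must contain $\beta_j(G'_j)$, with the choices made in the correct order so that every new requirement involves only a finite set and a yet-unchosen large index. I will follow \cite[Theorem~2.3.3]{RordamLarsenLaustsen} for this diagonal construction.
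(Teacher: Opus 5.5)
\textbf{The gap is in your first step, and it cannot be closed.} The hypothesis controls $\psi_n\phi_n$ only at equal indices and $\phi_{n+1}\psi_n$ only at consecutive indices, and in both cases only pointwise. So in any chain $\alpha_j=\phi_{m_j}$, $\beta_j=\psi_{m_j}$ where the hypothesis says anything about $\alpha_{j+1}\beta_j$, you are forced to take $m_{j+1}=m_j+1$. Your text first says this is not required and then uses it. Once $m_1$ is fixed there is nothing left to choose, so ``taking $m_j$ large'' at later steps is not available.

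There is a more basic problem. The estimate $\|\phi_{m_j+1}\psi_{m_j}(y)-y\|<2^{-j}$ is needed for $y\in G'_j\supseteq\phi_{m_j}(F'_j)$, and this set moves with the very index being chosen. Pointwise convergence gives, for each fixed finite set $G$, a threshold $N(G)$. Nothing guarantees $m\ge N(\phi_m(F))$ for any $m$. The orbits $\{\phi_m(x):m\in\N\}$ need not be relatively compact, so contractivity does not turn pointwise convergence into uniform control on them. The same circularity affects $F'_{j+1}\supseteq\beta_j(G'_j)$.

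\textbf{The statement is false under this hypothesis.} Let $X=\{x_j\}_{j\ge0}\cup\{\infty\}$ with $x_j\to\infty$, a copy of $\omega+1$. Let $Y=\{p_i\}\cup\{p\}\cup\{q_i\}\cup\{q\}$ with $p_i\to p$ and $q_i\to q$, a copy of $\omega\cdot2+1$. Define continuous maps $f_n:Y\to X$ and $g_n:X\to Y$ by
\[
f_n(p_i)=x_{2i},\quad f_n(p)=\infty,\quad f_n(q_i)=x_{2\min(i,n)+1},\quad f_n(q)=x_{2n+1},
\]
\[
g_n(x_{2i})=p_i,\quad g_n(\infty)=p,\quad g_n(x_{2i+1})=\begin{cases} q_i,& i\le n,\\ q,& i=n+1,\\ p,& i\ge n+2.\end{cases}
\]
\begin{itemize}
\item The map $f_ng_n$ moves only the points $x_j$ with $j\ge 2n+3$, and it sends them to $x_{2n+1}$ or $\infty$.
\item The map $g_nf_{n+1}$ moves only the points $q_i$ with $i\ge n+1$, and it sends them to $q$.
\end{itemize}
Hence $f_ng_n\to\mathrm{id}_X$ and $g_nf_{n+1}\to\mathrm{id}_Y$ uniformly. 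Put $B=C(X)$, $C=C(Y)$, $\phi_n(b)=b\circ f_n$ and $\psi_n(c)=c\circ g_n$. These are unital $*$-homomorphisms satisfying both hypotheses of the statement. But $C(X)\not\cong C(Y)$, because $X$ has one non-isolated point and $Y$ has two. In this example $\psi_n(c)$ is not Cauchy whenever $c(p)\neq c(q)$, which is exactly where your limit argument would break.

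\textbf{What the cited results actually assume.} Elliott's intertwining and its textbook versions use a finite-set form of the hypothesis:
\begin{itemize}
\item finite sets $F_n\subseteq B$ and $G_n\subseteq C$ with dense unions;
\item $\phi_n(F_n)\subseteq G_n$ and $\psi_n(G_n)\subseteq F_{n+1}$;
\item $\|\psi_n\phi_n(x)-x\|<\varepsilon_n$ for $x\in F_n$ and $\|\phi_{n+1}\psi_n(y)-y\|<\varepsilon_n$ for $y\in G_n$, with $\sum\varepsilon_n<\infty$.
\end{itemize}
Alternatively, for fixed maps one assumes $\psi\phi\approx_u\mathrm{id}_B$ and $\phi\psi\approx_u\mathrm{id}_C$; then the unitaries can be chosen after the finite sets are fixed. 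Under either hypothesis, your second paragraph (the Cauchy estimate, extension by contractivity, and the two-sided inverse) is the standard proof and is correct. The paper gives no argument beyond the citation, so it does not supply the missing step either.
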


\subsection{Finite-level intertwining and Borel complexity}

\begin{remark}\label{rem:isom-analytic}
In any standard coding of separable $C^*$-algebras with a named dense sequence,
the isomorphism relation is analytic.  One way to see this is to code a candidate
Elliott intertwining by a pair of sequences of finite partial maps between the
named dense sets.  The coherence conditions asserting that these finite maps are
approximately $*$-homomorphic and approximately inverse on larger and larger
initial segments form a Borel relation on the Polish space of such sequences.
By Theorem~\ref{thm:elliott-intertwining}, existence of such a coded intertwining
is equivalent to genuine isomorphism.  Thus $\cong$ is analytic.
Moreover, the resulting orbit-equivalence complexity is maximal in the standard
sense: Sabok proved that isomorphism of separable $C^*$-algebras, already of simple
separable AI algebras, is complete among orbit equivalence relations; see
\cite{SabokCompleteness}.  This is distinct from, and stronger in a different
direction than, merely observing analyticity as a subset of a product coding space.
\end{remark}

\begin{theorem}\label{thm:Dstab}
Let $D$ be a separable unital exact $C^*$-algebra.
Then the class of $K\in\Ideal(A)$ such that $A/K$ is $D$-absorbing is analytic.
\end{theorem}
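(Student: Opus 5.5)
The set will be presented as the projection of a Borel set built from two codes: a code for the tensor product $(A/K)\otimes_{\min}D$, and a code for an isomorphism between it and $A/K$.

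\textbf{Step 1: code the tensor product.} Since $D$ is separable and unital, fix the separable $C^*$-algebra $A\otimes_{\min}D$ and consider the map $K\mapsto \mathcal J_{\min}(K,0)=\ker(q_K\otimes\mathrm{id}_D)$. By exactness of $D$, Proposition~\ref{prop:tensor-borel} applies: the description $\overline{K\odot D}$ holds, and the quotient is canonically $(A/K)\otimes_{\min}D$. In fact Theorem~\ref{thm:tensor-continuity}(ii) gives Wijsman continuity of this map $\Ideal(A)\to\Ideal(A\otimes_{\min}D)$. Thus $K$ determines, continuously, an ideal $L_K$ with $(A\otimes_{\min}D)/L_K\cong (A/K)\otimes_{\min}D$.

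\textbf{Step 2: code an isomorphism.} Fix countable dense sets $D^A$ (a $*$-subalgebra) and a dense $\Q(i)$-$*$-subalgebra $E$ of $A\otimes_{\min}D$, for instance generated by $D^A\odot D_0$. A candidate isomorphism $B:=A/K\to C:=(A\otimes D)/L_K$ is coded by a point $(\bar y,\bar x)\in E^{\N}\times (D^A)^{\N}$. Here $y_m$ is meant to represent the image of $u_m+K$, and $x_k$ the image of $e_k+L_K$ under the inverse. Both product spaces are Polish, being countable discrete spaces raised to the power $\N$.

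Define $\mathcal R\subseteq \Ideal(A)\times E^\N\times (D^A)^\N$ by a countable conjunction of conditions in the quotient norms $\Phi(K,\cdot)$ and $\Phi(L_K,\cdot)$. First, $u_m\mapsto y_m$ is isometric and respects the $\Q(i)$-$*$-algebra operations. This is encoded by equalities such as $\Phi(L_K,y_{m\cdot m'}-y_my_{m'})=0$, $\Phi(L_K,y_m)=\Phi(K,u_m)$, and similarly for sums, scalars, adjoints and the unit, using the indexing of $D^A$ by its own operations. Second, $e_k\mapsto x_k$ satisfies the same conditions. Third, the two maps are mutually inverse: $\Phi(K,x_{\sigma(y_m)}-u_m)=0$ is not directly expressible, because $y_m$ is an element of $E$ rather than an index. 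Instead I would require, for each $m$ and each $n$, the closed condition that whenever $\Phi(L_K,y_m-e_k)<1/n$ holds, then $\Phi(K,x_k-u_m)\le 1/n$. The symmetric condition is imposed for the inverse direction.

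Each atomic condition is a Borel (indeed closed or open) condition in $(K,\bar y,\bar x)$. The reason is that $K\mapsto\Phi(L_K,e)$ is continuous and the coordinate projections are continuous. Hence $\mathcal R$ is Borel.

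\textbf{Step 3: verify the equivalence.} An isometric $*$-homomorphism defined on a dense $*$-subalgebra extends to an isometric $*$-homomorphism on the completion, so a point of $\mathcal R$ gives isometric unital $*$-homomorphisms $\phi:B\to C$ and $\psi:C\to B$. The approximate-inverse clauses together with continuity give $\psi\phi=\mathrm{id}$ and $\phi\psi=\mathrm{id}$, so $B\cong C$.

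Conversely, given an isomorphism $\theta:B\to C$, I cannot simply choose $y_m$ with $y_m+L_K=\theta(u_m+K)$ exactly, since $\theta(u_m+K)$ need not lie in the image of $E$. This is the main obstacle: exact homomorphism equalities demand exact representatives.

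I would first try replacing exact equalities by approximate coherence, as in Remark~\ref{rem:isom-analytic}. Codes become sequences of finite partial maps whose errors are summable on growing initial segments, with the operation and isometry defects bounded by $2^{-j}$ at level $j$. Theorem~\ref{thm:elliott-intertwining}, or simply a Cauchy-limit argument, then produces genuine homomorphisms and an isomorphism. An isomorphism always yields such a coherent sequence, by density of $E+L_K$ and $D^A+K$. All conditions remain countable Boolean combinations of strict and non-strict inequalities in Wijsman-continuous functions, so $\mathcal R$ stays Borel.

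Finally, the $D$-absorbing class equals $\pi_{\Ideal(A)}(\mathcal R)$, which is analytic as the projection of a Borel set.
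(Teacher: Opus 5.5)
Your proposal is correct and follows essentially the paper's route: the Borel (in fact, by Theorem~\ref{thm:tensor-continuity}, Wijsman-continuous) assignment $K\mapsto\mathcal J_{\min}(K,\{0\})$, then analyticity of isomorphism via coded approximate maps, then a projection. The only difference is that you unfold Remark~\ref{rem:isom-analytic} directly inside the two quotient codings $\Ideal(A)$ and $\Ideal(A\otimes_{\min}D)$, instead of passing both algebras to a standard coding and pulling back the analytic isomorphism relation; in doing so you rightly drop exact representatives in favour of Cauchy-coherent approximate codes, and for these the Cauchy-limit argument is the relevant tool rather than Theorem~\ref{thm:elliott-intertwining}, which as stated requires genuine homomorphisms.
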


\begin{proof}
Write $B_K=A/K$ and
\[
C_K:=(A/K)\otimes_{\min} D.
\]
Since $D$ is exact, Proposition~\ref{prop:tensor-borel} applies to the minimal tensor
product, so the assignment
\[
K\longmapsto \mathcal J_{\min}(K,\{0\})\in \Ideal(A\otimes_{\min} D)
\]
is Borel. Therefore the quotient assignment $K\mapsto C_K$ is Borel in the standard
coding of separable $C^*$-algebras.

By Remark~\ref{rem:isom-analytic}, the isomorphism relation for separable $C^*$-algebras
is analytic in the quotient coding. Pulling this analytic relation back along the Borel
map
$K\mapsto (B_K,C_K)$
shows that
$\{K\in\Ideal(A): B_K\cong C_K\}$
is analytic.
\end{proof}

\begin{remark}\label{rem:Dstab-SSA}
For strongly self-absorbing $D$, one expects a sharper $\Pi^0_3$ description of
$D$-stability via approximately central approximately multiplicative u.c.p.\ maps
on finite sets.
However, obtaining that bound in the present coding requires a separate
finite-dimensional operator-system lemma for coding u.c.p.\ maps from a fixed
separable domain into quotients.
Since that lemma is not developed here, we content ourselves with the analytic
upper bound of Theorem~\ref{thm:Dstab}.
\end{remark}

\section{TROs and ternary ideals}\label{sec:TRO}

A ternary ring of operators (TRO) is a closed subspace
$T\subseteq\mathcal B(H,K)$ closed under $[x,y,z]=xy^*z$.
A closed subspace $J\subseteq T$ is a ternary ideal if
\[
    [T,T,J]+[T,J,T]+[J,T,T]\subseteq J.
\]

\begin{lemma}\label{lem:freeTRO}
There exists a separable TRO $T_{\mathrm{free}}$ generated by a sequence
$(u_n)$ of contractions such that for every separable TRO $T$ and every
sequence $(t_n)\subseteq \operatorname{Ball}(T)$ there exists a unique
contractive ternary homomorphism
$\pi:T_{\mathrm{free}}\to T$
with $\pi(u_n)=t_n$ for all $n$.
In particular every separable TRO is a quotient of $T_{\mathrm{free}}$ by a
closed ternary ideal.
\end{lemma}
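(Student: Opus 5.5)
I will build $T_{\mathrm{free}}$ as a universal object by the standard ``maximal seminorm'' construction, in the same spirit as $C^*_{\max}(F_\infty)$. Let $\mathcal T_0$ be the free complex ternary $*$-algebraic object on symbols $(u_n)$: the span of formal odd-length alternating words $u_{i_1}u_{i_2}^*u_{i_3}\cdots u_{i_{2k+1}}$. It carries the formal triple product $[x,y,z]=xy^*z$. For $x\in\mathcal T_0$ I define
\[
\|x\|_{\max}:=\sup\bigl\{\|\rho(x)\|:\ \rho\bigr\},
\]
where $\rho$ ranges over all assignments $u_n\mapsto t_n\in\operatorname{Ball}(\mathcal B(H,K))$, extended ternary-homomorphically, with $H,K$ separable.

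The supremum is finite because of the contractivity estimate $\|xy^*z\|\le\|x\|\|y\|\|z\|$. By induction on word length, each word has image of norm at most $1$, so $\|x\|_{\max}$ is bounded by the $\ell_1$-norm of the coefficients. Restricting to separable $H,K$ loses nothing, since the image of the countably generated $\mathcal T_0$ lives in separable reducing subspaces. This also makes the class of representations a set, up to unitary equivalence.

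Next I will realise the completion concretely. I take a direct sum $\rho_{\mathrm u}=\bigoplus\rho$ over a countable family of representations that is norming for a countable dense set of rational-coefficient elements of $\mathcal T_0$. Then I define $T_{\mathrm{free}}$ as the norm closure of $\rho_{\mathrm u}(\mathcal T_0)$ inside $\mathcal B(\bigoplus H,\bigoplus K)$. This closure is a TRO, since it is a closed subspace closed under $xy^*z$. It is separable because it is generated by countably many elements, and its norm agrees with $\|\cdot\|_{\max}$ on $\mathcal T_0$. The step I expect to need the most care is exactly this one. One must show that a countable subfamily of representations computes $\|\cdot\|_{\max}$ on all of $\mathcal T_0$, not just on the dense set. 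This follows because every $\rho$ is contractive for the $\ell_1$-norm on the coefficient space, so approximation from the rational span passes to the supremum.

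For the universal property, let $T\subseteq\mathcal B(H,K)$ be a separable TRO and let $t_n\in\operatorname{Ball}(T)$. The assignment $u_n\mapsto t_n$ is one of the $\rho$'s, so $\|\rho(x)\|\le\|x\|_{\max}$. Hence $\rho$ extends to a contractive linear map $\pi:T_{\mathrm{free}}\to T$, and it preserves the triple product by continuity. Uniqueness holds because a ternary homomorphism is determined on the ternary span of the $u_n$, which is dense.

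For the last assertion, choose $(t_n)$ dense in $\operatorname{Ball}(T)$. The image of $\pi$ then contains a dense subset of $T$. A ternary homomorphism between TROs has closed range, because its linking-algebra extension is a $*$-homomorphism of $C^*$-algebras, which has closed range. So $\pi$ is surjective. Its kernel is a closed ternary ideal, and $T\cong T_{\mathrm{free}}/\ker\pi$ by the standard first isomorphism theorem for TROs, which is again obtained via linking algebras.
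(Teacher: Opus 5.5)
Your proposal is correct and follows the same route as the paper: the maximal ternary seminorm over contractive assignments, finite by induction on word length, then completion, the universal property by contractivity, and surjectivity from dense range together with closed range via the linking-algebra $*$-homomorphism. Your countable direct-sum realisation adds one thing the paper leaves implicit: it shows explicitly that the completion of $(\mathcal T_0,\|\cdot\|_{\max})$ is a concrete TRO.
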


\begin{proof}
Let $\mathcal T_{\mathrm{alg}}$ be the algebraic free ternary system on symbols
$u_n$.  Define
\begin{multline*}
\|x\|_{\max}
:=
\sup\bigl\{\|\rho(x)\|:\ \rho:\mathcal T_{\mathrm{alg}}\to S
\text{ is a ternary homomorphism}\\
\text{into a TRO }S,\ 
\|\rho(u_n)\|\le 1\ \forall n\bigr\}.
\end{multline*}
An induction on ternary word length shows that $\|x\|_{\max}<\infty$ for every
$x\in \mathcal T_{\mathrm{alg}}$: if $x=[x_1,x_2,x_3]$, then
$\|\rho(x)\|\le\|\rho(x_1)\|\,\|\rho(x_2)\|\,\|\rho(x_3)\|$,
and the right-hand side is uniformly bounded over all admissible $\rho$ by the
inductive hypothesis.

Let $N=\{x:\|x\|_{\max}=0\}$, set $T_0=\mathcal T_{\mathrm{alg}}/N$, and let
$T_{\mathrm{free}}$ be the completion of $T_0$.  By construction the images of
the $u_n$ are contractions.

Given $(t_n)\subseteq \operatorname{Ball}(T)$, the algebraic universal property
gives a unique ternary homomorphism
$\rho:\mathcal T_{\mathrm{alg}}\to T$ with $\rho(u_n)=t_n$.
By definition of $\|\cdot\|_{\max}$, $\rho$ is contractive, so it factors
through $T_0$ and extends uniquely to a contractive ternary homomorphism
$\pi:T_{\mathrm{free}}\to T$.

Finally, if $T$ is separable, choose a dense sequence $(t_n)$ in
$\operatorname{Ball}(T)$.  The resulting map $\pi$ has dense range, and the
range of a ternary homomorphism is closed because such a homomorphism extends
to a $*$-homomorphism between linking $C^*$-algebras.  Hence $\pi$ is
surjective, so $T\cong T_{\mathrm{free}}/\ker\pi$.
\end{proof}

\begin{proposition}\label{prop:TRO-polish}
The space of closed ternary ideals in $T_{\mathrm{free}}$ is closed in
$\SB(T_{\mathrm{free}})$ for any admissible topology, hence Polish.
The quotient map $J\mapsto T_{\mathrm{free}}/J$ is a Borel surjection onto separable
TROs (up to isomorphism).
\end{proposition}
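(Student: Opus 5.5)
The plan is to view TROs as an instance of the abstract framework of Section~\ref{sec:continuous}, and then read off both claims from results already proved there. The category has underlying objects the separable TROs, regarded as Banach spaces. It carries the single continuous ternary operation $\omega_1(x,y,z)=[x,y,z]=xy^*z$, as in Example~\ref{ex:standard-cats}(iv). This operation is conjugate-linear in the middle variable but jointly norm-continuous, because $\|xy^*z\|\le\|x\|\|y\|\|z\|$, and continuity is all that Definition~\ref{def:category-ops} and Proposition~\ref{prop:ideal-closed} use.

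The first step is to identify admissible kernels with closed ternary ideals. Suppose $J$ is a ternary ideal and $x_j-x_j'\in J$. Telescoping gives
\[
[x_1,x_2,x_3]-[x_1',x_2',x_3']=[x_1-x_1',x_2,x_3]+[x_1',x_2-x_2',x_3]+[x_1',x_2',x_3-x_3'],
\]
and each summand lies in $[J,T,T]+[T,J,T]+[T,T,J]\subseteq J$. Conversely, if $J$ is an admissible kernel, then taking $x_j'=x_j-h$ in one coordinate (with $h\in J$) and the other coordinates unchanged yields each of the three absorption inclusions. With this identification, Proposition~\ref{prop:ideal-closed} applied to $X=T_{\mathrm{free}}$ gives closedness in $\SB(T_{\mathrm{free}})$ for every admissible $\tau$. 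Since $\SB(T_{\mathrm{free}})$ is Polish for admissible $\tau$ (as recalled from Godefroy--Saint-Raymond), the ideal space is Polish.

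For the second claim, surjectivity up to isomorphism is exactly the last assertion of Lemma~\ref{lem:freeTRO}. For Borelness I would interpret ``the quotient map is Borel'' as in Theorem~\ref{thm:continuous-main}(2): the code $c(J)\in\R^\N$ records $\Phi(J,x)$ for $x$ ranging over a countable dense set $D$. I would take $D$ to be the set of rational ternary polynomials in the generators $u_n$, which is closed under the bracket and rational combinations and is dense because the $u_n$ generate. Each coordinate is continuous for the Wijsman topology and upper semicontinuous, hence Borel, for any admissible topology, and by Theorem~\ref{thm:continuous-main}(3) the code determines $T_{\mathrm{free}}/J$ up to isometric ternary isomorphism. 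I will also observe that the quotient is again a TRO: a TRO quotient by a closed ternary ideal is a TRO, for example via the linking $C^*$-algebra, where $J$ corresponds to a closed ideal of the linking algebra. This lets me cite the linking-algebra fact already invoked in the proof of Lemma~\ref{lem:freeTRO}.

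The step I expect to be the main obstacle is this last point, namely that $T_{\mathrm{free}}/J$, with the quotient norm and induced bracket, is genuinely a concrete TRO rather than just an abstract ternary Banach space. I would handle it by passing to the linking algebra. The closed ternary ideal $J$ generates a closed two-sided ideal of the linking algebra whose off-diagonal corner is exactly $J$. The quotient linking algebra is a $C^*$-algebra, and its off-diagonal corner is isometric to $T_{\mathrm{free}}/J$. Everything else is a direct invocation of the general theorems.
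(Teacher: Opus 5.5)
Your proposal is correct and follows the paper's route: the paper's proof consists only of applying Proposition~\ref{prop:ideal-closed} to the ternary operation $[x,y,z]=xy^*z$, and it leaves the surjectivity and Borel-coding parts implicit in Lemma~\ref{lem:freeTRO} and Theorem~\ref{thm:continuous-main}. Your extra checks fill in details the paper does not state: that admissible kernels coincide with closed ternary ideals (by telescoping), and that $T_{\mathrm{free}}/J$ is a genuine TRO (via the linking algebra); the only fix needed is to take the coefficients of your dense set in $\Q(i)$ rather than $\Q$, so that it is dense in the complex span.
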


\begin{proof}
Apply Proposition~\ref{prop:ideal-closed} to the ternary operation
$[x,y,z]=xy^*z$.
\end{proof}

Write $V=T_{\mathrm{free}}/J$ and let $D^T\subseteq T_{\mathrm{free}}$ be a fixed
countable dense subset; set $\Phi_T(J,x)=\|x+J\|_{V}$.

\begin{proposition}\label{prop:TRO-complexity}
In the Wijsman topology on the ternary ideal space of $T_{\mathrm{free}}$:
\begin{enumerate}
\item The class of $J$ such that $[x,y,z]=[z,y,x]$ in $V=T_{\mathrm{free}}/J$ for all
$x,y,z\in V$ is closed.
\item The class of $J$ such that $V$ is rectangular AF, that is, the closure of an
increasing union of finite-dimensional sub-TROs, is $\Pi^0_2$.
\end{enumerate}
\end{proposition}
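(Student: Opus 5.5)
For (1) I would follow the proof of Proposition~\ref{prop:comm}. The identity $[x,y,z]=[z,y,x]$ is an equation between continuous ternary terms in $V$. Both sides are jointly continuous in $(x,y,z)$, so the set of triples satisfying it is closed in $V^3$. Hence the identity holds on all of $V$ as soon as it holds on the dense set $D^T+J$, and I would take $D^T$ closed under the ternary operation (Lemma~\ref{lem:dense-op-closed}). For fixed $x,y,z\in D^T$, the condition
\[
\Phi_T\bigl(J,[x,y,z]-[z,y,x]\bigr)=0
\]
is closed, because $J\mapsto\Phi_T(J,w)=d(w,J)$ is Wijsman-continuous (Lemma~\ref{lem:Phi-continuous}). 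Intersecting over the countably many triples gives a closed set.

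For (2) I would mimic Proposition~\ref{prop:AF}. First I would reformulate rectangular AF locally, using the standard equivalence valid for separable structures: $V$ is rectangular AF iff for every finite tuple $\bar x$ and every $\varepsilon>0$ there is a finite-dimensional sub-TRO $W\subseteq V$ with $\dist(x_s,W)<\varepsilon$ for all $s$. This is the usual Bratteli-type argument via the linking algebra, since sub-TROs of $V$ correspond to corner-compatible subalgebras of the linking $C^*$-algebra.

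Next I would code finite-dimensional TROs. Each is ternary-isomorphic to a direct sum $\bigoplus_j M_{p_j,q_j}$ of rectangular matrix spaces. Such a space is generated by rectangular matrix units $e^{(j)}_{ab}$ satisfying the finite ternary relations
\[
[e_{ab},e_{cd},e_{fg}]=\delta_{bd}\delta_{ac}e_{fg}\text{-type identities}
\]
(concretely $e_{ab}e_{cd}^*e_{fg}=\delta_{bd}\delta_{cf}e_{ag}$) within each block, and vanishing ternary products across blocks.

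The key step, and the main obstacle, is a stability lemma for these relations, the TRO analogue of Lemma~\ref{lem:stable-fd}: approximate rectangular matrix units in an arbitrary TRO should be perturbable to exact ones. I would prove it by passing to the linking $C^*$-algebra $L(V)=\begin{pmatrix}VV^*&V\\V^*&V^*V\end{pmatrix}$. There, approximate rectangular matrix units $x_{ab}$ produce approximate matrix units of a finite-dimensional $C^*$-algebra $\bigoplus_j M_{p_j+q_j}$, namely $x_{ab}x_{cd}^*$, $x_{ab}^*x_{cd}$ and the $x_{ab}$ themselves. The one subtlety is that this algebra is non-unital and sits in a corner-graded way; I expect Loring's semiprojectivity of finite-dimensional algebras~\cite{Loring}, applied to the grading-preserving relations, to handle it. Perturbing there and compressing back by the corner projection of $L(V)$ yields exact rectangular matrix units in the $(1,2)$ corner $V$ nearby. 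Since $D^T+J$ is dense in $V$, approximate witnesses can then be chosen from $D^T$.

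With stability in hand, I would fix $\bar x\in(D^T)^{<\N}$ and $n$. The witnesses are a block type, tuples in $D^T$ for the approximate matrix units, and rational coefficients. The approximate relations (with tolerance $\delta$ from stability) and the closeness conditions $\Phi_T(J,x_s-\sum c\,e)<1/n$ are finitely many strict inequalities in Wijsman-continuous functions of $J$, so they are open. A countable union over witnesses is open, and intersecting over $(\bar x,n)$ gives a $G_\delta$, i.e.\ $\Pi^0_2$, set. Equivalently, this is an instance of Theorem~\ref{thm:definability-rank} with stable-existence atoms.
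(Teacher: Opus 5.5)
Your proposal is correct and follows essentially the same route as the paper. In (1) you intersect the closed conditions $\Phi_T(J,[x,y,z]-[z,y,x])=0$ over the countable dense set; in (2) you take, for each $(\bar x,n)$, the open set of codes admitting approximate rectangular matrix units from $D^T$ whose rational combinations approximate $\bar x$, with exactness recovered by finite-dimensional stability in the linking $C^*$-algebra, and then intersect over $(\bar x,n)$. One small caution: perturbing in $L(V)$ without regard to the grading and then compressing by the corner projections does not preserve exactness of the relations, so the perturbation itself must be graded (first correct the diagonal projections inside $\overline{VV^*}$ and $\overline{V^*V}$, then polar-correct the off-diagonal pieces), a point the paper also passes over in a single line.
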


\begin{proof}
For ternary commutativity it suffices to test $x,y,z\in D^T$:
\[
        \Phi_T(J,[x,y,z]-[z,y,x])=0.
\]
Each condition is closed, and the intersection is countable.

For rectangular AF, fix a finite tuple $\bar x\in(D^T)^{<\N}$ and $n\in\N$.  A witness
consists of a finite direct sum of rectangular matrix spaces and approximate
rectangular matrix units from $D^T$, together with rational linear combinations of
these units approximating the entries of $\bar x$ within $1/n$.  The rectangular
matrix-unit relations are stable by applying the finite-dimensional perturbation
lemma in the linking $C^*$-algebra.  Hence, for fixed finite data, the witness
conditions are finitely many strict inequalities in the continuous functions
$J\mapsto\Phi_T(J,t)$, and so define an open set.  The search over all finite
rectangular types, dense tuples, and rational coefficients is countable.  Thus, for
fixed $(\bar x,n)$, the witness set is open; intersecting over all $\bar x$ and $n$
gives a $G_\delta$ set.
\end{proof}

We do not assign a finite Wijsman rank to TRO simplicity.  The dense-set argument that
works for purely algebraic simplicity has the same obstruction as in the Banach-algebra
case discussed after Proposition~\ref{prop:simple}: non-full elements need not be
approximated by non-full elements from a prescribed dense set.

\section{Countable structures: a complexity catalogue}
\label{sec:countable-complexity}

All spaces in this section are compact zero-dimensional Polish spaces (closed subspaces
of Cantor cubes), and atomic predicates are clopen.  For a fixed word $w$ in the free
object, the predicate ``$[w]=1$ in the quotient'' is clopen.  Consequently:
one universal quantifier over a countable domain yields a closed
($\Pi^0_1$) condition;
one existential quantifier yields an open ($\Sigma^0_1$) condition;
alternations proceed in the expected way.

\subsection{Countable groups}\label{subsec:groups}

We code $G=F_\infty/N$ with $N\in\NSub(F_\infty)\subseteq 2^{F_\infty}$.

\paragraph{Order and structure.}
\begin{itemize}
\item \emph{Finite:} $\Sigma^0_2$ ($\exists$ finite transversal
$T\subseteq F_\infty$ s.t.\ $\forall g\in F_\infty\;\bigvee_{t\in T} gt^{-1}\in N$;
each membership test is clopen, so the inner condition is closed, and the outer
existential over finite $T$ gives $\Sigma^0_2$).
\item \emph{Finitely generated:} $\Sigma^0_3$ ($\exists$ finite $S\subseteq F_\infty$
s.t.\ $\forall g\in F_\infty$, $g\in\langle S\rangle N$).
\item \emph{Cyclic:} $\Sigma^0_3$ ($\exists g\;\forall h\;\exists n:\ h\in\langle g\rangle N$).
\item \emph{Abelian:} closed ($[F_\infty,F_\infty]\subseteq N$).
\item \emph{Simple:} $\Pi^0_2$.
$G_N$ is simple iff for every $g$ and every generator $x_i$,
either $g\in N$ or there exist
$w_1,\dots,w_k\in F_\infty$ and signs $\varepsilon_j\in\{\pm1\}$ such that
$x_i(\prod_{j=1}^k w_j g^{\varepsilon_j} w_j^{-1})^{-1}\in N$.
Since $g\in N$ is clopen and the existentially quantified membership test is
a countable union of clopen conditions (hence open), each instance
of the outer $\forall g\;\forall x_i$ is open, and the intersection is $G_\delta=\Pi^0_2$.
\end{itemize}

\paragraph{Commutativity and centrality.}
\begin{itemize}
\item \emph{Nilpotent of class $\le c$:} closed (all basic commutators of weight
$c\!+\!1$ lie in~$N$).  ``Nilpotent'' $=\bigcup_c$ is $\Sigma^0_2$.
\item \emph{Solvable of derived length $\le d$:} closed.  ``Solvable'' is $\Sigma^0_2$.
\item \emph{Perfect ($G=[G,G]$):} $G_\delta$ ($\Pi^0_2$).
For each generator~$x_i$, the condition
$\exists w\in[F_\infty,F_\infty]:\ x_iw^{-1}\in N$ is a countable union of clopen sets,
hence open.
Intersecting over all generators gives $G_\delta$.
\item \emph{Centreless:} $\Pi^0_2$
($\forall g\;\exists h:\ g\in N\lor [g,h]\notin N$; the disjunction is
clopen $\cup$ clopen = clopen, so $\forall\exists(\text{clopen})$ gives $\Pi^0_2$).
\item \emph{Non-trivial centre:} $\Sigma^0_2$
($\exists g:\ g\notin N\land\forall h:\ [g,h]\in N$; the conjunction of a
clopen set with a countable intersection of clopen sets is closed,
so $\exists(\text{closed})$ gives $\Sigma^0_2$).
\end{itemize}

\paragraph{Torsion.}
\begin{itemize}
\item \emph{Torsion-free:} closed
($\forall g\;\forall n\ge 2:\ g^n\in N\implies g\in N$).
\item \emph{Periodic:} $\Pi^0_2$ ($\forall g\;\exists n\ge 2:\ g^n\in N$).
\item \emph{Locally finite:} $\Pi^0_3$ ($\forall$ finite $S$, $\langle S\rangle N/N$
is finite; each instance is $\Sigma^0_2$ since finiteness is $\Sigma^0_2$,
and $\forall(\Sigma^0_2)=\Pi^0_3$).
\end{itemize}

\paragraph{Group extensions and virtual properties.}
The remaining properties in this direction
(finite-by-cyclic, finite-by-abelian, polycyclic, cyclic-by-finite, virtually
cyclic, virtually abelian, virtually nilpotent, residually finite, Hopfian, and
related variants) require additional coding of finite quotients, finite-index
subgroups, or endomorphisms.  Their precise Borel complexity is not established
in the present paper, so we do not assign ranks to them here.

We record only the easy derived-series case:
\begin{itemize}
\item \emph{Metabelian:} closed, since $G_N$ is metabelian iff
$F_\infty^{(2)}\subseteq N$.
\end{itemize}

\subsubsection{Universality and embeddability}

A fundamental asymmetry between the countable-group and $C^*$-algebraic settings
is the failure of universality for embeddings.
There is no \emph{universal} countable group with respect to embeddings: no single
countable group contains an isomorphic copy of every countable group.
Indeed, there are $2^{\aleph_0}$ pairwise non-isomorphic finitely generated groups
(already among $2$-generated quotients of the free group $F_2$), whereas any
fixed countable group has only countably many finitely generated subgroups
(since each is generated by a finite subset).
Thus no countable group can contain copies of all finitely generated groups,
and \emph{a fortiori} none can contain copies of all countable groups;
see, e.g.,~\cite[Chapter~IV]{LyndonSchupp} for related standard facts.
In the quotient coding this means that the parameter space $\NSub(F_\infty)$ cannot
be replaced by the ideal space of a single countable group, in contrast with the
$C^*$-algebraic setting where $C^*_{\max}(F_\infty)$ is universal for separable
$C^*$-algebras.
Nevertheless, $F_\infty$ is universal for \emph{presentations}: every countable group
is a quotient of~$F_\infty$, and the normal subgroup lattice $\NSub(F_\infty)$
serves as the standard Borel space for the classification problem.

Gromov~\cite{Gromov1999} initiated a systematic study of the \emph{space of
finitely generated groups} (quotients of $F_k$ for fixed~$k$, equipped with the
Chabauty topology), viewing group-theoretic properties as subsets of this space
and asking for their topological and measure-theoretic complexity.
The present catalogue may be seen as an extension of this viewpoint to all
countable groups (using $F_\infty$ in place of~$F_k$) within the framework
of the Borel hierarchy.

\subsubsection{Sofic groups}

Sofic groups were introduced by Gromov~\cite{Gromov1999} as a common generalisation of amenable
and residually finite groups.  They form a robust class of groups that are
well-approximated by finite symmetric groups.  The motivation for studying sofic groups
stems from the fact that they are known to satisfy several major conjectures that remain
open for arbitrary groups, such as Gottschalk's surjunctivity conjecture and
Kaplansky's direct finiteness conjecture for group rings.  Gromov~\cite{Gromov1999}
asked whether \emph{every} group is sofic; this remains one of the most prominent open
problems in group theory.

\begin{definition}\label{def:sofic}
A countable group $G$ is \emph{sofic} if for every finite subset $F\subseteq G$ and
every $\varepsilon>0$, there exist a natural number $d$ and a map
$\sigma:F\to\mathrm{Sym}(d)$ (not necessarily a homomorphism) such that:
\begin{enumerate}
\item $\sigma(1)=\mathrm{id}$ (if $1\in F$);
\item for all $u,v,w\in F$ with $uv=w$ in $G$, the normalised Hamming distance
satisfies $d_H(\sigma(u)\sigma(v),\sigma(w))<\varepsilon$;
\item for all distinct $u,v\in F$,
$d_H(\sigma(u),\sigma(v))>1-\varepsilon$.
\end{enumerate}
\end{definition}

\begin{proposition}\label{prop:sofic}
The class of $N\in\NSub(F_\infty)$ such that $F_\infty/N$ is sofic is $G_\delta$
(\emph{i.e.}~$\Pi^0_2$).
\end{proposition}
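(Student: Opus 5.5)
We will write soficity as a $\forall\exists$ condition over countable data whose matrix is clopen in $N$. The quantifiers range over finite subsets $F\subseteq F_\infty$ of words and rationals $\varepsilon=1/n$, and the witnesses are integers $d$ together with maps $\sigma:F\to\mathrm{Sym}(d)$. All of these form countable sets.

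The first step is to pass from subsets of $G=F_\infty/N$ to finite sets of words. A finite $F'\subseteq G$ is the image of a finite $F\subseteq F_\infty$, but distinct words may represent the same element of $G$. We therefore fix a finite $F\subseteq F_\infty$ and consider maps $\sigma:F\to\mathrm{Sym}(d)$ subject to four requirements:
\begin{itemize}
\item if $u,v\in F$ with $uv^{-1}\in N$, then $\sigma(u)=\sigma(v)$;
\item if $1\in F$, or more generally $u\in N$ for some $u\in F$, then $\sigma(u)=\mathrm{id}$;
\item if $u,v,w\in F$ with $uvw^{-1}\in N$, then $d_H(\sigma(u)\sigma(v),\sigma(w))<1/n$;
\item if $u,v\in F$ with $uv^{-1}\notin N$, then $d_H(\sigma(u),\sigma(v))>1-1/n$.
\end{itemize}
For fixed $(F,n,d,\sigma)$ each requirement is an implication whose hypothesis is a clopen membership test on $N$ (Theorem~\ref{thm:countable-main}) and whose conclusion is a fixed true or false statement about permutations. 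Hence the set $W_{F,n,d,\sigma}$ of $N$ satisfying all of them is a finite Boolean combination of clopen sets, and so is clopen.

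Next we set $U_{F,n}=\bigcup_{d,\sigma}W_{F,n,d,\sigma}$. This is open, so $\bigcap_{F,n}U_{F,n}$ is $G_\delta$. It remains to show that this intersection is exactly the sofic class.

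Suppose $G$ is sofic and take $F$ and $n$. Let $F'$ be the image of $F$ in $G$, apply Definition~\ref{def:sofic} to $F'$ with $\varepsilon=1/n$, and define $\sigma(u)$ as the permutation assigned to the class of $u$. This map is constant on classes and satisfies every requirement. For the identity clause: if $1\in F'$, the definition gives $\sigma(1)=\mathrm{id}$, so any word $u\in N$ is sent to $\mathrm{id}$.

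Conversely, suppose $N$ lies in every $U_{F,n}$, and let $F'\subseteq G$ be finite with $\varepsilon>0$. Choose one word representative for each element of $F'$, include the empty word if $1\in F'$, take $n$ with $1/n<\varepsilon$, and let $F$ be this finite set of words. A witness $\sigma$ on $F$ is constant on classes, so it descends to a map on $F'$, and this descended map satisfies conditions (i)--(iii) of Definition~\ref{def:sofic}.

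The one step needing care is the mismatch between words and group elements. This is exactly what the "constant on classes" requirement and the class-sensitive identity clause handle. That requirement is harmless in the forward direction because the sofic approximation is defined on the group itself. Beyond this, no analytic difficulty arises, since $\mathrm{Sym}(d)$ is finite and every comparison is decided by a clopen condition.
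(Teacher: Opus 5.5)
Your proof is correct and follows essentially the same route as the paper. In both arguments, each fixed choice of finite set of words, tolerance $1/n$, size $d$ and map $\sigma$ gives a clopen set; the countable union over $(d,\sigma)$ is open, and the countable intersection over finite sets and tolerances is $G_\delta$. Your extra clauses ($\sigma$ constant on $N$-classes, and $\sigma(u)=\mathrm{id}$ for $u\in N$) are not strictly needed, since the paper works with finite $S\ni 1$ closed under inverses and a witness can be restricted to one representative per class. They do, however, make the equivalence with Definition~\ref{def:sofic} fully explicit, whereas the paper only asserts it.
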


\begin{proof}
Let $G_N=F_\infty/N$.  Fix a finite set $S\subseteq F_\infty$ containing $1$ and closed
under inverses.  Fix $k\in\N$ and write $\varepsilon=1/k$.  For $d\in\N$ and a map
$\sigma:S\to\mathrm{Sym}(d)$, we say that $\sigma$ is an \emph{$(S,\varepsilon)$-sofic
approximation for $G_N$} if the following hold:
\begin{enumerate}[label=\textup{(\alph*)}]
\item $\sigma(1)=\mathrm{id}$;
\item (\emph{multiplicativity on the partial multiplication table})
for all $u,v,w\in S$ with $uvw^{-1}\in N$ (\emph{i.e.}\ $uv=w$ in $G_N$), we have
$d_H(\sigma(u)\sigma(v),\sigma(w))<\varepsilon$;
\item (\emph{almost injectivity})
for all $u,v\in S$ with $uv^{-1}\notin N$ (\emph{i.e.}\ $u\ne v$ in $G_N$), we have
$d_H(\sigma(u),\sigma(v))>1-\varepsilon$.
\end{enumerate}
(Here $d_H$ denotes the normalised Hamming metric on $\mathrm{Sym}(d)$.)

For fixed $(S,k,d,\sigma)$, the set of $N$ for which $\sigma$ is an
$(S,1/k)$-sofic approximation depends only on membership of the finitely many words
\[
\{u:\ u\in S\}\ \cup\ \{uvw^{-1}:\ u,v,w\in S\}\ \cup\ \{uv^{-1}:\ u,v\in S\}
\]
in $N$, together with finitely many Hamming-metric inequalities involving the fixed
permutations $\sigma(u)$.  Since each predicate ``$w\in N$'' is clopen in the Cantor
topology on $2^{F_\infty}$, this set of $N$ is clopen.

Let $U_{S,k}$ be the union of these clopen sets over all $d\in\N$ and all
$\sigma:S\to\mathrm{Sym}(d)$.  Then $U_{S,k}$ is open.

Finally, $G_N$ is sofic iff for every finite $S$ and every $k$ there exists such a
$(S,1/k)$-approximation, \emph{i.e.}\ if and only if
$N\in \bigcap_{S,k} U_{S,k}$,
a countable intersection of open sets.  Hence the sofic codes form a $G_\delta$ set.
\end{proof}

\subsection{Countable abelian groups}\label{subsec:abelian}

We code $A=\Z^{(\N)}/H$ with $H\in\Sub(\Z^{(\N)})$.

\paragraph{Basic structure.}
\begin{itemize}
\item \emph{Finite:} $\Sigma^0_2$
($\exists$ finite transversal $T$ s.t.\ $\forall x\;\bigvee_{t\in T} x-t\in H$).
\item \emph{Finitely generated:} $\Sigma^0_3$
($\exists$ finite $S$ s.t.\ $\forall x\;\exists\vec n:\ x-\sum n_ss\in H$).
\item \emph{Cyclic:} $\Sigma^0_3$
($\exists g\;\forall x\;\exists n:\ x-ng\in H$).
\item \emph{Torsion-free:} closed
($\forall x\;\forall n\ge 2:\ nx\in H\implies x\in H$;
each instance is clopen, so the intersection is closed).
\item \emph{Torsion:} $\Pi^0_2$
($\forall x\;\exists n\ge 1:\ nx\in H$).
\item \emph{Divisible:} $\Pi^0_2$
($\forall a\;\forall n\ge 1\;\exists b:\ nb-a\in H$).
\item \emph{First Ulm subgroup is zero ($G^1=0$):} $\Pi^0_3$.
An abelian group $A_H=\Z^{(\N)}/H$ has $G^1=0$ (\emph{i.e.}, has no non-zero element divisible by every positive integer) iff
\[
\forall x\in \Z^{(\N)}\ 
\Bigl(
x\in H\ \lor\ \exists n\ge 1\ \forall y\in \Z^{(\N)}:\ ny-x\notin H
\Bigr).
\]
For fixed $x$ and $n$, the inner universal condition is closed; hence the
displayed formula has complexity $\Pi^0_3$.
(For torsion-free groups, being reduced is equivalent to $G^1=0$.
We do not need, and therefore do not discuss, the descriptive complexity of
reducedness for general countable abelian groups here.)
\end{itemize}

\paragraph{Torsion decomposition.}
\begin{itemize}
\item \emph{$p$-primary:} $\Pi^0_2$ ($\forall x\;\exists k:\ p^kx\in H$).
\item \emph{Direct sum of cyclic groups:} analytic ($\Sigma^1_1$) in general, since it
quantifies over an entire sequence of potential cyclic generators.
\end{itemize}

\paragraph{As $\Z$-modules.}
\begin{itemize}
\item \emph{Free abelian of finite rank $r$:} $\Sigma^0_3$
($\exists$ independent set of size $r$ generating modulo~$H$).
\item \emph{Injective $\Leftrightarrow$ divisible:} $\Pi^0_2$.
\end{itemize}

\paragraph{Slenderness.}
A torsion-free abelian group $A$ is \emph{slender} if every homomorphism
$\Z^\N\to A$ depends on finitely many coordinates.

\begin{proposition}\label{prop:slender-Pi03}
For countable abelian groups,
\[
\mathrm{Slender}=\mathrm{TorsionFree}\cap \{G : G^1=0\}.
\]
Consequently, in the coding by subgroups of $\Z^{(\N)}$, the class of slender groups is
$\Pi^0_3$.
\end{proposition}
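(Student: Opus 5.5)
The plan is to prove the set-theoretic identity by combining Nunke's characterisation of slender groups with a short argument that, for torsion-free groups, being reduced is the same as $G^1=0$. The complexity statement then follows from the catalogue above.

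\emph{Step 1: the external characterisation.} By definition every slender group is torsion-free, so $\mathrm{Slender}\subseteq\mathrm{TorsionFree}$. For the rest we invoke Nunke's theorem (see Fuchs, \emph{Infinite Abelian Groups}). It says that a torsion-free abelian group is slender iff it contains no subgroup isomorphic to $\Q$, to $\Z^\N$, or to the $p$-adic integers $J_p$ for any prime $p$.

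When $G$ is countable, the last two cases cannot occur, because $\Z^\N$ and $J_p$ are uncountable. Hence a countable torsion-free group is slender iff it contains no copy of $\Q$. Since a torsion-free divisible group is a $\Q$-vector space, and any nonzero one contains a copy of $\Q$, this is the same as saying $G$ is reduced.

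\emph{Step 2: reduced $\Leftrightarrow$ $G^1=0$ for torsion-free $G$.} This is the only internal algebra needed, and it is the step requiring the most care.

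\begin{itemize}
\item If $G$ is not reduced, it contains a copy of $\Q$, and any nonzero element of that copy is divisible by every $n\ge1$. So $G^1\neq0$.
\item Conversely, suppose $0\neq x\in G$ is divisible by every $n$. By torsion-freeness the element $y_n$ with $ny_n=x$ is unique. For all $m,n$ we have $n(my_{mn})=x$, so uniqueness gives $my_{mn}=y_n$. Hence the elements $y_{k!}$ form a chain $y_{1}=x$, $y_{k!}=(k+1)\,y_{(k+1)!}$. Their union generates a rank-one subgroup in which every element is divisible by every integer, and which is isomorphic to $\Q$ via $y_{k!}\mapsto 1/k!$. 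So $G$ is not reduced.
\end{itemize}

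Combining Steps 1 and 2 gives $\mathrm{Slender}=\mathrm{TorsionFree}\cap\{G:G^1=0\}$ among countable abelian groups.

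\emph{Step 3: complexity.} In the coding $H\in\Sub(\Z^{(\N)})$, the catalogue in Subsection~\ref{subsec:abelian} already shows two things:
\begin{itemize}
\item torsion-freeness is closed, being an intersection of clopen conditions;
\item the condition $G^1=0$ is $\Pi^0_3$, via the displayed formula $\forall x\,(x\in H\lor\exists n\,\forall y:\ ny-x\notin H)$.
\end{itemize}
A closed set is in particular $\Pi^0_3$, and $\Pi^0_3$ is closed under finite intersections. Hence $\mathrm{Slender}$ is $\Pi^0_3$.

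The main obstacle is only the correct citation of Nunke's theorem together with the cardinality reduction to the countable case. Everything else is the elementary divisibility argument of Step 2 and bookkeeping already done in the catalogue.
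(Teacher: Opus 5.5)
Your proposal is correct and follows the paper's proof essentially step for step. The shared steps are: Nunke's theorem; the cardinality observation that $\Z^\N$ and $J_p$ cannot embed in a countable group; the unique-roots argument that, in a torsion-free group, a nonzero element divisible by every integer generates a copy of $\Q$; and the catalogue bounds, namely that torsion-freeness is closed and $G^1=0$ is $\Pi^0_3$. The differences are cosmetic: you use the torsion-free form of Nunke's theorem where the paper uses the general form and treats $\Z(p)$ as the torsion obstruction, you pass through ``reduced'' as an intermediate notion, and you write out the copy of $\Q$ explicitly via the chain $y_{k!}$.
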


\begin{proof}
By Nunke's theorem~\cite{Nunke1962}, an abelian group $G$ is slender iff it contains no subgroup
isomorphic to any of $\Q$, $\Z^\omega$, $\Z(p)$, or $J_p$ (for any prime $p$).
If $G$ is countable, then $\Z^\omega$ and $J_p$ cannot embed into $G$ because they are
uncountable. The obstruction $\Z(p)$ is exactly torsion, so for countable groups the
criterion reduces to: $G$ is slender iff $G$ is torsion-free and contains no copy of
$\Q$.

For torsion-free abelian groups, ``contains no copy of $\Q$'' is equivalent to $G^1=0$.
Indeed, every copy of $\Q$ is divisible, so a group containing it has non-zero elements
divisible by every integer.
Conversely, if a torsion-free group $G$ has a non-zero element $x$ divisible by every
integer, the unique roots of $x$ generate a subgroup isomorphic to $\Q$.

Thus, for countable abelian groups,
\[
\mathrm{Slender}=\mathrm{TorsionFree}\cap \{G : G^1=0\}.
\]
By the complexity bounds recorded above, torsion-free is closed ($\Pi^0_1$) and $G^1=0$
is $\Pi^0_3$. Hence slenderness is $\Pi^0_3$.
\end{proof}

\begin{question}\label{qu:slender-optimal}
Is the $\Pi^0_3$ upper bound for slender countable abelian groups optimal?
In particular, is slenderness $\Pi^0_3$-complete in this coding?
\end{question}

\subsection{Countable rings}\label{subsec:rings}

We code $R=R_\infty/I$ where $R_\infty=\Z\langle X_n:n\in\N\rangle$ and
$I\in\Id(R_\infty)$.

\paragraph{Basic algebraic structure.}
\begin{itemize}
\item \emph{Commutative:} closed ($\forall a,b:\ ab-ba\in I$).
\item \emph{Finite:} $\Sigma^0_2$ (finite transversal).
\item \emph{Integral domain (commutative case):} closed
($\forall a,b:\ ab\in I\implies a\in I\lor b\in I$).
\item \emph{Division ring:} $\Pi^0_2$
($\forall a\;\exists b:\ a\in I\lor(ab-1\in I\text{ and }ba-1\in I)$;
the first disjunct is clopen and the second is a countable union of clopen conditions,
so each instance is open and the intersection is $G_\delta$).
\item \emph{Field:} $\Pi^0_2$ (commutative $\cap$ division ring; intersection of closed and $\Pi^0_2$).
\item \emph{Reduced:} closed
($\forall a\;\forall n\ge 1:\ a^n\in I\implies a\in I$).
\item \emph{Nil ring:} $\Pi^0_2$ ($\forall a\;\exists n\ge 1:\ a^n\in I$).
\item \emph{Boolean ring:} closed ($\forall a:\ a^2-a\in I$).
\end{itemize}

\paragraph{Ideals and factor structure.}
\begin{itemize}
\item \emph{Simple:} $\Pi^0_2$
($\forall a\;\exists m,b_j,c_j:\ a\in I\lor\sum_{j=1}^mb_jac_j-1\in I$;
$a\in I$ is clopen, the existential clause is a countable union of clopen conditions,
hence open; $\forall(\text{open})=G_\delta$).
\item \emph{Prime (two-sided):} $\Pi^0_2$
($\forall a,b\;\exists r:\ a\in I\lor b\in I\lor arb\notin I$;
each disjunct is clopen, so the disjunction is open and $\forall(\text{open})=G_\delta$).
\item \emph{Semiprime:} $\Pi^0_2$
($\forall a:\ (\forall r:\ ara\in I)\implies a\in I$).
\item \emph{Local:} $\Pi^0_2$
($\forall a:\ [\exists u:\ au-1,ua-1\in I]\lor[\exists v:\ (1\!-\!a)v-1,v(1\!-\!a)-1\in I]$;
each $\exists$ clause is open, so the disjunction is open and $\forall(\text{open})=G_\delta$).
\item \emph{Von Neumann regular:} $\Pi^0_2$
($\forall a\;\exists x:\ a-axa\in I$; $\forall\exists(\text{clopen})=G_\delta$).
\item \emph{Unit-regular:} $\Pi^0_2$
($\forall a\;\exists u,v:\ uv-1,vu-1,a-aua\in I$;
$\forall\exists(\text{clopen})=G_\delta$).
\end{itemize}

\paragraph{Dedekind finiteness.}

\begin{proposition}\label{prop:ring-df}
Let $R=R_\infty/I$ be coded by $I\in\Id(R_\infty)$.  The class of codes $I$ such that
$R$ is \emph{Dedekind finite} (directly finite), \emph{i.e.}\
$\forall a,b\in R\ (ab=1\ \Rightarrow\ ba=1)$,
is a closed subset of $\Id(R_\infty)$ (hence $\Pi^0_1$).
Its complement (Dedekind infiniteness) is open.
\end{proposition}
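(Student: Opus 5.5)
The plan is to write Dedekind infiniteness as a countable union of clopen sets in $\Id(R_\infty)$. Since $R_\infty$ is countable and the quotient map $R_\infty\to R$ is surjective, every pair $a,b\in R$ lifts to a pair $\tilde a,\tilde b\in R_\infty$. Therefore
\[
R \text{ is Dedekind infinite}
\iff
\exists \tilde a,\tilde b\in R_\infty:\ \tilde a\tilde b-1\in I\ \wedge\ \tilde b\tilde a-1\notin I .
\]
Unlike the Banach-algebra case (Proposition~\ref{prop:df-banach}), no approximation or norm control is needed. Witnesses are exact elements of the countable free ring, so the unbounded-witness phenomenon of Daws--Horv\'ath plays no role here.

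Next I would check the complexity of each instance. For a fixed pair $(\tilde a,\tilde b)$, the conditions ``$\tilde a\tilde b-1\in I$'' and ``$\tilde b\tilde a-1\notin I$'' each depend on a single coordinate of $I\in 2^{R_\infty}$. Hence each is clopen, and so is their conjunction. The Dedekind-infinite class is the union of these clopen sets over the countable set $R_\infty\times R_\infty$, so it is open. Its complement, the Dedekind-finite class, is therefore closed in $\Id(R_\infty)$. Since $\Id(R_\infty)$ is itself closed in the Cantor cube by Theorem~\ref{thm:free-objects-closed}, the class is also closed in $2^{R_\infty}$.

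Two points need care, though neither is a real obstacle. The first is the convention for the zero ring, $I=R_\infty$. There $1=0$, so $ab=1$ always implies $ba=1$ and the ring is Dedekind finite. This is consistent with the displayed formula, because $\tilde b\tilde a-1\notin I$ fails for every pair. The second is to confirm that the first-order sentence $\forall a,b\,(ab=1\Rightarrow ba=1)$ really transfers to lifts. This holds because equality in $R$ is exactly membership of the difference in $I$, which is the clopen atomic predicate noted in Theorem~\ref{thm:countable-main}(4). The proof thus amounts to an instance of that theorem for a single universal quantifier block over clopen atoms.
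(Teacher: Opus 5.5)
Your proposal is correct and follows the paper's proof. For each pair $u,v\in R_\infty$, the cylinder $\{I:\ uv-1\in I,\ vu-1\notin I\}$ is clopen, and Dedekind infiniteness is the countable union of these cylinders; equivalently, Dedekind finiteness is the intersection of their clopen complements $E_{u,v}$, and your remarks on lifting witnesses and the zero ring are accurate bookkeeping that the paper leaves implicit.
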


\begin{proof}
Fix $u,v\in R_\infty$.  In the quotient $R_\infty/I$ we have $uv=1$ iff $uv-1\in I$,
and $vu=1$ iff $vu-1\in I$.  For fixed $(u,v)$ the set
\[
E_{u,v}:=\{I:\ (uv-1\in I)\Rightarrow(vu-1\in I)\}
\]
is the complement of the clopen cylinder
$\{I:\ uv-1\in I\ \wedge\ vu-1\notin I\}$, hence is clopen.
Dedekind finiteness is $\bigcap_{u,v\in R_\infty}E_{u,v}$, a countable intersection
of clopen sets, therefore closed.
The complement is the corresponding countable union of clopen sets, hence open.
\end{proof}

\begin{remark}\label{rem:ring-df-contrast}
The contrast with the Banach-algebra case (Proposition~\ref{prop:df-banach}) is
instructive: in countable rings the Dedekind finiteness condition involves no norm
bounds and the atomic predicates are clopen, yielding a closed ($\Pi^0_1$) set.
In Banach algebras the atomic predicates involve continuous (not clopen) norms, but
the open-fragment/Neumann-series argument above still yields a closed set.
\end{remark}

\subsection{Lattices and Boolean algebras}\label{subsec:lattices}

We code lattices as quotients $L_\infty/\theta$ with $\theta$ a lattice congruence
on the free lattice $L_\infty$ in the signature $(\wedge,\vee)$.  If considering
bounded lattices, we use $(\wedge,\vee,0,1)$.

\paragraph{Lattice properties.}

\begin{itemize}
\item \emph{Finite:} $\Sigma^0_2$ (finite transversal).
\item \emph{Distributive:} closed (satisfies the identity
$x\wedge(y\vee z)=(x\wedge y)\vee(x\wedge z)$).
\item \emph{Modular:} closed (satisfies the Horn condition
$x\le b\implies x\vee(a\wedge b)=(x\vee a)\wedge b$).
\item \emph{Complemented (bounded case):} $\Pi^0_2$.
For a fixed $x$, the condition $\exists y:\ (x\wedge y=0)\text{ and }(x\vee y=1)$
is open (a single existential over the countable domain).  Intersecting over all~$x$
gives~$\Pi^0_2$.
\end{itemize}

\paragraph{Boolean algebras.}
If we use the Boolean-algebra signature $(\wedge,\vee,{}',0,1)$, Boolean algebras form
a variety and hence a closed subspace of the corresponding congruence space on
$B_\infty$.  If we use only the lattice signature $(\wedge,\vee,0,1)$, the property of
being a Boolean algebra is $\Pi^0_2$ (distributive, which is closed, and complemented,
which is $\Pi^0_2$).  Working within the closed space of Boolean algebras $\Id(B_\infty)$:

\begin{itemize}
\item \emph{Atomless:} $\Pi^0_2$
($\forall x>0\;\exists y:\ 0<y<x$).
\item \emph{Atomic:} $\Pi^0_3$
($\forall x>0\;\exists y\le x:\ y\text{ is an atom}$, where ``$y$ is an atom'' is the
closed condition $y>0\wedge\forall z\le y\ (z=0\vee z=y)$; the inner $\forall\exists$
scheme yields~$\Pi^0_3$).
\item \emph{Complete:} $\Sigma^0_2$.  A countable Boolean algebra is complete if and only
if it is finite, so this reduces to the property of being finite.
\end{itemize}

\subsection{Optimality of Borel bounds}
\label{subsec:optimality}

The bounds established above are not merely upper estimates: several fundamental
properties are \emph{complete} for their respective Borel classes, demonstrating
that the definability method assigns optimal ranks.

\begin{theorem}\label{thm:finite-sigma2-complete}
In the parameter space $\NSub(F_\infty)$, the property of being
a finite group is $\Sigma^0_2$-complete.  The same holds for rings, abelian groups,
and lattices in their respective parameter spaces.
\end{theorem}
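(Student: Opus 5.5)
The proof has two parts: the upper bound, which is already recorded in the catalogue, and a hardness reduction from a canonical $\Sigma^0_2$-complete set. For membership in $\Sigma^0_2$ I would cite the finite-transversal formula from Subsection~\ref{subsec:groups}. Finiteness of $F_\infty/N$ says that there is a finite $T\subseteq F_\infty$ such that for every $g$ some $gt^{-1}\in N$ with $t\in T$. For fixed $T$ the inner condition is a countable intersection of clopen sets, and the union over the countably many finite $T$ is $\Sigma^0_2$. The same formula, with the appropriate clopen equality test ($x-t\in H$, $a-t\in I$, or $(a,t)\in\theta$), gives the upper bound for abelian groups, rings and lattices.

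For hardness I reduce the standard $\Sigma^0_2$-complete set
\[
Q=\{\alpha\in 2^\N:\ \alpha(n)=1\text{ for only finitely many }n\}
\]
by a continuous map $\alpha\mapsto N_\alpha$. The idea is to use the generators $x_n$ as independent switches. When $\alpha(n)=0$, the generator $x_n$ is killed. When $\alpha(n)=1$, it survives as a copy of a nontrivial finite group, say $\Z/2$.

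Concretely, in the abelian coding set
\[
H_\alpha=\langle\, e_n\ (\alpha(n)=0),\ 2e_n\ (\alpha(n)=1)\,\rangle\le\Z^{(\N)}.
\]
Then $\Z^{(\N)}/H_\alpha\cong\bigoplus_{n:\alpha(n)=1}\Z/2$, which is finite iff $\alpha\in Q$. For groups let $N_\alpha$ be the normal closure of $\{x_n:\alpha(n)=0\}\cup\{x_n^2:\alpha(n)=1\}\cup\{[x_m,x_n]:m,n\in\N\}$. The quotient is the same elementary abelian $2$-group. For rings, take the ideal generated by all commutators, by $2$, by $X_n^2-X_n$ and $X_mX_n$ for $m\neq n$, and by the $X_n$ with $\alpha(n)=0$. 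The quotient is $\mathbb F_2\oplus\bigoplus_{\alpha(n)=1}\mathbb F_2X_n$, which is finite iff $\alpha\in Q$. For lattices, take the congruence generated by all distributivity instances, by $x_n\le x_{n+1}$, and by $x_n=x_{n+1}$ whenever $\alpha(n)=0$. The quotient is a chain whose number of distinct elements is one plus the number of $n$ with $\alpha(n)=1$.

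The main obstacle is continuity. The map $\alpha\mapsto N_\alpha$ is continuous into $2^{F_\infty}$ exactly when, for each fixed word $w$, the bit ``$w\in N_\alpha$'' depends on only finitely many values of $\alpha$. I would check this by a retraction argument. Let $S$ be the finite set of generators occurring in $w$. The homomorphism $F_\infty\to F_\infty$ that fixes the generators in $S$ and kills all others maps $N_\alpha$ into itself and fixes $w$. It follows that $w\in N_\alpha$ iff $w$ lies in the normal closure of the relators involving only generators in $S$, and that set is determined by $\alpha|_S$.

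The same retraction argument works in the ring and lattice cases. In the lattice case, collapsing the generators outside $S$ to suitable neighbours of $S$ in the chain replaces killing them, and I expect this to need the most care.

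Once continuity holds, $N_\alpha$ lies in the relevant coding space $\mathcal S$ from Theorem~\ref{thm:free-objects-closed}, because the generated subobject is automatically of the right kind. Then $\alpha\in Q\iff F_\infty/N_\alpha$ is finite, so $Q$ is a continuous preimage of each finiteness class. This proves $\Sigma^0_2$-hardness, and together with the upper bound it gives $\Sigma^0_2$-completeness.
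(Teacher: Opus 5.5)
Your proof is correct. Its skeleton is the paper's: the upper bound comes from the finite-transversal formula, and hardness comes from a continuous reduction of the eventually-zero sequences to quotients built from copies of $\Z/2$, $\mathbb F_2$ or a chain. The reductions themselves differ, in three ways.

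First, the paper never kills generators. It merges $x_n$ with $x_{n+1}$ when $\alpha(n)=0$ by sending $x_n\mapsto e_{k_\alpha(n)}$, where $k_\alpha(n)=\sum_{i<n}\alpha(i)$.

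Second, the paper proves continuity without any retraction. Each kernel is the kernel of an explicit evaluation map into a fixed structure, so membership of a fixed word is read off from $\alpha(0),\dots,\alpha(M-1)$. Your retraction argument is correct, but your constructions also admit this direct device. The maps $x_n\mapsto\alpha(n)e_n$ into $\bigoplus_{\N}\Z/2$, $X_n\mapsto\alpha(n)X_n$ into the ring $\mathbb F_2\oplus\bigoplus_{\N}\mathbb F_2X_n$ of orthogonal idempotents, and $x_n\mapsto k_\alpha(n)$ into the chain $(\N,\min,\max)$ have kernels exactly $N_\alpha$, your ring ideal, and your lattice congruence. This is the cleanest way to close the lattice case you flagged. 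There, whether a pair of terms in the generators from $S$ is identified depends on $\alpha$ on the whole interval $[\min S,\max S)$, not on $\alpha|_S$. A retraction preserving the congruence exists, but it must depend on $\alpha$. Between consecutive $s_i<s_{i+1}$ in $S$, send generators to $x_{s_i}$ up to some $j\in[s_i,s_{i+1})$ with $\alpha(j)=1$, if one exists, and to $x_{s_{i+1}}$ after it.

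Third, your chain avoids a problem in the paper's lattice step, which uses finite subsets of $\operatorname{im}(k_\alpha)$ with the top $\operatorname{im}(k_\alpha)$ adjoined. Read literally, with that top being the set itself ordered by inclusion, the generator sent to the top and the generator sent to $\{k_\alpha(0)\}=\{0\}$ are identified iff $\alpha\equiv 0$. That condition is not open, so the paper's lattice map is discontinuous at $\alpha\equiv0$. It would need a formal top, or no top, whereas your chain has no such edge case.
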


\begin{proof}
Membership in $\Sigma^0_2$ was noted in Section~\ref{subsec:groups}.  For completeness,
we reduce from the canonical $\Sigma^0_2$-complete set
$\mathrm{Fin} = \{ \alpha \in 2^\N : \exists N\; \forall n \ge N\; (\alpha(n) = 0) \}$.

Define $V:=\bigoplus_{k\in\N}\Z/2\Z$ with canonical basis $(e_k)_{k\in\N}$.
For $\alpha\in 2^\N$ set
\[
k_\alpha(n):=\sum_{i<n}\alpha(i)\in\N,
\]
and define a homomorphism $\pi_\alpha:F_\infty\to V$ by $\pi_\alpha(x_n)=e_{k_\alpha(n)}$.
Let 
\[
    N_\alpha:=\ker(\pi_\alpha)\in\NSub(F_\infty).
\]
For the abelian-group coding, define the surjective homomorphism
\[
\widetilde\pi_\alpha:\Z^{(\N)}\to V,
\qquad
\widetilde\pi_\alpha(e_n)=e_{k_\alpha(n)},
\]
and let
\[
H_\alpha:=\ker(\widetilde\pi_\alpha)\in \Sub(\Z^{(\N)}).
\]
Then
\[
\Z^{(\N)}/H_\alpha\cong \bigoplus_{j\in\operatorname{im}(k_\alpha)} \Z/2\Z,
\]
which is finite iff $\alpha\in\mathrm{Fin}$ and countably infinite otherwise.
Exactly as in the free-group coding, membership of a fixed element of $\Z^{(\N)}$ in
$H_\alpha$ depends only on finitely many values of $\alpha$, so the map
$\alpha\mapsto H_\alpha$ is continuous.

Then $G_\alpha:=F_\infty/N_\alpha$ is abelian of exponent dividing $2$, and in $G_\alpha$
we have $x_n=x_{n+1}$ whenever $\alpha(n)=0$ (since then $k_\alpha(n)=k_\alpha(n+1)$).
Thus $G_\alpha$ has one $\Z/2\Z$-generator for each $\sim_\alpha$-equivalence class
of $\N$ generated by the edges $n\sim_\alpha n{+}1$ when $\alpha(n)=0$.

If $\alpha\in\mathrm{Fin}$, then $\alpha(n)=0$ for all $n\ge N$, so $k_\alpha(n)$ is bounded
and $G_\alpha$ is generated by finitely many elements of order $2$, hence is finite.
If $\alpha\notin\mathrm{Fin}$, then $k_\alpha(n)\to\infty$, so $G_\alpha\cong\bigoplus_\N\Z/2\Z$
and is infinite.

Finally, the map $\alpha\mapsto N_\alpha$ is continuous.
Indeed, for a fixed reduced word $w\in F_\infty$ let $M$ be the largest index of a generator
appearing in $w$.  Then $\pi_\alpha(w)$ depends only on the values $\alpha(0),\dots,\alpha(M{-}1)$,
hence the set $\{\alpha: w\in N_\alpha\}=\{\alpha:\pi_\alpha(w)=0\}$ is clopen in $2^\N$.
Therefore $\alpha\mapsto N_\alpha$ is continuous as a map into $2^{F_\infty}$.

For rings, let
\[
V_\alpha:=\bigoplus_{j\in\operatorname{im}(k_\alpha)}\mathbb F_2 e_j,
\qquad
R_\alpha:=\mathbb F_2\oplus V_\alpha,
\]
with multiplication
\[
(a,v)(b,w):=(ab,aw+bv),
\]
so that $V_\alpha^2=0$.
Define the unique unital ring homomorphism
\[
\rho_\alpha:R_\infty\to R_\alpha
\]
by
\[
\rho_\alpha(1)=(1,0),
\qquad
\rho_\alpha(X_n)=(0,e_{k_\alpha(n)}) \quad (n\in\N).
\]
This map is surjective, since $(1,0)$ is the unit of $R_\alpha$ and the elements
$(0,e_{k_\alpha(n)})$ generate the square-zero ideal $V_\alpha$.
Then $R_\alpha$ is finite iff $\operatorname{im}(k_\alpha)$ is finite, \emph{i.e.}\ if and only if
$\alpha\in\mathrm{Fin}$; otherwise $R_\alpha$ is countably infinite.

For lattices, let $I_\alpha:=\operatorname{im}(k_\alpha)$ and let $L_\alpha$ be
the lattice consisting of all finite subsets of $I_\alpha$ together with the top
element $I_\alpha$, ordered by inclusion.  This lattice is finite iff
$\alpha\in\mathrm{Fin}$ and countably infinite otherwise.  It is generated by the
top element together with the singletons $\{k_\alpha(n)\}$, so there is a
surjective lattice homomorphism
$\lambda_\alpha:L_\infty\to L_\alpha$
sending one chosen free generator to $I_\alpha$ and the remaining generators to
the singletons $\{k_\alpha(n)\}$.  Let $\theta_\alpha=\ker(\lambda_\alpha)$.

In each case, membership of a fixed word / polynomial / lattice term in the
corresponding kernel depends only on finitely many values
$k_\alpha(0),\dots,k_\alpha(M)$, hence only on finitely many coordinates of
$\alpha$.  Therefore the maps
$\alpha\mapsto N_\alpha$, $\alpha\mapsto H_\alpha$,
$\alpha\mapsto \ker(\rho_\alpha)$, $\alpha\mapsto \theta_\alpha$
are continuous, completing the proof.
\end{proof}

\begin{theorem}\label{thm:torsion-divisible-pi2-complete}
In the parameter space $\Sub(\Z^{(\N)})$, the properties
of being a torsion group and of being a divisible group are
both $\Pi^0_2$-complete.
\end{theorem}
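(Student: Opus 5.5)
Membership in $\Pi^0_2$ was already recorded in Section~\ref{subsec:abelian}: torsion is $\forall x\,\exists n\ge1\ (nx\in H)$ and divisibility is $\forall a\,\forall n\ge1\,\exists b\ (nb-a\in H)$, each a $\forall\exists(\text{clopen})$ scheme. So only hardness needs proof. I will reduce the canonical $\Pi^0_2$-complete set
\[
\mathrm{Inf}=\{\alpha\in 2^\N:\ \exists^\infty n\ \alpha(n)=1\}
\]
(equivalently the set $P_2$ used earlier) to each property by a continuous map $\alpha\mapsto H_\alpha\in\Sub(\Z^{(\N)})$. Continuity will be checked exactly as in Theorem~\ref{thm:finite-sigma2-complete}: for each fixed $z\in\Z^{(\N)}$, whether $z\in H_\alpha$ should depend on only finitely many coordinates of $\alpha$.

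\emph{Torsion.} For each $n$, the generator $e_n$ receives the relation $2^{c_\alpha(n)}e_n=0$ if $\alpha$ has a $1$ at some position $\ge n$, and no relation otherwise. This relation depends on a tail, so it is not finitary, and I will replace it by a finitary one. Let $H_\alpha$ be generated by $2^{s}e_n$ whenever $\alpha(s)=1$ and $s\ge n$. Then $e_n$ is torsion iff some $s\ge n$ has $\alpha(s)=1$, and the quotient is $\bigoplus_n \Z/2^{s_n}$ or $\Z$ in the $n$th slot. Hence the group is torsion iff $\alpha\in\mathrm{Inf}$. For continuity, note that $z=\sum z_ne_n\in H_\alpha$ iff for each $n\in\operatorname{supp}z$ we have $2^{s}\mid z_n$ for some $s\ge n$ with $\alpha(s)=1$. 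Any such $s$ is bounded by $\log_2|z_n|$ when $z_n\neq0$, so only finitely many coordinates of $\alpha$ are consulted.

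\emph{Divisible.} This case is the main obstacle, because divisibility requires infinitely many roots in a finitary way. I will use a direct-limit construction with generators $e_{n}$ and relations $2e_{n+1}-e_n\in H_\alpha$ whenever $\alpha(n)=1$. I will also add $e_0$-type relations forcing each generator to start a new copy when $\alpha(n)=0$. The cleaner choice is to make the quotient a direct sum of blocks. Maximal runs of consecutive $1$s glue $e_n$'s into $\Z[1/2]$-approximations, and zeros cut the chain. I would instead aim for $\Z[1/p$ for all $p]$-style divisibility, using an enumeration of pairs (prime power $m$, index). The plan is to have the quotient be $\Q$ when $\alpha\in\mathrm{Inf}$, built as the limit $\Z\xrightarrow{\times 2}\Z\xrightarrow{\times3}\cdots$ where the $k$th multiplication step is ``activated'' only at the $k$th $1$ of $\alpha$. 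When $\alpha\notin\mathrm{Inf}$, the quotient becomes $\Z[1/k!]\oplus(\text{free part})$, which is not divisible.

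Concretely, I will set $k_\alpha(n)=\sum_{i<n}\alpha(i)$ as in Theorem~\ref{thm:finite-sigma2-complete}. Then $H_\alpha$ is generated by $e_n-e_{n+1}$ when $\alpha(n)=0$ and by $(k_\alpha(n)+2)e_{n+1}-e_n$ when $\alpha(n)=1$. Membership of a fixed $z$ is decided by computing in the finite initial system up to $\max\operatorname{supp}z$, because the quotient is torsion-free and the map from the first $M$ generators is the canonical embedding into the chain. The remaining verification is that this chain is cofinal in $\Q$ exactly when $\alpha$ has infinitely many $1$s. Otherwise the group is isomorphic to $\Z$ via eventually-identified generators, and $\Z$ is not divisible.
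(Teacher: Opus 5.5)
Your proposal is correct and follows essentially the same route as the paper. For torsion, your generators $2^{s}e_n$ ($s\ge n$, $\alpha(s)=1$) play the role of the paper's $2^{m-n}e_n$. For divisibility, your final concrete construction is the same direct limit of copies of $\Z$ with bonding maps activated at the ones of $\alpha$: your multipliers $k_\alpha(n)+2$ give $D_i=(k_\alpha(i)+1)!$, where the paper uses $(n+2)!$, and continuity comes from the embedding $e_i\mapsto 1/D_i$ into $\Q$. Keep only that final construction, since your earlier exploratory remark that the quotient is $\Z[1/k!]\oplus(\text{free part})$ is wrong; for finitely many ones the quotient is just $\Z$, as you correctly conclude at the end.
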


\begin{proof}
Membership in $\Pi^0_2$ is recorded in Section~\ref{subsec:abelian}.  We reduce from
the canonical $\Pi^0_2$-complete set
\[
        P_\infty=\{\alpha\in2^\N: \exists^\infty n\, \alpha(n)=1\}.
\]

\emph{Torsion.}
For $\alpha\in2^\N$, let $H_\alpha\le\Z^{(\N)}$ be generated by
\[
        \{2^{m-n}e_n:n\le m\text{ and }\alpha(m)=1\}.
\]
Then $\Z^{(\N)}/H_\alpha$ is torsion iff $\alpha\in P_\infty$.  Indeed, if
$\alpha$ has infinitely many ones, then for every $n$ there is $m\ge n$ with
$\alpha(m)=1$, so a power of $2$ annihilates the image of $e_n$; every element has
finite support, hence is torsion.  If $\alpha$ has only finitely many ones and $n$ is
larger than the last one, then no non-zero multiple of $e_n$ belongs to $H_\alpha$, so
the image of $e_n$ has infinite order.

The map $\alpha\mapsto H_\alpha$ is continuous.  For a fixed
$z=\sum c_ne_n\in\Z^{(\N)}$ with finite support, membership $z\in H_\alpha$ holds
iff, for every $n$ with $c_n\ne0$, there is some
\[
        m\in\{n,n+1,\dots,n+\nu_2(|c_n|)\}
\]
with $\alpha(m)=1$; here $\nu_2(q)$ is the exponent of $2$ in $q$.  Thus the cylinder
$\{\alpha:z\in H_\alpha\}$ depends on only finitely many coordinates of
$\alpha$.

\emph{Divisible.}
For $\alpha\in2^\N$, define
\[
        d_n(\alpha)=
        \begin{cases}
        1, & \alpha(n)=0,\\
        (n+2)!, & \alpha(n)=1,
        \end{cases}
\]
and let $J_\alpha\le\Z^{(\N)}$ be generated by
\[
        e_n-d_n(\alpha)e_{n+1}\qquad(n\in\N).
\]
Equivalently, $\Z^{(\N)}/J_\alpha$ is the direct limit of copies of $\Z$ with
bonding maps multiplication by $d_n(\alpha)$.

If $\alpha\in P_\infty$, then the indices with $\alpha(n)=1$ are unbounded.  Given
an element represented by $ae_i$ and an integer $q\ge1$, choose $t>i$ such that
$\prod_{k=i}^{t-1}d_k(\alpha)$ is divisible by $q$; this is possible because one of
the factorial factors can be chosen large enough.  Since
\[
        e_i=\Bigl(\prod_{k=i}^{t-1}d_k(\alpha)\Bigr)e_t
\]
in the quotient, $ae_i$ is divisible by $q$.  Thus the whole quotient is divisible.
If $\alpha\notin P_\infty$, then all but finitely many $d_n(\alpha)$ are equal to
$1$, and the quotient embeds as the cyclic subgroup $M^{-1}\Z\subseteq\Q$
for some integer $M$; this group is not divisible.

The map $\alpha\mapsto J_\alpha$ is continuous.  Put
$D_0=1$ and $D_i=\prod_{k<i}d_k(\alpha)$.  The homomorphism
\[
        e_i\longmapsto 1/D_i
\]
from $\Z^{(\N)}$ into $\Q$ has kernel exactly $J_\alpha$.  Hence, for a fixed
$z=\sum_{i\le r}c_ie_i$, the condition $z\in J_\alpha$ is
\[
        \sum_{i\le r} c_i/D_i=0,
\]
which depends only on $\alpha(0),\dots,\alpha(r-1)$.  This proves continuity and
therefore $\Pi^0_2$-hardness for both properties.
\end{proof}

\section{A \texorpdfstring{$\Pi^1_1$}{Pi11}-complete property in a quotient coding}\label{sec:Pi11-example}

We give an explicit non-Borel example within the quotient paradigm, showing that some
natural Banach-space properties of quotients cannot be located within the Borel
hierarchy.  This calibrates the reach of the definability method developed in earlier
sections.

\subsection{Closed subsets of Cantor space and commutative quotients}

Let $K_0=2^\N$ be the Cantor space and let $\mathscr F(K_0)$ be the hyperspace of
non-empty closed subsets of $K_0$ with its Effros Borel structure (equivalently, the
standard Borel structure induced by the Vietoris topology).

For $F\in\mathscr F(K_0)$ define the closed ideal
\[
I_F=\{f\in C(K_0): f|_F=0\}\triangleleft C(K_0).
\]
Then the restriction map $C(K_0)\to C(F)$ is a surjective $*$-homomorphism with kernel
$I_F$, hence $C(K_0)/I_F\cong C(F)$ as $C^*$-algebras.

\begin{lemma}\label{lem:F-to-I-borel}
The map $F\mapsto I_F$ from $\mathscr F(K_0)$ onto $\Ideal(C(K_0))$ is a
Borel isomorphism.
\end{lemma}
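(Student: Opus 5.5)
The plan is to show that $F\mapsto I_F$ is a bijection and is continuous from the Vietoris topology to the Wijsman topology. A continuous bijection between standard Borel spaces is then a Borel isomorphism by Souslin's theorem (an injective Borel image of a Borel set is Borel; \cite[Theorem~15.1]{Kechris}).

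\emph{Bijectivity.} Since $K_0$ is compact Hausdorff, the standard correspondence between closed ideals of $C(K_0)$ and closed subsets of $K_0$ applies. Every closed ideal $I$ equals $I_{F}$, where $F$ is the hull $\{x:f(x)=0\ \forall f\in I\}$; this is a Stone--Weierstrass/partition-of-unity argument (see \cite[II.1.1]{BlackadarOA}). Conversely $F$ is recovered from $I_F$ as its hull, because for closed $F$ and $x\notin F$ Urysohn's lemma gives $f\in I_F$ with $f(x)\ne0$. One caveat: $F=\varnothing$ is excluded from $\mathscr F(K_0)$, and it would correspond to $I=C(K_0)$. So the map is onto the proper ideals; I would either state it as onto $\Ideal(C(K_0))\setminus\{C(K_0)\}$, which is a clopen-complement singleton and so harmless by Convention~\ref{conv:proper-quotients}, or adjoin $\varnothing$ as an isolated point.

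\emph{Continuity.} I would use the formula already used before Theorem~\ref{thm:Cstar-Pi2-complete}:
\[
\dist(f,I_F)=\|f+I_F\|_{C(K_0)/I_F}=\|f|_F\|_\infty=\max_{x\in F}|f(x)|.
\]
The middle equality holds because $C(K_0)/I_F\cong C(F)$ isometrically. For fixed $f$, the map $F\mapsto\max_F|f|$ is continuous for the Vietoris topology, which on compact $K_0$ is the Hausdorff-metric topology: if $d_H(F_n,F)\to0$, uniform continuity of $|f|$ gives convergence of the maxima. Hence $F\mapsto I_F$ is Wijsman-continuous, and therefore Borel.

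\emph{Borel inverse.} Since $\mathscr F(K_0)$ is compact and $\Ideal(C(K_0))$ with Wijsman is Hausdorff, a continuous bijection is in fact a homeomorphism. This gives Borelness of the inverse directly and avoids even Souslin.

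The main (mild) obstacle is bookkeeping rather than mathematics. The hull correspondence has to be invoked correctly, and the empty set / whole algebra has to be handled consistently with the statement's "onto". I expect the continuity computation to be routine.
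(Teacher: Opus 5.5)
Your proof is correct. For the inverse map it takes a different route from the paper.

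\textbf{Forward map.} The paper uses the same formula $\dist(f,I_F)=\sup_{x\in F}|f(x)|$. It states only that the map is Borel, although the formula gives Wijsman continuity exactly as you argue.

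\textbf{Inverse map.} The paper argues directly: for each basic clopen $U\subseteq K_0$,
\[
F\cap U\neq\varnothing \iff \dist(1_U,I_F)=1,
\]
and the hit sets generate the Effros/Vietoris Borel structure, so the inverse is Borel. That argument is explicit, since it shows which Wijsman coordinates recover $F$, but it relies on zero-dimensionality of $K_0$ (indicators of clopen sets must be continuous). You instead use compactness of $\mathscr F(K_0)$ and Hausdorffness of the Wijsman topology to upgrade the continuous bijection to a homeomorphism. This gives a stronger, topological conclusion and works verbatim for any compact metric space in place of the Cantor space. The cost is that it is less constructive.

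\textbf{The empty set.} Your caveat is also well taken. $C(K_0)=I_\varnothing$ lies in $\Ideal(C(K_0))$ but not in the image of $\mathscr F(K_0)$, so the paper's ``onto'' and ``bijective'' are literally correct only after removing that point. Since $\dist(1,I)\in\{0,1\}$ for every closed ideal $I$, the point $C(K_0)$ is isolated in the Wijsman topology. Either of your fixes is therefore harmless for the later $\Pi^1_1$-completeness transfer: the zero quotient has separable dual, and adding an isolated point does not affect completeness.
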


\begin{proof}
For $f\in C(K_0)$,
\[
\dist(f,I_F)=\|f|_F\|_\infty=\sup_{x\in F}|f(x)|.
\]
Hence $F\mapsto I_F$ is Borel.  The map is bijective by the ideal theory of
commutative $C^*$-algebras.

To see that the inverse is Borel, fix a countable clopen basis
$\mathcal U$ of $K_0$.  For each $U\in\mathcal U$, the indicator $1_U$ is a
continuous function on $K_0$, and
\[
F\cap U\neq\varnothing
\iff
\sup_{x\in F}1_U(x)=1
\iff
\dist(1_U,I_F)=1.
\]
Since the hit sets $\{F:F\cap U\neq\varnothing\}$ generate the Vietoris
Borel structure on $\mathscr F(K_0)$, the inverse map is Borel.
\end{proof}

\subsection{Countability of closed sets is \texorpdfstring{$\Pi^1_1$}{Pi11}-complete}

Let $\CTbl\subseteq\mathscr F(K_0)$ be the set of countable closed subsets
of~$K_0$.

\begin{theorem}\label{thm:countable-closed-Pi11}
$\CTbl$ is $\Pi^1_1$-complete.
\end{theorem}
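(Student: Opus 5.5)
This is the classical Hurewicz theorem (see Kechris, Theorem~27.5), and I would prove it in two halves: a coanalytic upper bound and a continuous reduction from well-founded trees.

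\emph{Upper bound.} A closed $F\subseteq K_0$ is countable iff it contains no perfect subset, by Cantor--Bendixson. Consider the set of pairs $(F,P)\in\mathscr F(K_0)\times\mathscr F(K_0)$ with $P\subseteq F$ and $P$ perfect. Inclusion is closed in the Vietoris topology, and the perfect sets form a $G_\delta$ subset of $\mathscr F(K_0)$. So this set of pairs is Borel. Its projection onto the first coordinate is the set of uncountable closed sets, which is therefore analytic, and hence $\CTbl$ is $\Pi^1_1$. As a cross-check, one can also use the Cantor--Bendixson rank: the derivative map is Borel and $F\in\CTbl$ iff the iterated derivatives reach $\varnothing$, which is the usual coanalytic rank description.

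\emph{Hardness.} I would reduce the $\Pi^1_1$-complete set $\mathrm{WF}$ of well-founded trees on $\N$ (Kechris~33.B), viewed as a closed subset of $2^{\N^{<\N}}$. Fix a continuous injection $\N^{<\N}\ni s\mapsto x_s\in K_0$. The aim is for infinite branches of $T$ to produce perfect pieces, while finite nodes contribute only isolated points. The concrete construction is $F_T=\overline{\{x_s:s\in T\}}$, with the $x_s$ chosen so that $x_{s^\frown n}\to x_s$ in a controlled way. Identifying $K_0$ with $2^{\N}$, encode $s=(n_0,\dots,n_{k-1})$ as
\[
x_s=0^{n_0}1\,0^{n_1}1\cdots 0^{n_{k-1}}1\,0^\infty .
\]
The limit points of $F_T$ are then of two kinds: the points $x_s$ themselves, and the infinite sequences $y_b$ coding branches $b\in[T]$.

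This encoding alone does not give the dichotomy, since a single branch contributes only one point. To get a Cantor set when $[T]\ne\varnothing$, I would use the standard trick of doubling each node. Replace the tree $T$ by the tree $T^{*}$ on $\N\times 2$ consisting of all pairs of sequences whose first coordinate lies in $T$. Then $T\mapsto T^{*}$ is continuous, $T^{*}$ is well-founded iff $T$ is, and any branch of $T^{*}$ comes with a full binary splitting, so $[T^{*}]$ is either empty or perfect. Set $F_T=\overline{\{x_s:s\in T^*\}}$ with an encoding of $(\N\times2)^{<\N}$ in the same style as above.

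Now check the dichotomy.
\begin{enumerate}
\item If $T$ is ill-founded, then $F_T$ contains a homeomorphic copy of $[T^*]$, which is a nonempty perfect set, so $F_T$ is uncountable.
\item If $T$ is well-founded, then $F_T\setminus\{x_s\}$ consists only of limits of $x_{s_k}$ along sequences $s_k$ that converge in the encoding. Such a sequence either eventually extends a fixed node and then converges to $x_{t}$ for some $t$, or it converges to a branch point, and there are no branch points. Hence $F_T$ is countable.
\end{enumerate}

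Continuity of $T\mapsto F_T$ into the Vietoris topology is the step I expect to be the main obstacle. The closure of a union of points is generally only lower semicontinuous. I would handle this by checking the hit and miss sets of basic clopen sets directly. The clopen set $[u]$ meets $F_T$ iff some $x_s$ with $s\in T^*$ lies in $[u]$. Because the encoding is prefix-coded, whether $x_s\in[u]$ is decided by a bounded part of $s$, or else $[u]$ meets only $x_s$ for $s$ ranging over finitely many initial nodes. This should make both conditions depend on finitely many coordinates of $T$, possibly after restricting to trees containing $\varnothing$.

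If this continuity argument gets messy, I would fall back on citing Hurewicz's theorem and Kechris~27.5 directly, since the statement is classical.
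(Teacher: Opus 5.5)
Your proposal is correct. The upper bound is the paper's own argument: uncountable closed sets are exactly those containing a non-empty perfect set, so the complement of $\CTbl$ is a projection of a Borel set. For hardness, however, the paper constructs nothing; it cites Hurewicz's theorem via \cite[Theorem~27.5 and Section~33.B]{Kechris}, whereas you give a self-contained reduction from well-founded trees.

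The step you flag as the main obstacle is short once it is pinned down. Transport the doubled tree to a tree on $\N$ by the bijection $(n,e)\mapsto 2n+e$. Your original prefix code then applies verbatim, and $F_T=\{x_s:s\in T^*\}\cup\{y_\beta:\beta\in[T^*]\}$. Each finite binary string parses uniquely as $u=c(w)0^r$, with $c(w)$ the code of a node $w$. Three facts then combine: $[u]$ is open, $x_w\in[u]$, and $x_s\in[u]$ forces $s\supseteq w$. Together with closure of $T^*$ under initial segments, they give that $F_T$ meets $[u]$ iff $w\in T^*$. This is a single coordinate of $T$, hence clopen. The sets $\{F:F\cap[u]\neq\varnothing\}$ and their complements generate the Vietoris topology on $\mathscr F(K_0)$, so $T\mapsto F_T$ is continuous on non-empty trees. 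Restricting to non-empty trees keeps $F_T\neq\varnothing$ and loses nothing.

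So it is the tree property that does the work. Your claim that $[u]$ meets $x_s$ only for finitely many initial nodes is false, since $[c(w)0^r]$ contains $x_s$ for infinitely many $s\supseteq w$.

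Your route yields a genuinely continuous reduction into the Vietoris topology, which on compact $K_0$ equals the Wijsman topology; this matches the paper's definition of hardness via continuous maps. The paper's citation buys brevity and covers every uncountable Polish space.
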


\begin{proof}
It is classical that $\CTbl$ is coanalytic: a closed set $F\subseteq K_0$ is
uncountable iff it contains a perfect subset.

$\Pi^1_1$-hardness goes back to Hurewicz; see
\cite[Theorem~27.5 and Section~33.B]{Kechris}.  In particular, for every
uncountable Polish space $X$, the classes of countable compact subsets
$K_{\mathrm{No}}(X)$ and countable closed subsets $F_{\mathrm{No}}(X)$ are
$\Pi^1_1$-complete.  Since $K_0=2^\N$ is compact, every closed subset of $K_0$
is compact, and the Wijsman/Effros/Vietoris Borel structures coincide on
$\mathscr F(K_0)$.  Hence $\CTbl$ is $\Pi^1_1$-complete.
\end{proof}

\subsection{Separable dual of \texorpdfstring{$C(F)$}{C(F)}}

\begin{lemma}\label{lem:CF-dual-separable}
For $F\in\mathscr F(K_0)$, the dual Banach space $C(F)^*$ is separable iff $F$ is
countable.
\end{lemma}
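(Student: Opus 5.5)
The plan is to prove the two directions separately, using the Riesz representation $C(F)^*\cong M(F)$, the space of finite regular Borel measures on $F$ with the total-variation norm.

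For the direction ``$F$ countable $\Rightarrow$ $C(F)^*$ separable'', I will show that every regular Borel measure on a countable compact metrisable space is purely atomic. Such a measure is countably additive and every singleton is Borel, so $\mu=\sum_{x\in F}\mu(\{x\})\delta_x$. The map $\mu\mapsto(\mu(\{x\}))_{x\in F}$ is then an isometric isomorphism $M(F)\cong\ell_1(F)$, because $\|\mu\|=\sum_x|\mu(\{x\})|$. If $F$ is countable, $\ell_1(F)$ is separable, being spanned by finitely supported sequences with rational (or complex-rational) coefficients.

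For the converse, ``$F$ uncountable $\Rightarrow$ $C(F)^*$ non-separable'', I will exhibit an uncountable $2$-separated family. The Dirac measures $\{\delta_x:x\in F\}$ lie in $C(F)^*$. For $x\neq y$ we have $\|\delta_x-\delta_y\|=2$, as witnessed by a continuous function with $f(x)=1$ and $f(y)=-1$, for instance via Urysohn (or a clopen set in $F\subseteq 2^\N$). Balls of radius $1$ around the $\delta_x$ are therefore pairwise disjoint. An uncountable such family is incompatible with separability, since each ball would have to contain a distinct element of a countable dense set.

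Neither direction presents a real obstacle. The only point needing care is the atomicity claim in the first direction, and that follows from countable additivity over the countably many singletons. One alternative for that direction is to cite the classical theorem that $C(K)^*$ is separable iff $K$ is countable (equivalently, $C(K)$ is Asplund iff $K$ is scattered, and a compact metrisable $K$ is scattered iff it is countable). I will, however, use the direct $\ell_1(F)$ identification to keep the argument self-contained.
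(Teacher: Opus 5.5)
Your proposal is correct and follows essentially the same argument as the paper. For countable $F$, every regular Borel measure is atomic, so $C(F)^*\cong\ell_1(F)$ is separable. For uncountable $F$, the Dirac measures $\{\delta_x : x\in F\}$ form an uncountable $2$-separated family, which rules out separability.
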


\begin{proof}
If $F$ is countable, then every regular Borel measure on $F$ is atomic and
$C(F)^*\cong \ell_1(F)$, which is separable.

If $F$ is uncountable, then the family of point masses $(\delta_x)_{x\in F}\subseteq C(F)^*$
is uncountable and satisfies $\|\delta_x-\delta_y\|=2$ for $x\neq y$.
Hence $C(F)^*$ contains an uncountable $2$-separated subset and is non-separable.
\end{proof}

\begin{theorem}\label{thm:dual-separable-Pi11}
In the quotient coding $F\mapsto C(K_0)/I_F\cong C(F)$, the property
\[
\{F\in\mathscr F(K_0):\ C(F)^*\ \text{is separable}\}
\]
is $\Pi^1_1$-complete.  Consequently, the corresponding class of ideals
\[
\{I\in\Ideal(C(K_0)):\ C(K_0)/I\ \text{has separable dual}\}
\]
is $\Pi^1_1$-complete.
\end{theorem}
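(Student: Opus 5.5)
The proof combines three results already available. By Lemma~\ref{lem:CF-dual-separable}, for every $F\in\mathscr F(K_0)$ the dual $C(F)^*$ is separable exactly when $F$ is countable. Hence
\[
\{F\in\mathscr F(K_0):\ C(F)^*\text{ is separable}\}=\CTbl .
\]
Theorem~\ref{thm:countable-closed-Pi11} says $\CTbl$ is $\Pi^1_1$-complete, so the first assertion follows at once. Nothing beyond this set-theoretic identity is needed for the first part.

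For the statement about ideals, I transfer along $F\mapsto I_F$. Since $C(K_0)/I_F\cong C(F)$ isometrically, we get
\[
\{I\in\Ideal(C(K_0)):\ C(K_0)/I\text{ has separable dual}\}=\{I_F:\ F\in\CTbl\}.
\]
Lemma~\ref{lem:F-to-I-borel} shows $F\mapsto I_F$ is a Borel isomorphism, so this set is the image of a coanalytic set under a Borel isomorphism. It is therefore coanalytic, because its complement is a Borel image of the analytic set $\mathscr F(K_0)\setminus\CTbl$.

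Hardness is the delicate step, because the definition of $\Gamma$-hardness in Section~\ref{subsec:borel-hierarchy} asks for \emph{continuous} reductions. A Borel isomorphism only preserves the weaker "Borel reduction" notion. I would prove that $F\mapsto I_F$ is actually continuous from the Vietoris topology to the Wijsman topology. For fixed $f$ we have $\dist(f,I_F)=\max_{x\in F}|f(x)|$, and this is continuous in $F$ for the Hausdorff metric, by the same computation already used for $Y\mapsto K_Y$ before Theorem~\ref{thm:Cstar-Pi2-complete}.

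With continuity in hand, let $B\in\Pi^1_1(Y)$. Composing a continuous reduction $Y\to\mathscr F(K_0)$ of $B$ to $\CTbl$ with $F\mapsto I_F$ gives a continuous reduction of $B$ to the class of ideals. This shows the ideal class is $\Pi^1_1$-hard, hence $\Pi^1_1$-complete.

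As a fallback for any worry about which topology on the ideal space is meant, I would note two points. First, $\Pi^1_1$-completeness in the Borel-reduction sense already implies non-Borelness. Second, by Remark~\ref{rem:rank-convention}, all admissible topologies give the same Borel structure. Together these make the non-Borel conclusion topology-independent.
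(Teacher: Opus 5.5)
Your proof is correct and has the same structure as the paper's. You identify $\{F: C(F)^*\text{ separable}\}=\CTbl$ via Lemma~\ref{lem:CF-dual-separable}, invoke Theorem~\ref{thm:countable-closed-Pi11}, and then transfer along $F\mapsto I_F$.

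The genuine difference is in the transfer step.

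\textbf{The paper's route.} It says only that completeness transfers because $F\mapsto I_F$ is a Borel isomorphism (Lemma~\ref{lem:F-to-I-borel}). Under the continuous-reduction definition of hardness in Section~\ref{subsec:borel-hierarchy}, this is immediate only with an extra input: a theorem of Kechris (\emph{On the concept of $\Pi^1_1$-completeness}, 1997). That theorem says a coanalytic set that is complete under Borel reductions is already complete under continuous reductions from zero-dimensional spaces.

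\textbf{Your route.} You show directly that $F\mapsto I_F$ is Vietoris-to-Wijsman continuous. Uniform continuity of $f$ on $K_0$ gives
\[
\bigl|\max_{x\in F}|f(x)|-\max_{x\in G}|f(x)|\bigr|\le\omega_f\bigl(d_H(F,G)\bigr),
\]
where $\omega_f$ is the modulus of uniform continuity of $f$. Being a continuous injection from the compact space $\mathscr F(K_0)$, the map is in fact a homeomorphism onto its image. Continuous reductions then compose directly.

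\textbf{Comparison.} Your version is self-contained and matches the paper's stated definition of hardness. The paper's is shorter but relies on an unstated descriptive-set-theoretic fact.

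One small bookkeeping point, shared with the paper: $F\mapsto I_F$ on non-empty closed sets misses the ideal $C(K_0)$ itself. Its zero quotient trivially has separable dual. So the ideal class is really $\{I_F:F\in\CTbl\}\cup\{C(K_0)\}$, not exactly $\{I_F:F\in\CTbl\}$ as you wrote. Adding a single point affects neither coanalyticity nor hardness.
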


\begin{proof}
By Lemma~\ref{lem:CF-dual-separable}, the set in question is exactly $\CTbl$,
which is $\Pi^1_1$-complete by Theorem~\ref{thm:countable-closed-Pi11}.
Since $F\mapsto I_F$ is a Borel isomorphism (Lemma~\ref{lem:F-to-I-borel}),
$\Pi^1_1$-completeness transfers to the ideal coding.
\end{proof}

\begin{remark}\label{rem:Pi11-calibration}
Theorem~\ref{thm:dual-separable-Pi11} shows that certain natural properties of
quotient algebras are \emph{provably not Borel}, even for commutative $C^*$-algebras.
This stands in contrast to the properties in our catalogue
(Sections~\ref{sec:banach-complexity}--\ref{sec:Cstar-properties}), all of which are
Borel at finite levels.  The distinction reflects a fundamental boundary: properties
whose definitions involve quantification over uncountable sets (here, over all
regular Borel measures, or equivalently over all closed subsets of~$F$) tend to escape
the Borel hierarchy, whereas those expressible by countable quantification over
a dense set of witnesses remain Borel.
\end{remark}

\subsection{Superatomic Boolean algebras}\label{subsec:superatomic}

The parameterisation of countable Boolean algebras provides another application
of the $\CTbl$ space.  Let $B_\infty$ be the free Boolean algebra on countably many
generators.  The Stone space of $B_\infty$ is canonically homeomorphic to the Cantor
space $K_0=2^\N$.

By Stone duality, there is a canonical bijection between ideals
$I\in\Id(B_\infty)$ and closed subsets $F\in\mathscr F(K_0)$: the quotient Boolean
algebra $B_\infty/I$ is isomorphic to the algebra of clopen subsets of~$F$.

A Boolean algebra is called \emph{superatomic} if every homomorphic image contains
an atom.  A classical result in topology and logic establishes that a countable
Boolean algebra is superatomic if and only if its Stone space is countable.

\begin{theorem}\label{thm:superatomic-Pi11}
In the parameter space of countable Boolean algebras coded by ideals in the free
Boolean algebra $\Id(B_\infty)$, the property
\[
\{I\in\Id(B_\infty):\ B_\infty/I\ \text{is superatomic}\}
\]
is $\Pi^1_1$-complete.
\end{theorem}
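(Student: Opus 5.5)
The proof will transfer Theorem~\ref{thm:countable-closed-Pi11} along Stone duality, in the same way Theorem~\ref{thm:dual-separable-Pi11} was obtained from Lemma~\ref{lem:F-to-I-borel}.

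\textbf{Step 1: the Stone-duality map is a homeomorphism.} Identify the Stone space of $B_\infty$ with $K_0=2^\N$, so that the free generators correspond to the basic clopen sets $\{x:x(n)=1\}$ and every element $b\in B_\infty$ corresponds to a clopen set $\widehat b\subseteq K_0$. Consider the map
\[
        F\longmapsto I_F:=\{b\in B_\infty:\widehat b\cap F=\varnothing\}.
\]
I will check that it is a bijection from closed subsets of $K_0$ onto $\Id(B_\infty)$. Nonempty $F$ corresponds to proper ideals; to handle the improper ideal, either admit $F=\varnothing$ or restrict to proper ideals.

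For a fixed $b$, the condition $b\in I_F$ says $F\subseteq K_0\setminus\widehat b$, and this set is clopen in the Vietoris topology. Hence $F\mapsto I_F$ is continuous into $2^{B_\infty}$. It is a continuous bijection between compact metrisable spaces, so it is a homeomorphism. In particular it is a Borel isomorphism, which is all that is needed.

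\textbf{Step 2: superatomicity corresponds to countability.} Next I will verify that $B_\infty/I_F$ is isomorphic to the clopen algebra of $F$. The map $b\mapsto\widehat b\cap F$ is surjective because every clopen subset of $F$ is a trace of a clopen subset of $K_0$ (by compactness and zero-dimensionality), and its kernel is $I_F$. The Stone space of this clopen algebra is $F$ itself.

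By the classical criterion quoted before the theorem, $B_\infty/I_F$ is superatomic iff $F$ is countable. If a self-contained argument is wanted, a countable compact space is scattered, and quotients of its clopen algebra correspond to closed subspaces, which have isolated points, i.e.\ atoms. Conversely, an uncountable compact metrisable $F$ contains a perfect set $P$, and the clopen algebra of $P$ is an atomless homomorphic image. Therefore the superatomic set is exactly the image of $\CTbl$ under the Stone-duality map.

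\textbf{Step 3: conclude.} The set of superatomic codes is the image of $\CTbl$ under a homeomorphism. By Theorem~\ref{thm:countable-closed-Pi11}, $\CTbl$ is $\Pi^1_1$-complete. Both membership in $\Pi^1_1$ and $\Pi^1_1$-hardness transfer along a homeomorphism: compose reductions with it, or pull back along its inverse.

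\textbf{Main obstacle.} The only delicate point is bookkeeping. The free Boolean algebra on countably many generators must really have Stone space homeomorphic to $2^\N$ with the correspondence above. The empty set or the improper ideal must be handled explicitly, since the trivial algebra is vacuously superatomic and $\varnothing$ is countable, so it is consistent to include it. Apart from this, the proof is the same as for Theorem~\ref{thm:dual-separable-Pi11}.
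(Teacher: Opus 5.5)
Your proposal is correct and takes essentially the same approach as the paper: you transfer the $\Pi^1_1$-completeness of $\CTbl$ along the Stone-duality correspondence $F\leftrightarrow I_F$, using the classical fact that $B_\infty/I_F$ is superatomic iff $F$ is countable. Your extra steps go slightly beyond the paper's appeal to a Borel isomorphism: you check that $F\mapsto I_F$ is a homeomorphism onto $\Id(B_\infty)$, and you match the improper ideal with $\varnothing$. This makes the transfer of hardness under continuous reductions (the paper's definition) immediate.
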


\begin{proof}
Under the Stone duality correspondence $I\leftrightarrow F$, the map
$F\mapsto I_F$ from $\mathscr F(K_0)$ to $\Id(B_\infty)$ is a Borel isomorphism.
By the characterisation above, $B_\infty/I_F$ is superatomic if and only if
$F\in\CTbl$.

Since $\CTbl$ is $\Pi^1_1$-complete
(Theorem~\ref{thm:countable-closed-Pi11}), the preimage of $\CTbl$ under this
Borel isomorphism is exactly the set of ideals generating superatomic quotients.
Therefore superatomicity is $\Pi^1_1$-complete.
\end{proof}

\begin{remark}\label{rem:slenderness-resolved}
Proposition~\ref{prop:slender-Pi03} shows that, for countable abelian groups,
slenderness is already Borel: indeed
\[
\mathrm{Slender}=\mathrm{TorsionFree}\cap\{G:G^1=0\}
\in \Pi^0_3.
\]
Thus the only remaining issue is the optimality of this upper bound.
\end{remark}

\section{Extensions and open questions}

\begin{enumerate}
\item \emph{Completeness.}
Theorem~\ref{thm:Cstar-Pi2-complete} proves optimality of the $G_\delta$ upper bounds
for AF-ness, real-rank bounds and topological stable-rank bounds.  The remaining
low-rank $C^*$-classes in this paper include MF-ness and approximate divisibility
($G_\delta$), and quasidiagonality, property~(SP), and nuclear dimension~$\le n$
($\Pi^0_3$).  We have only $\Pi^0_2$ lower bounds for property~(SP) and nuclear
dimension.  Are any of the remaining upper bounds sharp?
The $\Pi^1_1$-completeness results in Section~\ref{sec:Pi11-example}---separable
dual (Theorem~\ref{thm:dual-separable-Pi11}) and superatomicity
(Theorem~\ref{thm:superatomic-Pi11})---show that non-Borel phenomena span both
continuous and discrete quotient codings.  It would be interesting to identify further
such examples among less elementary properties.

\item \emph{Other invariants.}
The Borel coding of $K_0$ (Theorem~\ref{thm:K0}) and $K_1$ (Theorem~\ref{thm:K1})
extends to all higher $K$-groups via Bott periodicity (Remark~\ref{rem:Kn-KOn}).
It remains to code traces and broader Elliott-type invariants as countable Borel data.

\item \emph{Tensor-product regularity.}
Refine tensor-product kernel regularity: when does the tensor-ideal
assignment become continuous for natural choices of topology beyond
the cases treated in Theorem~\ref{thm:tensor-continuity}?

\item \emph{Slender abelian groups.}
The class of slender groups is $\Pi^0_3$ in the present
coding.  Determine whether this upper bound is optimal; in particular, is
slenderness $\Pi^0_3$-complete?

\item \emph{Dedekind finiteness for Banach algebras.}
Proposition~\ref{prop:df-banach} shows that Dedekind finiteness is closed in the
Wijsman topology.  Determine whether this closed set is non-open, or whether natural
subclasses exhibit sharper completeness phenomena.  The unbounded-witness phenomenon
of Daws--Horv\'ath~\cite{DawsHorvathCJM} remains relevant for uniform versions, but
not for the upper bound proved here.

\item \emph{Simplicity for Banach algebras and TROs.}
The present paper records Borelness of simplicity for separable $C^*$-algebras via
standard codings, but deliberately does not assert a finite Wijsman-rank bound for
Banach-algebra or TRO simplicity.  Determine the optimal complexity in these quotient
codings, both in the unital and non-unital settings.

\item \emph{Isomorphism complexity in the non-$C^*$ settings.}
For $P=C^*_{\max}(F_\infty)$, the relation
$K\sim L \iff P/K\cong P/L$
is precisely isomorphism of separable $C^*$-algebras, whose complexity is
already known by \cite{FarahTomsTornquist,SabokCompleteness}.
It would be interesting to determine the analogous complexity for the
Banach-algebra, TRO, and other quotient codings developed here.

\item \emph{Other categories.}
Apply the framework to Banach lattices with additional structure
(Banach function spaces, KB-spaces), operator systems, Jordan algebras, and other
operator-algebraic structures.
\end{enumerate}

\section*{Acknowledgements}
The author is grateful to Ilijas Farah for his interest in this work and for
helpful comments on an earlier draft, especially concerning substructure versions
of the Wijsman-closedness argument, congruence-based parametrisations, and the
comparison with existing codings of separable $C^*$-algebras. Thanks are extended to Christian Rosendal for numerous spot on remark made during the author's presentation about certain aspects of the present work. The author also thanks Jennifer Pi and Stuart White for communicating Austin Shiner's related forthcoming work. The author gratefully acknowledges support received from
NCN Sonata-Bis~13 (2023/50/E/ST1/00067).

\bibliographystyle{amsplain}

\end{document}